\definecolor{Green}{RGB}{30, 150, 30}
\newtheorem{thm}{Theorem}[section]
\newtheorem{prop}[thm]{Proposition}
\newtheorem{lem}[thm]{Lemma}
\newtheorem{cor}[thm]{Corollary}
\theoremstyle{definition}
\newtheorem{defn}[thm]{Definition}
\theoremstyle{definition}
\theoremstyle{definition}
\theoremstyle{definition}
\newtheorem{remark}[thm]{Remark}
\theoremstyle{definition}
\theoremstyle{definition}
\newtheorem{convention}[thm]{Convention}
\theoremstyle{definition}
\newtheorem{claim}[thm]{Claim}
\newcommand*\Z{\mathbb{Z}}
\newcommand*\supp{\operatorname{supp}}
\newcommand*\Stab{\operatorname{Stab}}
\newcommand*\diam{\operatorname{diam}}
\newcommand*\nest{\sqsubseteq}
\newcommand*\propnest{\sqsubsetneq}
\newcommand*\nestneq{\sqsubsetneq}
\newcommand*\mf[1]{\mathfrak{#1}}
\newcommand*\mc[1]{\mathcal{#1}}
\newcommand*\trans{\pitchfork}
\newcommand{\dist}{\textup{\textsf{d}}}
\newcommand*\gate{\mathfrak{g}}
\newcommand*\sub[1]{\langle {#1} \rangle}
\newcommand{\tsh}[1]{\left\{\kern-.7ex\left\{#1\right\}\kern-.7ex\right\}}
\newcommand{\Tsh}[2]{\tsh{#2}_{#1}}
\newcommand{\threshold}[2]{\Tsh{#2}{#1}}
\DeclareMathOperator*{\free}{\raisebox{-0.6ex}{\scalebox{2.5}{$\ast$}}}
\newcommand{\llangle}{\left\langle\!\left\langle}
\newcommand{\rrangle}{\right\rangle\!\right\rangle}
\newcommand{\prefix}{\operatorname{prefix}}
\newcommand{\suffix}{\operatorname{suffix}}
\newcommand*\lk{\operatorname{lk}}
\newcommand*\st{\operatorname{st}}
\newcommand{\syl}[1]{| {#1} |_{syl}}
\newcommand{\Snest}{\nest_{\mf{S}}}
\newcommand{\Sperp}{\perp_{\mf{S}}}
\newcommand{\Strans}{\trans_{\mf{S}}}
\newcommand{\Rnest}{\nest_{\mf{R}}}
\newcommand{\Rperp}{\perp_{\mf{R}}}
\newcommand{\Rtrans}{\trans_{\mf{R}}}
\newcommand{\oldProj}{\pi}
\newcommand{\newProj}{\psi}
\newcommand{\oldRel}{\rho}
\newcommand{\newRel}{\beta}
\title[Graph products and HHGs]{ 
Hierarchical hyperbolicity of graph products}
\author{Daniel Berlyne}
\address{The Graduate Center, City University of New York, U.S.A.}
\email{dberlyne@gradcenter.cuny.edu}
\author{Jacob Russell}
\address{Rice University, Houston, U.S.A.}
\email{jacob.russell@rice.edu}
\begin{document}

\begin{abstract}
We show that any graph product of finitely generated groups is hierarchically hyperbolic relative to its vertex groups. We apply this result to answer two questions of Behrstock, Hagen, and Sisto: we show that the syllable metric on any graph product forms a hierarchically hyperbolic space, and that graph products of hierarchically hyperbolic groups are themselves hierarchically hyperbolic groups. This last result is a strengthening of a result of Berlai and Robbio by removing the need for extra hypotheses on the vertex groups.  We also answer two questions of Genevois about the geometry of the electrification of a graph product of finite groups. 
\end{abstract}



\maketitle
\tableofcontents

\section{Introduction}

There have been many attempts to generalise the notion of hyperbolicity of a group since it was first introduced by Gromov \cite{gromov1}. One of these, \emph{hierarchical hyperbolicity}, was developed by Behrstock, Hagen, and Sisto \cite{BHS_HHSI,BHS_HHSII} as a way of describing hyperbolic behaviour in quasi-geodesic metric spaces via hierarchy machinery akin to that constructed for mapping class groups by Masur and Minsky \cite{MMI,MMII}.  The work of Behrstock, Hagen, and Sisto originally focused on developing such machinery for right-angled Artin groups, but also encompasses a wide variety of groups and spaces, such as virtually cocompact special groups \cite{BHS_HHSII}, 3--manifold groups with no Nil or Sol components \cite{BHS_HHSII}, Teichm\"{u}ller space with either the Teichm\"{u}ller or Weil--Petersson metric \cite{BHS_HHSI,MMI,BKMM_Rigidity,Brock_WeilPetersson,Durham_Combinatorial_Teich,Rafi_Combinatorial_Teich,EMR_Teich_Rank}, and graph products of hyperbolic groups \cite{BR_Combination}. Hierarchical hyperbolicity has deep geometric consequences for a space, including a Masur and Minsky style distance formula \cite{BHS_HHSII}, a quadratic isoperimetric inequality \cite{BHS_HHSII}, rank rigidity and Tits alternative theorems \cite{HHS_Boundary,DHS_corrigendum}, control over top-dimensional quasi-flats \cite{BHS_HHS_Quasiflats}, and bounds on the asymptotic dimension \cite{BHS_HHS_AsDim}.

 A hierarchically hyperbolic structure on a quasi-geodesic space $\mc{X}$ is a collection of uniformly hyperbolic spaces $C(W)$ indexed by the elements $W$ of an index set $\mf{S}$. For each $W \in \mf{S}$, there is a projection map from $\mc{X}$ onto the hyperbolic space $C(W)$, and every pair of elements of $\mf{S}$ is related by one of three mutually exclusive relations: orthogonality, nesting, and transversality. This data then satisfies a collection of axioms that allow for the coarse geometry of the entire space to be recovered from the projections to the hyperbolic spaces $C(W)$.

In the present paper, we construct an explicit hierarchy structure for any graph product, using right-angled Artin groups as our motivating example. Given a finite simplicial graph $\Gamma$ with vertex set $V(\Gamma)$ and edge set $E(\Gamma)$, we define the \emph{right-angled Artin group} $A_{\Gamma}$ by 
\[ A_{\Gamma} = \langle V(\Gamma) \,\,|\,\, [v,w]=e \,\,\forall\,\, \{v,w\} \in E(\Gamma) \rangle. \]
More generally, if we associate to each vertex $v$ of $\Gamma$ a finitely-generated group $G_{v}$, then we define the \emph{graph product} $G_{\Gamma}$ by 
\[ G_{\Gamma} = \left.\left(\free_{v \in V(\Gamma)} G_{v}\right) \right/ \llangle [g_{v},g_{w}] \,\, \middle| \,\, g_{v} \in G_{v},\, g_{w} \in G_{w},\, \{v,w\} \in E(\Gamma) \rrangle, \]
so that $A_{\Gamma}$ is obtained as the special case where the vertex groups are $G_{v} = \mathbb{Z}$ for all $v \in V(\Gamma)$.

For right-angled Artin groups $A_{\Gamma}$, a hierarchically hyperbolic structure was constructed by Behrstock, Hagen, and Sisto by considering the collection of induced subgraphs of the defining graph $\Gamma$  \cite{BHS_HHSI}. Each induced subgraph $\Lambda$ of $\Gamma$ generates a new right-angled Artin group $A_\Lambda$, which is realised as a subgroup of $A_{\Gamma}$. The Cayley graph of $A_{\Gamma}$ is the $1$--skeleton of a CAT($0$) cube complex $X$, which comes equipped with a projection to a hyperbolic space $C(X)$ called the \emph{contact graph}. Since each induced subgraph $\Lambda$ of $\Gamma$ generates its own right-angled Artin group with associated cube complex $Y \subseteq X$, the subgroup $A_\Lambda$ has its own associated contact graph $C(Y)$. Since edges of $\Gamma$ correspond to commuting relations in $A_{\Gamma}$, join subgraphs of $\Gamma$ (that is, subgraphs of the form $\Lambda_{1} \sqcup \Lambda_{2}$ where every vertex of $\Lambda_{1}$ is joined by an edge to every vertex of $\Lambda_{2}$) generate direct product subgroups of $A_{\Gamma}$.  This provides us with an intuitive notion of \emph{orthogonality} within our hierarchy. Set containment of subgraphs of $\Gamma$ provides a natural partial order in the hierarchy, which we call \emph{nesting}, and any subgraphs that are not orthogonal or nested are considered \emph{transverse}. Collectively, the hyperbolic spaces $C(Y)$ allow us to recover the entire geometry of $A_{\Gamma}$, via projections to the subcomplexes $Y \subseteq X$ and through the nesting, orthogonality and transversality relations defined above.

Since the nesting and orthogonality relations for a right-angled Artin group are intrinsic to the defining graph $\Gamma$, it is sensible to attempt to generalise this hierarchy structure to arbitrary graph products. It is important to note, however, that arbitrary graph products may \emph{not}  be hierarchically hyperbolic, since we have no control over the vertex groups. For example, the vertex groups could be copies of Out$(F_{3})$, which is known not to be hierarchically hyperbolic \cite{BHS_HHSII}. However, this is the only roadblock. Specifically, we show that graph products are \emph{relatively hierarchically hyperbolic}, that is, graph products admit a structure satisfying all of the axioms of hierarchical hyperbolicity with the exception that the  the spaces associated to the nesting-minimal sets (the vertex groups) are not necessarily hyperbolic. 

\renewcommand*{\thethm}{\Alph{thm}}

\begin{thm}\label{intro_thm:main_thm}
Let $\Gamma$ be a finite simplicial graph, with each vertex $v$ labelled by a finitely-generated group $G_{v}$. The graph product $G_{\Gamma}$ is a hierarchically hyperbolic group relative to the vertex groups.
\end{thm}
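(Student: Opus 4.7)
The plan is to construct a hierarchically hyperbolic structure on $G_\Gamma$ that mirrors the Behrstock--Hagen--Sisto construction for right-angled Artin groups, but allows the hyperbolic spaces associated to the nesting-minimal domains (the vertex groups) to be non-hyperbolic, placing us in the relative HHS framework. Equivariance under left multiplication by $G_\Gamma$ will then promote the relative HHS structure to a relative HHG structure.

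First, I would take the index set $\mathfrak{S}$ to consist of cosets $gG_\Lambda$ where $\Lambda$ ranges over non-empty subgraphs of $\Gamma$ and $G_\Lambda := \langle G_v : v \in V(\Lambda) \rangle$ is the associated parabolic subgroup. Using the normal form theory for graph products, each coset admits canonical shortest representatives, making this a well-defined countable index set. The three relations are then intrinsic to the combinatorics of $\Gamma$ together with the coset structure: nesting $gG_\Lambda \nest hG_\Delta$ is coset containment (equivalently $\Lambda \subseteq \Delta$ and $g^{-1}h \in G_\Delta$); orthogonality $gG_\Lambda \orth hG_\Delta$ requires $\Lambda$ and $\Delta$ to lie in complementary sides of a join in $\Gamma$ together with appropriate coset compatibility; and transversality is everything else. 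The nesting-minimal elements will be exactly the single-vertex cosets $gG_v$, matching the vertex groups in the theorem statement.

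Second, I would define the hyperbolic space $\fontact(gG_\Lambda)$ as follows. When $\Lambda$ is a single vertex $v$, take $\fontact(gG_v)$ to be the Cayley graph of $G_v$ itself; no hyperbolicity is required since we are in the relative setting. When $|V(\Lambda)| > 1$, take $\fontact(gG_\Lambda)$ to be the Cayley graph of $G_\Lambda$ with respect to the syllable generating set $\bigcup_{v\in V(\Lambda)}(G_v \setminus \{e\})$, coned-off over all proper parabolic cosets $hG_{\Lambda'}$ with $\Lambda' \propnest \Lambda$ coming from join subfactors. Hyperbolicity of these coned-off syllable graphs should reduce by a combinatorial argument to hyperbolicity of the contact graph of the Salvetti cube complex in the RAAG case, since coning off the vertex groups removes any dependence on the internal structure of the $G_v$. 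The projection $\pi_{hG_\Lambda}$ is then defined via normal forms: after translating by $h^{-1}$, extract from a normal form of $h^{-1}g$ the syllables belonging to $V(\Lambda)$ that are not separated from $\Lambda$ by non-commuting syllables further out; this is well-defined up to bounded ambiguity coming from the equivalence of normal forms under commutation of syllables.

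Finally, I would verify the relative HHS axioms. Finite complexity is immediate, bounded by $|V(\Gamma)|$. Partial realization exploits the fact that orthogonal cosets live in commuting subgroups, so a tuple of prescribed projections on an orthogonal family can be realized simultaneously by concatenating the corresponding syllables. Uniqueness follows because the normal form of an element is determined by its syllable data across all subgraphs. The main obstacle will be the consistency and bounded geodesic image axioms, which require careful control over how syllables contribute to projections under the normal form rewriting: the key technical lemma will be that a syllable in a normal form of $g$ contributes non-trivially to $\pi_{gG_\Lambda}$ only when its vertex ``sees'' $\Lambda$ in $\Gamma$ through a chain of non-commuting syllables, and that a geodesic in $\fontact(hG_\Delta)$ staying far from a nested subdomain $gG_\Lambda \nest hG_\Delta$ cannot alter the syllables contributing to $\pi_{gG_\Lambda}$ along the geodesic. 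Once these combinatorial lemmas are in place, $G_\Gamma$-equivariance of the entire construction is manifest, yielding the relative HHG conclusion.
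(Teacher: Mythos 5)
Your proposal follows the right overall blueprint -- index set built from cosets of graphical subgroups, subgraph metric (coning off proper parabolics) for the associated hyperbolic spaces, gate/normal-form projections, relations read off from $\Gamma$ -- and this is indeed the structure the paper builds. However, there is one concrete choice that would cause the argument to fail: you take the index set to be the set of \emph{cosets} $gG_\Lambda$, whereas the correct index set is the set of \emph{parallelism classes} of such cosets, where $g\langle\Lambda\rangle$ and $h\langle\Lambda\rangle$ are declared parallel when $g^{-1}h \in \langle\st(\Lambda)\rangle$.

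Here is why the distinction is not cosmetic. Take $g = e$ and $h \in \langle\lk(\Lambda)\rangle\smallsetminus\{e\}$; then $\langle\Lambda\rangle$ and $h\langle\Lambda\rangle$ are distinct cosets but are parallel. Under your definitions these two domains are not nested (neither coset contains the other) and not orthogonal (your orthogonality requires $\Lambda$ and $\Delta$ to sit on opposite sides of a join, which $\Lambda$ cannot do with itself), so they would have to be transverse. But the gate map $\gate_{h\Lambda}\vert_{\langle\Lambda\rangle}\colon \langle\Lambda\rangle\to h\langle\Lambda\rangle$ is a surjective isometry in the syllable metric (it is left-multiplication by $h$), so the lateral relative projection $\rho^{\Lambda}_{h\Lambda}$ -- however you define it, whether as $\pi_{h\Lambda}(\langle\Lambda\rangle)$ or $\pi_{h\Lambda}(\langle\st(\Lambda)\rangle)$ -- is all of $C(h\Lambda)$, which has unbounded diameter whenever $\Lambda$ is not a join. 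This violates the diameter-at-most-$E$ requirement in the transversality axiom of the proto-hierarchy structure, and also breaks the consistency inequality. The fix is exactly to quotient by parallelism, and once you do so the definitions of nesting and orthogonality must be rephrased in terms of a common representative (i.e., $[g\Lambda]\nest[h\Omega]$ iff $\Lambda\subseteq\Omega$ and some $k$ satisfies $[k\Lambda]=[g\Lambda]$ and $[k\Omega]=[h\Omega]$, and analogously for orthogonality with $\Lambda\subseteq\lk(\Omega)$); your ``coset containment'' and ``appropriate coset compatibility'' are too restrictive and too vague, respectively, to recover this. Verifying that the parallelism-class nesting relation is transitive is itself a non-trivial lemma you would need to prove.

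Two secondary remarks. First, your sketch of hyperbolicity -- reduce the coned-off syllable graph to the RAAG contact graph -- is a genuinely different route from the paper, which gives a direct slim-triangles argument using hyperplanes and combinatorial hyperplanes in the prism complex $S(\Gamma)$ and in fact shows the spaces are quasi-trees; your reduction is plausible in spirit but would require an actual argument that the subgraph metric is uniformly hyperbolic independently of the vertex groups, which is not literally a quasi-isometry to the RAAG case. Second, your intuition for projections (extracting syllables of a normal form that ``see'' $\Lambda$) and for bounded geodesic image (a geodesic far from $\rho^{g\Lambda}_{h\Omega}$ cannot change syllables contributing to $\pi_{g\Lambda}$) is correct and matches the paper's prefix description of the gate map and its hyperplane-separation arguments; these parts of the plan are sound modulo the index-set issue.
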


The notion of relative hierarchical hyperbolicity was originally developed by Behrstock, Hagen, and Sisto in \cite{BHS_HHSII} and is explored further in \cite{BHS_HHS_AsDim}. 
Despite the lack of hyperbolicity in the nesting-minimal sets, many of the consequences of hierarchical hyperbolicity are preserved in the relatively hierarchically hyperbolic setting. In particular, Theorem \ref{intro_thm:main_thm} implies the graph product $G_\Gamma$ has a Masur and Minsky style distance formula and an acylindrical action on the nesting-maximal hyperbolic space; see Corollaries \ref{cor:distance_formula} and \ref{cor:acyl}.

Another way of asserting control over the vertex groups is by replacing the word metric on $G_{\Gamma}$ with the \emph{syllable metric},  which measures the length of an element $g \in G_{\Gamma}$ by counting the  minimal number of elements needed to express $g$ as a product of vertex group elements. 
This has the effect of making all vertex groups diameter $1$, and therefore hyperbolic. The syllable metric on a right-angled Artin group was studied by Kim and Koberda as an analogue of the Weil--Petersson metric on Teichm\"uller space (the Weil--Petersson metric is quasi-isometric to the space obtained from the mapping class group by coning off all cyclic subgroups generated by Dehn twists)  \cite{KimKoberda}. Kim and Koberda produce several hierarchy-like results for the syllable metric on a right-angled Artin group with triangle- and square-free defining graph, including a Masur and Minsky style distance formula and an acylindrical action on a hyperbolic space. This inspired Behrstock, Hagen, and Sisto to ask if the syllable metric on a right-angled Artin group is a hierarchically hyperbolic space \cite{BHS_HHSII}. We give a positive answer to this question, not just for right-angled Artin groups but for all graph products.

\begin{cor}\label{intro_cor:syllable_is_HHG}
Let $\Gamma$ be a finite simplicial graph, with each vertex $v$ labelled by a group $G_{v}$. The graph product $G_{\Gamma}$ endowed with the syllable metric is a hierarchically hyperbolic space.
\end{cor}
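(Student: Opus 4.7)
The plan is to deduce the corollary from Theorem~\ref{intro_thm:main_thm} by exploiting the fact that the syllable metric on $G_\Gamma$ is quasi-isometric to the electrification of the standard Cayley graph of $G_\Gamma$ obtained by coning off every coset of every vertex group. Indeed, a syllable is by definition a non-trivial element of some vertex group, so the Cayley graph of $(G_\Gamma,d_{\mathrm{syl}})$ is literally the graph one obtains by adding an edge between any two elements of $G_\Gamma$ lying in a common vertex-group coset. The only way $d_{\mathrm{syl}}$ differs from $d_{\mathrm{word}}$ is that traversing a long element inside a single vertex-group coset now costs at most $1$.

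With this reinterpretation, I would take the relatively hierarchically hyperbolic structure $(G_\Gamma,\mathfrak{S})$ of Theorem~\ref{intro_thm:main_thm} and produce an HHS structure on $(G_\Gamma,d_{\mathrm{syl}})$ by keeping the same index set $\mathfrak{S}$, the same relations $\sqsubseteq$, $\perp$, $\pitchfork$, and the same hyperbolic spaces $C(W)$ with their projections $\pi_W$ for every non-minimal $W \in \mathfrak{S}$, while replacing each nesting-minimal $C(V)$ (the one corresponding to a vertex group $G_v$) by a single point, with its projection the constant map. Since the vertex groups are the sole source of non-hyperbolicity in the relative structure, every $C(W)$ in the modified structure is uniformly hyperbolic, so the structure is a candidate HHS structure. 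All HHS axioms whose statements quantify only over non-minimal elements (finite complexity, partial realisation, bounded geodesic image, large links, consistency, nesting/orthogonality combinatorics) are inherited verbatim from the RHHG structure, and the axioms involving the minimal elements become vacuous because their spaces are now bounded.

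The remaining content, and the main obstacle, is to verify that this modified structure recovers the correct metric, i.e.\ the distance formula and uniqueness axioms. My approach is to invoke the RHHG distance formula implicit in Theorem~\ref{intro_thm:main_thm}, which expresses $d_{\mathrm{word}}(g,h)$ coarsely as a sum of thresholded projection distances over $\mathfrak{S}$. This sum splits into a contribution from non-minimal domains and a contribution from the vertex groups. Passing from $d_{\mathrm{word}}$ to $d_{\mathrm{syl}}$ exactly annihilates the vertex-group summands: any element of a single vertex group has $d_{\mathrm{syl}}$-length at most $1$, and conversely, a shortest syllable expression of $g^{-1}h$ records precisely the non-minimal projection data. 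The upper bound on $d_{\mathrm{syl}}$ in terms of the new sum is immediate from the RHHG upper bound on $d_{\mathrm{word}}$; for the lower bound I would use graph-product normal forms to argue that each syllable in a minimal syllable representative of $g^{-1}h$ must force a non-trivial jump in some non-minimal $C(W)$, so the non-minimal summands alone already dominate $d_{\mathrm{syl}}(g,h)$ up to multiplicative and additive error. Uniqueness then follows formally from the distance formula in the usual way.
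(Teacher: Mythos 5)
Your overall plan---keep the index set, relations and non-minimal hyperbolic spaces of the relative structure, and collapse each $\nest$-minimal $C(gv)$ to a bounded object---is exactly what the paper does, and the observation that hyperbolicity, nesting, orthogonality, finite complexity, containers, partial realisation, bounded geodesic image, large links, and consistency all survive this modification because they either are purely combinatorial or never invoke the ambient metric is correct and is the content of the paper's remark that ``the proofs follow as before with trivial $\nest$-minimal case.'' However, there are two genuine gaps in the way you propose to close the argument.

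First, the uniqueness axiom cannot be extracted from the word-metric distance formula in the way you describe; the argument as written is essentially circular. The distance formula for $(G_\Gamma,\dist_{\mathrm{word}},\mathfrak S_\Gamma)$ bounds $\dist_{\mathrm{word}}(g,h)$ above by $K\sum_W\tsh{\dist_W(g,h)}_\sigma+L$ \emph{including} the $\nest$-minimal summands; removing those summands gives a smaller quantity, so the upper bound does not transfer to the reduced sum. More seriously, a distance formula for the candidate syllable-metric HHS structure is a \emph{consequence} of that structure being an HHS, so it is not available as an input. What one actually needs to prove, from scratch, is the uniqueness axiom for $\dist_{\mathrm{syl}}$: if $\dist_W(g,h)\le r$ for every non-minimal $W$, then $\dist_{\mathrm{syl}}(g,h)$ is controlled. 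Your proposed lower bound---``each syllable in a minimal syllable representative forces a non-trivial jump in some non-minimal $C(W)$''---is asserted rather than established, and it is not the right mechanism: the correct argument is the induction on $|V(\Gamma)|$ of Lemma~\ref{lem:uniqueness}, where the only change in the syllable-metric setting is that the base case (a single vertex) becomes trivial because $C(v)$ now has diameter $1$. You also need, and do not mention, a re-verification that the projections $\pi_{g\Lambda}$ are coarsely Lipschitz with respect to $\dist_{\mathrm{syl}}$ rather than $\dist_{\mathrm{word}}$; this does hold (the inequality $\dist_{g\Lambda}\le\dist_{\mathrm{syl}}$ is already in the proof of Lemma~\ref{lem:projections}), but it is not automatic, since $\dist_{\mathrm{syl}}\le\dist_{\mathrm{word}}$ means Lipschitzness for the word metric is \emph{weaker}.

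Second, by deducing the result from Theorem~\ref{intro_thm:main_thm} you silently re-impose the hypothesis that each $G_v$ is finitely generated, which the corollary deliberately drops. The paper avoids this by observing that Theorem~\ref{thm:graph_products_are_rel_HHGs} uses finite generation only to furnish the word metric, and every other step already factors through $S(\Gamma)$; so one simply re-runs the same proofs with ``syllable metric'' in place of ``word metric'' throughout, rather than invoking the finished theorem as a black box.
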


To prove Theorem \ref{intro_thm:main_thm} and Corollary \ref{intro_cor:syllable_is_HHG}, we utilise techniques developed by Genevois and Martin in \cite{Gen_Thesis,Genevois_Martin}, which exploit the cubical-like geometry of a graph product when endowed with the syllable metric. This allows us to adapt proofs from the right-angled Artin group case, which rely heavily on geometric properties of cube complexes. While the syllable metric does not appear in the statement of Theorem \ref{intro_thm:main_thm}, it is an integral part of the proof, acting as a middle ground where geometric computations are performed before projecting to the associated hyperbolic spaces. This also allows Theorem \ref{intro_thm:main_thm} and Corollary \ref{intro_cor:syllable_is_HHG} to be proved essentially simultaneously.

 Our primary application of Theorem \ref{intro_thm:main_thm} is showing that a graph product of hierarchically hyperbolic groups is itself hierarchically hyperbolic. This gives a positive answer to another question of Behrstock, Hagen, and Sisto \cite[Question D]{BHS_HHSII}. 

\begin{thm}\label{intro_thm:Graph_Products_of_HHGs_are_HHGs}
Let $\Gamma$ be a finite simplicial graph, with each vertex $v$ labelled by a group $G_{v}$. If each $G_v$ is a hierarchically hyperbolic group, then the graph product $G_{\Gamma}$ is a hierarchically hyperbolic group.
\end{thm}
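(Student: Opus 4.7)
The strategy is to upgrade the relative hierarchically hyperbolic structure on $G_\Gamma$ provided by Theorem \ref{intro_thm:main_thm} to a full HHG structure by substituting each vertex group's HHG structure at the nesting-minimal positions. Specifically, let $(G_\Gamma,\mf{S})$ be the structure from Theorem \ref{intro_thm:main_thm} and let $\mf{T}\subseteq\mf{S}$ denote the nesting-minimal elements; these form a union of $G_\Gamma$-orbits indexed by $V(\Gamma)$, and each is naturally identified with a coset $hG_v$ whose associated (non-hyperbolic) space is a Cayley graph of $G_v$. For each $v$ fix an HHG structure $(G_v,\mf{S}_v)$ with hyperbolic spaces $C_v(\cdot)$, and for each coset $hG_v\in\mf{T}$ let $h\mf{S}_v$ be the translated copy of $\mf{S}_v$, with associated spaces $hC_v(\cdot)$. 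Define the new index set
\[ \mf{S}' \;=\; \bigl(\mf{S}\setminus\mf{T}\bigr) \,\sqcup\, \bigsqcup_{hG_v\in\mf{T}} h\mf{S}_v. \]
The $\nest$, $\perp$, and $\trans$ relations are inherited: elements of $\mf{S}\setminus\mf{T}$ keep their old relations; elements within the same $h\mf{S}_v$ keep their internal relations; an element $W\in h\mf{S}_v$ stands in the same relation to $U\in\mf{S}\setminus\mf{T}$ as $hG_v$ does, with $W\nest U$ whenever $hG_v\nest U$; and two elements in distinct $h\mf{S}_v$ and $h'\mf{S}_{v'}$ inherit the relation between $hG_v$ and $h'G_{v'}$. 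Projections from $G_\Gamma$ to a translate of $C_v(W)$ are defined by first applying the peripheral projection $G_\Gamma\to hG_v$ from the relative structure, then the HHG projection $G_v\to C_v(W)$.

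Verification of the HHG axioms for $(G_\Gamma,\mf{S}')$ splits naturally into three cases for each axiom, depending on whether the indices involved lie entirely in $\mf{S}\setminus\mf{T}$, entirely in a single $h\mf{S}_v$, or mix the two. The first case is immediate from Theorem \ref{intro_thm:main_thm} (noting that all axioms except hyperbolicity of the minimal spaces are satisfied) and the second from the HHG structure on $G_v$. The mixed cases follow by observing that every new projection factors through a vertex-group coset, so bounded geodesic image, large links, consistency, and partial realisation all reduce to a composition of the outer and inner axioms once one uses that each vertex-group coset is an isometrically embedded subcomplex of the Genevois--Martin cubical model underlying Theorem \ref{intro_thm:main_thm}. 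Equivariance and cofiniteness of the $G_\Gamma$-action on $\mf{S}'$ follow from cofiniteness on $\mf{S}$, cofiniteness of each $G_v$-action on $\mf{S}_v$, and finiteness of $V(\Gamma)$.

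The main obstacle I expect is the uniqueness axiom in the mixed case, where one must show that if two elements of $G_\Gamma$ have uniformly close projections to every space in $\mf{S}'$, then they are uniformly close in $G_\Gamma$. This requires combining the distance formula implicit in Theorem \ref{intro_thm:main_thm}, which bounds $G_\Gamma$ distance in terms of projections to spaces in $\mf{S}$ together with word-metric contributions from the vertex-group peripherals, with the HHG distance formula for each $(G_v,\mf{S}_v)$, which converts those peripheral contributions into projections over $\mf{S}_v$. Calibrating the thresholds so that the two sums merge uniformly into a single distance formula over $\mf{S}'$ is the essential technical step; once this is done, the remaining axioms fall out from routine case analysis, yielding the desired HHG structure and hence Theorem \ref{intro_thm:Graph_Products_of_HHGs_are_HHGs}.
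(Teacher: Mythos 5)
Your overall strategy---substituting the vertex groups' HHG structures at the $\nest$--minimal positions of the relative HHG structure from Theorem \ref{intro_thm:main_thm}---is exactly the paper's approach, and your descriptions of the inherited relations and of projections factoring through vertex-group cosets are correct. However, there is a genuine gap: you assert that the remaining axioms, including containers, ``fall out from routine case analysis,'' and this is false. The container axiom actively fails for the naively combined structure $\mf{S}'$. To see why, suppose $W \in h\mf{S}_v$ is orthogonal to another element of $h\mf{S}_v$, and suppose further that the coset $hG_v$ is orthogonal (in $\mf{S}$) to some element $Q \in \mf{S}\smallsetminus\mf{T}$. The putative container for $W$ would then have to contain $Q$ and also contain \emph{some but not all} elements of $h\mf{S}_v$. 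But in your combined index set, any domain nesting-above both an element of $\mf{S}\smallsetminus\mf{T}$ and an element of $h\mf{S}_v$ must sit above the entire $h\mf{S}_v$ (the only such domains are the old domains $U$ with $hG_v \nest U$). There is simply no object in $\mf{S}'$ that can serve as this container.

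The paper resolves this by first showing that $\mf{S}'$ (the paper's $\mf{T}_0$) satisfies every HHS axiom \emph{except} containers, together with a finite-rank bound on pairwise orthogonal families, and then invoking a separate, non-trivial result (Theorem \ref{thm:almost_HHGs_are_HHGs}, proved in the appendix) which shows that such ``almost HHS'' structures can always be completed to genuine HHS structures by adjoining dummy container domains $D_W^{\mc{U}}$ whose associated hyperbolic spaces are single points. That completion theorem is the essential technical content of the proof and cannot be absorbed into routine verification. By contrast, the uniqueness axiom, which you flag as the principal obstacle, is in fact handled straightforwardly by composing the uniqueness functions of $\mf{S}_\Gamma$ and of the vertex-group structures ($\phi(\kappa) = \theta(\theta(\kappa)+\kappa)$ in the paper's notation); no threshold calibration against the distance formula is needed. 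So your proposal identifies the right skeleton but misdiagnoses where the difficulty lies and omits the step that actually requires new ideas.
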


Berlai and Robbio have established a combination theorem for graphs of hierarchically hyperbolic groups that they use to prove Theorem \ref{intro_thm:Graph_Products_of_HHGs_are_HHGs} when the vertex groups satisfy some natural, but non-trivial, additional hypotheses \cite{BR_Combination}. For the specific case of graph products, Theorem \ref{intro_thm:Graph_Products_of_HHGs_are_HHGs} improves upon Berlai and Robbio's result by removing the need for these additional hypotheses, as well as providing an explicit description of the hierarchically hyperbolic structure in terms of the defining graph. 

We also use our relatively hierarchically hyperbolic structure for graph products to answer two questions of Genevois about a new quasi-isometry invariant for graph products of finite groups called the \emph{electrification} of $G_{\Gamma}$. Graph products of finite groups form a particularly interesting class, as they include right-angled Coxeter groups and are the only cases where the syllable metric and word metric are quasi-isometric.  Genevois defines the electrification $\mathbb{E}(\Gamma)$  of a graph product of finite groups by taking the syllable metric on $G_\Gamma$ and adding  edges between elements $g,h$  whenever $g^{-1}h \in G_\Lambda \leq G_\Gamma$ and $\Lambda$ is a \emph{minsquare} subgraph of $\Gamma$, that is, a minimal subgraph that contains opposite vertices of a square if and only if it contains the whole square. Motivated by an analogy with relatively hyperbolic groups, Genevois proved that any quasi-isometry between graph products of finite groups induces a quasi-isometry between their electrifications, and used this invariant to distinguish several quasi-isometry classes of right-angled Coxeter groups \cite{Gen_Rigidity}. Geometrically, the electrification  sits between the syllable metric on $G_\Gamma$ and the nesting-maximal hyperbolic space in our hierarchically hyperbolic structure on $G_\Gamma$. We exploit this situation to classify when the electrification  has bounded diameter and when it is a quasi-line, answering Questions 8.3 and 8.4 of \cite{Gen_Rigidity}.

\begin{thm}\label{intro_thm:electrification}
Let $G_\Gamma$ be a graph product of finite groups and let $\mathbb{E}(\Gamma)$ be its electrification.
\begin{enumerate}
    \item $\mathbb{E}(\Gamma)$ has bounded diameter if and only if $\Gamma$ is either a complete graph, a minsquare graph, or the join of minsquare graph and a complete graph.
    \item $\mathbb{E}(\Gamma)$ is a quasi-line if and only if $G_\Gamma$ is virtually cyclic.
\end{enumerate}
\end{thm}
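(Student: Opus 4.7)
The strategy is to leverage the relatively hierarchically hyperbolic structure on $G_\Gamma$ provided by Theorem \ref{intro_thm:main_thm}. As observed in the introduction, the electrification $\mathbb{E}(\Gamma)$ fits into a sequence of Lipschitz maps
\[
(G_\Gamma, d_{\operatorname{syl}}) \longrightarrow \mathbb{E}(\Gamma) \longrightarrow C(S),
\]
where $S$ is the nesting-maximal element of the HHG structure and $C(S)$ its associated hyperbolic space. Boundedness therefore propagates rightward along these maps, while unboundedness propagates leftward.

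For the forward direction of (1), I would verify each listed case directly. When $\Gamma$ is complete, $G_\Gamma$ is the direct product of the finite vertex groups, hence finite. When $\Gamma$ itself is a minsquare subgraph, every pair of elements of $G_\Gamma$ is joined by an edge in $\mathbb{E}(\Gamma)$ by definition, giving diameter $1$. When $\Gamma = \Lambda_1 * \Lambda_2$ with $\Lambda_1$ minsquare and $\Lambda_2$ complete, the group splits as $G_\Gamma \cong G_{\Lambda_1} \times G_{\Lambda_2}$ and any element factors accordingly; the $\Lambda_1$-factor has electrification distance at most $1$ to the identity, while the $\Lambda_2$-factor has syllable length at most $|V(\Lambda_2)|$, and the triangle inequality yields a uniform diameter bound.

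For the converse of (1), I would argue contrapositively. The first dichotomy is whether $\Gamma$ is a non-trivial join. If it is not, the top-level hyperbolic space $C(S)$ is unbounded --- a consequence of the explicit HHG construction in which any infinite-order element of $G_\Gamma$ not supported on a proper join subgraph projects non-trivially to $C(S)$. The Lipschitz map $\mathbb{E}(\Gamma) \to C(S)$ then forces $\mathbb{E}(\Gamma)$ to be unbounded. If $\Gamma = \Lambda_1 * \Lambda_2$ is a non-trivial join, I would induct on $|V(\Gamma)|$ using the direct product decomposition $G_\Gamma \cong G_{\Lambda_1} \times G_{\Lambda_2}$: $\mathbb{E}(\Gamma)$ is bounded if and only if each $\mathbb{E}(\Lambda_i)$ is, reducing to the listed forms on each factor and then merging by a case analysis (two joined minsquare factors collapse to a single minsquare factor, complete factors merge, and any surviving non-minsquare/non-complete factor contradicts the inductive hypothesis).

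For the forward direction of (2), if $G_\Gamma$ is virtually cyclic then since all vertex groups are finite, the only way for $G_\Gamma$ to be infinite is for $\Gamma$ to consist of two non-adjacent vertices with $G_v \cong \Z/2\Z$ each, so $G_\Gamma$ is the infinite dihedral group. In this case $\Gamma$ contains no squares, the only minsquare subgraphs are single vertices, and $\mathbb{E}(\Gamma)$ coincides up to quasi-isometry with $(G_\Gamma, d_{\operatorname{syl}})$, which is a quasi-line. For the converse, suppose $\mathbb{E}(\Gamma)$ is a quasi-line, so it is two-ended and hyperbolic. I would use the HHG structure to control the rank: any pair of orthogonal unbounded product regions in $G_\Gamma$ would project to a quasi-flat in $\mathbb{E}(\Gamma)$, contradicting the quasi-line property, and any non-elementary hyperbolic action on a nesting-maximal space would be visible via the Lipschitz map to $C(S)$. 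Combined with part (1) ruling out the bounded cases, these constraints should force $\Gamma$ to be the two-vertex no-edge graph with $\Z/2\Z$ vertex groups, hence $G_\Gamma$ virtually cyclic.

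The main obstacle I anticipate is the converse of (1): setting up a clean induction on join decompositions that correctly tracks the interaction between the minsquare property and the product structure, since minsquare subgraphs interact subtly with joins. A related subtlety in (2) is that coning off changes the number of ends unpredictably, so one must use the HHG structure (rather than naive ends arguments) to rule out higher-rank behavior.
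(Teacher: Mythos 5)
Your high-level picture---that $\mathbb{E}(\Gamma)$ sits between $S(\Gamma)$ and $C(\Gamma)$ when $\Gamma$ is not minsquare, so unboundedness of $C(\Gamma)$ propagates to $\mathbb{E}(\Gamma)$---is exactly right, and your treatment of the three ``bounded'' cases in part (1) matches the paper. The forward direction of (2) and the appeal to the acylindrical action on $C(\Gamma)$ are also the right instincts.

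The significant gap is in the inductive step for the converse of (1). You assert that for a join $\Gamma = \Lambda_1 \bowtie \Lambda_2$ one has ``$\mathbb{E}(\Gamma)$ is bounded if and only if each $\mathbb{E}(\Lambda_i)$ is,'' but this fails for an arbitrary join decomposition: a minsquare subgraph of $\Gamma$ can straddle both $\Lambda_1$ and $\Lambda_2$, and conversely a minsquare subgraph of $\Lambda_1$ alone need not remain square-complete inside $\Gamma$ (squares through $\Lambda_2$ can ruin it). So $\mathbb{E}(\Gamma)$ is in general neither a product of the $\mathbb{E}(\Lambda_i)$ nor comparable to it. The paper resolves this with a structural lemma (Lemma \ref{lem:joins_with_minsquare}): if $\Gamma$ splits as a join \emph{and} contains a proper minsquare subgraph, then one can \emph{rearrange} the join into $\Gamma = \Gamma_1 \bowtie \Gamma_2$ so that $\Gamma_1$ contains \emph{every} minsquare subgraph of $\Gamma$ and $\Gamma_2$ is complete; only then does $\mathbb{E}(\Gamma)$ genuinely decompose as the $1$--skeleton of $\mathbb{E}(\Gamma_1) \times \mathbb{E}(\Gamma_2)$, making induction viable. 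Your proposal's ``merging by a case analysis'' does not supply this rearrangement, which is the actual technical content of the step. You also need a separate case when $\Gamma$ is a join containing no proper minsquare subgraph, where $\mathbb{E}(\Gamma) = S(\Gamma)$ and unboundedness follows from $\Gamma$ being non-complete.

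Two smaller issues in part (2). First, your characterization of virtually cyclic $G_\Gamma$ (``two non-adjacent $\Z/2\Z$ vertices'') omits the cases where $\Gamma$ is the join of that graph with a complete graph; your conclusion happens to still hold there (no induced square, so $\mathbb{E}(\Gamma) = S(\Gamma)$), but the claimed characterization is wrong as stated. Second, for the converse of (2), the argument via ``orthogonal unbounded product regions giving quasi-flats in $\mathbb{E}(\Gamma)$'' is not secure, since the electrification may collapse a quasi-flat even when $\Gamma$ is not minsquare. The paper instead applies Osin's trichotomy to the acylindrical action on $C(\Gamma)$ (Corollary \ref{cor:acyl}), concluding (when $\Gamma$ is not a join, not bounded-diameter, and $G_\Gamma$ not virtually cyclic) that there are two independent loxodromics; their orbits give bi-infinite quasi-geodesics at infinite Hausdorff distance already in $C(\Gamma)$, hence in $\mathbb{E}(\Gamma)$, which rules out a quasi-line. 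The join case is then reduced to this one by the same Lemma \ref{lem:joins_with_minsquare}. Your sketch points in a similar direction but skips the concrete mechanism.
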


As a final application of Theorem \ref{intro_thm:main_thm}, we give a new proof of Meier's classification of hyperbolicity of graph products \cite{Meier}.

\vspace{3mm}
\noindent\textbf{Outline of the paper.} We begin by introducing the necessary tools from the geometry of graph products in Section \ref{section:backgroup_graph_products} and reviewing the definition of a relatively hierarchically hyperbolic group in Section \ref{section:backgroup_HHS}. In Section \ref{section:proto}, we set up our proof of the relative hierarchical hyperbolicity of graph products by defining the necessary spaces, projections, and relations. In Section \ref{section:rel_HHG}, we show the spaces, projections, and relations defined in Section \ref{section:proto} satisfy the axioms of a relative HHG (or non-relative HHS in the case of the syllable metric). This completes the proofs of Theorem \ref{intro_thm:main_thm} and Corollary \ref{intro_cor:syllable_is_HHG}. Section \ref{section:applications} is devoted to applications. We start by proving graph products of HHGs are HHGs (Theorem \ref{intro_thm:Graph_Products_of_HHGs_are_HHGs}) in Section \ref{section:graph_products_of_HHG}, which requires a technical result that can be found in the appendix of \cite{ABD}. In Section \ref{section:hyperbolic_graph_products}, we record our proof of Meier's hyperbolicity criteria, and in Section \ref{section:electrification}, we classify when Genevois' electrification has infinite diameter and when it is a quasi-line, proving Theorem \ref{intro_thm:electrification}.

\vspace{3mm}
\noindent \textbf{Acknowledgements.} The authors would like to thank Mark Hagen for his useful comments and suggestions for applications, Anthony Genevois for his comments on an earlier draft of the paper, and their advisor Jason Behrstock for many valuable conversations and his support throughout. The authors would also like to thank an  anonymous referee for comments that improved the paper.

\section{Background}

\renewcommand*{\thethm}{\thesection.\arabic{thm}}

\subsection{Graph products}\label{section:backgroup_graph_products}

\begin{defn}[Graph product]
Let $\Gamma$ be a finite simplicial graph with vertex set $V(\Gamma)$ and edge set $E(\Gamma)$, and with each vertex $v \in V(\Gamma)$ labelled by a  group $G_{v}$.  The \emph{graph product} $G_\Gamma$ is the group 
\[ G_{\Gamma} = \left.\left(\free_{v \in V(\Gamma)} G_{v}\right) \right/ \llangle [g_{v},g_{w}] \,\, \middle| \,\, g_{v} \in G_{v},\, g_{w} \in G_{w},\, \{v,w\} \in E(\Gamma) \rrangle. \]
 We call the $G_v$ the \emph{vertex groups} of the graph product $G_\Gamma$. 
 
By deleting any vertices labelled by the trivial group, every graph product is isomorphic to a graph product where each vertex group is non-trivial. We will therefore will operate under the standing assumption that the vertex groups of our graph products are always non-trivial.
\end{defn}

Note that if all vertex groups of $G_\Gamma$ are copies of $\mathbb{Z}$ then $G_{\Gamma}$ is the right-angled Artin group with defining graph $\Gamma$, and if all vertex groups are copies of $\mathbb{Z}/2\mathbb{Z}$ then $G_{\Gamma}$ is the corresponding right-angled Coxeter group.

We wish to study the geometry of $G_{\Gamma}$ by adapting the cubical geometry of right-angled Artin groups. To this end, we will first need to eliminate any badly behaved geometry occurring within vertex groups. We do this by replacing the usual word metric with the \emph{syllable metric}.

\begin{defn}[Syllable metric on a graph product]\label{defn:syllable_metric}
  Let $G_\Gamma$ be a graph product. The graph $S(\Gamma)$ is the metric graph whose vertices are elements of $G_\Gamma$ and where $g,h \in G_\Gamma$ are joined by an edge of length 1 labelled by $g^{-1}h$ if there exists a vertex $v$ of $\Gamma$ such that $g^{-1}h \in G_{v}$. We denote the distance in $S(\Gamma)$ by $\dist_{syl}(\cdot,\cdot)$ and say $\dist_{syl}(g,h)$ is the \emph{syllable distance} between $g$ and $h$. When convenient, we will use $|g|_{syl}$ to denote $\dist_{syl}(e,g)$ and call it the \emph{syllable length} of $g$.
\end{defn}

Notice that all cosets of vertex groups have diameter $1$ under the syllable metric, thus trivialising their geometry. Therefore, when working with $S(\Gamma)$, instead of expressing an element $g \in G_{\Gamma}$ as a word in the generators of $G_{\Gamma}$, it is more geometrically meaningful to express $g$ as a product of \emph{any} elements of vertex groups.

\begin{defn}[Syllable expressions]
 Let $G_\Gamma$ be a graph product and $g \in G_\Gamma$. If $g = s_1 \dots s_n$ where each $s_i \in G_{v_{i}}$ for some $v_i \in V(\Gamma)$, then we say $s_1 \dots s_n$ is a \emph{syllable expression} for $g$. If $s_1 \dots s_n$ is a syllable expression for $g$ and $n = \dist_{syl}(e,g)$, then we say $s_1 \dots s_n$ is a \emph{reduced syllable expression} for $g$. In this case, $n$ is the smallest number of terms possible for any syllable expression of $g$. 
\end{defn}

A foundational fact about graph products is that any syllable expression can be reduced by applying a sequence of canonical moves.

\begin{thm}[Reduction algorithm for graph products; {\cite[Theorem 3.9]{Green}}] \label{thm:graph_product_normal_form}
Let $G_\Gamma$ be a graph product and $g \in G_\Gamma$. If $s_1\dots s_n$ is a reduced syllable expression for $g$ and $t_1 \dots t_m$ is a syllable expression for $g$, then $t_1\dots t_m$ can be transformed into $s_1 \dots s_n$ by applying a sequence of the following three moves.
\begin{itemize}
    \item Remove a term $t_i$ if $t_i = e$.
    \item Replace consecutive terms $t_i$ and  $t_{i+1}$ belonging to the same vertex group $G_v$ with the single term $t_{i}t_{i+1} \in G_v$.
    \item Exchange the position of consecutive terms $t_i$ and $t_{i+1}$ when $t_i \in G_v$ and $t_{i+1} \in G_w$ with $v$ joined to $w$ by an edge in $\Gamma$.
    \end{itemize}
\end{thm}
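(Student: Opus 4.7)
The plan is to prove the theorem via a normal form argument. Fix a total order on $V(\Gamma)$ and call a syllable expression $t_1 \dots t_m$ \emph{normal} if no $t_i$ is trivial, no two consecutive $t_i, t_{i+1}$ lie in the same vertex group, and no sequence of move 3 swaps produces a lexicographically smaller sequence of vertex-indices $(v_1, \dots, v_m)$ where $t_i \in G_{v_i}$. I would first show that every syllable expression can be brought to a normal expression by a sequence of the three moves. Moves 1 and 2 strictly decrease the word length $m$, and move 3 applied to lex-reduce the vertex sequence of a length-minimal expression terminates because the lex order is a well-order on fixed-length sequences over the finite set $V(\Gamma)$. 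Whenever a lex-reducing swap creates a new adjacent same-group pair, move 2 applies again and strictly decreases the length, so the combined process terminates at a normal expression.

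The main step is then to show that the normal form depends only on the underlying group element. Letting $\sim$ denote the equivalence relation on syllable expressions generated by moves 1--3, this amounts to proving that $w \sim w'$ whenever $w$ and $w'$ represent the same element of $G_\Gamma$. I would argue as follows. The set $M$ of $\sim$-equivalence classes forms a monoid under concatenation, and the evaluation map $\varepsilon \colon M \to G_\Gamma$ sending a class to the product of its terms is a well-defined monoid homomorphism (well-definedness is immediate because each move preserves the group element). Conversely, the inclusions $G_v \hookrightarrow M$ send commuting generators (for edges of $\Gamma$) to commuting classes via move 3 and respect internal multiplication via moves 1 and 2, so if $M$ is a group then the universal property of the graph product yields a homomorphism $G_\Gamma \to M$ inverse to $\varepsilon$. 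Hence $\varepsilon$ is a bijection, and two syllable expressions represent the same element of $G_\Gamma$ if and only if they are $\sim$-equivalent.

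Given these two ingredients, the theorem follows: since $s_1 \dots s_n$ already satisfies the no-trivial and no-consecutive-same-group conditions, transforming it to its normal form uses only move 3, and these swaps are reversible. One therefore reduces $t_1 \dots t_m$ to the common normal form via moves 1--3 and then inverts the swap sequence to arrive at $s_1 \dots s_n$. The main obstacle is showing that $M$ is actually a group, which by the monoid structure reduces to showing that the class of $t_m^{-1} \dots t_1^{-1}$ is a two-sided inverse of the class of $t_1 \dots t_m$; this in turn reduces to the claim that the empty word is the unique normal representative of the identity. I would prove this last claim directly by induction on normal form length: if $t_1 \dots t_m$ is a nonempty normal expression for $e$ with $t_1 \in G_v$, then $t_1^{-1} \in G_v$ must eventually be cancelled by syllables further right in any reduction to the identity, and a careful tracking of $v$-syllables using the lex-minimality condition forces some later $G_v$-syllable to be movable adjacent to $t_1$ by move 3, enabling a length-reducing merger that contradicts normality.
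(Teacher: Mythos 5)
The paper only cites Green's thesis for this result and gives no proof of its own, so there is nothing to compare against; the proposal must stand on its own, and it has a genuine gap. Your universal-property argument correctly shows that $\varepsilon\colon M \to G_\Gamma$ is a bijection, which gives that two syllable expressions represent the same group element if and only if they are $\sim$-equivalent, where $\sim$ is the \emph{symmetric} closure of moves 1--3. But when you reduce $t_1\dots t_m$ and $s_1\dots s_n$ ``to the common normal form'' you are tacitly invoking \emph{uniqueness} of normal forms within a $\sim$-class --- that is, confluence of the rewriting system --- and bijectivity of $\varepsilon$ does not give this. Two $\sim$-equivalent expressions could normalise to distinct normal forms if the system is not confluent; all $\varepsilon$ tells you is that they are connected by some zig-zag of moves and inverse moves. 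The final lemma you propose (the empty word is the unique normal form for $e$) is a special case of the needed confluence, and even if proved it does not yield the general statement: applying it to the expression obtained by concatenating a normal $w$ for $g$ with the reversed-and-inverted version of another normal $w'$ for $g$ merely recovers $[w]=[w']$ in $M$, which you already knew.

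As a smaller point, the claim that showing $M$ is a group ``reduces to'' the empty-word lemma is backwards: $M$ is a group for free, since the class of $t_m^{-1}\dots t_1^{-1}$ inverts the class of $t_1\dots t_m$ by merging and deleting $t_m t_m^{-1}$, then $t_{m-1}t_{m-1}^{-1}$, and so on --- only forward moves 1 and 2, no uniqueness required. The genuine work, which your write-up never confronts, is confluence, and that requires a substantive argument: either Newman's lemma applied after resolving the overlapping critical pairs of the three moves, or Green's own induction on the number of syllables using retractions $G_\Gamma \to \sub{\Lambda}$ to track which syllables of a longer expression must cancel. Without some such argument, the chain from $t_1\dots t_m$ down to a normal form and back up to $s_1\dots s_n$ may not meet in the middle.
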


The next corollary shows that when each of the vertex groups of the graph product is finitely generated,  Theorem \ref{thm:graph_product_normal_form} implies that the word length of any $g \in G_{\Gamma}$ will be the sum of  the word lengths of the terms in any reduced syllable expression for $g$.

\begin{cor}[Reduced syllable expressions minimise word length]\label{cor:reduced_syllable_expressions_minimize_word_length}
Let $G_\Gamma$ be a graph product of finitely generated groups. For each $v \in V(\Gamma)$, let $S_v$ be a finite generating set for the vertex group $G_v$, and  let $|g|$ be the word length of $g \in G_\Gamma$ with respect to the finite generating set $S = \bigcup_{v \in V(\Gamma)} S_v$.  For all $g \in G_\Gamma$, if $s_1\dots s_n$ is a reduced syllable expression for $g$, then \[|g| = \sum_{i=1}^n |s_i|.\]
\end{cor}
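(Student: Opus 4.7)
The plan is to prove the two inequalities $|g| \leq \sum_{i=1}^n |s_i|$ and $|g| \geq \sum_{i=1}^n |s_i|$ separately, using Theorem \ref{thm:graph_product_normal_form} for the nontrivial direction.

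For the easy direction $|g| \leq \sum_{i=1}^n |s_i|$, I would simply take a minimal word $w_i$ for $s_i$ in the generating set $S_{v_i} \subseteq S$ for each $i$, and concatenate to obtain a word of length $\sum_{i=1}^n |s_i|$ in $S$ representing $g$.

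For the harder direction $|g| \geq \sum_{i=1}^n |s_i|$, I would start with a minimal-length word $w = x_1 x_2 \cdots x_k$ for $g$ in $S$, where $k = |g|$. Since $S = \bigcup_v S_v$, each letter $x_j$ lies in some vertex group $G_{v_j}$, so $w$ itself is a syllable expression $t_1 t_2 \cdots t_k$ for $g$ with each $t_j = x_j$ of word length $|t_j| = 1$, giving $\sum_{j=1}^k |t_j| = |g|$. By Theorem \ref{thm:graph_product_normal_form}, this syllable expression can be transformed into any reduced syllable expression for $g$ by a sequence of the three moves. I would then verify that none of the three moves increases $\sum |t_j|$: removing an $e$-term obviously does not, combining $t_i, t_{i+1} \in G_v$ into $t_i t_{i+1}$ satisfies $|t_i t_{i+1}| \leq |t_i| + |t_{i+1}|$ by the triangle inequality in the word metric on $G_v$, and swapping commuting terms leaves the multiset of syllables unchanged. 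Thus the resulting reduced syllable expression $s_1' \cdots s_n'$ satisfies $\sum_{i=1}^n |s_i'| \leq |g|$.

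Finally, to transfer this inequality to the given reduced syllable expression $s_1 \cdots s_n$, I would apply Theorem \ref{thm:graph_product_normal_form} once more to transform $s_1' \cdots s_n'$ into $s_1 \cdots s_n$. Since both have the minimal number $n = \dist_{syl}(e,g)$ of terms, no removal or combination moves can appear in this transformation, so only swap moves are used; these preserve the multiset of syllables and hence preserve $\sum |s_i|$. This yields $\sum_{i=1}^n |s_i| = \sum_{i=1}^n |s_i'| \leq |g|$, completing the proof.

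The only real subtlety is the last step, ensuring the quantity $\sum |s_i|$ does not depend on the choice of reduced syllable expression; this is handled cleanly by the observation that any move reducing the number of syllables cannot appear when transforming one reduced expression into another of the same length, leaving only the length-preserving swap move.
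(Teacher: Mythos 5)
Your proof is correct and follows essentially the same approach as the paper: both start from a minimal-length word for $g$ in $S$, view it as a syllable expression, and invoke Theorem~\ref{thm:graph_product_normal_form} to reduce it, observing that the reduction cannot increase the total word length. One small streamlining: since Theorem~\ref{thm:graph_product_normal_form} lets you transform the minimal word directly into the \emph{given} reduced expression $s_1\cdots s_n$, your final paragraph (passing through an intermediate $s_1'\cdots s_n'$ and then converting by swaps) is unnecessary, though not incorrect.
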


\begin{proof}
Let $s_1\dots s_n$ be a reduced syllable expression for $g \in G_\Gamma$. There exist $w_1,\dots, w_m \in S$ such that $|g| = m$ and $g = w_1\dots w_m$. Since every element of $S$ is an element of one of the vertex groups of $G_\Gamma$, the product $w_1\dots w_m$ is also a syllable expression for $g$. Thus, by applying a finite number of the moves from 
Theorem \ref{thm:graph_product_normal_form}, we can transform $w_1\dots w_m$ into $s_1 \dots s_n$. We  can therefore write each $s_i$ as a product  $s_i = w_{\sigma_i({1})} \dots w_{\sigma_i({m_i})}$, where $m_i \leq m$ and $\sigma_i$ is a permutation of $\{1,\dots, m\}$.  Further, if $i \neq k$, then  $\{\sigma_i(1),\dots,\sigma_i(m_i)\} \cap \{\sigma_k(1), \dots, \sigma_k(m_k)\} = \emptyset$. Thus, $ \sum_{i=1}^n |s_i| \leq \sum_{i=1}^n m_i \leq m$. However, $m = |g| \leq \sum_{i=1}^n |s_i| $ by definition, so $|g| = \sum_{i=1}^n |s_i|$.
\end{proof}

Another critical consequence of Theorem \ref{thm:graph_product_normal_form} is that the terms in a reduced syllable expression for an element of a graph product are well-defined up to applying the commutation relation. This ensures that the following notions are well-defined for an element of a graph product.

\begin{defn}[Syllables and support of an element]
Let $G_\Gamma$ be a graph product and let $g \in G_\Gamma$. If $s_1 \dots s_n$ is a reduced syllable expression for $g$, then we call the $s_i$ the \emph{syllables} of $g$ and use $\supp(g)$ to denote the maximal subgraph of $\Gamma$ with vertex set $\{v_1,\dots,v_n\}$, where $s_i \in G_{v_i}$. We call $\supp(g)$ the \emph{support} of $g$. 
\end{defn}

\begin{convention}
Whenever we consider a subgraph  $\Lambda \subseteq \Gamma$, we will assume that $\Lambda$ is both non-empty and an induced subgraph of $\Gamma$. That is, whenever $v,w$ are vertices of $\Lambda$ that are joined by an edge of $\Gamma$, then $v$ and $w$ are joined by an edge of $\Lambda$ as well.
\end{convention}

Another hallmark feature of graph products is their rich collection of subgroups corresponding to subgraphs of the defining graph.

\begin{defn}[Graphical subgroups]
Let $G_\Gamma$ be a graph product with vertex groups $\{G_v:v \in V(\Gamma)\}$ and let $\Lambda \subseteq \Gamma$ be a subgraph.  We use $\sub{\Lambda}$ to denote the subgroup of $G_\Gamma$ generated by $\{G_v : v \in V(\Lambda)\}$. We call such subgroups the \emph{graphical subgroups} of $G_\Gamma$. Note, each subgroup $\sub{\Lambda}$ is isomorphic to the graph product $G_\Lambda$.
\end{defn}

Since the graphical subgroups are themselves graph products, we can also define the syllable metric on them and their cosets. 

\begin{defn}[Syllable metric on graphical subgroups]
Let $G_\Gamma$ be a graph product, $g \in G_\Gamma$, and $\Lambda \subseteq \Gamma$. Let $S(\Lambda)$  be the metric graph defined in Definition \ref{defn:syllable_metric} for the graph product $\sub{\Lambda}$, and let $S(g\Lambda)$ denote the metric graph whose vertices are elements of the coset $g\sub{\Lambda}$ and where $gx$ and $gy$ are joined by an edge of length 1 if $x$ and $y$ are joined by an edge in $S(\Lambda)$. 
\end{defn}

\begin{remark}[Graphical subgroups are convex in $S(\Gamma)$]
Geodesics in $S(\Gamma)$ between two elements $k$ and $h$ are labelled by the reduced syllable forms of $k^{-1}h$.  The induced subgraph of $S(\Gamma)$ with vertex set $g \sub{\Lambda}$ is therefore convex and graphically isomorphic  to $S(g\Lambda)$ via the identity map. In particular, the distance between two vertices $k,h$ of $S(g\Lambda)$ is $\dist_{syl}(k,h)$.
\end{remark}

In order to analyse how the graphical subgroups of $G_\Gamma$ interact, we make extensive use of the following definitions from graph theory.

\begin{defn}[Star, link, and join]
Let $\Gamma$ be a finite simplicial graph and $\Lambda$ a subgraph of $\Gamma$. The \emph{link} of $\Lambda$, denoted $\lk(\Lambda)$, is the subgraph spanned by the vertices of $\Gamma \smallsetminus \Lambda$ that are connected to every vertex of $\Lambda$. The \emph{star} of $\Lambda$, denoted $\st(\Lambda)$, is $\Lambda \cup \lk(\Lambda)$. We say $\Lambda$ is a \emph{join} if the vertices of $\Lambda$ can be expressed as $V(\Lambda) = V(\Lambda_{1}) \cup V(\Lambda_{2})$ where $\Lambda_{1}$ and $\Lambda_{2}$ are disjoint subgraphs of $\Gamma$ and every vertex of $\Lambda_{1}$ is connected to every vertex of $\Lambda_{2}$. We denote the join of $\Lambda_{1}$ and $\Lambda_{2}$ by $\Lambda_{1} \bowtie \Lambda_{2}$. In particular, $\st(\Lambda)$ is the join $\Lambda \bowtie \lk(\Lambda)$.
\end{defn}

\begin{remark}\label{rem:star_elements}
The star, link, and join have important algebraic significance. A join subgraph of $\Gamma$ generates a subgroup of $G_{\Gamma}$ which splits as a direct product, while $\langle\st(\Lambda)\rangle$ is the largest subgroup of $G_{\Gamma}$ which splits as a direct product with $\langle\Lambda\rangle$ as one of the factors: $\langle\st(\Lambda)\rangle  = \langle\Lambda\rangle \times \langle\lk(\Lambda)\rangle$. Moreover, since every element of $\langle\Lambda\rangle$ commutes with every element of $\langle\lk(\Lambda)\rangle$, the reduced syllable form tells us that we can always write an element $g \in \langle\st(\Lambda)\rangle$ in the form $g=\lambda l$, where $\lambda \in \langle\Lambda\rangle$ and $l \in \langle\lk(\Lambda)\rangle$. 
\end{remark}

Genevois observed that the graph $S(\Gamma)$ is almost a cube complex, with the only non-cubical behaviour arising from the vertex groups. More precisely, he showed the following result.

\begin{prop}[{\cite[Lemmas 8.5, 8.8]{Gen_Thesis}}]\label{prop:squares_in_S} Two adjacent edges of $S(\Gamma)$ are  edges of a triangle if and only if they are labelled by elements of the same vertex group. Two adjacent edges of $S(\Gamma)$ are edges of an induced square if and only if they are labelled by elements of adjacent vertex groups. In this case,  opposite edges of the square are labelled by the same vertex groups.
\end{prop}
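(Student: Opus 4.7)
The plan is to translate by $g^{-1}$ so that the two adjacent edges share the identity vertex $e$, with other endpoints $s \in G_v \smallsetminus \{e\}$ and $t \in G_w \smallsetminus \{e\}$. Throughout, the engine is Theorem \ref{thm:graph_product_normal_form}: the syllable length of an element is computable from any reduced syllable expression, and two reduced expressions for the same element differ only by commutation swaps. A preliminary observation to nail down is that if $v \neq w$ and $s,t$ are nontrivial, then $s^{-1}\cdot t$ is already a reduced syllable expression of length $2$, since no merge or deletion applies and commutation swaps do not change the length.

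For the triangle statement, I would observe that a triangle through the two edges exists if and only if $s^{-1}t$ labels an edge of $S(\Gamma)$, i.e.\ lies in some vertex group. When $v=w$ this is immediate since $s^{-1}t \in G_v$ and $s^{-1}t \neq e$ (else the two edges would coincide). Conversely, if $v \neq w$, then by the preliminary observation $|s^{-1}t|_{syl} = 2$, so $s^{-1}t$ cannot lie in any single vertex group.

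For the induced square, suppose the two edges extend to an induced square with fourth vertex $h$, and write $s^{-1}h = t' \in G_{w'} \smallsetminus \{e\}$ and $t^{-1}h = s' \in G_{v'} \smallsetminus \{e\}$. The induced hypothesis combined with the triangle case forces $v \neq w$, so $s^{-1}t = t'(s')^{-1}$ has syllable length $2$ via the left-hand expression; hence the right-hand expression is also reduced of length $2$, which forces $v' \neq w'$. Theorem \ref{thm:graph_product_normal_form} then says the two reduced length-$2$ expressions $s^{-1}\cdot t$ and $t'\cdot (s')^{-1}$ agree either literally or after a single commutation swap. Literal agreement would give $s^{-1} = t'$ and hence $h = st' = e$, contradicting that the square has four distinct vertices. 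The remaining option is the commutation swap, which requires $v$ and $w$ to be adjacent in $\Gamma$ and gives $s = s'$, $t = t'$. This simultaneously shows $v,w$ are adjacent and that opposite edges carry identical labels in the same vertex groups. The converse direction is straightforward: if $v$ and $w$ are adjacent then $st = ts$, so $h := gst = gts$ completes a square on the two edges, and the preliminary observation shows $|s^{-1}t|_{syl} = 2$, ruling out a diagonal edge.

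The main obstacle is the forward direction of the square case, specifically the case analysis on how two reduced syllable expressions of length $2$ can represent the same element. Once the only two possibilities (identity of expressions, or one commutation swap) are isolated and each is pushed through, the conclusion — adjacency of $v$ and $w$, together with the labelling of opposite edges — drops out. The triangle case and the converse direction are then short consequences of the same length-$2$ reduced form analysis.
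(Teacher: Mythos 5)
The paper does not prove this proposition; it is stated as a citation to Genevois' thesis (\cite[Lemmas 8.5, 8.8]{Gen_Thesis}), so there is no in-paper argument to compare against. Your proof is correct and self-contained: the key reduction---that two length-two reduced syllable expressions for the same element must agree either literally or after a single commutation swap---follows from Theorem~\ref{thm:graph_product_normal_form} because none of its three moves increases syllable length, so any derivation between two reduced length-two expressions uses only commutation swaps, of which there is at most one nontrivial option. You then push each case through correctly (literal agreement forces $h=e$, ruling it out; the swap case yields adjacency of $v,w$ and the matching of opposite labels), and the converse and the triangle case are handled cleanly by the same length-two analysis.
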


The above proposition means that while $S(\Gamma)$ is not a cube complex, it is the $1$--skeleton of a complex built from \emph{prisms} glued isometrically along subprisms. Henceforth, we will interchangeably refer to $S(\Gamma)$ and the canonical cell complex of which it is the $1$--skeleton.

\begin{defn}[Prism]
A \emph{prism} $P$ of $S(\Gamma)$ is a subcomplex which can be written as a product of simplices $P = T_{1} \times \dots \times T_{m}$.
\end{defn}

Since a cube is a product of $1$--simplices, prisms generalise the cubes in a cube complex. Genevois used the prisms in $S(\Gamma)$ to build hyperplanes with very similar properties to those in CAT(0) cube complexes. We present a slightly different, but equivalent, construction of these hyperplanes in $S(\Gamma)$.

 In a cube complex, hyperplanes are built from mid-cubes. If we view each cube in a cube complex as a product $\left[-\frac{1}{2},\frac{1}{2}\right]^n$, we obtain a \emph{mid-cube} by restricting one of the intervals $\left[-\frac{1}{2},\frac{1}{2}\right]$ to $0$. In much the same way, we obtain a \emph{mid-prism} from a prism by performing a  modified  barycentric subdivision on one of its simplices. If this simplex is a $1$--simplex, this just gives us the midpoint of the edge.

\begin{defn}[Mid-prism]
Given an $n$--simplex $T$ in $S(\Gamma)$, perform a modified barycentric subdivision as follows. First add a vertex at the barycentre of each sub-simplex of $T$. Then for each $2 \leq k \leq n$, add edges connecting the barycentre of each $k$--simplex in $T$ to the barycentres of each of its $(k-1)$--sub-simplices; see Figure \ref{fig:midprism}. The complex we have added through this procedure is then the $1$--skeleton of a canonical simply connected cell complex, which we denote by $K(T)$.
We call $K(T)$ the \emph{mid-prism} of $T$. More generally, we define a \emph{mid-prism} $K_{i}$ of a prism $P = T_{1} \times \dots \times T_{m}$ to be the product $K_{i} = T_{1} \times \dots \times T_{i-1} \times K(T_{i}) \times T_{i+1} \times \dots \times T_{m}$.
\end{defn}

\begin{figure}[ht]
\centering
\def\svgscale{1}
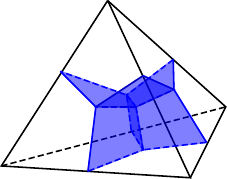 \hspace{.1in} 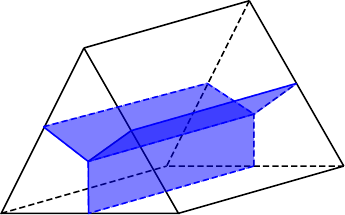
\caption{The mid-prism of a $3$--simplex and a mid-prism of the product of a $2$--simplex and a $1$--simplex.}\label{fig:midprism}
\end{figure}

Note that the simplices in $S(\Gamma)$ that arise from infinite vertex groups have infinitely many vertices. A simplex with infinitely many vertices may still be assigned a mid-prism, by constructing mid-prisms for each of its finite sub-simplices. The inductivity of the barycentric subdivision procedure ensures that these mid-prisms all agree with each other.

A \emph{hyperplane} of a cube complex is defined to be a maximal connected  union of mid-cubes. In the same way, we can construct hyperplanes in $S(\Gamma)$ by taking maximal connected  unions of mid-prisms.

\begin{defn}[Hyperplane, carrier]\label{defn:hyperplane}
Construct an equivalence relation $\sim$ on the edges of $S(\Gamma)$ by  defining $E_{1} \sim E_{2}$ if $E_{1}$ and $E_{2}$ are either opposite sides of a square or two sides of a triangle, and then extending transitively. We say the \emph{hyperplane} dual to the equivalence class $[E]$ is the  union of all mid-prisms that intersect edges of $[E]$; see Figure \ref{fig:hyperplane_eg}. The \emph{carrier} of the hyperplane dual to $[E]$ is the union of all prisms that  contain edges of $[E]$.

If a geodesic $\gamma$ or a coset $g \sub{\Lambda}$ contains an edge that is dual to a hyperplane $H$, then we say $H$ \emph{crosses} $\gamma$ or $g \sub{\Lambda}$. We say a hyperplane $H$ \emph{separates} two subsets $X$ and $Y$ of $S(\Gamma)$ if $X$ and $Y$ are each entirely contained in different connected components of $S(\Gamma) \smallsetminus H$. 
\end{defn}

\begin{figure}[ht]
\centering
\def\svgscale{1}
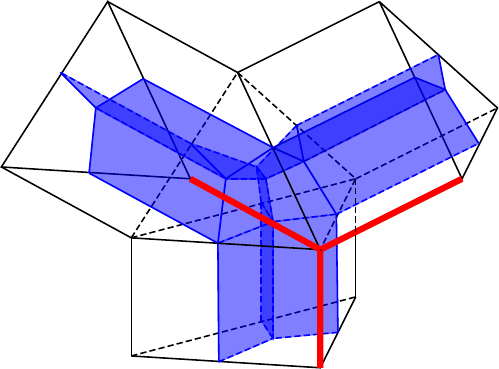 
\caption{A hyperplane (blue) inside its carrier, and an associated combinatorial hyperplane (red).}\label{fig:hyperplane_eg}
\end{figure}

Each hyperplane of a cube complex comes with two corresponding \emph{combinatorial hyperplanes}, obtained by restricting intervals to $-\frac{1}{2}$ or $\frac{1}{2}$ instead of $0$ when constructing mid-cubes. The advantage of these combinatorial hyperplanes is that they form subcomplexes of the cube complex. In $S(\Gamma)$, we obtain combinatorial hyperplanes by restricting a simplex to a vertex instead of performing barycentric subdivision when constructing mid-prisms.

\begin{defn}[Combinatorial hyperplane]\label{defn:comb_hyperplane}
Let $P = T_{1} \times \dots \times T_{m}$ be a prism, where each $T_{i}$ is an $n_{i}$--simplex. Each mid-prism $K_{i}$ splits $P$ into $n_{i}+1$ sectors, each containing a subcomplex $T_{1} \times \dots \times \{v_{k}\} \times \dots \times T_{m}$, where $v_{k}$ is a vertex of $T_{i}$. Given a hyperplane $H$ of $S(\Gamma)$, consider the union of all such subcomplexes obtained from the mid-prisms of $H$. We call each connected component of this union a \emph{combinatorial hyperplane} associated to $H$; see Figure \ref{fig:hyperplane_eg}.
\end{defn}

\begin{remark}[Labelling hyperplanes]\label{rem:labeling_hyperplanes}
Proposition \ref{prop:squares_in_S} tells us that if two edges $E_{1}$ and $E_{2}$ of $S(\Gamma)$ are sides of a common triangle or opposite sides of a square, then they are labelled by elements of the same vertex group. It follows that all edges that a hyperplane $H$ intersects are labelled by elements of the same vertex group $G_{v}$. We therefore label $H$ with the  vertex group $G_v$. Moreover, the edges of the associated combinatorial hyperplanes will then be labelled by elements of $\langle\lk(v)\rangle$. This fact will be exploited repeatedly in our proofs.
\end{remark}

Genevois established that the hyperplanes of $S(\Gamma)$ maintain many of the fundamental properties from the cubical setting.

\begin{prop}[{Properties of hyperplanes; \cite[Section 2]{Gen_Thesis}}]\label{prop:hyperplanes_properties} \quad
\begin{enumerate}
    \item Every hyperplane of $S(\Gamma)$ separates $S(\Gamma)$ into at least two connected components.
    \item If $H$ is a hyperplane of $S(\Gamma)$, then any combinatorial hyperplane for $H$ is convex in $S(\Gamma)$.
    \item If $H$ is a hyperplane of $S(\Gamma)$, then any connected component of $S(\Gamma) \smallsetminus H$ is convex in $S(\Gamma)$.
    \item A continuous path $\gamma$ in $S(\Gamma)$ is a geodesic if and only if $\gamma$ intersects each hyperplane at most once. \label{hyperplanes:geodesic_iff_cross_once}
    \item If two hyperplanes cross, then they are labelled by adjacent vertex groups.\label{hyperplanes:cross_iff_adjacent_labels} 
\end{enumerate}
\end{prop}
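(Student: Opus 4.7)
The central tool throughout will be the reduction algorithm (Theorem \ref{thm:graph_product_normal_form}), which converts geometric statements about $S(\Gamma)$ into combinatorial statements about syllable expressions, together with the labelling principle of Remark \ref{rem:labeling_hyperplanes} and the prism structure of Proposition \ref{prop:squares_in_S}. The plan is to prove the statements in the order (\ref{hyperplanes:cross_iff_adjacent_labels}), (\ref{hyperplanes:geodesic_iff_cross_once}), (1), (2), (3), with (\ref{hyperplanes:geodesic_iff_cross_once}) serving as the main gateway.

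Statement (\ref{hyperplanes:cross_iff_adjacent_labels}) is the most direct. If hyperplanes $H_1, H_2$ cross, then by definition they share a mid-prism, which sits inside a prism $P = T_1 \times \cdots \times T_m$ containing edges $E_1 \subseteq T_i$ and $E_2 \subseteq T_j$ with $i \neq j$ dual to $H_1$ and $H_2$ respectively. Proposition \ref{prop:squares_in_S} forces the vertex groups labelling $T_i$ and $T_j$ to be adjacent in $\Gamma$, and Remark \ref{rem:labeling_hyperplanes} identifies these as the labels of $H_1, H_2$. For (\ref{hyperplanes:geodesic_iff_cross_once}), the reverse implication is a counting argument (syllable length bounds the number of hyperplane crossings). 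For the forward direction, suppose an edge-path $\gamma = (g_0, \dots, g_n)$ crosses a hyperplane $H$ twice, at edges $(g_{i-1}, g_i)$ and $(g_{j-1}, g_j)$ with $i < j$, both labelled by $G_v$. Reading off syllables along $\gamma$ gives a syllable expression $s_1 \cdots s_n$ for $g_0^{-1} g_n$ with $s_i, s_j \in G_v$. A careful look at the carrier of $H$ shows the intermediate syllables $s_{i+1}, \dots, s_{j-1}$ must lie in vertex groups adjacent to $v$, since otherwise the edges $(g_{k-1}, g_k)$ for $i<k<j$ could not remain in the carrier between the two dual crossings. By Theorem \ref{thm:graph_product_normal_form} we may commute $s_j$ leftward past these intermediate syllables and merge it with $s_i$, producing a syllable expression of length at most $n-1$ for $g_0^{-1} g_n$, so $\gamma$ is not a geodesic.

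For (1), I would associate to each hyperplane $H$ labelled by $G_v$ the coset $h \langle \st(v) \rangle$ spanned by its carrier, and then use (\ref{hyperplanes:geodesic_iff_cross_once}) to define a side function: for a vertex $x \in S(\Gamma)$, pick any geodesic from $e$ to $x$ and record whether it crosses $H$ (and if so, with what element of $G_v$ the dual edge is labelled). Item (\ref{hyperplanes:geodesic_iff_cross_once}) ensures this is well-defined, and the fibres are separated by $H$. Statements (2) and (3) are then standard consequences of (\ref{hyperplanes:geodesic_iff_cross_once}): a geodesic between two points in a combinatorial hyperplane for $H$ (respectively, a component of $S(\Gamma) \smallsetminus H$) cannot leave the region, since returning to it would force the geodesic to cross a separating hyperplane twice, contradicting (\ref{hyperplanes:geodesic_iff_cross_once}).

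The principal obstacle I anticipate is the detailed carrier analysis needed for the forward direction of (\ref{hyperplanes:geodesic_iff_cross_once}): namely, the claim that the intermediate syllables between two $G_v$-dual crossings of a single hyperplane must lie in $\lk(v)$. This requires an induction following the equivalence relation defining the hyperplane, and must accommodate infinite simplices arising from infinite vertex groups. A secondary subtlety is ensuring that the side function for (1) is truly well-defined across different geodesics; this requires a coherence statement about which $G_v$-element is recorded when different geodesics from $e$ to $x$ cross $H$ at different edges, which ultimately again reduces to the carrier structure combined with the reduction algorithm.
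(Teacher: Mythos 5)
This proposition is cited to Genevois's thesis and not proved in the paper, so there is no in-paper argument to compare against; what follows is an assessment of your proposal on its own merits.

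Your overall architecture is reasonable, but the pivotal step in the forward direction of (\ref{hyperplanes:geodesic_iff_cross_once}) has a real gap. You assert that the intermediate syllables $s_{i+1},\dots,s_{j-1}$ between two $G_v$-dual crossings of $H$ must lie in $\lk(v)$, ``since otherwise the edges $(g_{k-1},g_k)$ could not remain in the carrier between the two dual crossings.'' But nothing forces an arbitrary path to remain in the carrier between two crossings of $H$: the path is free to leave $h\langle\st(v)\rangle$ and return. The sequence of square/triangle moves witnessing $e_i\sim e_j$ does live in the carrier, but that is a different edge path from the segment of $\gamma$ between the crossings, and you have conflated the two. The claim becomes true only once you assume $\gamma$ is a geodesic and invoke \emph{convexity of the carrier} — which is essentially item (2), which you propose to deduce later from (4), so as written the argument is circular.

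The circularity can be broken, and with a smaller input than (2): the paper's remark immediately following Definition \ref{defn:syllable_metric} observes that any coset $g\langle\Lambda\rangle$ of a graphical subgroup is convex in $S(\Gamma)$, by the elementary fact that geodesics are labelled by reduced syllable expressions. Using Remark \ref{rem:labeling_hyperplanes}, the carrier of $H$ is contained in a single coset $h\langle\st(v)\rangle$. If $\gamma$ is a geodesic crossing $H$ at $e_i$ and $e_j$, then the subpath from $g_{i-1}$ to $g_j$ is a geodesic between two points of $h\langle\st(v)\rangle$, hence stays in that coset by elementary coset-convexity; so the intermediate syllables all have support in $\st(v)$. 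From there your reduction argument runs: commute $s_j$ leftward past $\lk(v)$-syllables until it meets the nearest $G_v$-syllable (which is $s_i$ if none intervene, or else an intermediate one) and merge, shortening the expression. This rescues the step, but the write-up should make the appeal to coset-convexity explicit rather than to ``the carrier.''

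A secondary point on ordering: you describe the reverse direction of (\ref{hyperplanes:geodesic_iff_cross_once}) as a counting argument, but that count only forces minimality once you know each hyperplane $H$ actually separates its endpoints (item (1)), so that every competing path must also cross $H$. As stated you prove (1) after (4). A clean linearisation is: first the elementary coset-convexity observation; then (\ref{hyperplanes:cross_iff_adjacent_labels}); then the \emph{forward} direction of (\ref{hyperplanes:geodesic_iff_cross_once}) as above; then (1), (2), (3); and finally the reverse direction of (\ref{hyperplanes:geodesic_iff_cross_once}), for which only then is the ``count the separating hyperplanes'' argument available. Your sketches of (1)--(3) and (\ref{hyperplanes:cross_iff_adjacent_labels}) are otherwise sound in spirit.
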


\begin{remark}\label{remark:hyperplanes_cross_loops_twice}
Item (\ref{hyperplanes:geodesic_iff_cross_once}) implies that a hyperplane $H$ of $S(\Gamma)$ crosses a geodesic connecting a pair of points $x,y $ if and only if $H$ separates $x$ and $y$. Thus, if $\gamma_1,\dots,\gamma_n$ is a collection of geodesics in $S(\Gamma)$ such that $\gamma_1\cup \dots \cup\gamma_n$ forms a loop and $H$ is a hyperplane that crosses $\gamma_i$, then $H$ must also cross $\gamma_j$ for some $j \neq i$.
\end{remark}

It is important to note that while we still use the terms `hyperplane' and `combinatorial hyperplane' here, they differ from those of cube complexes in a critical way: the complement of a hyperplane $H$ in $S(\Gamma)$ may have more than two connected components, and thus $H$ may have more than two associated combinatorial hyperplanes.

Genevois and Martin use the convexity of  the cosets $g \sub{\Lambda}$ to construct a nearest point projection onto $g\langle\Lambda\rangle$, which we call a \emph{gate map}.
The map and its properties are given below, and will be essential tools throughout this paper.

\begin{prop}[Gate onto graphical subgroups; {\cite[Section 2]{Genevois_Martin}}]\label{prop:gates_to_subgroups}
Let $G_\Gamma$ be a graph product. For all $\Lambda \subseteq \Gamma$ and $g\in G_\Gamma$, there exists a map $\gate_{g\Lambda} \colon G_\Gamma \to g \sub{\Lambda}$ satisfying the following properties.
\begin{enumerate}
    \item \label{gate:distance_non-increasing} For all $k,h \in G_\Gamma$, $\dist_{syl}(\gate_{g\Lambda}(h),\gate_{g\Lambda}(k)) \leq \dist_{syl}(h,k)$.
    \item \label{gate:equivariance} For all $x,h \in G_\Gamma$, $h \cdot \gate_{g\Lambda}(x) = \gate_{hg\Lambda}(hx)$. In particular, $\gate_{g\Lambda}(x) = g \cdot \gate_\Lambda( g^{-1} x)$.
    \item  \label{gate:unique_closest_point} For all $x \in G_\Gamma$, $\gate_{g\Lambda}(x)$ is the unique element of $g\sub{\Lambda}$ such that $\dist_{syl}(x,\gate_{g\Lambda}(x)) = \dist_{syl}(x, g\sub{\Lambda}).$
    \item \label{gate:hyperplanes_separating_image} Any hyperplane in $S(\Gamma)$ that separates $x$ from $\gate_{g\Lambda}(x)$ separates $x$ from $g\sub{\Lambda}$.
    \item \label{gate:hyperplanes_separating_pairs} If $x,y \in G_\Gamma$ and $H$ is a hyperplane in $S(\Gamma)$ separating $\gate_{g\Lambda}(x)$ and $\gate_{g\Lambda}(y)$, then $H$ separates $x$ and $y$, so that $x$ and $\gate_{g\Lambda}(x)$ (resp. $y$ and $\gate_{g\Lambda}(y)$) are contained in the same connected component of  $S(\Gamma) \smallsetminus H$.
\end{enumerate}
\end{prop}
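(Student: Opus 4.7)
The strategy is to reduce to the case $g=e$ by defining $\gate_{g\Lambda}(x) := g\cdot \gate_\Lambda(g^{-1}x)$, which makes item (\ref{gate:equivariance}) hold by construction. All other items for $\gate_{g\Lambda}$ then follow from the corresponding items for $\gate_\Lambda$, since left multiplication by any element of $G_\Gamma$ acts as a label-preserving cellular isometry of $S(\Gamma)$ that permutes hyperplanes. So I only need to build $\gate_\Lambda \colon G_\Gamma \to \langle\Lambda\rangle$ with the stated properties.

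For $x \in G_\Gamma$, the subgraph $\langle\Lambda\rangle$ is non-empty (it contains $e$) and convex in $S(\Gamma)$, so there exists at least one $y_0 \in \langle\Lambda\rangle$ with $\dist_{syl}(x,y_0) = \dist_{syl}(x,\langle\Lambda\rangle)$. I would set $\gate_\Lambda(x) := y_0$, and the heart of the proof is the following \emph{separation claim}: every hyperplane $H$ that crosses some geodesic from $x$ to $y_0$ already separates $x$ from \emph{all} of $\langle\Lambda\rangle$. To prove this, suppose for contradiction that some $z \in \langle\Lambda\rangle$ lies in the same component of $S(\Gamma)\smallsetminus H$ as $x$. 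By convexity of $\langle\Lambda\rangle$, the geodesic $[y_0,z]$ is contained in $\langle\Lambda\rangle$ and, since it joins opposite sides of $H$, must cross $H$ (Proposition \ref{prop:hyperplanes_properties}(\ref{hyperplanes:geodesic_iff_cross_once})). Using the convexity of the combinatorial hyperplanes associated to $H$ (Proposition \ref{prop:hyperplanes_properties}(2)) together with Remark \ref{remark:hyperplanes_cross_loops_twice} applied to the loop $[x,y_0]*[y_0,z]*[z,x]$, I would push the initial geodesic from $x$ to $y_0$ across $H$ onto a combinatorial hyperplane and concatenate with a subpath reaching $\langle\Lambda\rangle$ inside the component of $x$, producing a path from $x$ to $\langle\Lambda\rangle$ of length strictly less than $\dist_{syl}(x,y_0)$, contradicting minimality.

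Granted the separation claim, item (\ref{gate:hyperplanes_separating_image}) is immediate. For uniqueness (item (\ref{gate:unique_closest_point})), if $y_1, y_2 \in \langle\Lambda\rangle$ both realise the distance to $x$, then the separation claim forces the set of hyperplanes separating $x$ from $y_i$ to equal the set separating $x$ from $\langle\Lambda\rangle$, independent of $i$; since syllable distance equals the number of separating hyperplanes and hyperplanes separate distinct points, $y_1 = y_2$. For item (\ref{gate:hyperplanes_separating_pairs}), if $H$ separates $\gate_\Lambda(x)$ from $\gate_\Lambda(y)$, then it separates exactly one of these, say $\gate_\Lambda(x)$, from the other and from $y$; applying the separation claim to both $x$ and $y$ forces $x$ and $\gate_\Lambda(x)$ to lie in the same component of $S(\Gamma)\smallsetminus H$, and likewise for $y$, so $H$ separates $x$ from $y$. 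Finally, item (\ref{gate:distance_non-increasing}) follows by combining item (\ref{gate:hyperplanes_separating_pairs}) with Proposition \ref{prop:hyperplanes_properties}(\ref{hyperplanes:geodesic_iff_cross_once}): the hyperplanes separating the gates are a subset of those separating $x$ from $y$.

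The main obstacle is the separation claim. In a CAT($0$) cube complex this is standard because hyperplane complements have exactly two components, so one can literally reflect a geodesic across a hyperplane. In $S(\Gamma)$, however, the complement of a hyperplane can have more than two components and a hyperplane may have several associated combinatorial hyperplanes (as emphasised after Proposition \ref{prop:hyperplanes_properties}). The pushing argument therefore has to be carried out prism-by-prism, using the vertex-group labelling of hyperplanes (Remark \ref{rem:labeling_hyperplanes}) together with Proposition \ref{prop:squares_in_S} to ensure that the modified path remains a genuine edge path in $S(\Gamma)$ whose crossings are consistent with the commutation structure of the graph product.
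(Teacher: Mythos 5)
The paper does not reprove this proposition; it cites it from \cite[Section 2]{Genevois_Martin}, so there is no internal proof to compare against. Your overall architecture is the right one: the reduction to $g=e$ via equivariance is clean, and the derivations of items (\ref{gate:distance_non-increasing}), (\ref{gate:unique_closest_point}), and (\ref{gate:hyperplanes_separating_pairs}) from the ``separation claim'' are all correct (for uniqueness, one should say explicitly that a hyperplane separating $y_1$ from $y_2$ must, by Remark~\ref{remark:hyperplanes_cross_loops_twice}, cross $[x,y_1]$ or $[x,y_2]$ and so would both separate $x$ from $\langle\Lambda\rangle$ and cross $\langle\Lambda\rangle$, an immediate contradiction). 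However, your ``separation claim'' \emph{is} item (\ref{gate:hyperplanes_separating_image}) verbatim, so you have not isolated a simpler intermediate lemma; the entire content of the proposition is item (\ref{gate:hyperplanes_separating_image}), and your argument for it is a sketch, not a proof.

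The sketch has a real gap. Writing $E=[a,b]$ for the edge of $[x,y_0]$ dual to $H$ with $a$ on the $x$-side, ``pushing'' the segment $[b,y_0]$ to a length-preserving path emanating from $a$ (using the parallelism of combinatorial hyperplanes of $H$) produces a path that need not terminate in $\langle\Lambda\rangle$, and the ``subpath reaching $\langle\Lambda\rangle$'' you then append could be arbitrarily long, destroying the length comparison with $\dist_{syl}(x,y_0)$. In a CAT(0) cube complex this step is usually finessed with a median argument, and making the analogue rigorous in $S(\Gamma)$ really does need the quasi-median machinery that Genevois develops; ``prism-by-prism'' is not an argument but a placeholder. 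A fully self-contained alternative is algebraic: define $\gate_\Lambda(x)$ to be the longest prefix $p$ of $x$ in $\langle\Lambda\rangle$; verify item (\ref{gate:unique_closest_point}) using Theorem~\ref{thm:graph_product_normal_form} (if $r\in\langle\Lambda\rangle$ is nontrivial and $\prefix_\Lambda(s)=e$, then $|rs|_{syl}=|r|_{syl}+|s|_{syl}>|s|_{syl}$); and verify item (\ref{gate:hyperplanes_separating_image}) directly by observing that a hyperplane labelled $G_w$ dual to the $i$-th edge of $[p,x]$ crosses $\langle\Lambda\rangle$ only if $w\in\Lambda$ and $s_1\cdots s_{i-1}\in\langle\Lambda\rangle\langle\lk(w)\rangle$, which, after taking a factorisation $s_1\cdots s_{i-1}=\lambda\mu$ of minimal total syllable length and applying Theorem~\ref{thm:graph_product_normal_form} once more, forces $\lambda=e$ and exhibits $s_i$ as a nontrivial prefix of $p^{-1}x$ in $\langle\Lambda\rangle$, a contradiction. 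Either route involves genuine work that your proposal leaves undone.
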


We also obtain a convenient algebraic formulation for the gate map of an element $g$ onto a graphical subgroup $\langle\Lambda\rangle$ by considering the collection of all possible initial subwords of $g$ that are contained in $\langle\Lambda\rangle$.

\begin{defn}[Prefixes and suffixes]
Let $g \in G_{\Gamma}$. If there exist $p, s \in G_\Gamma$ so that $g = p s$ and $\syl{g} = \syl{p}+\syl{s}$,  we call $p$ a \emph{prefix} of $g$ and $s$ a \emph{suffix} of $g$. We shall use $\prefix(g)$ and $\suffix(g)$  to respectively denote the collections of all prefixes and suffixes of $g$.
\end{defn}

\begin{lem}[Algebraic description of the gate map]\label{lem:head_lemma}
For all $\Lambda \subseteq \Gamma$ and $g \in G_\Gamma$, there exists $p \in \prefix(g) \cap \sub{\Lambda}$ so that $\gate_{\Lambda}(g) = p$. Further, $p$ is the element of $\prefix(g) \cap \sub{\Lambda}$ with the largest syllable length.
\end{lem}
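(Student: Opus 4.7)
The plan is to identify $\gate_\Lambda(g)$ as the endpoint of a geodesic from $e$ to $g$ that stays inside $\langle\Lambda\rangle$ as long as possible. Set $p := \gate_\Lambda(g)$. To show $p \in \prefix(g)$, I would concatenate a syllable-geodesic $\alpha$ from $e$ to $p$ (which exists inside $\langle\Lambda\rangle$) with a syllable-geodesic $\beta$ from $p$ to $g$, and argue that the resulting path is a geodesic. By Proposition \ref{prop:hyperplanes_properties}(\ref{hyperplanes:geodesic_iff_cross_once}) it suffices to verify that no hyperplane crosses both $\alpha$ and $\beta$. This is exactly the content of Proposition \ref{prop:gates_to_subgroups}(\ref{gate:hyperplanes_separating_image}): any hyperplane $H$ crossing $\beta$ separates $g$ from $p$, hence from $\langle\Lambda\rangle$, and therefore has both $e$ and $p$ on the same side, so $H$ cannot cross $\alpha$. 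Thus $\alpha \cdot \beta$ is a syllable-geodesic from $e$ to $g$, giving $\syl{g} = \syl{p} + \dist_{syl}(p,g)$ and hence $p \in \prefix(g) \cap \sub{\Lambda}$.

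For the second part, let $q \in \prefix(g) \cap \sub{\Lambda}$ be arbitrary. The prefix condition gives $\dist_{syl}(g,q) = \syl{g} - \syl{q}$, and since $q \in \sub{\Lambda}$, this distance is bounded below by $\dist_{syl}(g,\sub{\Lambda}) = \dist_{syl}(g,p) = \syl{g} - \syl{p}$, where the last equality comes from the paragraph above. Rearranging yields $\syl{q} \leq \syl{p}$. Moreover, if equality holds then $\dist_{syl}(g,q) = \dist_{syl}(g,\sub{\Lambda})$, so by the uniqueness of the closest point in Proposition \ref{prop:gates_to_subgroups}(\ref{gate:unique_closest_point}), we conclude $q = p$. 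This makes $p$ the unique element of $\prefix(g) \cap \sub{\Lambda}$ of maximal syllable length.

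The main obstacle is really just the first step: recognising that the gate map is compatible with syllable-geodesics from the identity, via the hyperplane-separation property. Once that is in hand, the prefix characterisation and the maximality of syllable length fall out mechanically from the defining properties of $\gate_\Lambda$ together with the equivalence between hyperplane counts and syllable distance.
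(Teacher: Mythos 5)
Your proof is correct, and it takes a genuinely different route from the paper. The paper argues in the opposite direction and more combinatorially: it defines $p$ to be a maximal-syllable-length element of $\prefix(g)\cap\sub{\Lambda}$, observes from maximality that $\prefix(s)\cap\sub{\Lambda}=\{e\}$ for the corresponding suffix $s$, deduces from the reduction algorithm that $|x^{-1}ps|_{syl}\geq |s|_{syl}$ for $x=\gate_\Lambda(g)$, and then invokes Proposition \ref{prop:gates_to_subgroups}(\ref{gate:unique_closest_point}) to conclude $x=p$. You instead start from $p:=\gate_\Lambda(g)$ and prove directly that it is a prefix by concatenating a geodesic $\alpha$ from $e$ to $p$ inside $\sub{\Lambda}$ with a geodesic $\beta$ from $p$ to $g$, using Proposition \ref{prop:gates_to_subgroups}(\ref{gate:hyperplanes_separating_image}) to rule out any hyperplane crossing both, and then Proposition \ref{prop:hyperplanes_properties}(\ref{hyperplanes:geodesic_iff_cross_once}) to certify $\alpha\cdot\beta$ as a geodesic; maximality then follows from a one-line distance comparison. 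Your geometric argument is arguably cleaner, since it sidesteps the normal-form bookkeeping that underlies the paper's inequality $|x^{-1}ps|_{syl}\geq|s|_{syl}$, and it yields the uniqueness of the maximiser as a free by-product via Proposition \ref{prop:gates_to_subgroups}(\ref{gate:unique_closest_point}). The paper's version is more self-contained at the algebraic level, relying only on Theorem \ref{thm:graph_product_normal_form} rather than the hyperplane machinery.
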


\begin{proof}
Since $\prefix(g) \cap \sub{\Lambda}$ is a finite set, there exists $p \in \prefix(g) \cap \sub{\Lambda}$ so that $|p'|_{syl} \leq |p|_{syl}$ for all $p' \in \prefix(g) \cap \sub{\Lambda}$. Let $x = \gate_\Lambda(g)$ and let $s$ be the suffix of $g$ corresponding to $p$. If there exists a non-identity element $y \in \prefix(s) \cap \sub{\Lambda}$, then $py$ would be an element of $\prefix(g) \cap \sub{\Lambda}$ with syllable length strictly larger than $p$. Since this is impossible by choice of $p$, we have $\prefix(s) \cap \sub{\Lambda} = \{e\}$. This implies $\syl{x^{-1}ps} \geq \syl{s}$ since $x^{-1}p \in \sub{\Lambda}$, and we have the following calculation:
 \[\dist_{syl}(x,g) =|x^{-1} ps|_{syl} \geq |s|_{syl} = |p^{-1}g|_{syl} = \dist_{syl}(p,g).\] Since $p \in \sub{\Lambda}$, this implies $x=p$, as $x$ is the unique element of $\sub{\Lambda}$ that minimises the syllable distance of $g$ to $\sub{\Lambda}$ (Proposition \ref{prop:gates_to_subgroups}(\ref{gate:unique_closest_point})).
\end{proof}

\begin{defn}
Denote the element $p$ of $\prefix(g) \cap \langle \Lambda \rangle$ with largest syllable length by $\prefix_{\Lambda}(g)$, and define $\suffix_{\Lambda}(g) = (\prefix_{\Lambda}(g^{-1}))^{-1}$.
\end{defn}

\subsection{Relatively hierarchically hyperbolic groups}\label{section:backgroup_HHS} 

We break the definition of a relative HHG given by Behrstock, Hagen, and Sisto in \cite{BHS_HHSII}  into three parts in order to more clearly organise the structure of our arguments. First we define what we call the \emph{proto-hierarchy structure}, which sets up the defining information (relations and projections) for the HHG structure.  
We then give the more advanced geometric properties that we need to impose for the group to be a relatively hierarchically hyperbolic space.
 We then define a \emph{relatively hierarchically hyperbolic group} to be a group whose Cayley graph is a relative HHS in such a way that the relative HHS structure agrees with the group structure.

\begin{defn}[Proto-hierarchy structure]\label{defn:HHS}
Let $\mc{X}$ be a quasi-geodesic space and $E>0$. An \emph{$E$--proto-hierarchy structure} on $\mc{X}$ is an index set $\mathfrak S$ and a set $\{ C(W) : W\in\mathfrak S\}$ of geodesic spaces $( C (W),\dist_W)$  such that the following axioms are satisfied. \begin{enumerate}
\item\textbf{(Projections.)}\label{axiom:projections} For each $W \in \mf{S}$, there exists a \emph{projection} $\pi_W \colon \mc{X} \rightarrow 2^{C(W)}$ such that for all $x \in \mc{X}$, $\pi_W(x) \neq \emptyset$ and $\diam(\pi_W(x)) \leq E$. Moreover, each $\pi_W$ is $(E, E)$--coarsely
Lipschitz and $C(W) \subseteq \mc{N}_E(\pi_{W}(\mc{X}))$ for all $W \in \mf{S}$.

 \item \textbf{(Nesting.)} \label{axiom:nesting} If $\mathfrak S \neq \emptyset$, then $\mf{S}$ is equipped with a  partial order $\nest$ and contains a unique $\nest$--maximal element. When $V\nest W$, we say $V$ is \emph{nested} in $W$.  For each
 $W\in\mathfrak S$, we denote by $\mathfrak S_W$ the set of all $V\in\mathfrak S$ with $V\nest W$.  Moreover, for all $V,W\in\mathfrak S$ with $V\propnest W$ there is a specified non-empty subset $\rho^V_W\subseteq C(W)$ with $\diam(\rho^V_W)\leq E$.

 \item \textbf{(Orthogonality.)} 
 \label{axiom:orthogonal} $\mathfrak S$ has a symmetric relation called \emph{orthogonality}. If $V$ and $W$ are orthogonal, we write $V\perp
 W$ and require that $V$ and $W$ are not $\nest$--comparable. Further, whenever $V\nest W$ and $W\perp
 U$, we require that $V\perp U$. We denote by $\mf{S}_W^\perp$ the set of all $V\in \mf{S}$ with $V\perp W$.

 \item \textbf{(Transversality.)}
 \label{axiom:transversality} If $V,W\in\mathfrak S$ are not
 orthogonal and neither is nested in the other, then we say $V,W$ are
 \emph{transverse}, denoted $V\trans W$.  Moreover, for all $V,W \in \mathfrak{S}$ with $V\trans W$ there are non-empty
  sets $\rho^V_W\subseteq C(W)$ and
 $\rho^W_V\subseteq C(V)$ each of diameter at most $E$.
\end{enumerate}
We use $\mf{S}$ to denote the entire proto-hierarchy structure, including the index set $\mf{S}$, spaces $\{C(W) : W \in \mf{S}\}$, projections $\{\pi_W : W \in \mf{S}\}$, and relations $\nest$, $\perp$, $\trans$. We call the elements of $\mf{S}$ the \emph{domains} of $\mf{S}$ and call the set $\rho_W^V$ the \emph{relative projection} from $V$ to $W$. The number $E$ is called the \emph{hierarchy constant} for $\mf{S}$.
\end{defn}

\begin{defn}[Relatively hierarchically hyperbolic space]\label{defn:relativ_HHS}
An $E$--proto-hierarchy structure $\mf{S}$ on a quasi-geodesic space $\mc{X}$ is a \emph{relatively $E$--hierarchically hyperbolic space structure} (relative $E$--HHS structure) on $\mc{X}$ if it satisfies the following additional axioms.
\begin{enumerate}

\item \textbf{(Hyperbolicity.)} \label{axiom:hyperbolicity} For each $W \in \mf{S}$, either $W$ is $\nest$--minimal or $C(W)$ is $E$--hyperbolic.

\item \textbf{(Finite complexity.)} \label{axiom:finite_complexity} Any set of pairwise $\nest$--comparable elements has cardinality at most $E$.
 
\item \textbf{(Containers.)} \label{axiom:containers}  For each $W \in \mf{S}$ and $U \in \mf{S}_W$ with $ \mf{S}_W\cap \mf{S}_U^\perp \neq \emptyset$, there exists $Q \in\mf{S}_W$ such that $V \nest Q$ whenever $V \in\mf{S}_W \cap \mf{S}_U^\perp$.  We call $Q$ a \emph{container of $U$ in $W$}.

\item\textbf{(Uniqueness.)} There exists a function 
$\theta \colon [0,\infty) \to [0,\infty)$ so that for all $r \geq 0$, if $x,y\in\mc X$ and
$\dist_\mc{X}(x,y)\geq\theta(r)$, then there exists $W\in\mathfrak S$ such
that $\dist_W(\pi_W(x),\pi_W(y))\geq r$. We call $\theta$ the \emph{uniqueness function of $\mf{S}$}.\label{axiom:uniqueness}

 \item \textbf{(Bounded geodesic image.)} \label{axiom:bounded_geodesic_image} For all $x,y \in \mc{X}$ and $V,W\in\mathfrak S$ with $V\propnest W$, if $\dist_V(\pi_V(x),\pi_V(y))\geq E$, then every $C(W)$--geodesic from $\pi_W(x)$ to $\pi_W(y)$ must intersect the $E$--neighbourhood of $\rho_W^V$.

 \item \textbf{(Large links.)}  \label{axiom:large_link_lemma} 
For all $W\in\mathfrak S$ and  $x,y\in\mc X$, there exists $\{V_1,\dots,V_m\}\subseteq\mathfrak S_W\smallsetminus\{W\}$ such that $m \leq E \dist_{W}(\pi_W(x),\pi_W(y))+E$, and for all $U\in\mathfrak
S_W\smallsetminus\{W\}$, either $U\in\mathfrak S_{V_i}$ for some $i$, or $\dist_{U}(\pi_U(x),\pi_U(y)) \leq E$.  

\item \textbf{(Consistency.)}
 \label{axiom:consistency}  If $V\trans W$, then \[\min\left\{\dist_{
 W}(\pi_W(x),\rho^V_W),\dist_{
 V}(\pi_V(x),\rho^W_V)\right\}\leq E\] for all $x\in\mc X$. Further, if $U\nest V$ and either $V\propnest W$ or $V\trans W$ and $W\not\perp U$, then $\dist_W(\rho^U_W,\rho^V_W)\leq E$.
 
  \item \textbf{(Partial realisation.)} \label{axiom:partial_realisation}  If $\{V_i\}$ is a finite collection of pairwise orthogonal elements of $\mathfrak S$ and $p_i\in  C(V_i)$ for each $i$, then there exists $x\in \mc X$ so that:
 \begin{itemize}
 \item $\dist_{V_i}(\pi_{V_{i}}(x),p_i)\leq E$ for all $i$;
 \item for each $i$ and 
 each $W\in\mathfrak S$, if $V_i\propnest W$ or $W\trans V_i$, we have 
 $\dist_{W}(\pi_W(x),\rho^{V_i}_W)\leq E$.
  \end{itemize}
  \end{enumerate}

If $C(W)$ is $E$--hyperbolic for all $W \in \mf{S}$, then $\mf{S}$ is an \emph{$E$--hierarchically hyperbolic space structure} on $\mc{X}$. We call a quasi-geodesic space $\mc{X}$ a  \emph{(relatively) $E$--hierarchically hyperbolic space} if there exists a (relatively) $E$--hierarchically hyperbolic structure on $\mc{X}$. We use the pair $(\mc{X},\mf{S})$ to denote a (relatively) hierarchically hyperbolic space equipped with the specific (relative) HHS structure $\mf{S}$.
\end{defn}

\begin{defn}[Relatively hierarchically hyperbolic group]\label{defn:hierarchically hyperbolic groups}
  
     Let $G$ be a finitely generated group and let $X$ be the Cayley graph of $G$ with respect to some finite generating set.  We say $G$ is a (relatively) \emph{$E$--hierarchically hyperbolic group} (HHG) if:

   \begin{enumerate}[(1)]
       \item \label{item:HHG_cayleygraph} The space $X$ admits a (relative) $E$--HHS structure $\mf{S}$.
        \item \label{item:HHG_index_action}There is a $\nest$--, $\perp$--, and $\trans$--preserving action of $G$ on $\mf{S}$ by bijections such that $\mf{S}$ contains finitely many $G$--orbits.
        \item \label{item:HHG conditions} For each $W \in \mf{S}$ and $g\in G$, there exists an isometry $g_W \colon C(W) \rightarrow C(gW)$ satisfying the following for all $V,W \in \mf{S}$ and $g,h \in G$.
      \begin{itemize}
            \item The map $(gh)_W \colon C(W) \to C(ghW)$ is equal to the map $g_{hW} \circ h_W \colon C(W) \to C(ghW)$.
            \item For each $x \in X$, $g_W(\pi_W(x))$ and $\pi_{gW}(g \cdot x)$ are at most $E$--far apart in $C(gW)$.
            \item If $V \trans W$ or $V \propnest W$, then $g_W(\rho_W^V)$  and $\rho_{gW}^{gV}$ are at most $E$--far apart in $C(gW)$.
        \end{itemize}
        \end{enumerate}

The structure $\mf{S}$ satisfying (\ref{item:HHG_cayleygraph})--(\ref{item:HHG conditions})  is called a \emph{(relatively) $E$--hierarchically hyperbolic group} (HHG) structure on $G$. We use $(G,\mf{S})$ to denote a group $G$ equipped with a specific (relative) HHG structure $\mf{S}$. 
\end{defn} 

 We build the proto-hierarchy structure for a graph product of finitely generated groups in Section \ref{section:proto} and spend Section \ref{section:rel_HHG} verifying this structure satisfies the axioms of a relatively hierarchically hyperbolic space and respects the group structure.

\section{The proto-hierarchy structure on a graph product}\label{section:proto}
For this section $G_\Gamma$ will be a graph product of finitely generated groups. For each vertex group $G_v$, let $S_v$ be a finite generating set for $G_v$, then define $S$ to be $\bigcup_{v \in V(\Gamma)} S_v$. Throughout this section, $\dist$ will denote the word metric on $G_\Gamma$ with respect to $S$. We now begin to explicitly construct the HHS structure on $G_\Gamma$. We first define the index set, associated spaces, and projection maps in Section \ref{section:index_set} and then define the relations and relative projections in Section \ref{section:relations}.

\subsection{The index set, associated spaces, and projections.}\label{section:index_set}

The index set for our relative HHS structure on $G_\Gamma$ is the set of parallelism classes of graphical subgroups. This mirrors the case of right-angled Artin groups studied in \cite{BHS_HHSI}.

\begin{defn}[Parallelism and the index set for a graph product]\label{defn:index_set}
  Let $G_\Gamma$ be a graph product. For a subgraph $\Lambda \subseteq \Gamma$, we shall use $g \Lambda$ to denote the coset $g\sub{\Lambda}$ for ease of notation. We say $g\Lambda$ and $h\Lambda$ are \emph{parallel} if  $g^{-1}h \in \sub{\st(\Lambda)}$ and write $g\Lambda \parallel h \Lambda$. Let $[g\Lambda]$ denote the equivalence class of $g\Lambda$ under the parallelism relation $\parallel$. Define the index set $\mf{S}_\Gamma = \{ [g\Lambda] : g \in G_\Gamma, \ \Lambda \subseteq \Gamma\}$.
\end{defn}

The geometric intuition for the definition of parallelism comes from the fact that if two cosets $g\langle\Lambda\rangle$ and $h\langle\Lambda\rangle$ satisfy $g^{-1}h \in \langle\st(\Lambda)\rangle$, then they are each crossed by precisely the same set of hyperplanes of $S(\Gamma)$. Again, it is important to note that these hyperplanes, introduced in Definition \ref{defn:hyperplane}, are generalisations of those in cube complexes.

\begin{prop}[Parallel cosets have the same hyperplanes]\label{prop:hyperplanes_cross_parallel}
Let $\Lambda \subseteq \Gamma$ and $g,h \in G_\Gamma$. If $g\langle\Lambda\rangle \parallel h\langle\Lambda\rangle$, then every hyperplane of $S(\Gamma)$ crossing $g\langle\Lambda\rangle$ must also cross $h\langle\Lambda\rangle$.
\end{prop}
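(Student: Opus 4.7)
The plan is to leverage the splitting $\langle\st(\Lambda)\rangle \cong \langle\Lambda\rangle \times \langle\lk(\Lambda)\rangle$ (Remark \ref{rem:star_elements}) to reduce the claim to a translation by a single element of $\langle\lk(\Lambda)\rangle$, and then to build an explicit chain of squares between an edge dual to $H$ in $g\langle\Lambda\rangle$ and a corresponding edge in $h\langle\Lambda\rangle$.

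First I would write $g^{-1}h = \lambda l$ with $\lambda \in \langle\Lambda\rangle$ and $l \in \langle\lk(\Lambda)\rangle$, which is possible by Remark \ref{rem:star_elements}. Since $\lambda \in \langle\Lambda\rangle$ and $l$ commutes with every element of $\langle\Lambda\rangle$, one obtains $h\langle\Lambda\rangle = g\lambda l \langle\Lambda\rangle = gl\langle\Lambda\rangle$. Thus it suffices to show that every hyperplane $H$ crossing $g\langle\Lambda\rangle$ also crosses $gl\langle\Lambda\rangle$.

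Fix such a hyperplane $H$, and choose an edge $E$ of $g\langle\Lambda\rangle$ dual to $H$. This edge has the form $E = (gk,\, gks)$ for some $k \in \langle\Lambda\rangle$ and $s \in G_v$ with $v \in \Lambda$ (by Remark \ref{rem:labeling_hyperplanes}, $H$ is then labelled by $G_v$). Write $l = l_1 \cdots l_n$ as a reduced syllable expression with $l_i \in G_{v_i}$ and $v_i \in \lk(\Lambda)$, and set $l^{(i)} = l_1 \cdots l_i$ with $l^{(0)} = e$. Because each $v_i$ is adjacent to $v$, each $l_i$ commutes with $s$, so by Proposition \ref{prop:squares_in_S} the four vertices
\[ gk\,l^{(i-1)},\quad gks\,l^{(i-1)},\quad gks\,l^{(i)},\quad gk\,l^{(i)} \]
span a square in $S(\Gamma)$ whose two $s$--labelled edges are opposite sides. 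Hence $(gk\,l^{(i-1)},\, gks\,l^{(i-1)}) \sim (gk\,l^{(i)},\, gks\,l^{(i)})$ in the hyperplane equivalence relation of Definition \ref{defn:hyperplane}.

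Chaining these equivalences from $i=1$ to $i=n$ yields $E \sim (gkl,\, gksl)$. Since $k \in \langle\Lambda\rangle$ and $l \in \langle\lk(\Lambda)\rangle$ commute, this final edge equals $(glk,\, glks)$, which lies in $gl\langle\Lambda\rangle = h\langle\Lambda\rangle$. Therefore $H$ crosses $h\langle\Lambda\rangle$, as desired. The only mildly delicate step is the square-forming bookkeeping in the induction, which relies on correctly invoking that elements of $\langle\lk(\Lambda)\rangle$ commute with the syllable $s$ representing $H$; once this is in hand the argument is entirely combinatorial.
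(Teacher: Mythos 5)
Your proof is correct and takes essentially the same route as the paper's: decompose $g^{-1}h$ via the star splitting to reduce to a translation by $l\in\langle\lk(\Lambda)\rangle$, then walk the hyperplane across a strip of squares arising from the commutation of $l$'s syllables with the $G_v$-label of $H$. You merely make the square bookkeeping explicit and skip the paper's cosmetic translation by $g^{-1}$ back to $\langle\Lambda\rangle$.
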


\begin{proof}
Since  $g\langle\Lambda\rangle \parallel h\langle\Lambda\rangle$, we have $g^{-1}h \in \langle\st(\Lambda)\rangle$ and there exist $\lambda \in \sub{\Lambda}$ and $l \in \sub{\lk(\Lambda)}$ such that $g^{-1}h= \lambda l$ (Remark \ref{rem:star_elements}). Since $\lambda$ and $l$ commute, $g^{-1}h \sub{\Lambda} = l \sub{\Lambda}$.

Let $H$ be a hyperplane in $S(\Gamma)$ crossing $g\langle\Lambda\rangle$. In particular, $H$ separates two adjacent points $ga$ and $gb$ in $g\langle\Lambda\rangle$. Translating by $g^{-1}$, we have that $g^{-1}H$ separates $a$ and $b$ in $\langle\Lambda\rangle$. Let $s_1 \dots s_n$ be a reduced syllable expression for $l$. Thus, there is a geodesic from $a$ to $l a $ and a geodesic from $b$ to $l b$ each labelled by $s_1\dots s_n$ where each $s_i \in \sub{\lk(\Lambda)}$. Since $b^{-1}a$ labels an edge of $\sub{\Lambda}$, $b^{-1}a$ and $s_i$ span a square for each $i \in \{1,\dots,n\}$. Thus we have a strip of squares joining the edge between $a$ and $b$ to the edge between $l a$ and $l b$ with the hyperplane $g^{-1} H$ running through the middle.  Hence $g^{-1} H$ crosses $l \sub{\Lambda} = g^{-1}h \sub{\Lambda}$ and by translating by $g$, $H$ crosses $h \sub{\Lambda}$.
\end{proof}

The  hierarchy structure on a graph product on $n$ vertices can be thought of as being built up in $n$ levels, with level $k$ consisting of the subgraphs with $k$ vertices. Whenever we build up to the next level in the hierarchy, we need to record precisely the geometry we have just added; any less will violate the uniqueness axiom, while any more  may violate hyperbolicity. When defining our spaces $C(g\Lambda)$, we therefore do not want to record any distance travelled in strict subgraphs of $\Lambda$. This leads us to the \emph{subgraph metric}:

\begin{defn}[Subgraph metric on a graph product]\label{defn:subgraph_metric}
Let $G_\Gamma$ be a graph product. Define $C(\Gamma)$ to be the graph whose vertices are elements of $G_\Gamma$ and where $g,h \in G_\Gamma$ are joined by an edge if there exists a proper subgraph $\Lambda \subsetneq \Gamma$ such that $g^{-1}h \in \sub{\Lambda}$, or if $g^{-1}h$ is an element of the generating set $S$ defined at the beginning of the section. We denote the distance in $C(\Gamma)$ by $\dist_\Gamma(\cdot,\cdot)$ and say $\dist_\Gamma(g,h)$ is the \emph{subgraph distance} between $g$ and $h$.  When $\Gamma$ is a single vertex $v$,  $C(\Gamma) = C(v)$ is the Cayley graph of the vertex group $G_v$ with respect to the finite generating set $S$. Otherwise, $\dist_\Gamma(e,g)$ is equal to the smallest $n$ such that  $g=\lambda_1 \dots \lambda_n $ with $\supp(\lambda_i)$ a proper subgraph of $\Gamma$ for each $i \in \{1,\dots,n\}$.
  
If $g = \lambda_1 \dots \lambda_n$ where $\supp(\lambda_i)$ is a proper subgraph of $\Gamma$ for each $i \in \{1,\dots,n\}$, then we call $\lambda_1 \dots \lambda_n$ a \emph{subgraph expression} for $g$. If $n = \dist_\Gamma(e,g)$, then  $\lambda_1 \dots \lambda_n$ is a \emph{reduced subgraph expression} for $g$. Note that when $\Gamma$ is a single vertex, there are no subgraph expressions.
\end{defn}

\begin{remark}
When $\Gamma$ has at least $2$ vertices, $S(\Gamma)$ is obtained from Cay($G_{\Gamma},S$)  by adding extra edges, where $S$ is the generating set defined at the beginning of the section. Likewise $C(\Gamma)$ is then obtained from $S(\Gamma)$ by adding even more edges. It therefore follows that $\dist_{\Gamma} \leq \dist_{syl} \leq \dist$, where $\dist$ is the word metric on $G_{\Gamma}$ induced by $S$. 
\end{remark}

In a reduced subgraph expression $g=\lambda_1 \dots \lambda_n$ we may assume   $\suffix_{\Lambda_{i+1}}(\lambda_{1}\dots\lambda_{i}) = e$ for each $i \in \{1,\dots,n-1\}$ by removing  any non-trivial suffix from the end of $\lambda_{1}\dots\lambda_{i}$ and attaching it to the beginning of $\lambda_{i+1}$. By repeating this procedure for each $i$ in ascending order and then writing reduced syllable expressions for each $\lambda_{i}$, we then obtain a reduced syllable expression for $g$.

\begin{lem}\label{lem:normal_subgraph_form}
If $\Gamma$ contains at least 2 vertices, then for each  $g \in G_\Gamma$, there exist $\lambda_1,\dots,\lambda_n \in G_{\Gamma}$ with $\supp(\lambda_i) = \Lambda_i \subsetneq \Gamma$ such that the following hold.
\begin{enumerate}
    \item $\lambda_1\dots\lambda_n$ is a reduced subgraph expression for $g$. \label{item:reduced_subgraph_expression}
    \item For each $i \in \{1,\dots, n-1\}$, $\suffix_{\Lambda_{i+1}}(\lambda_1\dots\lambda_i) = e$. \label{item:trivial_suffix}
    \item $|g|_{syl} = |\lambda_1\dots \lambda_n|_{syl}= \sum_{j=1}^{n} |\lambda_j|_{syl}$. \label{item:subgraph+syllable_expression}
\end{enumerate}
In particular, for each $x,y \in G_\Gamma$, there exists an $S(\Gamma)$--geodesic $\gamma$ connecting $x$ and $y$  such that if $\lambda_1\dots \lambda_n$ is the above reduced subgraph expression for $x^{-1}y$, then the element $x\lambda_1\dots \lambda_i$ is a vertex of $\gamma$ for each $i \in \{1,\dots,n\}$.
\end{lem}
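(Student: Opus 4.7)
My plan is to start with a reduced subgraph expression $g = \mu_1 \cdots \mu_n$, which exists by the very definition of $\dist_\Gamma(e,g) = n$, and modify it iteratively to enforce property (\ref{item:trivial_suffix}); properties (\ref{item:reduced_subgraph_expression}) and (\ref{item:subgraph+syllable_expression}) together with the geodesic assertion will then follow. I will process indices $i$ from $n-1$ down to $1$. At step $i$, if $s_i := \suffix_{\Lambda_{i+1}}(\mu_1 \cdots \mu_i) \neq e$, I perform the following move: using Theorem \ref{thm:graph_product_normal_form}, I rearrange the syllables of $\mu_1 \cdots \mu_i$ by commutations so that a reduced syllable expression of $s_i$ appears as a final block, writing $\mu_1 \cdots \mu_i = (a_1 a_2 \cdots a_i) \cdot s_i$, where $a_j$ is the product (in the original order) of those syllables of $\mu_j$ that were \emph{not} moved into the final $s_i$ block. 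I then redefine $\lambda_j := a_j$ for $j \leq i$ and $\lambda_{i+1} := s_i \mu_{i+1}$, leaving $\lambda_j = \mu_j$ for $j \geq i+2$.

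The main obstacle is checking that each iteration preserves reducedness. Each $a_j$ has support contained in the original $\Lambda_j$, so the supports stay proper subgraphs. By the maximality of $s_i$, combined with the fact that $\mu_1 \cdots \mu_i = p \cdot s_i$ is a syllable-sum preserving split with $p = a_1 \cdots a_i$, one deduces that $\suffix_{\Lambda_{i+1}}(p) = e$, so property (\ref{item:trivial_suffix}) at index $i$ holds after the move (the new $\Lambda_{i+1}$ is contained in the old one, so the suffix cannot grow). If some $a_j$ were trivial, then all syllables of $\mu_j$ would lie in $s_i \subseteq \langle \Lambda_{i+1} \rangle$ and would commute past each of $\mu_{j+1},\dots,\mu_i$; hence $\mu_j$ would commute with $\mu_{j+1},\dots,\mu_i$ and $\mu_j \mu_{i+1} \in \langle \Lambda_{i+1}\rangle$, yielding a subgraph expression for $g$ of length $n-1$ and contradicting reducedness. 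Processing in reverse order guarantees that for any already-established index $i' > i$, both the group element $\mu_1 \cdots \mu_{i'}$ and the subgraph $\Lambda_{i'+1}$ remain unchanged by the current move, so the equation $\suffix_{\Lambda_{i'+1}}(\mu_1 \cdots \mu_{i'}) = e$ persists.

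Once property (\ref{item:trivial_suffix}) holds at every index, property (\ref{item:subgraph+syllable_expression}) follows from the identity $|ab|_{syl} = |a|_{syl} + |b|_{syl}$ whenever $\suffix_{\supp(b)}(a) = e$, applied inductively to $\lambda_1 \cdots \lambda_n$. For the geodesic claim, concatenating reduced syllable expressions for each $\lambda_i$ produces, by property (\ref{item:subgraph+syllable_expression}), a reduced syllable expression for $g$ of total length $|g|_{syl}$. The edge path in $S(\Gamma)$ starting at $x$ and traversing the edges labelled by these successive syllables is then a geodesic from $x$ to $xg = y$, and by construction passes through the vertices $x\lambda_1 \cdots \lambda_i$ for each $i$, which handles the general $x,y$ case after setting $g = x^{-1}y$.
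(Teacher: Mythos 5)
Your single top-down pass is a genuinely different route from the paper's argument, which is an induction on $n$ that invokes the inductive hypothesis twice (once to put $\omega_1\cdots\omega_{n-1}$ in good form before touching $\Omega_n$, and once more after absorbing the suffix $\sigma$ into $\omega_n$). The iterative scheme is appealing, but it has a gap at exactly the step that drives it. At step $i$ you write $\mu_1\cdots\mu_i=(a_1\cdots a_i)\cdot s_i$ with each $a_j$ built from ``those syllables of $\mu_j$ that were not moved into the final $s_i$ block,'' and you describe the split as ``syllable-sum preserving with $p=a_1\cdots a_i$.'' This bookkeeping tacitly assumes that the syllables of $\mu_1\cdots\mu_i$ are exactly the disjoint union of the syllables of $\mu_1,\dots,\mu_i$, i.e.\ that $|\mu_1\cdots\mu_i|_{syl}=\sum_{j\le i}|\mu_j|_{syl}$. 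That is precisely property (\ref{item:subgraph+syllable_expression}) for the first $i$ factors, which you say you will ``then'' deduce, so the argument is circular. The assumption is not automatic for an arbitrary reduced subgraph expression: on the path graph with vertices $a,b,c$ and edges $a$-$b$, $b$-$c$ (vertex groups $\mathbb{Z}$), taking $\mu_1=s_as_b$ and $\mu_2=s_bs_c$ gives a reduced subgraph expression for $s_as_b^2s_c$ with $|\mu_1|_{syl}+|\mu_2|_{syl}=4>3=|\mu_1\mu_2|_{syl}$. You have not ruled out the same collapse occurring among $\mu_1,\dots,\mu_{n-1}$ when $n\ge 3$, which is exactly where your pass begins. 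The paper sidesteps this by applying the inductive hypothesis to $\omega_1\cdots\omega_{n-1}$ \emph{before} doing any syllable arithmetic, which secures the additivity needed to decompose $\sigma$ syllable-by-syllable. To repair your argument you would need a preliminary normalisation (for instance, repeatedly pushing any mergeable suffix syllable of some $\mu_j$ into the later factor $\mu_k$ to minimise $\sum_j|\mu_j|_{syl}$ over all reduced subgraph expressions for $g$, which terminates because the sum is a positive integer and does not change $n$) and then check that your move never increases $\sum_j|\lambda_j|_{syl}$ so the normalised state persists.

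A smaller imprecision in the same step: to show $a_j\neq e$ you assert that $\mu_j$ ``would commute with $\mu_{j+1},\dots,\mu_i$.'' The rearrangement only guarantees that the syllables of $\mu_j$ commute past the \emph{retained} syllables $a_{j+1},\dots,a_i$; they need not commute with syllables of $\mu_{j+1},\dots,\mu_i$ that also migrate into $s_i$. The contradiction is available more directly: if $a_j=e$, then $(a_1)\cdots(a_{j-1})(a_{j+1})\cdots(a_i)(s_i\mu_{i+1})(\mu_{i+2})\cdots(\mu_n)$ is already a subgraph expression for $g$ of length $n-1$, contradicting $\dist_\Gamma(e,g)=n$.
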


\begin{proof}
We begin by noting how the final conclusion of the lemma follows from the main conclusion. Let  $\lambda_1\dots\lambda_n$ be a reduced subgraph expression for $x^{-1}y$ that satisfies (\ref{item:subgraph+syllable_expression}). For each $i \in \{1,\dots,n\}$, let $s_1^i\dots s_{m_i}^i$ be a reduced syllable expression for $\lambda_i$. Since $|x^{-1}y|_{syl}=|\lambda_1\dots \lambda_n|_{syl} = \sum_{j=1}^{n} |\lambda_j|_{syl}$, it follows that $(s_1^1 \dots s_{m_1}^1) \dots (s_1^n\dots s_{m_n}^n)$ is a reduced syllable expression for $x^{-1}y$. Hence, there exists an $S(\Gamma)$--geodesic $\eta$ from $e$ to $x^{-1}y$ whose edges are labelled by $(s_1^1 \dots s_{m_1}^1) \dots (s_1^n\dots s_{m_n}^n)$, and this implies the element $\lambda_1\dots \lambda_i$ appears as a vertex of $\eta$  for each $i \in \{1,\dots,n\}$. Translating by $x$ gives $\gamma = x\eta$ as the desired geodesic.

We now prove we can find a reduced subgraph expression satisfying (\ref{item:trivial_suffix}) and (\ref{item:subgraph+syllable_expression}) for any element of $G_\Gamma$. Our proof proceeds by induction on $n = \dist_{\Gamma}(e,g)$. If $n = 1$, then $\supp(g)$ is a proper subgraph of $\Gamma$ and the conclusion is  trivially true.

Assume the lemma holds for all $h \in G_\Gamma$ with $\dist_\Gamma(e,h) \leq n-1$ and let $g \in G_\Gamma$ with $\dist_{\Gamma}(e,g)=n$.   Let $\omega_1\dots \omega_n$ be a reduced subgraph expression for $g$. Let $\Omega_i = \supp(\omega_i)$ for each $i \in \{1,\dots,n\}$.  By the induction hypothesis, we can assume $g_{0}=\omega_1\dots\omega_{n-1}$ satisfies the conclusion of the lemma. Hence,  $|\omega_1\dots \omega_{n-1}|_{syl}= \sum_{j=1}^{n-1} |\omega_j|_{syl}$ and $\suffix_{\Omega_{i+1}}(\omega_1\dots\omega_i) = e$  for  $i \in \{1,\dots, n-2\}$. 

Let $\sigma = \suffix_{\Omega_{n}}(\omega_1\dots\omega_{n-1})$. For each $i \in \{1,\dots,n-1\}$, let $s_1^i\dots s_{m_i}^i$ be a reduced syllable expression for $\omega_i$.  Now,  $(s_1^1 \dots s_{m_1}^1) \dots (s_1^{n}\dots s_{m_{n-1}}^{n-1})$ is a reduced syllable expression for $\omega_1\dots \omega_{n-1}$ as  $|\omega_1\dots \omega_{n-1}|_{syl}= \sum_{j=1}^{{n-1}} |\omega_j|_{syl}$. 
Thus, each syllable of $\sigma$ is a syllable of one of $\omega_1, \dots,\omega_{n-1}$. 
For each $i \in \{1,\dots, n-1\}$, let $j_1 < \dots < j_i$ be the elements of $\{1,\dots, m_i\}$ such that $s_{j_1}^i, \dots, s_{j_i}^i$ are the syllables of $\omega_i$ that are {not} syllables of $\sigma$. For $i \in \{1,\dots, n-1\}$, let $\omega'_i = s^i_{j_1} \dots s^i_{j_i}$. Thus, we have $\omega_1\dots\omega_{n-1} = \omega'_1\dots\omega'_{n-1} \sigma$ where $\suffix_{\Omega_n}(\omega'_1\dots\omega'_{n-1}) = e$.

Let $\omega'_n = \sigma \omega_n$. Then $\omega'_1\dots \omega'_{n-1}\omega'_n$ is a reduced subgraph expression for $g$ with $\supp(\omega'_n) = \Omega_n$ and $\suffix_{\Omega_n}(\omega'_1\dots\omega'_{n-1}) = e$. Let $g' = \omega'_1\dots\omega'_{n-1}$. 
Since $\omega'_1\dots \omega'_n$ is a reduced subgraph expression for $g$, then $\omega'_1\dots\omega'_{n-1}$ is a reduced subgraph expression for $g'$. 
Hence, $\dist_\Gamma(e,g') = n-1$ and the induction hypothesis says that there exists a reduced subgraph expression $\lambda_1\dots\lambda_{n-1}$ for $g'$ such that $\suffix_{\supp(\lambda_{i+1})}(\lambda_1\dots\lambda_{i}) = e$ for $i \in \{1,\dots, n-2\}$ and $|\lambda_1\dots\lambda_{n-1}|_{syl} = \sum_{j=1}^{n-1} |\lambda_j|_{syl}$. 
Further, $\suffix_{\Omega_n}(\lambda_1\dots\lambda_{n-1}) = e$ as $\lambda_1\dots\lambda_{n-1} = g' = \omega'_1\dots\omega'_{n-1}$.

Now let $\lambda_n = \omega'_n$ and $\Lambda_i = \supp(\lambda_i)$ for each $i \in \{1,\dots,n\}$. We verify that $\lambda_1,\dots,\lambda_n$ satisfies the conclusion of the lemma for $g$.
\begin{enumerate}
    \item $\lambda_1\dots \lambda_{n}$ is a reduced subgraph expression for $g$ as each $\Lambda_i = \supp(\lambda_i)$ is a proper subgraph of $\Gamma$ and $\dist_{\Gamma}(e,g) = n$.
    \item For each $i \in \{1,\dots,n-1\}$, the above shows $\suffix_{\Lambda_{i+1}}(\lambda_1\dots\lambda_{i}) = e$.
    \item We prove that writing each $\lambda_i$ in a reduced syllable form produces a reduced syllable form for the product $\lambda_1 \dots \lambda_n$. For each $i \in \{1,\dots,n\}$, let $t_1^i \dots t_{k_i}^i$ be a reduced syllable expression for $\lambda_i$. Since  $|\lambda_1\dots\lambda_{n-1}|_{syl} = \sum_{j=1}^{n-1} |\lambda_j|_{syl}$, we know $(t_1^1 \dots t_{k_1}^1) \dots (t_1^{n-1} \dots t_{k_{n-1}}^{n-1})$ is a reduced syllable expression for $\lambda_1\dots\lambda_{n-1}$. Therefore, if  $$(t_1^1 \dots t_{k_1}^1) \dots (t_1^{n}\dots t_{k_{n}}^{n})$$ is not a reduced syllable expression for $\lambda_1 \dots \lambda_n$, then Theorem \ref{thm:graph_product_normal_form} implies there must exist syllables $t_j^i$ of $\lambda_1\dots\lambda_{n-1}$ and $t_\ell^n$ of $\lambda_n$ such that $\supp(t_j^i) = \supp(t_\ell^n)$ and $t_j^i$ can be moved to be adjacent to $t_\ell^n$ using a number of commutation relations. However, this implies $t_j^i$ is a suffix for $ \lambda_1 \dots \lambda_{n-1}$ with support in $\Lambda_n$. This is impossible as $\suffix_{\Lambda_n}(\lambda_1\dots \lambda_{n-1}) = e$. Therefore, $(t_1^1 \dots t_{k_1}^1) \dots (t_1^{n}\dots t_{k_{n}}^{n})$ must be a reduced syllable expression for $\lambda_1\dots\lambda_{n}$ and hence $|\lambda_1\dots\lambda_{n}|_{syl} = |\lambda_1|_{syl} + \dots + |\lambda_{n}|_{syl}$ as desired.\qedhere
\end{enumerate}
\end{proof}

We can now define the geodesic spaces associated to elements of the index set. In the next section, we will show that they are hyperbolic.

\begin{defn}
 Let $G_\Gamma$ be a graph product. For each $g \in G_\Gamma$ and $\Lambda \subseteq \Gamma$, let $C(g\Lambda)$ denote the graph whose vertices are elements of the coset $g\sub{\Lambda}$ and where $gx$ and $gy$ are joined by an edge if $x$ and $y$ are joined by an edge in $C(\Lambda)$. The metric on $C(g\Lambda)$ is denoted $\dist_{g\Lambda}(\cdot,\cdot)$.
\end{defn}

\begin{remark}\label{rem:join_implies_bounded_diameter}
If $\Lambda \subseteq \Gamma$ is a join $\Lambda = \Lambda_1 \bowtie \Lambda_2$, then every element $\lambda \in \sub{\Lambda}$ can be written as $\lambda = \lambda_1 \lambda_2$ where $\lambda_1 \in \sub{\Lambda_1}$ and $\lambda_2 \in \sub{\Lambda_2}$. Since $\Lambda_1$ and $\Lambda_2$ are proper subgraphs of $\Lambda$, this implies $C(\Lambda)$, and therefore $C(g\Lambda)$, has diameter at most $2$ whenever $\Lambda$ splits as a join.
\end{remark}

We now wish to use our gate map from Proposition \ref{prop:gates_to_subgroups} to define projections for our hierarchy structure. Since $\mf{S}_\Gamma$ is the set of parallelism classes of cosets of graphical subgroups, we must verify that the gate map is  well-behaved  under parallelism.

\begin{lem}[{Gates to parallelism classes are well defined}]\label{lem:C_spaces_are_well_defined}
If $g\Lambda \parallel h\Lambda$, then for all $x \in G_\Gamma$, $\gate_{h\Lambda}(x) = \gate_{h\Lambda} \circ \gate_{g\Lambda}(x)$. In particular, if $g\Lambda \parallel h\Lambda$, then $\gate_{h\Lambda}\vert_{g\sub{\Lambda}} \colon g\sub{\Lambda} \to h\sub{\Lambda}$ agrees with the isometry of $S(\Gamma)$ induced by the element $hpg^{-1}$, where $p = \prefix_{\Lambda}(h^{-1}g)$.
\end{lem}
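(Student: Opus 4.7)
The plan is to exploit hyperplane combinatorics: by Proposition~\ref{prop:hyperplanes_cross_parallel}, the parallel cosets $g\langle\Lambda\rangle$ and $h\langle\Lambda\rangle$ are crossed by exactly the same hyperplanes of $S(\Gamma)$, so any obstruction to the equality $\gate_{h\Lambda}(x)=\gate_{h\Lambda}\circ\gate_{g\Lambda}(x)$ would produce a hyperplane that both crosses and fails to cross $g\langle\Lambda\rangle$. The ``in particular'' statement will then follow by decomposing $h^{-1}g\in\langle\st(\Lambda)\rangle$ as a commuting product of an element of $\langle\Lambda\rangle$ with an element of $\langle\lk(\Lambda)\rangle$ and applying the algebraic description of gates from Lemma~\ref{lem:head_lemma}.

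For the composition identity, I would set $y=\gate_{g\Lambda}(x)$ and $z=\gate_{h\Lambda}(y)$ and argue by contradiction that $z=\gate_{h\Lambda}(x)$. Otherwise, writing $z'=\gate_{h\Lambda}(x)$, some hyperplane $H$ separates the distinct points $z,z'\in h\langle\Lambda\rangle$, so $H$ must cross $h\langle\Lambda\rangle$ and, by Proposition~\ref{prop:hyperplanes_cross_parallel}, must also cross $g\langle\Lambda\rangle$. Applying Proposition~\ref{prop:gates_to_subgroups}(\ref{gate:hyperplanes_separating_pairs}) to the gate $\gate_{h\Lambda}$ and the pair $x,y$ would then force $H$ to separate $x$ from $y$; invoking Proposition~\ref{prop:gates_to_subgroups}(\ref{gate:hyperplanes_separating_image}) next shows that $H$ separates $x$ from $g\langle\Lambda\rangle$, contradicting the fact that $H$ crosses $g\langle\Lambda\rangle$. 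The main subtlety here is coordinating the two hyperplane properties of the gate with the parallelism hypothesis; Proposition~\ref{prop:hyperplanes_cross_parallel} is the crucial link that converts a hyperplane crossing $h\langle\Lambda\rangle$ into one crossing $g\langle\Lambda\rangle$.

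For the explicit description, parallelism gives $h^{-1}g\in\langle\st(\Lambda)\rangle\cong\langle\Lambda\rangle\times\langle\lk(\Lambda)\rangle$, and I would write $h^{-1}g=p\ell$ with $p\in\langle\Lambda\rangle$, $\ell\in\langle\lk(\Lambda)\rangle$, and $p\ell=\ell p$. Since $\supp(p)\subseteq\Lambda$ and $\supp(\ell)\subseteq\lk(\Lambda)$ are disjoint, concatenating reduced syllable expressions for $p$ and $\ell$ yields a reduced syllable expression for $h^{-1}g$, giving $|h^{-1}g|_{syl}=|p|_{syl}+|\ell|_{syl}$, so $p\in\prefix(h^{-1}g)\cap\langle\Lambda\rangle$; moreover, any non-identity prefix of $\ell$ has support in $\lk(\Lambda)$ and hence cannot lie in $\langle\Lambda\rangle$. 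Therefore $p=\prefix_{\Lambda}(h^{-1}g)$. Now for any $g\lambda\in g\langle\Lambda\rangle$ with $\lambda\in\langle\Lambda\rangle$, Proposition~\ref{prop:gates_to_subgroups}(\ref{gate:equivariance}) together with the commutativity of $\ell$ with $\langle\Lambda\rangle$ yields
\[\gate_{h\Lambda}(g\lambda)=h\cdot\gate_{\Lambda}(h^{-1}g\lambda)=h\cdot\gate_{\Lambda}((p\lambda)\ell).\]
The same disjoint-support argument, now applied to the decomposition $(p\lambda)\ell$, identifies $\prefix_{\Lambda}((p\lambda)\ell)$ with $p\lambda$, so Lemma~\ref{lem:head_lemma} gives $\gate_{\Lambda}((p\lambda)\ell)=p\lambda$. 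Hence $\gate_{h\Lambda}(g\lambda)=hp\lambda=(hpg^{-1})\cdot g\lambda$, exhibiting the restricted gate as left multiplication by $hpg^{-1}$.
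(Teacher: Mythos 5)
Your proof is correct and follows essentially the same route as the paper: the composition identity is established by the same hyperplane contradiction argument using Proposition~\ref{prop:hyperplanes_cross_parallel} together with parts (\ref{gate:hyperplanes_separating_image}) and (\ref{gate:hyperplanes_separating_pairs}) of Proposition~\ref{prop:gates_to_subgroups}, and the explicit formula follows from the same decomposition $h^{-1}g = p\ell$ with $p\in\langle\Lambda\rangle$, $\ell\in\langle\lk(\Lambda)\rangle$ fed into the prefix description of the gate. The only difference is that you spell out in more detail why $p=\prefix_\Lambda(h^{-1}g)$, which the paper leaves implicit.
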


\begin{proof}
Suppose that $\gate_{h\Lambda} (x) \neq \gate_{h\Lambda}(\gate_{g\Lambda}(x))$. There must then exist a hyperplane $H$ separating $\gate_{h\Lambda} (x)$ and $\gate_{h\Lambda}(\gate_{g\Lambda}(x))$ in $S(\Gamma)$.  By (\ref{gate:hyperplanes_separating_image}) and (\ref{gate:hyperplanes_separating_pairs}) of  Proposition \ref{prop:gates_to_subgroups}, $H$ separates $x$ and $\gate_{g\Lambda}(x)$ and thus cannot cross $g\sub{\Lambda}$. However, $H$ crosses $h\langle\Lambda\rangle$, and so must
cross $g\sub{\Lambda}$  by Proposition \ref{prop:hyperplanes_cross_parallel}. As this is a contradiction, we must have that $\gate_{h\Lambda} (x) = \gate_{h\Lambda}(\gate_{g\Lambda}(x))$.

Note, if $g\lambda \in g\langle\Lambda\rangle$, then  equivariance (Proposition \ref{prop:gates_to_subgroups}(\ref{gate:equivariance})) plus the prefix description of the gate map (Lemma \ref{lem:head_lemma}) imply
\[\gate_{h\Lambda}(g\lambda) = h \cdot \gate_{\Lambda}(h^{-1}g\lambda) = h \cdot \prefix_{\Lambda}(h^{-1}g\lambda).\]
Since $h^{-1}g \in \langle\st(\Lambda)\rangle$, we can write $h^{-1}g = pl$, where $p \in \langle\Lambda\rangle$ and $l \in \langle\lk(\Lambda)\rangle$. Therefore $\gate_{h\Lambda}(g\lambda) = h\cdot\prefix_{\Lambda}(pl\lambda) = hp\lambda$, that is, $\gate_{h\Lambda}\vert_{g\sub{\Lambda}}$ agrees with the isometry induced by $hpg^{-1}$.
\end{proof}

Since Cay($G_{\Gamma},S$), $S(\Gamma)$ and $C(\Gamma)$ differ only in that the latter two have extra edges, we can easily promote our gate map to a projection map.

\begin{defn}\label{defn:projections}
  For all $\Lambda \subseteq \Gamma$ and $g \in G_\Gamma$, define $\pi_{g\Lambda} \colon G_\Gamma \to C(g\Lambda)$ by $i_{g\Lambda} \circ \gate_{g\Lambda}$ where $i_{g\Lambda}$ is the inclusion map from $g\sub{\Lambda}$ into $C(g\Lambda)$. 
\end{defn}

\begin{remark}\label{rem:prefix_description_of_projection}
Combining the prefix description of the gate map (Lemma \ref{lem:head_lemma})  with equivariance (Proposition \ref{prop:gates_to_subgroups}.(\ref{gate:equivariance})), we have that $\gate_{g\Lambda}(x) = g \cdot \prefix_\Lambda(g^{-1}x)$ for all $x \in G_\Gamma$. Since the only difference between $\pi_{g\Lambda}$ and $\gate_{g\Lambda}$ is the metric on the image, this means $\pi_{g\Lambda}(x) =g \cdot \prefix_\Lambda(g^{-1}x)$ as well. 
\end{remark}

Note that any coset of $\langle\Lambda\rangle$ can be expressed in the form $g\langle\Lambda\rangle$ where $\suffix_{\Lambda}(g) = e$ (and thus $\prefix_{\Lambda}(g^{-1})=e$). Indeed, let $h\langle\Lambda\rangle$ be a coset of $\sub{\Lambda}$, and suppose $\suffix_{\Lambda}(h) = \lambda$. Then we can write $h = g\lambda$, where $\suffix_{\Lambda}(g) = e$. It therefore follows that $h\langle\Lambda\rangle = g\lambda\langle\Lambda\rangle = g\langle\Lambda\rangle$. 
The next proposition shows that choosing the representative of $g \sub{\Lambda}$ in this way ensures that $\prefix_{\Lambda}(g^{-1}x)$ contains only syllables of $x$. This is particularly helpful when considering the prefix description of $\pi_{g\Lambda}(x)$. 

\begin{prop}\label{prop:syllables_of_gate_ are_syllables_of_image}
Let $\Lambda \subseteq \Gamma$ and let $g\in G_\Gamma$. Then for all $x,y\in G_\Gamma$, every syllable of $(\gate_{g\Lambda}(x))^{-1} \cdot \gate_{g\Lambda}(y)$ is a syllable of $x^{-1}y$. In particular, if $g$ is the representative of $g \sub{\Lambda}$ with $\suffix_\Lambda(g) = e$ and $h \in G_\Gamma$, then every syllable of $\prefix_\Lambda(g^{-1}h) = \gate_{\Lambda}(g^{-1}h)$ is a syllable of $h$.
\end{prop}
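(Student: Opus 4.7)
The plan is to first prove a bridge-type lemma: for any $x, y \in G_\Gamma$ and coset $g\sub{\Lambda}$, writing $a = \gate_{g\Lambda}(x)$ and $b = \gate_{g\Lambda}(y)$, some $S(\Gamma)$--geodesic from $x$ to $y$ passes through both $a$ and $b$. Once this is established, the proposition follows immediately: the middle segment of such a geodesic reads off a reduced syllable expression for $a^{-1}b = (\gate_{g\Lambda}(x))^{-1}\gate_{g\Lambda}(y)$ that appears as a consecutive sub-expression of a reduced syllable expression for $x^{-1}y$. Since Theorem~\ref{thm:graph_product_normal_form} guarantees the multiset of syllables is a well-defined invariant of an element, every syllable of $a^{-1}b$ is a syllable of $x^{-1}y$.

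To prove the bridge lemma, I would exploit the hyperplane description of the syllable metric: by Proposition~\ref{prop:hyperplanes_properties}(\ref{hyperplanes:geodesic_iff_cross_once}) and Remark~\ref{remark:hyperplanes_cross_loops_twice}, $\dist_{syl}(u,v)$ equals the number of hyperplanes separating $u$ from $v$. I would then partition the hyperplanes separating $x$ from $y$ into three classes: $(1)$ those separating $x$ from $a$, $(2)$ those separating $a$ from $b$, and $(3)$ those separating $b$ from $y$. The key facts from Proposition~\ref{prop:gates_to_subgroups}(\ref{gate:hyperplanes_separating_image}) and Proposition~\ref{prop:gates_to_subgroups}(\ref{gate:hyperplanes_separating_pairs}) are that a hyperplane separating $x$ from $a$ cannot cross $g\sub{\Lambda}$, while a hyperplane separating $a$ from $b$ must cross $g\sub{\Lambda}$ and keep $x, a$ (respectively $y, b$) on a common side. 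These observations force pairwise disjointness of the three classes; the only subtle case, $(1) \cap (3)$, is handled by observing that such a hyperplane would have $g\sub{\Lambda}$ entirely on one side, forcing both $x$ and $y$ to the opposite side and contradicting that $H$ separates them. Exhaustiveness holds because any hyperplane separating $x$ from $y$ that does not cross $g\sub{\Lambda}$ has $g\sub{\Lambda}$ (hence both $a$ and $b$) in a single connected component of $S(\Gamma)\smallsetminus H$, and so separates precisely one of $x, y$ from its gate. Summing cardinalities yields
\[\dist_{syl}(x,y) = \dist_{syl}(x,a) + \dist_{syl}(a,b) + \dist_{syl}(b,y),\]
so concatenating any $S(\Gamma)$--geodesics $x \to a$, $a \to b$, $b \to y$ produces an $S(\Gamma)$--geodesic of the required form.

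For the ``in particular'' statement, I would apply the main conclusion with $x = e$ and $y = h$. The hypothesis $\suffix_\Lambda(g) = e$ combined with Remark~\ref{rem:prefix_description_of_projection} yields $\gate_{g\Lambda}(e) = g \cdot \prefix_\Lambda(g^{-1}) = g$, so $(\gate_{g\Lambda}(e))^{-1}\gate_{g\Lambda}(h) = \prefix_\Lambda(g^{-1}h)$, and the main statement shows each of its syllables is a syllable of $h$. The principal obstacle is the careful bookkeeping in the three-way partition of hyperplanes, particularly verifying disjointness of classes $(1)$ and $(3)$; once that is in place, everything else reduces to direct applications of the cited results.
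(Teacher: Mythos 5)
The bridge lemma you propose---that the gates $a = \gate_{g\Lambda}(x)$ and $b = \gate_{g\Lambda}(y)$ lie on a common $S(\Gamma)$--geodesic from $x$ to $y$---is false, and it already fails when $S(\Gamma)$ is a genuine CAT(0) cube complex. Take $\Gamma$ a single edge $\{u,v\}$ with $G_u = G_v = \mathbb{Z}/2\mathbb{Z}$, so $S(\Gamma)$ is a square with vertices $e,s,t,st$. With $\Lambda = \{u\}$, $g=e$, $x = t$, $y = st$, one computes $a = e$, $b = s$, so $\dist_{syl}(x,a)+\dist_{syl}(a,b)+\dist_{syl}(b,y) = 3$, whereas $\dist_{syl}(x,y) = |t^{-1}st|_{syl} = |s|_{syl} = 1$. (The proposition itself is unharmed: $a^{-1}b = s$ is a syllable of $x^{-1}y = s$.) The gap is precisely the step ``summing cardinalities yields $\dist_{syl}(x,y) = \dist_{syl}(x,a)+\dist_{syl}(a,b)+\dist_{syl}(b,y)$.'' Your three classes do partition the hyperplanes separating $x$ from $y$, so their cardinalities sum to $\dist_{syl}(x,y)$; but the cardinality of class (1) is the number of hyperplanes separating $x$ from $a$ \emph{and also} separating $x$ from $y$, which is in general strictly smaller than $\dist_{syl}(x,a)$. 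In the example the $G_v$--labelled hyperplane separates $x$ from $a$ but leaves $x$ and $y$ on the same side, so it never enters class (1). There is a second, subordinate issue: the disjointness argument for $(1)\cap(3)$ reasons that both $x$ and $y$ land on ``the opposite side'' of $g\sub{\Lambda}$, implicitly assuming the hyperplane complement has exactly two components; this already fails once a vertex group has order at least three.

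The paper's proof avoids these pitfalls by not attempting to thread one geodesic through both gates. It fixes an $S(\Gamma)$--geodesic $\eta$ from $a$ to $b$ and a separate geodesic $\gamma$ from $x$ to $y$, uses Proposition~\ref{prop:gates_to_subgroups}(\ref{gate:hyperplanes_separating_pairs}) to see that each hyperplane $H_i$ crossing $\eta$ also crosses $\gamma$, and then---the step with no counterpart in your sketch---argues that the edge of $\gamma$ dual to $H_i$ carries the \emph{same element} of the vertex group, not merely an element of the same vertex group, because otherwise $H_i$ would branch through a triangle and a branch of it would have to cross a geodesic from $x$ to $a$ or $y$ to $b$, contradicting Proposition~\ref{prop:gates_to_subgroups}(\ref{gate:hyperplanes_separating_image}). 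That label-tracking argument is what carries the proof, and it is entirely independent of whether $a$ and $b$ sit on a geodesic from $x$ to $y$.
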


\begin{proof}
Let $x,y \in G_\Gamma$, then let $p_x = \gate_{g\Lambda}(x)$ and $p_y = \gate_{g\Lambda}(y)$.  Let $\eta$ be an $S(\Gamma)$--geodesic connecting $p_x$ and $p_y$ and let $\gamma$ be an $S(\Gamma)$--geodesic connecting $x$ and $y$.  Let $s_1,\dots,s_n$ be the elements of the vertex groups of $G_\Gamma$ that label the edges of $\eta$. This means $s_1,\dots,s_n$ are the syllables of $p_x^{-1}p_y$. For each $i \in \{1,\dots,n\}$, let $H_i$ be the hyperplane  dual to the edge of $\eta$ that is labelled by $s_i$ and let $v_{i}$ be the vertex of $\Gamma$ such that $s_i \in G_{v_{i}}$.

Since each $H_i$ separates $\gate_{g\Lambda}(x)$ and $\gate_{g\Lambda}(y)$, each $H_i$ must also cross $\gamma$  by Proposition \ref{prop:hyperplanes_properties}(\ref{hyperplanes:geodesic_iff_cross_once}) and Proposition \ref{prop:gates_to_subgroups}(\ref{gate:hyperplanes_separating_pairs}). For $i \in \{1,\dots,n\}$, let $E_i$ be the edge of $\gamma$ dual to $H_i$.  Note, every edge dual to $H_i$ is labelled by an element of the vertex group $G_{v_i}$, but not necessarily by the same element of $G_{v_i}$. 

If $E_i$  is not labelled by $s_i \in G_{v_i}$, then the hyperplane $H_i$ must encounter a triangle of $S(\Gamma)$ between $\eta$ and $\gamma$. This creates a branch of the hyperplane $H_i$ that cannot cross either $\eta$ or $\gamma$ by Proposition \ref{prop:hyperplanes_properties}(\ref{hyperplanes:geodesic_iff_cross_once}). Thus, this branch must cross either an $S(\Gamma)$--geodesic connecting $x$ and $p_x$ or an $S(\Gamma)$--geodesic connecting $y$ and $p_y$; see Figure \ref{fig:branching_hyperplane}. 
Without loss of generality, assume $H_i$ crosses an $S(\Gamma)$--geodesic connecting $x$ and $p_x = \gate_{g\Lambda}(x)$.
\begin{figure}[ht]
     \centering
     \def\svgscale{.7}
     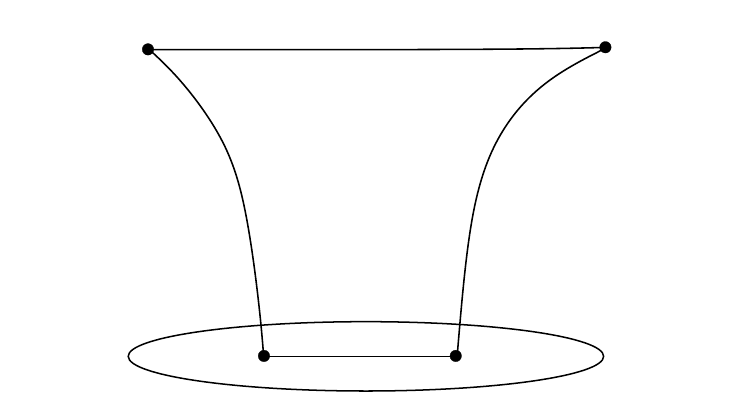
     \caption{If the the hyperplane $H_i$ encounters a triangle of $S(\Gamma)$ between $\eta$ and $\gamma$, then a branch of $H_i$ must cross an $S(\Gamma)$--geodesic from $x$ to $p_x$ (shown) or from $y$ to $p_y$. }
     \label{fig:branching_hyperplane}
 \end{figure}
This means $H_i$ separates $x$ from $\gate_{g\Lambda}(x)$, and thus $H_i$ must separate $x$ from all of $g \sub{\Lambda}$ (Proposition \ref{prop:hyperplanes_properties}(\ref{gate:hyperplanes_separating_image})). However, this is impossible as $H_i$ crosses $g\sub{\Lambda}$. Therefore $H_i$ cannot encounter a triangle between $\eta$ and $\gamma$, and $E_i$ must therefore be labelled by the element $s_i$. Since the elements labelling the edges of $\gamma$ are the syllables of $x^{-1}y$, this implies every syllable of $p_x^{-1} p_y$ is also a syllable of $x^{-1} y$.

For the final clause of the proposition, note that $\suffix_\Lambda(g) = e$ implies $\gate_\Lambda(g^{-1}) = \prefix_{\Lambda}(g^{-1}) = e$. Thus, we can apply the above with $x = g^{-1}$ and $y= g^{-1}h$ to conclude that every syllable of $(\gate_{\Lambda}(g^{-1}))^{-1}\gate_{\Lambda}(g^{-1}h) = \gate_{\Lambda}(g^{-1}h)$ is also a syllable of $(g^{-1})^{-1} g^{-1} h = h$.
\end{proof} 

Given $h,k \in G_{\Gamma}$, we shall employ a common abuse of notation by using $\dist_{g\Lambda}(h,k)$ to denote $\dist_{g\Lambda}(\pi_{g\Lambda}(h),\pi_{g\Lambda}(k))$. We can now prove our first HHS axiom.

\begin{lem}[Projections]\label{lem:projections}
For each  $g \in G_\Gamma$ and $\Lambda \subseteq \Gamma$, the projection $\pi_{g\Lambda}$ is $(1,0)$--coarsely Lipschitz.
\end{lem}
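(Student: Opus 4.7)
The plan is to reduce the claim to the case of adjacent vertices in the Cayley graph of $G_\Gamma$, and then invoke Proposition \ref{prop:syllables_of_gate_ are_syllables_of_image} to show that the gate of such a pair either collapses to a point or moves by a single vertex-group element. Since any path in Cay($G_\Gamma, S$) of length $n$ may be broken into $n$ consecutive unit steps, it suffices to show that $\dist_{g\Lambda}(\pi_{g\Lambda}(x), \pi_{g\Lambda}(y)) \leq 1$ whenever $\dist(x,y) = 1$. Set $s = x^{-1}y \in S$, let $v \in V(\Gamma)$ be the vertex with $s \in G_v$, and write $p = \gate_{g\Lambda}(x)$, $q = \gate_{g\Lambda}(y)$, both of which lie in $g\langle \Lambda\rangle$.

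By Proposition \ref{prop:syllables_of_gate_ are_syllables_of_image}, every syllable of $p^{-1}q$ is a syllable of $x^{-1}y = s$. Since $s$ is itself a single syllable (it lies in the vertex group $G_v$), this forces $p^{-1}q \in \{e, s\}$. If $p = q$, then $\pi_{g\Lambda}(x) = \pi_{g\Lambda}(y)$ and there is nothing to prove. Otherwise $p^{-1}q = s$, and since $p, q \in g\langle\Lambda\rangle$ we have $s \in \langle \Lambda\rangle$, so $v \in V(\Lambda)$. When $|V(\Lambda)| \geq 2$, the vertex $\{v\}$ is a proper subgraph of $\Lambda$ and $p^{-1}q \in \langle v\rangle$, so $p$ and $q$ are joined by an edge in $C(g\Lambda)$ by the definition of the subgraph metric; when $|V(\Lambda)| = 1$ we have $\Lambda = \{v\}$ and $C(g\Lambda)$ is a coset of the Cayley graph of $G_v$ with respect to $S_v$, so $s \in S \cap G_v = S_v$ again yields an edge between $p$ and $q$. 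Either way $\dist_{g\Lambda}(\pi_{g\Lambda}(x), \pi_{g\Lambda}(y)) \leq 1$, and chaining across a word-geodesic completes the proof.

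The only nontrivial ingredient is Proposition \ref{prop:syllables_of_gate_ are_syllables_of_image}, which is already proved; the rest is a short case analysis handling the distinction between $\Lambda$ being a single vertex (where $C(g\Lambda)$ is a Cayley graph using $S$) and $\Lambda$ containing at least two vertices (where the subgraph edges subsume all vertex-group edges). I do not expect any genuine obstacle beyond keeping this small case distinction straight.
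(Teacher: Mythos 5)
Your proof is correct, and it takes a genuinely different route from the paper's. The paper handles arbitrary $x,y$ directly and splits into two cases backed by different machinery: when $\Lambda$ is a single vertex it invokes Proposition \ref{prop:syllables_of_gate_ are_syllables_of_image} together with Corollary \ref{cor:reduced_syllable_expressions_minimize_word_length} (so that $|x^{-1}y| = \sum |s_i| \geq |p_x^{-1}p_y|$), and when $|V(\Lambda)| \geq 2$ it instead uses Proposition \ref{prop:gates_to_subgroups}(\ref{gate:distance_non-increasing}) — the gate is $1$--Lipschitz for the syllable metric — combined with the chain $\dist_{g\Lambda} \leq \dist_{syl} \leq \dist$. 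You instead reduce to the unit-distance case via the triangle inequality for $\dist_{g\Lambda}$ and then apply Proposition \ref{prop:syllables_of_gate_ are_syllables_of_image} uniformly: since $x^{-1}y \in S$ is a single syllable, any nontrivial reduced syllable expression for $p^{-1}q$ has exactly one syllable, and that syllable must be $s$, so $p^{-1}q \in \{e,s\}$, after which an edge of $C(g\Lambda)$ is read off directly from the definition in either case of $|V(\Lambda)|$. The reduction to unit steps buys you the ability to bypass both Corollary \ref{cor:reduced_syllable_expressions_minimize_word_length} and the gate's syllable-Lipschitz property, replacing them with a one-line inspection of the edge set of $C(g\Lambda)$; the price is the (easy but necessary) remark that a reduced syllable expression cannot repeat the same vertex group in consecutive terms, which is what forces $p^{-1}q$ to have at most one syllable.
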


\begin{proof}
We want to show that $\dist_{g\Lambda}(x,y) \leq \dist(x,y)$ for all $x,y \in G_{\Gamma}$. First assume $ \Lambda$ consists of a single vertex $v$.  Let $p_x$ and $p_y$ be $ \gate_{g\Lambda}(x) = \pi_{g\Lambda}(x)$ and  $\gate_{g\Lambda}(y) = \pi_{g\Lambda}(y)$ respectively. Since $\Lambda$ is the single vertex $v$, $C(\Lambda)$ is the Cayley graph of $G_v$ with respect to our fixed finite generating set, and $C(g\Lambda)$ is a coset of $C(\Lambda)$. Thus, it suffices to prove  $|p_x^{-1}p_y| $ is bounded above by $|x^{-1}y|$, where $| \cdot |$ is the word length on $G_\Gamma$ with respect to the generating set $S$ defined at the beginning of the section.

Let $s = p_x^{-1}p_y \in G_{v}$. By Proposition \ref{prop:syllables_of_gate_ are_syllables_of_image}, $s$ must be a syllable of $x^{-1}y$, that is, $s$ appears in a reduced syllable expression for $x^{-1}y$. Recall, if $s_1 \dots s_n$ is a reduced syllable expression for $x^{-1}y$, then $|x^{-1}y| = \sum_{i=1}^n |s_i|$ (Corollary \ref{cor:reduced_syllable_expressions_minimize_word_length}). Thus $|x^{-1}y| \geq |s| = |p_x^{-1}p_y|$.

Now assume $\Lambda$ contains at least $2$ vertices. By Proposition \ref{prop:gates_to_subgroups}(\ref{gate:distance_non-increasing}), we have \[\dist_{syl}(\gate_{g\Lambda}(x),\gate_{g\Lambda}(y)) \leq \dist_{syl}(x,y) \leq \dist(x,y).\] Furthermore, 
$C(g\Lambda)$ is obtained from $S(g\Lambda)$ by adding edges as $\Lambda$ contains at least two vertices. Thus we have 
\[\dist_{g\Lambda}(x,y) \leq \dist_{syl}(\gate_{g\Lambda}(x),\gate_{g\Lambda}(y)) \leq \dist_{syl}(x,y) \leq \dist(x,y). \qedhere\]
\end{proof}

Given an $S(\Gamma)$--geodesic $\gamma$, there is a natural order on its vertices which arises from orienting $\gamma$. The distances between the vertices of $\gamma$ under the projection $\pi_{g\Lambda}$ then satisfy the following monotonicity property with respect to this order.

\begin{lem}[Subgraph distance along $S(\Gamma)$--geodesics]\label{lem:subgraph_distance_along_geodesics}
Let $\gamma$ be an $S(\Gamma)$--geodesic connecting two elements $x,y \in G_\Gamma$. For each vertex $q$ of $\gamma$, each element $g \in G_{\Gamma}$, and each subgraph $\Lambda \subseteq \Gamma$, we have \[\dist_{g\Lambda}(x,q) \leq \dist_{g\Lambda}(x,y) \,\,\text{ and }\,\,\, \dist_{g\Lambda}(q,y) \leq \dist_{g\Lambda}(x,y).\]
\end{lem}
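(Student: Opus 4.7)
The plan is to reduce the statement to an assertion about the subgraph metric on $\sub{\Lambda}$ and then combine a hyperplane count with Lemma~\ref{lem:normal_subgraph_form}. By equivariance of the gate map (Proposition~\ref{prop:gates_to_subgroups}(\ref{gate:equivariance})), I may assume $g=e$. Writing $p_x=\gate_\Lambda(x)$, $p_q=\gate_\Lambda(q)$, $p_y=\gate_\Lambda(y)$ and left-translating by $p_x^{-1}$, it suffices to prove $\dist_\Lambda(e,a)\leq\dist_\Lambda(e,c)$ and $\dist_\Lambda(e,a^{-1}c)\leq\dist_\Lambda(e,c)$, where $a=p_x^{-1}p_q$ and $c=p_x^{-1}p_y$ both lie in $\sub{\Lambda}$.

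The preparatory step is to establish $|c|_{syl}=|a|_{syl}+|a^{-1}c|_{syl}$, so that $a\in\prefix(c)$ and $a^{-1}c\in\suffix(c)$. Combining parts~(\ref{gate:hyperplanes_separating_image}) and~(\ref{gate:hyperplanes_separating_pairs}) of Proposition~\ref{prop:gates_to_subgroups}, a hyperplane of $S(\Gamma)$ separates $p_x$ from $p_y$ if and only if it crosses $\sub{\Lambda}$ and separates $x$ from $y$, with the analogous characterisations for the pairs $(p_x,p_q)$ and $(p_q,p_y)$. Since $q$ lies on an $S(\Gamma)$-geodesic from $x$ to $y$, Proposition~\ref{prop:hyperplanes_properties}(\ref{hyperplanes:geodesic_iff_cross_once}) splits the hyperplanes separating $x$ from $y$ into the disjoint union of those separating $(x,q)$ and those separating $(q,y)$; the same partition therefore holds across $p_q$ for the hyperplanes separating $p_x$ from $p_y$, and since syllable distance equals the number of separating hyperplanes, the desired equality follows. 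When $\Lambda=\{v\}$ is a single vertex, only one hyperplane crosses $g\sub{\Lambda}$, so the count forces $p_q\in\{p_x,p_y\}$ and yields both inequalities immediately.

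For the remaining case where $\Lambda$ has at least two vertices, the proof reduces to showing that if $p,c\in\sub{\Lambda}$ with $p\in\prefix(c)$, then $\dist_\Lambda(e,p)\leq\dist_\Lambda(e,c)$ (the suffix case being symmetric via inversion). Apply Lemma~\ref{lem:normal_subgraph_form} to $c$ to obtain a reduced subgraph expression $c=\mu_1\cdots\mu_n$ with $n=\dist_\Lambda(e,c)$ and $|c|_{syl}=\sum_i|\mu_i|_{syl}$; the latter ensures that concatenating reduced syllable expressions for the $\mu_i$'s is itself a reduced syllable expression for $c$, so each syllable of $c$ lies in a unique $\mu_i$. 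Simultaneously, the factorisation $c=p\cdot s$ (with $s=p^{-1}c$) provides a second reduced syllable expression for $c$, labelling each syllable as belonging to $p$ or to $s$. For each $i$, define $p_i\in\sub{\supp(\mu_i)}$ to be the product of the $p$-syllables of $\mu_i$ taken in the order in which they appear within $\mu_i$; since $\supp(p_i)\subseteq\supp(\mu_i)\subsetneq\Lambda$, showing that $p_1\cdots p_n=p$ yields a subgraph expression for $p$ of length $n$, giving $\dist_\Lambda(e,p)\leq n=\dist_\Lambda(e,c)$. The main obstacle is verifying this identity: by Theorem~\ref{thm:graph_product_normal_form}, the two reduced syllable expressions for $c$ differ by a sequence of commutations of adjacent commuting syllables, and restricted to the subsequence of $p$-syllables each such commutation either swaps two adjacent (hence commuting) $p$-syllables without changing their product or moves a $p$-syllable past an $s$-syllable while preserving the internal order of the $p$-syllables, so the product of the $p$-syllables in the $(\mu_1\cdots\mu_n)$-order equals their product in the $(p\cdot s)$-order, which is $p$.
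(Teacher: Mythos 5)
Your proof is correct, and it arrives at the same underlying combinatorial fact by a cleaner route. The paper works geometrically: it takes $S(\Gamma)$-geodesics $\alpha$ from $p_x$ to $p_q$ and $\eta$ from $p_x$ to $p_y$ (the latter realising a reduced subgraph expression via Lemma~\ref{lem:normal_subgraph_form}), then runs a square-flipping argument (Claim~\ref{claim:crossing_reduction}) to replace $\alpha$ by a geodesic $\alpha'$ whose hyperplane labels occur in the same order as along $\eta$, whence the subgraph expression for $p_x^{-1}p_q$ is extracted. You instead isolate two clean sub-steps: (i) a hyperplane count, using the equivalence ``$H$ separates $p_x,p_y$ iff $H$ crosses $\sub{\Lambda}$ and separates $x,y$'' together with Proposition~\ref{prop:hyperplanes_properties}(\ref{hyperplanes:geodesic_iff_cross_once}), to show $p_x^{-1}p_q\in\prefix(p_x^{-1}p_y)$; and (ii) a purely algebraic lemma that a prefix of $c\in\sub{\Lambda}$ has subgraph distance at most $\dist_\Lambda(e,c)$, proved by invoking Theorem~\ref{thm:graph_product_normal_form}: both reduced syllable expressions for $c$ (one grouped as $\mu_1\cdots\mu_n$, the other as $ps$) are connected by commutations only, and each commutation preserves the ordered product of the $p$-labelled syllables. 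Step (ii) is the algebraic mirror of the paper's square-flipping claim --- an induced square in $S(\Gamma)$ is exactly a commutation relation (Proposition~\ref{prop:squares_in_S}) --- so the two proofs are combinatorially equivalent, but your factorisation through the prefix statement isolates a reusable fact about the subgraph metric and replaces the geodesic-replacement induction by a one-line invariance observation. One minor remark: it is worth stating explicitly that since both syllable expressions for $c$ are reduced (same length), Theorem~\ref{thm:graph_product_normal_form} cannot use the deletion or merging moves, only commutations; this is implicit in your wording but is the crux of why the invariant argument applies.
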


\begin{proof}
Fix $g \in G_\Gamma$ and a subgraph $\Lambda \subseteq \Gamma$. Let $p_x = \gate_{g\Lambda}(x)$, $p_y = \gate_{g\Lambda}(y)$, and $p_q = \gate_{g\Lambda}(q)$.

First suppose $\Lambda$ consists of a single vertex of $\Gamma$. Then the $S(\Gamma)$--diameter of $g \sub{\Lambda}$ is $1$ and there exists a single hyperplane $H$ so that every edge of $g\sub{\Lambda}$ is dual to $H$.  If $p_q \neq p_x$ and $p_q \neq p_y$, then $H$ must separate $p_q$ from both $p_x$ and $p_y$. Therefore, $H$ must  cross $\gamma$ between $x$ and $q$ and again between $q$ and $y$ by Proposition \ref{prop:gates_to_subgroups}(\ref{gate:hyperplanes_separating_pairs}). However, this is impossible as $H$ cannot cross $\gamma$ twice (Proposition \ref{prop:hyperplanes_properties}(\ref{hyperplanes:geodesic_iff_cross_once})). Thus we must have either $p_q= p_x$ or $p_q = p_y$. The conclusion of the lemma then automatically holds as $\pi_{g\Lambda}(q) = \pi_{g\Lambda}(x)$ or $\pi_{g\Lambda}(q) = \pi_{g\Lambda}(y)$.

Now assume $\Lambda$ has at least two vertices and $p_q \neq p_x$ and $p_q \neq p_y$. Let $\lambda_1\dots\lambda_m$ be a reduced subgraph expression for $p_{x}^{-1}p_{y}$ of the form provided by Lemma \ref{lem:normal_subgraph_form}. In particular, there exists an $S(\Gamma)$--geodesic $\eta$ connecting $p_x$ and $p_y$ whose vertices include $p_x\lambda_1\dots\lambda_i$  for each $i \in \{1,\dots,m\}$.

Let $\alpha$ and $\beta$ be $S(\Gamma)$--geodesics connecting $p_x$ to $p_q$ and $p_q$ to $p_y$ respectively. Any hyperplane that crosses $\alpha$ must also cross $\gamma$ and separate $x$ and $q$ by Proposition \ref{prop:gates_to_subgroups}(\ref{gate:hyperplanes_separating_pairs}). Similarly, any hyperplane that crosses $\beta$ must also cross $\gamma$ and separate $y$ and $q$. Thus, a hyperplane that crosses both $\alpha$ and $\beta$ would cross the $S(\Gamma)$--geodesic $\gamma$ twice. Since no hyperplane of $S(\Gamma)$ can cross the same geodesic twice (Proposition \ref{prop:hyperplanes_properties}(\ref{hyperplanes:geodesic_iff_cross_once})), it follows that any hyperplane that crosses $\alpha$ (resp. $\beta$) cannot cross $\beta$ (resp. $\alpha$). By Remark \ref{remark:hyperplanes_cross_loops_twice},  any hyperplane  that crosses either $\alpha$ or $\beta$ must therefore cross $\eta$ as $\alpha \cup \beta \cup \eta$ forms a loop in $S(\Gamma)$.

We now prove $\dist_{g\Lambda}(x,q) \leq \dist_{g\Lambda}(x,y)$. The proof for $\dist_{g\Lambda}(q,y) \leq \dist_{g\Lambda}(x,y)$ is nearly identical with $\beta$ replacing $\alpha$.  Let $E_1,\dots,E_k$ be the edges of $\alpha$ and let  $H_j$ be the hyperplane that crosses $E_j$ for $j \in \{1,\dots,k\}$.  We say that two hyperplanes $H_j$ and $H_\ell$ \emph{cross between $\alpha$ and $\eta$} if there exists a vertex $a$ of $\alpha$ such that for each vertex $b$ of $\eta$,  either $H_j$ or $H_\ell$ separates $a$ from $b$; see Figure \ref{fig:cross_between}.
 
 \begin{figure}[ht]
     \centering
     \def\svgscale{.7}
     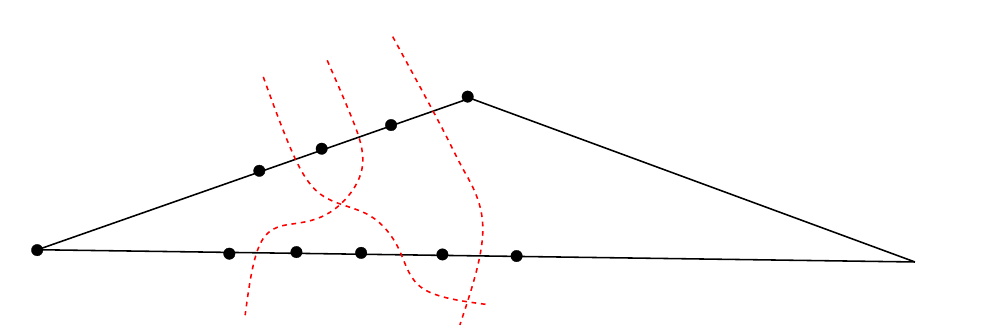
     \caption{The hyperplanes $H_{k-2}$ and $H_{k-1}$ cross between $\alpha$ and $\eta$ because the vertex $a$ is separated from every vertex of $\eta$ by either $H_{k-2}$ or $H_{k-1}$. Even though $H_{k-2}$ and $H_k$ cross, they do not cross \emph{between} $\alpha$ and $\eta$.}
     \label{fig:cross_between}
 \end{figure}
 
 \begin{claim}\label{claim:crossing_reduction}
 There exists an $S(\Gamma)$--geodesic $\alpha'$ that connects $p_x$ and $p_q$ such that no two of $H_1,\dots,H_k$ cross between $\alpha'$ and $\eta$.
 \end{claim}
 
 \begin{proof}
 Let $\alpha_1 = \alpha$ and let $K_i$ be the number of times two of $H_1,\dots,H_k$ cross between $\alpha_i$ and $\eta$.  Note, $K_1 \leq \frac{k(k-1)}{2}$. If $K_1 =0$ we are done. Otherwise, there exists $j \in \{1,\dots,k\}$ such that $H_j$ is the first hyperplane where $H_{j-1}$ and $H_{j}$ cross between $\alpha_1$ and $\eta$. Since $H_{j-1}$ and $H_j$ cross, Proposition \ref{prop:hyperplanes_properties}(\ref{hyperplanes:cross_iff_adjacent_labels}) tells us the edges $E_{j-1}$ and $E_j$ are labelled by elements of adjacent vertex groups. By Proposition \ref{prop:squares_in_S}, $E_{j-1}$ and $E_j$ are two sides of a square $S$ of $S(\Gamma)$ inside which $H_{j-1}$ and $H_j$ cross.  Let $\alpha_2$ be the $S(\Gamma)$--geodesic obtained from $\alpha_1$ by replacing the edges $E_{j-1}$ and $E_j$ with the other two sides of the square $S$; see Figure \ref{fig:flip_over_square}.
 
  \begin{figure}[ht]
     \centering
     \def\svgscale{.7}
     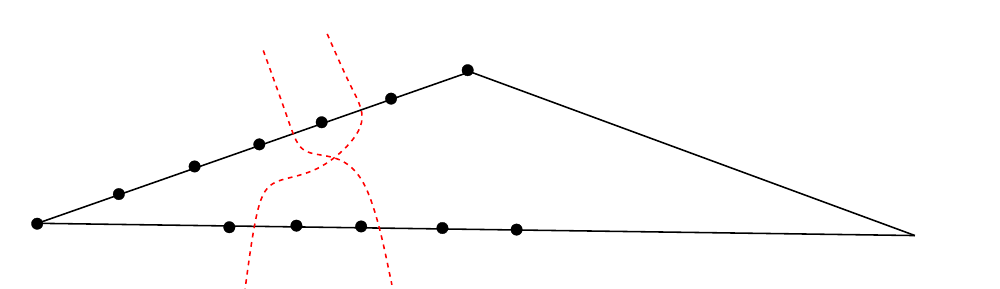
     \caption{The edges $E_{j-1}$ and $E_j$ can be replaced with the other two edges of the square $S$ to obtain a new $S(\Gamma)$--geodesic with $K_2 = K_1 -1$.}
     \label{fig:flip_over_square}
 \end{figure}

Since $H_{j-1}$ and $H_j$ crossed between $\alpha_1$ and $\eta$, we now have $K_2 = K_1 -1$, that is, that the number of times two of $H_1,\dots,H_k$ cross between $\alpha_2$ and $\eta$ is one less than the number of times two of $H_1,\dots,H_k$ crossed between $\alpha_1$ and $\eta$. Reindex $H_1,\dots, H_k$ such that $H_j$ crosses the $j$th edge of $\alpha_2$.
 
If $K_2 =0$, we are done, with $\alpha' = \alpha_2$. Otherwise, can repeat this argument at most $\frac{k(k-1)}{2}$ times to construct a sequence of geodesics $\alpha_1,\alpha_2,\dots, \alpha_r$ where $K_{i+1} = K_{i}-1$ and $K_r = 0$. Then, $\alpha' = \alpha_r$.
\end{proof}

Let $\alpha'$ be as in Claim \ref{claim:crossing_reduction} and reindex $H_1,\dots,H_k$ so that $H_j$ crosses the $j$th edge of $\alpha'$ for each $j \in \{1,\dots,k\}$. Since $H_{j}$ crosses $\eta$ for each $j \in \{1,\dots, k\}$,  the labels for the edges of $\alpha'$ are a subset of the labels of $\eta$. Further, since no two of $H_1,\dots, H_k$ cross between $\alpha'$ and $\eta$, the order in which the labels of edges appear along $\alpha'$ is the same as the order in which they appear along $\eta$. Since the vertices of $\eta$ include $p_{x}\lambda_{1}\dots\lambda_{i}$ for each $i \in \{1,\dots,m\}$, this implies that we can write $p_{x}^{-1}p_{q} = \lambda'_{1}\dots\lambda'_{m}$, where $\supp(\lambda'_{i}) \subseteq \supp(\lambda_{i})$ for each $i \in \{1,\dots,m\}$. It therefore follows that the $C(g\Lambda)$--distance between $p_x$ and $p_q$ is bounded above by the $C(g\Lambda)$--distance between $p_x$ and $p_y$, and so we have $\dist_{g\Lambda}(x,q) \leq \dist_{g\Lambda}(x,y)$.
\end{proof}

\subsection{The relations}\label{section:relations}
Here we define the nesting, orthogonality,  and transversality relations in the proto-hierarchy structure, and prove they have the desired properties. We tackle the nesting relation first.

\begin{defn}[Nesting]\label{defn:nesting}
  Let $G_\Gamma$ be a graph product and let $\mf{S}_\Gamma$ be the index set of parallelism classes of cosets of graphical subgroups described in Definition \ref{defn:index_set}. We say $[g\Lambda] \nest [h\Omega]$ if $\Lambda \subseteq \Omega$ and there exists $k \in G_\Gamma$ such that $[k\Lambda] = [g\Lambda]$ and $[k\Omega]=[h\Omega]$.   
\end{defn}

\begin{lem}\label{lem:nesting_partial}
The relation $\nest$ is a partial order.
\end{lem}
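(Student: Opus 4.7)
The plan is to verify that $\nest$ is reflexive, antisymmetric, and transitive. A preliminary observation is that parallelism itself is an equivalence relation on cosets of the form $g\sub{\Lambda}$: reflexivity, symmetry, and transitivity all follow from $\sub{\st(\Lambda)}$ being a subgroup, so the condition ``$g^{-1}h \in \sub{\st(\Lambda)}$'' is preserved under composition with intermediaries.

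Reflexivity of $\nest$ is immediate by taking $k=g$ in the definition. For antisymmetry, suppose $[g\Lambda] \nest [h\Omega]$ and $[h\Omega] \nest [g\Lambda]$. Then $\Lambda \subseteq \Omega$ and $\Omega \subseteq \Lambda$, forcing $\Lambda = \Omega$; a witness $k$ for $[g\Lambda] \nest [h\Omega]$ then satisfies $[g\Lambda] = [k\Lambda] = [k\Omega] = [h\Omega]$.

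The main work, and the main obstacle, is transitivity. Suppose $[g\Lambda] \nest [h\Omega]$ is witnessed by $k_1$ and $[h\Omega] \nest [f\Psi]$ is witnessed by $k_2$, so $\Lambda \subseteq \Omega \subseteq \Psi$. By transitivity of parallelism, $k_1\sub{\Omega} \parallel k_2\sub{\Omega}$, i.e., $k_1^{-1}k_2 \in \sub{\st(\Omega)}$, and by Remark \ref{rem:star_elements} we may write $k_1^{-1}k_2 = o\,l$ with $o \in \sub{\Omega}$ and $l \in \sub{\lk(\Omega)}$. Neither $k_1$ nor $k_2$ serves as a witness for the composite nesting in general, and the key trick is to set $k := k_1\, l$. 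What makes this work is the observation that $\Lambda \subseteq \Omega$ forces $\lk(\Omega) \subseteq \lk(\Lambda)$, hence $\sub{\lk(\Omega)} \subseteq \sub{\st(\Lambda)}$. Then $g^{-1}k = (g^{-1}k_1)\,l \in \sub{\st(\Lambda)}$, giving $[k\Lambda] = [g\Lambda]$; and using that $o$ commutes with $l$, one computes $f^{-1}k = f^{-1}k_2\, l^{-1}o^{-1}l = (f^{-1}k_2)\,o^{-1}$, which lies in $\sub{\st(\Psi)}$ since $o \in \sub{\Omega} \subseteq \sub{\Psi} \subseteq \sub{\st(\Psi)}$. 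Hence $[k\Psi] = [f\Psi]$, completing transitivity.
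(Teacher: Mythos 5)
Your proof is correct and takes essentially the same route as the paper's: the element $k = k_1 l$ you construct is the same (after a short computation using that $o$ and $l$ commute) as the paper's choice $c = b(\lambda_2')^{-1}\lambda_2$, and the key observations --- that $\lk(\Omega) \subseteq \lk(\Lambda)$ and that $\Omega \subseteq \Psi$ --- are the same ones the paper relies on. The only difference is cosmetic: you decompose only the middle-level relation $k_1^{-1}k_2 \in \sub{\st(\Omega)}$, whereas the paper writes out star decompositions at all three levels but ultimately uses only the middle one.
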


\begin{proof}
The only property that requires checking is transitivity, that is, if   $[g_{1}\Lambda_{1}] \nest [g_{2}\Lambda_{2}] \nest [g_{3}\Lambda_{3}]$, then $[g_{1}\Lambda_{1}] \nest [g_{3}\Lambda_{3}]$.

 Since $\subseteq$ is transitive, we have $\Lambda_1  \subseteq \Lambda_3$. Furthermore, there exist $a,b \in G_{\Gamma}$ such that $[g_{1}\Lambda_{1}] = [a\Lambda_{1}]$, $[a\Lambda_{2}] = [g_{2}\Lambda_{2}] = [b\Lambda_{2}]$, $[g_{3}\Lambda_{3}] = [b\Lambda_{3}]$, that is, $g_{1}^{-1}a \in \sub{\st(\Lambda_{1})}$, $g_{2}^{-1}a,g_{2}^{-1}b \in \sub{\st(\Lambda_{2})}$, $g_{3}^{-1}b \in \sub{\st(\Lambda_{3})}$.
 Thus $g_{1}^{-1}a = l_{1}\lambda_{1}$, $g_{2}^{-1}a = l_{2}\lambda_{2}$, $g_{2}^{-1}b = l'_{2}\lambda'_{2}$, $g_{3}^{-1}b = l_{3}\lambda_{3}$ where $\lambda_i, \lambda_i' \in \sub{\Lambda_i}$ and $l_i,l_i' \in \sub{\lk(\Lambda_i)}$ for each $i$. Let $c = b(\lambda'_{2})^{-1}\lambda_{2}$. Then $g_{3}^{-1}c = g_{3}^{-1}b(\lambda'_{2})^{-1}\lambda_{2} \in \langle\st(\Lambda_{3})\rangle$ since $\Lambda_{2} \subseteq \Lambda_{3}$. Moreover, since $\lk(\Lambda_{2}) \subseteq \lk(\Lambda_{1})$,
\begin{align*}
g_{1}^{-1}c &= g_{1}^{-1}aa^{-1}g_{2}g_{2}^{-1}bb^{-1}c \\
&= l_{1}\lambda_{1}\lambda_{2}^{-1}l_{2}^{-1}l'_{2}\lambda'_{2}(\lambda'_{2})^{-1}\lambda_{2} \\
&= l_{1}l_{2}^{-1}l'_{2}\lambda_{1} \in \langle\st(\Lambda_{1})\rangle.
\end{align*}
Thus $[g_1 \Lambda_1] = [c \Lambda_1]$ and $[g_3\Lambda_3] = [c\Lambda_3]$, verifying that $[g_{1}\Lambda_{1}] \nest [g_{3}\Lambda_{3}]$.
\end{proof}

\begin{defn}[Upwards relative projection]\label{defn:upward_projection}
If $[g\Lambda] \propnest [h\Omega]$, for any choice of representatives $g\Lambda \in [ g\Lambda]$ and $h\Omega \in [h\Omega]$, define $\rho_{h\Omega}^{g\Lambda} \subseteq C(h\Omega)$ to be  \[\rho_{h\Omega}^{g\Lambda} =\bigcup \limits_{k\Lambda \parallel g\Lambda} \pi_{h\Omega}\bigl( k\sub{\Lambda} \bigr) = \pi_{h\Omega}(g\sub{\st(\Lambda)}).\]
The equality between $\bigcup_{k \Lambda \parallel h \Lambda} \pi_{h\Omega} \bigl( k \sub{\Lambda}\bigr)$ and $\pi_{h\Omega}\left( g\sub{\st(\Lambda)}\right)$ is a consequence of the definition that $k \Lambda \parallel g \Lambda$ if and only if $g^{-1}k \in \sub{\st(\Lambda)}$. Indeed, $g\langle\st(\Lambda)\rangle = gg^{-1}k\langle\st(\Lambda)\rangle = k\langle\st(\Lambda)\rangle \supseteq k\langle\Lambda\rangle$ for all $k\Lambda \parallel g\Lambda$. Conversely, each element of $g\langle\st(\Lambda)\rangle$ can be written as $gl\lambda$ where $l \in \langle\lk(\Lambda)\rangle$ and $\lambda\in\langle\Lambda\rangle$, so that $gl\lambda \in gl\langle\Lambda\rangle$ where $g^{-1}gl = l \in \langle\st(\Lambda)\rangle$ and hence $g\Lambda \parallel gl\Lambda$.
\end{defn}

\begin{lem}[Upwards relative projections have bounded diameter]\label{lem:nesting}
If $[g\Lambda] \propnest [h\Omega]$, then for any choice of representatives $g\Lambda \in [g\Lambda]$ and $h \Omega \in [h\Omega]$, we have $\diam\bigl(\rho_{h\Omega}^{g\Lambda} \bigr) \leq 2$. 
\end{lem}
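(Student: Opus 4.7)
The plan is to use the prefix description of the gate map (Lemma \ref{lem:head_lemma}) to show that every point of $\rho_{h\Omega}^{g\Lambda}$ lies in a single coset of $\langle\st(\Lambda)\cap\Omega\rangle$, and then to leverage the fact that $C(\Omega)$ has small diameter on such cosets. By equivariance (Proposition \ref{prop:gates_to_subgroups}(\ref{gate:equivariance})) I may translate on the left by any element of $G_\Gamma$, so after an appropriate choice of representative (using that $\propnest$ means there is a simultaneous representative $k$ in the parallelism classes of both $g\Lambda$ and $h\Omega$), it suffices to bound the diameter of $\gate_\Omega(\langle\st(\Lambda)\rangle)$ in $C(\Omega)$.

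The key computation is the following. Every element of $\langle\st(\Lambda)\rangle$ can be written as $\lambda l$ with $\lambda\in\langle\Lambda\rangle$ and $l\in\langle\lk(\Lambda)\rangle$ (Remark \ref{rem:star_elements}), and every syllable of $\lambda$ commutes with every syllable of $l$ since $\lk(\Lambda)$ is adjacent to $\Lambda$. Using this commutation and Theorem \ref{thm:graph_product_normal_form}, I will verify that $\prefix_\Omega(\lambda l)=\lambda\cdot\prefix_\Omega(l)$: the syllables of $\lambda$ lie in $\Omega$ and can be freely commuted to the front, while any syllable of $l$ that contributes to a prefix in $\langle\Omega\rangle$ must have support in $\lk(\Lambda)\cap\Omega$. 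Consequently
\[
\gate_\Omega(\lambda l)\;=\;\lambda\cdot\prefix_\Omega(l)\;\in\;\langle\Lambda\rangle\cdot\langle\lk(\Lambda)\cap\Omega\rangle\;\subseteq\;\langle\st(\Lambda)\cap\Omega\rangle,
\]
so the entire set $\rho_{h\Omega}^{g\Lambda}$ is contained in a single coset of $\langle\st(\Lambda)\cap\Omega\rangle$.

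I then finish by a case split on whether $\st(\Lambda)\cap\Omega$ is a proper subgraph of $\Omega$. First note that $[g\Lambda]\propnest[h\Omega]$ forces $\Lambda\subsetneq\Omega$: if $\Lambda=\Omega$, the nesting definition collapses the two parallelism classes. In the first case, every element of $\langle\st(\Lambda)\cap\Omega\rangle$ is joined to the identity by an edge of $C(\Omega)$ by the very definition of the subgraph metric (Definition \ref{defn:subgraph_metric}), so the diameter is at most $2$. In the second case $\Omega\subseteq\st(\Lambda)$, and since $\Lambda\subsetneq\Omega$ we have $\Omega\smallsetminus\Lambda\ne\emptyset$ and $\Omega\smallsetminus\Lambda\subseteq\lk(\Lambda)$, so $\Omega=\Lambda\bowtie(\Omega\smallsetminus\Lambda)$ is a join; then Remark \ref{rem:join_implies_bounded_diameter} gives $\diam(C(\Omega))\leq 2$, and the bound on $\rho_{h\Omega}^{g\Lambda}$ is automatic.

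The main obstacle is the prefix computation, namely verifying carefully that writing $\lambda l$ with $\lambda \in \langle\Lambda\rangle$ and $l\in\langle\lk(\Lambda)\rangle$ yields a reduced syllable expression obtained by concatenating reduced syllable expressions for $\lambda$ and $l$ (so that syllable lengths add), and that no prefix of $\lambda l$ in $\langle\Omega\rangle$ can be longer than $\lambda\cdot\prefix_\Omega(l)$. Both facts follow by tracking the reduction moves of Theorem \ref{thm:graph_product_normal_form} and using that $\Lambda$ and $\lk(\Lambda)$ are disjoint vertex sets with every cross-pair adjacent in $\Gamma$.
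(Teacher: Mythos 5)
Your proposal is correct and follows essentially the same argument as the paper: the heart of both is the identity $\gate_\Omega(\lambda l) = \lambda\cdot\prefix_\Omega(l)$, which places the gate images inside a single coset of $\langle\st(\Lambda)\cap\Omega\rangle$, together with the dichotomy on whether $\st(\Lambda)\cap\Omega$ is proper in $\Omega$ (the complementary case forcing $\Omega$ to split as a join and appealing to Remark \ref{rem:join_implies_bounded_diameter}). One step you elide: after left-translating by the common representative $k$, the coset you are gating onto is $k^{-1}h\langle\Omega\rangle$, which is parallel to but need not equal $\langle\Omega\rangle$, so reducing to $\gate_\Omega(\langle\st(\Lambda)\rangle)$ requires Lemma \ref{lem:C_spaces_are_well_defined} (gates to parallel cosets factor through each other and restrict to a $C(\cdot)$-isometry on the coset); the paper invokes this explicitly at the corresponding point. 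The only other difference is that in the non-join case you quote the bound $2$ where the argument in fact yields $1$ (two points in the same coset of a proper subgraph subgroup are adjacent in $C(\Omega)$), but this does not affect the stated conclusion.
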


\begin{proof}
Let $g\Lambda$ and $h\Omega$ be fixed representatives of $[g\Lambda]$ and $[h\Omega]$ respectively. Suppose first that $\Omega$ splits as a join. Then $\diam(C(h\Omega)) = 2$ by Remark \ref{rem:join_implies_bounded_diameter}, and  hence $\diam\bigl(\rho_{h\Omega}^{g\Lambda} \bigr) \leq 2$. For the remainder of the proof we will therefore assume that $\Omega$ does not split as a join. Note that this implies that $\st(\Lambda)\cap\Omega \subsetneq \Omega$. Indeed, suppose $\st(\Lambda)\cap\Omega = \Omega$. Then $\Omega \subseteq \st(\Lambda)$, so either $\Omega \subseteq \Lambda$, $\Omega \subseteq \lk(\Lambda)$, or $\Omega$ splits as a join. The first two cases are impossible as $\Lambda \subsetneq \Omega$, and the last case is ruled out by assumption.

Let $a \in G_\Gamma$ be such that $[a\Lambda] = [g\Lambda]$ and $[a\Omega] = [h\Omega]$.  Since $[a\Lambda] = [g\Lambda]$, we have $g^{-1}a \in \sub{\st(\Lambda)}$, so $g\sub{\st(\Lambda)} = gg^{-1}a\sub{\st(\Lambda)} = a\sub{\st(\Lambda)}$. Thus $\rho^{g\Lambda}_{h\Omega} = \pi_{h\Omega}(g\sub{\st(\Lambda)}) = \pi_{h\Omega}(a\sub{\st(\Lambda)})$. Note that any element of $a\sub{\st(\Lambda)}$ can be expressed in the form $a\lambda l$ where $\lambda \in \sub{\Lambda}$ and $l \in \sub{\lk(\Lambda)}$. Using equivariance (Proposition \ref{prop:gates_to_subgroups}(\ref{gate:equivariance})) and the prefix description of the gate map (Lemma \ref{lem:head_lemma}), we have  \[\gate_{a \Omega}(a \lambda l) = a \cdot \gate_{\Omega}(a^{-1} a \lambda l) = a \cdot \prefix_\Omega(\lambda l) = a\lambda \cdot \prefix_{\Omega}(l). \]
This implies $\gate_{a \Omega}(a\lambda l) = a\lambda  l_{0}$, where $l_{0} = \prefix_{\Omega}(l) \in  \sub{\lk(\Lambda) \cap \Omega}$ and so $\supp(\lambda l_{0}) \subseteq \Lambda \cup (\lk(\Lambda) \cap \Omega) = \st(\Lambda) \cap \Omega \subsetneq \Omega$. Moreover, by Lemma \ref{lem:C_spaces_are_well_defined}, $\gate_{h \Omega}(a\lambda l) = \gate_{h \Omega}(\gate_{a\Omega}(a\lambda l)) = \gate_{h \Omega}(a\lambda l_{0})$.

Since $a \Omega \parallel h \Omega$, the gate map from $a\sub{\Omega}$ to $h \sub{\Omega}$ agrees with the isometry of $S(\Gamma)$ induced by the element $hpa^{-1}$ where $p = \prefix_\Omega(h^{-1}a)$ (Lemma \ref{lem:C_spaces_are_well_defined}). 
Since $\supp(\lambda l_{0}) \subsetneq \Omega$,  this implies $$\gate_{h\Omega}(a\lambda l_{0}) = hpa^{-1} \cdot a\lambda l_{0} = hp\lambda l_{0}.$$ 
Therefore, given two arbitrary elements $a\lambda l, a\lambda' l' \in a\sub{\st(\Lambda)}$, we have $(\gate_{h\Omega}(a\lambda l))^{-1}\gate_{h\Omega}(a\lambda'l') = l_{0}^{-1}\lambda^{-1}\lambda'l_{0}'$, where  $$\supp(l_{0}^{-1}\lambda^{-1}\lambda'l_{0}') \subseteq \st(\Lambda)\cap\Omega \subsetneq \Omega.$$ This implies the $C(h\Omega)$--diameter of  $\pi_{h\Omega}(g\sub{\st(\Lambda)}) = \rho_{h\Omega}^{g\Lambda}$ is at most $1$ in this case.
\end{proof}

Next we deal with the orthogonality relation.

\begin{defn}[Orthogonality]\label{defn:orthogonality}
  Let $G_\Gamma$ be a graph product and let $\mf{S}_\Gamma$ be the index set of parallelism classes of cosets of graphical subgroups described in Definition \ref{defn:index_set}. We say $[g\Lambda]\perp [h\Omega]$ if $\Lambda \subseteq \lk(\Omega)$ and there exists $k \in G_\Gamma$ such that $[k\Lambda] = [g\Lambda]$ and $[k\Omega]=[h\Omega]$.
\end{defn}

\begin{lem}[Orthogonality axiom]\label{lem:orthogonality}
The relation $\bot$ has the following properties:
\begin{enumerate}
    \item $\bot$ is symmetric; \label{orthogonality:symetric}
    \item If $[g\Lambda] \bot [h\Omega]$, then $[g\Lambda]$ and $[h\Omega]$ are not $\nest$--comparable; \label{orthogonality:nest_incomparable}
    \item If $[g\Lambda] \nest [h\Omega]$ and $[h\Omega] \bot [k\Pi]$, then $[g\Lambda] \bot [k\Pi]$. \label{orthogonality:orthogonal_to_nest}
\end{enumerate}
\end{lem}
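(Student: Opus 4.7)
For property (\ref{orthogonality:symetric}), I would observe that the graph-theoretic condition $\Lambda \subseteq \lk(\Omega)$ is itself symmetric: it simply asserts that every vertex of $\Lambda$ is distinct from and adjacent to every vertex of $\Omega$, which is equally the statement $\Omega \subseteq \lk(\Lambda)$. The element $k$ from Definition~\ref{defn:orthogonality} therefore simultaneously witnesses $[h\Omega] \perp [g\Lambda]$, and symmetry is immediate.

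For property (\ref{orthogonality:nest_incomparable}), the key point is that $\lk(\Omega)$ is disjoint from $\Omega$ by definition. Thus $\Lambda \subseteq \lk(\Omega)$ forces $\Lambda \cap \Omega = \emptyset$, and since the subgraphs indexing $\mf{S}_\Gamma$ are non-empty, neither $\Lambda \subseteq \Omega$ nor $\Omega \subseteq \Lambda$ can hold. By Definition~\ref{defn:nesting} this rules out $\nest$--comparability of $[g\Lambda]$ and $[h\Omega]$.

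Property (\ref{orthogonality:orthogonal_to_nest}) is the only step with substantive content, and I would mirror the transitivity argument for $\nest$ used in the proof of Lemma~\ref{lem:nesting_partial}. The graph containment is clear: $\Lambda \subseteq \Omega$ combined with $\Omega \subseteq \lk(\Pi)$ gives $\Lambda \subseteq \lk(\Pi)$, so only a common representative needs to be produced. I would choose $a \in G_\Gamma$ witnessing the nesting, so that $[a\Lambda] = [g\Lambda]$ and $[a\Omega] = [h\Omega]$, and $b \in G_\Gamma$ witnessing the orthogonality, so that $[b\Omega] = [h\Omega]$ and $[b\Pi] = [k\Pi]$. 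Then $a^{-1}b \in \sub{\st(\Omega)}$, and by Remark~\ref{rem:star_elements} it factors as $a^{-1}b = l\omega$ with $l \in \sub{\lk(\Omega)}$ and $\omega \in \sub{\Omega}$. My proposed common representative is $c = al$: the inclusion $\Lambda \subseteq \Omega$ yields $\lk(\Omega) \subseteq \lk(\Lambda) \subseteq \st(\Lambda)$, so $a^{-1}c = l \in \sub{\st(\Lambda)}$, while the inclusion $\Omega \subseteq \lk(\Pi) \subseteq \st(\Pi)$ yields $b^{-1}c = \omega^{-1} \in \sub{\st(\Pi)}$. Combined with $g^{-1}a \in \sub{\st(\Lambda)}$ and $k^{-1}b \in \sub{\st(\Pi)}$, this gives $[c\Lambda] = [g\Lambda]$ and $[c\Pi] = [k\Pi]$.

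The only step that requires any care is the construction of $c$ in (\ref{orthogonality:orthogonal_to_nest}); items (\ref{orthogonality:symetric}) and (\ref{orthogonality:nest_incomparable}) are essentially unpacking definitions. I do not anticipate any obstacle beyond organising the group-theoretic bookkeeping, which is a direct adaptation of the technique already used in Lemma~\ref{lem:nesting_partial}.
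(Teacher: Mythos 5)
Your proof is correct and follows essentially the same approach as the paper. Parts (1) and (2) are treated identically, and in part (3) the common representative you construct, $c = al$, is exactly the element the paper calls $b\omega$: the paper decomposes $b^{-1}a = \omega l$ rather than $a^{-1}b = l\omega$, but since the $\sub{\Omega}$-- and $\sub{\lk(\Omega)}$--factors commute, $al = b\omega^{-1}$ is the same element as the paper's $b\omega'$ with $\omega' = \omega^{-1}$.
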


\begin{proof}
(\ref{orthogonality:symetric}) If $\Lambda \subseteq \lk(\Omega)$, then all vertices of $\Lambda$ are connected to all vertices of $\Omega$, hence $\Omega \subseteq \lk(\Lambda)$ too. Thus the relation $\bot$ is symmetric.

(\ref{orthogonality:nest_incomparable}) Any graph is disjoint from its own link, hence if $[g\Lambda] \bot [h\Omega]$ then $[g\Lambda]$ and $[h\Omega]$ cannot be $\nest$--comparable.

(\ref{orthogonality:orthogonal_to_nest}) Suppose $[g\Lambda] \nest [h\Omega]$ and $[h\Omega] \bot [k\Pi]$. Then $\Lambda \subseteq \Omega \subseteq \lk(\Pi)$, and there exist $a,b \in G_{\Gamma}$ such that  \[[a\Lambda] = [g\Lambda], \ [a\Omega] = [h\Omega] = [b\Omega], \text{ and } [b\Pi] = [k\Pi].\] In particular, this means that $b^{-1}a \in \langle\st(\Omega)\rangle$, hence we can write $b^{-1}a = \omega l$ where $\omega \in \langle\Omega\rangle$ and $l \in \langle\lk(\Omega)\rangle$. Then $\omega^{-1}b^{-1}a = l \in \langle\lk(\Omega)\rangle \subseteq \langle\lk(\Lambda)\rangle \subseteq \langle\st(\Lambda)\rangle$, and so $[a\Lambda] = [b\omega\Lambda]$. On the other hand, $\omega^{-1}b^{-1}b = \omega^{-1} \in \langle\Omega\rangle \subseteq \langle\lk(\Pi)\rangle \subseteq \langle\st(\Pi)\rangle$, and so $[b\Pi] = [b\omega\Pi]$. Therefore $[g\Lambda] \perp [k\Pi]$, because $\Lambda \subseteq \lk(\Pi)$ and $[g\Lambda] = [b\omega\Lambda]$, $[k\Pi] = [b\omega\Pi]$.
\end{proof}

Our final relation is transversality, which is a little more nuanced, since our $[g\Lambda]$ and $[h\Omega]$ need not have a common representative $k$ in this case.

\begin{defn}[Transversality and lateral relative projections]\label{defn:transversality}
If $[g\Lambda], [h\Omega] \in \mathfrak{S}_{\Gamma}$ are not orthogonal and neither is nested in the other, then we say $[g\Lambda]$ and $[h\Omega]$ are transverse, denoted $[g\Lambda] \trans [h\Omega]$. When $[g\Lambda]\trans[h\Omega]$, for each choice of representatives $g\Lambda \in [g\Lambda]$ and $h\Omega \in [h\Omega]$, define $\rho_{g\Lambda}^{h\Omega} \subseteq C(g\Lambda)$ by \[ \rho_{g\Lambda}^{h\Omega} = \bigcup_{k \Omega \parallel h \Omega} \pi_{g\Lambda} \bigl( k \sub{\Omega}\bigr) = \pi_{g\Lambda}\left( h\sub{\st(\Omega)}\right). \] 
 The next lemma verifies that $\rho_{g\Lambda}^{h\Omega}$ has diameter at most $2$.
\end{defn}

\begin{lem}\label{lem:side_ways_projections_are_bounded}
If $[g\Lambda]\trans[h\Omega]$, then for any  choice of representatives $g \Lambda \in [g\Lambda]$ and $h \Omega \in [h\Omega]$, we have $\diam \bigl(\pi_{g\Lambda}({h} \sub{\st(\Omega)}) \bigr) \leq 2$ and $\diam\bigl(\pi_{h\Omega}(g\sub{\st(\Lambda)}) \bigr) \leq 2$.
\end{lem}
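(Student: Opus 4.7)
The plan is to adapt the strategy of Lemma \ref{lem:nesting}, with additional case analysis required because transversality does not supply a common representative. By the symmetry of the statement under swapping $(g,\Lambda) \leftrightarrow (h,\Omega)$, it suffices to bound $\diam(\pi_{g\Lambda}(h\sub{\st(\Omega)})) \leq 2$. If $\Lambda$ splits as a join, then $\diam(C(g\Lambda)) \leq 2$ by Remark \ref{rem:join_implies_bounded_diameter} and we are finished, so we may assume $\Lambda$ does not split as a join.

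For arbitrary $x, y \in h\sub{\st(\Omega)}$, set $p_x := \gate_{g\Lambda}(x)$ and $p_y := \gate_{g\Lambda}(y)$. Proposition \ref{prop:syllables_of_gate_ are_syllables_of_image} tells us that every syllable of $p_x^{-1}p_y$ is a syllable of $x^{-1}y \in \sub{\st(\Omega)}$. Since $p_x^{-1}p_y \in \sub{\Lambda}$, we conclude $\supp(p_x^{-1}p_y) \subseteq \Lambda \cap \st(\Omega)$. Hence if $\Lambda \cap \st(\Omega)$ is a proper subgraph of $\Lambda$, the element $p_x^{-1}p_y$ lies in a proper subgroup of $\sub{\Lambda}$ and $\dist_{g\Lambda}(p_x, p_y) \leq 1$.

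It remains to treat $\Lambda \subseteq \st(\Omega) = \Omega \cup \lk(\Omega)$. Since $\Omega$ and $\lk(\Omega)$ are disjoint and $\Lambda$ does not split as a join, either $\Lambda \subseteq \Omega$ or $\Lambda \subseteq \lk(\Omega)$. In either subcase transversality translates to the algebraic condition $g^{-1}h \notin \sub{\st(\Lambda)}\sub{\st(\Omega)}$, since a common representative would otherwise witness nesting (when $\Lambda \subseteq \Omega$) or orthogonality (when $\Lambda \subseteq \lk(\Omega)$). In particular, no sub-coset $h'\sub{\Lambda} \subseteq h\sub{\st(\Omega)}$ can be parallel to $g\sub{\Lambda}$, as this would force $g^{-1}h \in \sub{\st(\Lambda)}\sub{\st(\Omega)}$. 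Using the prefix description of the gate map (Lemma \ref{lem:head_lemma}) together with the factorisation $\sub{\st(\Omega)} = \sub{\Omega}\sub{\lk(\Omega)}$, one then analyses the prefixes $\prefix_{\Lambda}(g^{-1}h\omega l)$ for $\omega \in \sub{\Omega}$, $l \in \sub{\lk(\Omega)}$ and shows that the resulting projections all lie in a common proper sub-coset of $g\sub{\Lambda}$, yielding a bound of $1$ in $C(g\Lambda)$.

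The main obstacle is this last step: once $\Lambda \subseteq \st(\Omega)$, the syllable-level bound from Proposition \ref{prop:syllables_of_gate_ are_syllables_of_image} becomes vacuous, and one must explicitly identify the ``blocking'' syllables of $g^{-1}h$ lying outside $\sub{\st(\Lambda)}$ supplied by transversality, then use the commutation relations of $\Gamma$ to control which $\Lambda$-syllables of $\omega l$ can be pulled past these blocking syllables to the front of $g^{-1}h\omega l$. Executing this careful syllable bookkeeping, uniformly in $\omega$ and $l$, is the technical heart of the proof.
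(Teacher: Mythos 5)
Your reduction steps match the paper's exactly: handle the join case via Remark \ref{rem:join_implies_bounded_diameter}, then use Proposition \ref{prop:syllables_of_gate_ are_syllables_of_image} to get $\supp(p_x^{-1}p_y) \subseteq \Lambda \cap \st(\Omega)$, which settles the case $\Lambda \not\subseteq \st(\Omega)$. Your algebraic reformulation of transversality in the remaining case is also correct: when $\Lambda \subseteq \Omega$ or $\Lambda \subseteq \lk(\Omega)$, transversality is exactly the nonexistence of a common representative, which is equivalent to $g^{-1}h \notin \sub{\st(\Lambda)}\sub{\st(\Omega)}$.

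The genuine gap is the step you yourself flag as the ``technical heart.'' Everything up to that point is scaffolding; the actual content of the lemma is precisely the claim that $\supp(p_x^{-1}p_y) \neq \Lambda$ in the case $\Lambda \subseteq \st(\Omega)$, and you do not prove it. Declaring it provable by ``careful syllable bookkeeping, uniformly in $\omega$ and $l$'' is not a proof. Moreover, your stated target --- that all the projections $\pi_{g\Lambda}(h\omega l)$ lie in a \emph{common} proper sub-coset of $g\sub{\Lambda}$ --- is a stronger statement than what is needed or what the paper establishes: the proper subgraph $\supp(p_x^{-1}p_y)$ is allowed to vary with the pair $(x,y)$, and indeed a uniform proper sub-coset need not exist, so an argument aimed at producing one might simply fail.

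The paper fills the gap by a contradiction argument that is geometric rather than purely prefix-combinatorial. Assuming $\supp(p_x^{-1}p_y)=\Lambda$, it takes $S(\Gamma)$--geodesics from $x$ to $p_x$, from $y$ to $p_y$, from $p_x$ to $p_y$, and from $x$ to $y$, and classifies the hyperplanes crossing the $x$-to-$p_x$ leg: those that exit through the $x$-to-$y$ leg give syllables of the suffix $s_x=p_x^{-1}x$ lying in $\sub{\st(\Omega)}$, while those that exit through the $y$-to-$p_y$ leg must cross every hyperplane separating $p_x$ from $p_y$ --- and since those hyperplanes are labelled by \emph{all} vertices of $\Lambda$ (this is exactly where $\supp(p_x^{-1}p_y)=\Lambda$ is used), Proposition \ref{prop:hyperplanes_properties}(\ref{hyperplanes:cross_iff_adjacent_labels}) forces such syllables into $\sub{\lk(\Lambda)}$. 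A further crossing argument shows these two types of syllables commute, so $s_x = l_x\omega_x$ with $l_x\in\sub{\lk(\Lambda)}$, $\omega_x\in\sub{\st(\Omega)}$. Setting $k = p_x l_x$ then gives $[g\Lambda]=[k\Lambda]$ and $[h\Omega]=[k\Omega]$, contradicting transversality. If you want to salvage a purely syllable-level argument, you would need to reproduce the force of this hyperplane-crossing analysis algebraically, which is substantially more than the sketch provides.
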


\begin{proof}
We provide the proof for $\diam \bigl(\pi_{g\Lambda}({h} \sub{\st(\Omega)}) \bigr) \leq 2$. The other case is identical.

Let $x,y \in h \sub{\st(\Omega)}$. Define $p_x = \pi_{g\Lambda}(x) = \gate_{g\Lambda}(x)$  and $p_y = \pi_{g\Lambda}(y) = \gate_{g\Lambda}(y)$. If $\Lambda$ splits as a join $\Lambda_1 \bowtie \Lambda_2$, then $\dist_{g\Lambda}(p_x,p_y) \leq \diam(C(g\Lambda)) \leq 2$  by Remark \ref{rem:join_implies_bounded_diameter}.

Now suppose $\Lambda$ does not split as a join.
Since $p_x,p_y \in g\sub{\Lambda}$, we have $\supp(p_x^{-1}p_y) \subseteq \Lambda$. If $\supp(p_x^{-1}p_y)$ is a proper subgraph of $\Lambda$, then the $C(g\Lambda)$--distance between $p_x$ and $p_y$ will be at most $1$. Thus, it suffices to prove $\supp(p_x^{-1} p_y) \neq \Lambda$.

Since $[g\Lambda]\trans[h\Omega]$ we have that $[g\Lambda] \not \perp [h\Omega]$, $[g\Lambda] \not \nest [h\Omega]$, and $[h\Omega] \not \nest [g \Lambda]$. This can occur in two different ways; either  
  $\Lambda \not\subseteq \lk(\Omega)$, $\Omega \not\subseteq \Lambda$ and $\Lambda \not\subseteq \Omega$, or there does not exist $k \in G_\Gamma$ so that $[g\Lambda] = [k \Lambda]$ and $[h \Omega] = [ k \Omega]$.

First assume $\Lambda \not\subseteq \lk(\Omega)$ and $\Lambda \not\subseteq \Omega$.   Then $\Lambda \not\subseteq \st(\Omega)$, as $\Lambda$ also does not split as a join. This implies that $\st(\Omega)\cap\Lambda \neq \Lambda$. By Proposition \ref{prop:syllables_of_gate_ are_syllables_of_image}, every syllable of $p_x^{-1}p_y$ is a syllable of $x^{-1}y$. Since   $x^{-1}y \in \sub{\st(\Omega)}$, this implies $\supp(p_x^{-1}p_y) \subseteq \st(\Omega) \cap \Lambda \neq \Lambda$ as desired. 

Now assume $\Lambda \subseteq \lk(\Omega)$ or $\Lambda \subseteq \Omega$. Thus, there does not exist $k \in G_\Gamma$ so that $[g\Lambda] = [k \Lambda]$ and $[h \Omega] = [ k \Omega]$. For the purposes of contradiction, suppose $\supp(p_x^{-1}p_y) = \Lambda$.

Let $s_x$ and $s_y$ be the suffixes of $x$ and $y$ respectively such that $x= p_xs_x$ and $y = p_y s_y$. Select the following $S(\Gamma)$--geodesics: $\alpha_x$  connecting $x$  and $p_x$, $\alpha_y$ connecting $y$ and $p_y$, $\eta$ connecting $p_x$ and $p_y$, $\gamma$ connecting $x$ and $y$; see Figure \ref{fig:link_of_Lambda_case}.

\begin{figure}[ht]
\centering
\def\svgscale{.7}
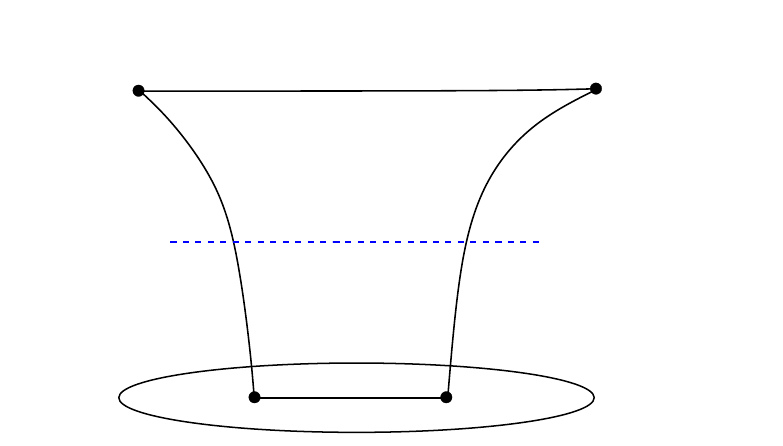
\caption{Any hyperplane that crosses $\alpha_x$ and $\alpha_y$ must cross all of the hyperplanes separating $p_x$ and $p_y$.}\label{fig:link_of_Lambda_case}
\end{figure} 

Let $t_1\dots t_n$ be the reduced syllable expression for $s_x$ corresponding to the geodesic $\alpha_x$. For each $i \in \{1,\dots,n\}$, let $H_i$ be the hyperplane crossing the edge of $\alpha_x$ labelled by $t_i$. Recall, a hyperplane in $S(\Gamma)$ crosses a geodesic segment if  and only if it separates the end points of the segment (Proposition \ref{prop:hyperplanes_properties}(\ref{hyperplanes:geodesic_iff_cross_once})). Each $H_i$ therefore separates $x$ and $p_x = \gate_{g\Lambda}(x)$,  so each $H_i$ must separate $x$ from all of $g\sub{\Lambda}$ by Proposition \ref{prop:gates_to_subgroups}(\ref{gate:hyperplanes_separating_image}). In particular, no $H_i$  crosses $\eta$. 
Thus, by Remark \ref{remark:hyperplanes_cross_loops_twice}, each $H_i$ must cross either  $\gamma$ or  $\alpha_y$. If $H_i$ crosses $\gamma$, then $t_i \in \sub{\st(\Omega)}$. On the other hand, if $H_i$ crosses $\alpha_y$, then $H_i$ must cross every hyperplane that separates $p_x$ and $p_y$; see Figure \ref{fig:link_of_Lambda_case}. 
Because $\supp(p_x^{-1} p_y) = \Lambda$, it follows that for every vertex $v$ of $\Lambda$ there exists a hyperplane that separates $p_x$ and $p_y$ and is labelled by $v$. Hence, if $H_i$ crosses $\alpha_y$, then $H_i$ crosses at least one hyperplane that is labelled by each vertex of $\Lambda$. By Proposition \ref{prop:hyperplanes_properties}(\ref{hyperplanes:cross_iff_adjacent_labels}), if two hyperplanes cross then they are labelled by adjacent vertices in $\Gamma$. Thus, the vertex labelling $H_i$ must be in the link of $\Lambda$. In particular, $t_i \in \sub{\lk(\Lambda)}$.

The above shows that $t_i \in \sub{\st(\Omega)}$ or $t_i \in \sub{\lk(\Lambda)}$ for each $i \in\{1,\dots,n\}$. Further, $t_i \in \sub{\st(\Omega)}$ if $H_i$ crosses $\gamma$ and $t_i \in \sub{\lk(\Lambda)}$ if $H_i$ crosses $ \alpha_y$. Now suppose $i < j$  and that $H_i$ crosses $\gamma$, but $H_j$ crosses $\alpha_y$. As shown in Figure \ref{fig:link_in_front}, this forces $H_i$ to cross $H_j$, which implies that $t_i$ and $t_j$ commute by Proposition \ref{prop:hyperplanes_properties}(\ref{hyperplanes:cross_iff_adjacent_labels}).
Thus, by commuting the syllables of $s_x$, we have $s_x =  l_x \omega_x$ where $\omega_x \in \sub{\st(\Omega)}$ and $l_x \in \sub{\lk(\Lambda)}$.

\begin{figure}[ht]
\centering
\def\svgscale{.7}
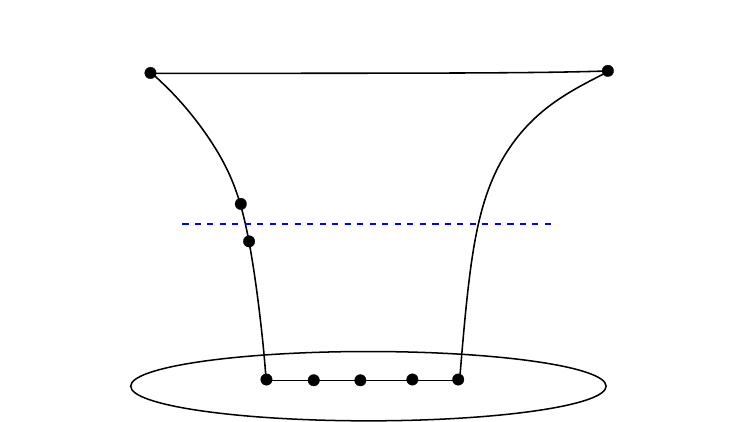
\caption{The hyperplane $H_i$ crosses $\alpha_x$ and $\gamma$ while $H_j$ crosses $\alpha_x$ and $\alpha_y$. Since $H_i$ appears before $H_j$ along $\alpha_x$, $H_i$ must cross $H_j$.}\label{fig:link_in_front}
\end{figure}

Now, since $x \in h\sub{\st(\Omega)}$, we have  $h^{-1}x \in \sub{\st(\Omega)}$, which implies $[h\Omega] = [x\Omega]$. Since $x = p_x s_x = p_x l_x \omega_x$, we have $[x\Omega] = [p_x  l_x \omega_x \Omega ] = [p_x l_x \Omega]$. Similarly, $p_{x} \in g\sub{\Lambda}$, so $g^{-1}p_{x} \in \sub{\Lambda}$, which implies $[g \Lambda ]= [p_x \Lambda]$. Now, $[p_x \Lambda] = [p_x l_x \Lambda]$ as $p_x^{-1} (p_x l_x) = l_x \in \sub{\lk(\Lambda)} \subseteq \sub{\st(\Lambda)}$. Thus we have \[[h\Omega] = [p_x l_x \Omega] \text{ and } [g\Lambda] = [p_x l_x \Lambda].\] However, this contradicts our assumption that there is no $k \in G_{\Gamma}$ such that $[h\Omega] = [k\Omega]$ and $[g\Lambda] = [k\Lambda]$, proving we must have $\supp(p_x^{-1} p_y) \neq \Lambda$ as desired.
\end{proof}

\subsection{The proto-hierarchy structure}
We now combine the work in this section to give a proto-hierarchy structure for $G_\Gamma$.

\begin{thm}\label{thm:proto-structure}
Let $G_\Gamma$ be a graph product of finitely generated groups. For each parallelism class $[g\Lambda] \in \mf{S}_\Gamma$, fix a representative $g\Lambda \in [g\Lambda]$.  The following is a  $2$--proto-hierarchy structure for $(G_\Gamma,\dist)$.
\begin{itemize}
    \item The index set is the set of parallelism classes $\mf{S}_\Gamma$ defined in Definition \ref{defn:index_set}.
    \item The space $C([g\Lambda])$ associated to $[g\Lambda]$ is the space $C(g\Lambda)$ from Definition \ref{defn:subgraph_metric}, where $g\Lambda$ is the fixed representative of $[g\Lambda]$.
    \item The projection map $\pi_{[g\Lambda]} \colon G_\Gamma \to C([g\Lambda])$ is the map $\pi_{g\Lambda} \colon G_\Gamma \to C(g\Lambda)$ from Definition \ref{defn:projections} for the fixed representative $g\Lambda \in [g\Lambda]$.
    \item $[g\Lambda] \nest [h\Omega]$ if $\Lambda \subseteq \Omega$ and there exists $k \in G_\Gamma$ such that $[k\Lambda] = [g\Lambda]$ and $[k \Omega] = [h\Omega]$. 
    \item The upwards relative projection $\rho_{[h\Omega]}^{[g\Lambda]}$ when $[g\Lambda]\propnest [h\Omega]$ is the set $\rho_{h\Omega}^{g\Lambda}$ from Definition \ref{defn:upward_projection}, where $g\Lambda$ and $h \Omega$ are the fixed representatives for $[h\Omega]$ and $[g\Lambda]$.
    \item $[g\Lambda] \perp [h\Omega]$ if $\Lambda \subseteq \lk(\Omega)$ and there exists $k \in G_\Gamma$ such that $[k\Lambda] = [g\Lambda]$ and $[k \Omega] = [h\Omega]$. 
    \item $[g\Lambda] \trans [h\Omega]$ whenever $[g\Lambda]$ and $[h\Omega]$ are not orthogonal and neither is nested into the other.
    \item The lateral relative projection $\rho_{[h\Omega]}^{[g\Lambda]}$ when $[g\Lambda] \trans [h\Omega]$ is the set $\rho_{h\Omega}^{g\Lambda}$ from Definition \ref{defn:transversality}, where $g\Lambda$ and $h \Omega$ are the  fixed representatives for $[h\Omega]$ and $[g\Lambda]$.
\end{itemize}
\end{thm}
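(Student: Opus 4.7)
The proof will essentially be a bookkeeping exercise, since all of the substantive work has already been done in Lemmas \ref{lem:projections} through \ref{lem:side_ways_projections_are_bounded}. The plan is to go through each axiom in Definition \ref{defn:HHS} one at a time and cite (or quickly verify) the relevant ingredient.

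First I would dispatch the projection axiom. For any $[g\Lambda] \in \mf{S}_\Gamma$ and any $x \in G_\Gamma$, the projection $\pi_{[g\Lambda]}(x) = i_{g\Lambda}\circ\gate_{g\Lambda}(x)$ is a single element of $g\sub{\Lambda}$, so $\pi_{[g\Lambda]}(x)$ is non-empty and has diameter $0 \leq 2$. Lemma \ref{lem:projections} shows $\pi_{g\Lambda}$ is $(1,0)$--coarsely Lipschitz, which is in particular $(2,2)$--coarsely Lipschitz. For the coarse surjectivity, note that if $y \in g\sub{\Lambda}$ then $\gate_{g\Lambda}(y) = y$ by Proposition \ref{prop:gates_to_subgroups}(\ref{gate:unique_closest_point}), so $\pi_{[g\Lambda]}(G_\Gamma)$ contains the entire vertex set of $C(g\Lambda)$, giving $C([g\Lambda]) \subseteq \mc{N}_1(\pi_{[g\Lambda]}(G_\Gamma))$.

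Next I would handle the nesting axiom. Lemma \ref{lem:nesting_partial} shows $\nest$ is a partial order. The unique $\nest$--maximal element is $[e\Gamma]$: for any $[g\Lambda] \in \mf{S}_\Gamma$ we have $\Lambda \subseteq \Gamma$ and $g^{-1}e = g^{-1} \in G_\Gamma = \sub{\st(\Gamma)}$, so $[e\Gamma] = [g\Gamma]$ and $[g\Lambda] \nest [e\Gamma]$ via the common representative $g$. Bounded diameter of the upward relative projection $\rho_{[h\Omega]}^{[g\Lambda]}$ is exactly Lemma \ref{lem:nesting}, which also establishes non-emptiness since $\pi_{h\Omega}(g\sub{\st(\Lambda)})$ is a non-empty subset of $C(h\Omega)$.

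The orthogonality axiom is covered in full by Lemma \ref{lem:orthogonality} (symmetry, incomparability with $\nest$, and the nesting--orthogonality compatibility). For transversality, the relation is declared to be the complement of orthogonality and nesting, so there is nothing to verify beyond the bound on the diameter of the lateral relative projections, which is Lemma \ref{lem:side_ways_projections_are_bounded}; non-emptiness of $\pi_{g\Lambda}(h\sub{\st(\Omega)})$ and $\pi_{h\Omega}(g\sub{\st(\Lambda)})$ is immediate. I expect no step to pose a serious obstacle — the delicate geometric content (bounded diameter of projections of star-subgroups, and preservation of the three relations under the parallelism equivalence) has already been settled in the preceding lemmas. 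The one conceptual check worth making explicit is that because the theorem fixes representatives of each parallelism class once and for all, no further well-definedness issue arises: Lemma \ref{lem:C_spaces_are_well_defined} guarantees that any two choices of representative yield $C$--spaces and projections that differ only by a canonical isometry, so the structure, with the chosen representatives, inherits all the stated bounds with hierarchy constant $E = 2$.
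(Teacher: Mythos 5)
Your proposal is correct and follows essentially the same approach as the paper, which likewise just assembles Lemmas \ref{lem:projections}, \ref{lem:nesting_partial}, \ref{lem:nesting}, \ref{lem:orthogonality}, and \ref{lem:side_ways_projections_are_bounded}. You supply a few extra details the paper leaves implicit (coarse surjectivity of $\pi_{[g\Lambda]}$, identifying $[e\Gamma]$ as the $\nest$--maximal element, and non-emptiness of the relative projections), but the route is the same.
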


\begin{proof}
The projection map $\pi_{[g\Lambda]}$ is shown to to be $(1,0)$--coarsely Lipschitz in Lemma \ref{lem:projections}. Nesting is shown to be a partial order in  Lemma \ref{lem:nesting_partial}. The upward relative projection has diameter at most $2$ by Lemma \ref{lem:nesting}. Lemma \ref{lem:orthogonality} shows that orthogonality is symmetric and mutually exclusive of nesting, and that 
nested domains inherit orthogonality. The lateral relative projections have diameter at most $2$ by Lemma \ref{lem:side_ways_projections_are_bounded}.
\end{proof}

\section{Graph products are relative HHGs}\label{section:rel_HHG}

In this section, we complete our proof that graph products of finitely generated groups are relative HHGs (Theorem \ref{thm:graph_products_are_rel_HHGs}) by proving the eight remaining HHS axioms and
showing that the group structure is compatible with our hierarchy structure. 
In Section \ref{section:hyperbolicity}, we prove hyperbolicity of $C(g\Lambda)$ whenever $\Lambda$ contains at least two vertices. Section \ref{section:finite_complexity_containers} is devoted to proving the finite complexity and containers axioms. Section \ref{section:uniqueness} deals with the uniqueness axiom, and in Section \ref{section:bgi_large_links}, we the prove the bounded geodesic image and large links axioms. In Section \ref{section:partial_realisation}, we verify partial realisation, and Section \ref{section:consistency} deals with the consistency axiom. Finally, in Section \ref{section:compatibility}, compatibility of the relative HHS structure with the group structure is checked.

We also obtain some auxiliary results along the way: in Section \ref{section:hyperbolicity}, we show that not only are the spaces $C(g\Lambda)$ hyperbolic whenever $\Lambda$ contains at least 2 vertices, but they are also quasi-trees; and in Section \ref{section:uniqueness}, we use uniqueness to give a classification of when $C(g\Lambda)$ has infinite diameter.

We conclude the section by remarking that the syllable metric on $G_{\Gamma}$ is a hierarchically hyperbolic space. This is true even when the vertex groups are not finitely generated. However, until then we will continue to assume $G_\Gamma$ is a graph product of finitely generated groups and that $\dist$ is the word metric on $G_\Gamma$, where the generating set for $G_{\Gamma}$ is given by taking a union of finite generating sets for each vertex group.

\subsection{Hyperbolicity}\label{section:hyperbolicity} 

\begin{lem}[Hyperbolicity]\label{lem:hyperbolicity}
For each $[g\Lambda] \in \mathfrak{S}_{\Gamma}$, either $[g\Lambda]$ is $\nest$--minimal or $C(g\Lambda)$ is $\frac{7}{2}$--hyperbolic.
\end{lem}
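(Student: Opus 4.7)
The claim splits into two cases based on the structure of $\Lambda$. First, I note that $[g\Lambda]$ is $\nest$-minimal precisely when $\Lambda$ consists of a single vertex: if $\Lambda$ contains any vertex $v$ together with another vertex, then $\{v\} \subsetneq \Lambda$ and $g$ is a common representative giving $[g\{v\}] \propnest [g\Lambda]$. Conversely, when $\Lambda = \{v\}$ there is no proper subgraph, so nothing can be properly nested below. Thus the $\nest$-minimal elements are exactly the single-vertex classes, whose associated $C$-spaces are Cayley graphs of arbitrary finitely-generated vertex groups and therefore need not be hyperbolic.

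In the non-minimal case $\Lambda$ contains at least two vertices, and I must show that $C(g\Lambda)$ is $\frac{7}{2}$-hyperbolic. Using the fact that left multiplication by $g$ induces an isometry between $C(\Lambda)$ and $C(g\Lambda)$ (which is immediate from the equivariance of the gate map in Proposition \ref{prop:gates_to_subgroups}(\ref{gate:equivariance}) together with the definition of the subgraph metric), it suffices to prove the statement for $g = e$. My plan is to establish the stronger statement that $C(\Lambda)$ is a quasi-tree, and then extract the explicit hyperbolicity constant $\frac{7}{2}$; I will do this via Manning's bottleneck criterion, which says that a geodesic space is a quasi-tree iff there is a uniform $\Delta$ such that for every pair $x, y$ some midpoint of a geodesic from $x$ to $y$ lies within $\Delta$ of every path from $x$ to $y$.

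To set up the bottleneck, I will take $x, y \in \langle\Lambda\rangle$ and apply Lemma \ref{lem:normal_subgraph_form} to produce a reduced subgraph expression $\lambda_1 \cdots \lambda_n$ for $x^{-1}y$ whose partial products $m_i = x\lambda_1 \cdots \lambda_i$ all lie on a single $S(\Gamma)$-geodesic $\gamma$ from $x$ to $y$. The consecutive partial products differ by an element supported on a proper subgraph, so they form a $C(\Lambda)$-geodesic of length $n$, and I declare the middle partial product $m = m_{\lfloor n/2 \rfloor}$ to be the midpoint. The task is then to show that any alternative $C(\Lambda)$-path $x = z_0, z_1, \ldots, z_k = y$, in which every $\supp(z_{j-1}^{-1}z_j)$ is a proper subgraph of $\Lambda$, must pass within uniformly bounded $C(\Lambda)$-distance of $m$.

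The hardest step will be the bottleneck verification itself, which I expect to reduce to a hyperplane-bookkeeping argument in $S(\Gamma)$ using Proposition \ref{prop:hyperplanes_properties} and Remark \ref{rem:labeling_hyperplanes}. The idea is that each edge of the alternative path crosses only hyperplanes whose labels lie in some proper subgraph of $\Lambda$, whereas the $S(\Gamma)$-geodesic $\gamma$ traverses hyperplanes labelled by every vertex of $\supp(x^{-1}y)$ in a controlled order dictated by the subgraph expression. Combining this with the convexity and separation properties of hyperplanes, together with the way hyperplanes of $S(\Gamma)$ interact with subgraph-metric edges (an edge labelled by $\langle\Lambda'\rangle$ crosses only hyperplanes labelled by vertices of $\Lambda'$), should force each $z_j$ either to lie on the same side of the hyperplanes traversed near $m$ as $x$, or on the same side as $y$; the first $z_j$ that switches sides must then be within a bounded number of proper-subgraph moves of $m$. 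Once this bound $\Delta$ is extracted, the quasi-tree conclusion follows from Manning's criterion, and the specific hyperbolicity constant $\frac{7}{2}$ is obtained by tracking $\Delta$ through the standard conversion from bottleneck to $\delta$-hyperbolicity.
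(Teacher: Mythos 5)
Your plan inverts the order the paper uses, and this inversion creates a real gap in getting the explicit constant. The paper first proves $C(g\Lambda)$ is $\frac{7}{2}$--hyperbolic by directly verifying the slim--triangles condition: given a $C(g\Lambda)$--geodesic triangle with sides $\gamma_1,\gamma_2,\gamma_3$, it replaces each edge of each side by an $S(g\Lambda)$--geodesic segment (the $\alpha^i_j$), picks a vertex $w$ of $\Lambda$ with non-empty link, and runs the hyperplane-tracking argument to produce a path labelled by a proper subgraph of $\Lambda$ joining a neighbourhood of an arbitrary vertex of $\gamma_1$ to $\gamma_2\cup\gamma_3$, giving distance at most $3$ between vertices (hence $\frac{7}{2}$ in general). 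It then proves the quasi-tree statement \emph{separately} (Theorem~\ref{thm:quasi-trees}) using the same hyperplane machinery applied to a geodesic versus an arbitrary path. You propose to prove the bottleneck property first and then ``extract the explicit hyperbolicity constant $\frac{7}{2}$ through the standard conversion from bottleneck to $\delta$-hyperbolicity.'' No such constant-preserving conversion exists: Manning's criterion gives that the space is quasi-isometric to a tree, and the hyperbolicity constant one gets out of that depends on the quasi-isometry constants to the tree, not transparently on the bottleneck constant $\Delta$. If the goal is the precise number $\frac{7}{2}$, you have to prove slimness directly, exactly as the paper does.

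Beyond that, the central ``hyperplane-bookkeeping'' is where nearly all the content of the lemma lives, and your proposal leaves it as a sketch. The actual argument has several non-obvious moving parts that need to be confronted: the case where $\Lambda$ has no edges must be split off (there is then no vertex $w$ with $\lk(w)\cap\Lambda\neq\emptyset$, so the combinatorial-hyperplane argument does not apply; the paper handles it by observing $S(g\Lambda)$ is a tree of simplices); the case where $\Lambda$ is a join must be split off (diameter $\leq 2$); and in the main case one needs the outermost-hyperplane device, the fact that the outermost combinatorial hyperplane $H_0'$ is labelled by $\lk(w)\cap\Lambda\subsetneq\Lambda$, and the careful case analysis on whether the returning hyperplanes $H_1,H_2$ land on $\gamma_1'$ near $p_j^1$ or cross $\gamma_2'\cup\gamma_3'$. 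Your phrase ``should force each $z_j$ to lie on the same side of the hyperplanes traversed near $m$'' papers over precisely the part where the work is. Finally, a small point: the element $m_{\lfloor n/2\rfloor}$ you choose as the bottleneck point may not satisfy Manning's requirement $\dist(x,m)=\dist(y,m)=\frac{1}{2}\dist(x,y)$ when $n$ is odd; the actual midpoint may be the midpoint of an edge, which changes the bookkeeping by $\frac{1}{2}$.
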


\begin{remark}
The hyperbolicity of $C(g\Lambda)$ can also be deduced from Proposition 6.4 of \cite{GenNeg}. The proof presented below uses a different argument that produces the explicit hyperbolicity constant of $\frac{7}{2}$.
\end{remark}

\begin{proof}
Take $[g\Lambda] \in \mathfrak{S}_{\Gamma}$ and suppose it is not $\nest$--minimal, i.e., $\Lambda$ contains at least two vertices. Let $x,y,z \in C(g\Lambda)$ be three distinct points and let $\gamma_{1},\gamma_{2},\gamma_{3}$ be three $C(g\Lambda)$--geodesics connecting the pairs $\{y,z\},\{z,x\},\{x,y\}$ respectively. We wish to show this triangle is $\frac{7}{2}$--slim, that is, we will show that $\gamma_1$ is contained in the $\frac{7}{2}$--neighbourhood of $\gamma_2 \cup \gamma_3$. Since $C(g\Lambda)$ is a metric graph whose edges have length $1$, it suffices to show that any vertex of $\gamma_{1}$ is at distance at most $3$ from $\gamma_{2}\cup\gamma_{3}$.

Let $p_1^i,\dots,p_{m_i}^i$ be the vertices of $\gamma_i$, and let $\gamma'_i$ be the path in $S(g\Lambda)$ obtained by connecting each pair of consecutive vertices $p_{j}^{i}$ and $p_{j+1}^i$ with an $S(g\Lambda)$--geodesic $\alpha_j^i$. Since $\alpha^{i}_{j}$ is labelled by vertices of $\supp((p_j^i)^{-1}p_{j+1}^i)$, which is a proper subgraph of $\Lambda$, the $C(g\Lambda)$--distance between any vertex of $\alpha_j^i$ and $p_j^i$ or $p^{i}_{j+1}$ is at most $1$. It therefore suffices to show that given any vertex $p^{1}_{j}$ of $\gamma_{1}$, either $\alpha^{1}_{j-1}$ or $\alpha^{1}_{j}$ is $C(g\Lambda)$--distance $1$ from some $\alpha^{i}_{t}$ with $i=2$ or $3$. See Figure \ref{fig:hyp_triangle}.

\begin{figure}[ht]
\centering
\def\svgwidth{\columnwidth}
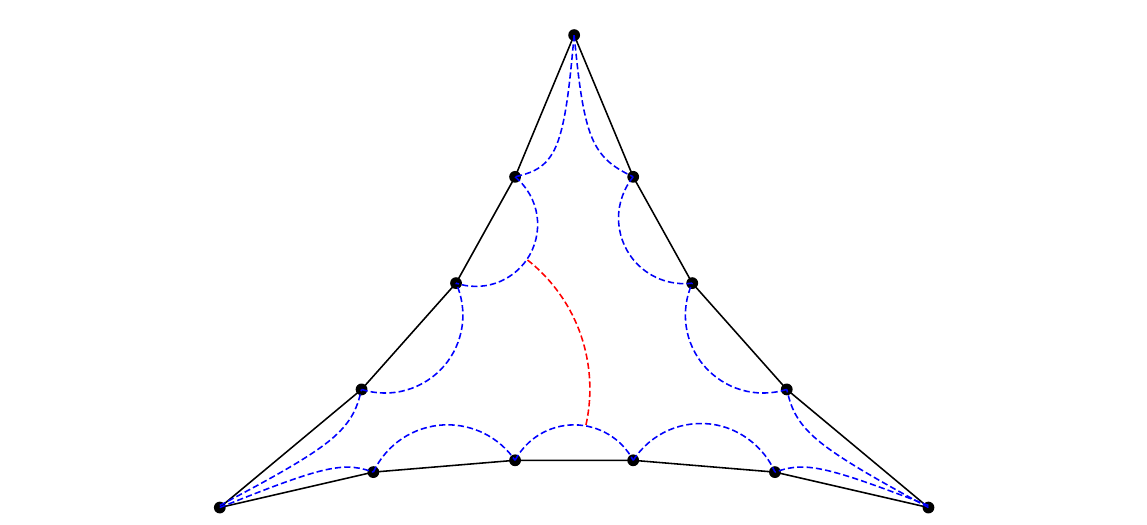
\caption{For each edge of the $C(g\Lambda)$--geodesic triangle, we construct an $S(g\Lambda)$--geodesic segment $\alpha^{i}_{j}$ between its endpoints (shown in blue). To show the triangle is $\frac{7}{2}$--slim, it then suffices to  show that for each $j$, $\alpha^{1}_{j-1}\cup\alpha^{1}_{j}$ is $C(g\Lambda)$--distance $1$ from some $\alpha^{i}_{t}$ with $i \neq 1$.}\label{fig:hyp_triangle}
\end{figure}

If $\Lambda$ has no edges, then $\langle\Lambda\rangle$ is the free product of the vertex groups, hence $S(g\Lambda)$ is a tree of simplices, that is, any  cycle in $S(g\Lambda)$ is contained in a single simplex (a coset of a vertex group). Therefore any two paths in $S(g\Lambda)$ with the same endpoints are contained in the $1$--neighbourhood of each other, and in particular $\gamma'_{1}$ is contained in the $1$--neighbourhood of $\gamma'_{2}\cup\gamma'_{3}$. Thus, any vertex of $\gamma_1$ is at  distance at most $3$ from $\gamma_{2}\cup\gamma_{3}$ in $C(g\Lambda)$.

Now suppose $\Lambda$ has at least one edge, so that it has a vertex $w$ with non-empty link. We may also assume that $\Lambda$ does not split as a join; otherwise, $C(g\Lambda)$ has diameter $2$ by Remark \ref{rem:join_implies_bounded_diameter} and hence is clearly $\frac{7}{2}$--hyperbolic. 
Take a vertex $p_j^1$ of $\gamma_1$. If $p^{1}_{j}$ is one of the first or last $4$ vertices of $\gamma_{1}$, then it is at distance at most $3$ from $\gamma_{2}$ or $\gamma_{3}$ in $C(g\Lambda)$. 
Otherwise $p^{1}_{j}$ is an endpoint of two consecutive edges $L_{j-1}$ and $L_{j}$ of $\gamma_{1}$ labelled by strict subgraphs $\Lambda_{j-1}$ and $\Lambda_{j}$ of $\Lambda$. We must have $\Lambda_{j-1} \cup \Lambda_{j} = \Lambda$, as otherwise we could replace these two edges with a single edge labelled by $\Lambda_{j-1}\cup\Lambda_{j}$, contradicting $\gamma_{1}$ being a $C(g\Lambda)$--geodesic. It follows that all vertices of $\Lambda$ appear as labels on the edges of the geodesic segments $\alpha^{1}_{j-1}$ and $\alpha^{1}_{j}$ of $\gamma'_{1}$ corresponding to $L_{j-1}$ and $L_{j}$. Consider the collection $\mathcal{E}_{w}$ of edges of $\alpha^{1}_{j-1} \cup \alpha^{1}_{j}$  labelled by the fixed vertex $w$ with $\lk(w) \cap \Lambda \neq \emptyset$, and consider the collection $\mathcal{H}_{w}$ of hyperplanes in $S(g\Lambda)$ dual to the edges in $\mathcal{E}_{w}$. We proceed to construct an $S(g\Lambda)$--path from an edge of $\mathcal{E}_{w}$ to some $\alpha^{i}_{t}$ with $i=2$ or $3$, either by travelling through the carrier of a single hyperplane, labelled by $\st(w)\cap\Lambda \subsetneq \Lambda$, or by following a sequence of combinatorial hyperplanes labelled by $\lk(w) \cap \Lambda \subsetneq \Lambda$. Since this path will be labelled by a proper subgraph of $\Lambda$, the $C(g\Lambda)$--distance between its endpoints will be $1$.

Suppose some hyperplane $H \in \mathcal{H}_{w}$ also crosses a geodesic segment $\alpha^{i}_{t}$ of $\gamma'_{2}\cup\gamma'_{3}$. Since the carrier of $H$ is labelled by vertices of  $\st(w) \cap \Lambda$, and $\st(w) \cap \Lambda$ is a strict subgraph of $\Lambda$ because $\Lambda$ does not split as a join, it follows that $p^{1}_{j}$ is at most $C(g\Lambda)$--distance $3$ from either $\gamma_{2}$ or $\gamma_{3}$, as desired.

Suppose therefore that no hyperplane of $\mathcal{H}_{w}$ crosses $\gamma'_{2}\cup\gamma'_{3}$. This means that each $H \in \mathcal{H}_{w}$ must cross $\gamma'_{1}$ a second time (Remark \ref{remark:hyperplanes_cross_loops_twice}). 
 Further, Proposition \ref{prop:hyperplanes_properties}(\ref{hyperplanes:cross_iff_adjacent_labels}) tells us that no two hyperplanes labelled by the same vertex may cross each other. It follows that there exists an outermost hyperplane $H_{0}$ of $\mathcal{H}_{w}$; that is, no hyperplane of $\mathcal{H}_{w}$ crosses edges of $\gamma'_{1}$ both earlier and later than $H_{0}$ does. Moreover, $H_{0}$ has an outermost combinatorial hyperplane $H_{0}'$; see Figure \ref{fig:outermost}. Note that since this combinatorial hyperplane is labelled by vertices of $\lk(w) \cap \Lambda \subsetneq \Lambda$, the $C(g\Lambda)$--distance between any two points on $H_{0}'$ is $1$. In particular, since $\gamma_{1}$ is a $C(g\Lambda)$--geodesic, it follows that the segments $\alpha^{1}_{r}$ and $\alpha^{1}_{k}$ that $H_{0}'$ intersects must satisfy $|k-r|\leq 2$. As we know that $H_{0}$ crosses $\alpha^{1}_{j-1}\cup\alpha^{1}_{j}$, this implies $H_{0}'$ must intersect $\alpha^{1}_{j-1}\cup\alpha^{1}_{j}$ too. Recalling that a hyperplane may not cross the same geodesic twice (Proposition \ref{prop:hyperplanes_properties}(\ref{hyperplanes:geodesic_iff_cross_once})), we may therefore suppose without loss of generality that $r=j$ and $j < k \leq j+2$ (the cases where $j-2 \leq k < j$ or $r = j-1$ proceed similarly).
 
 \begin{figure}[ht]
\centering
\def\svgscale{1.8}
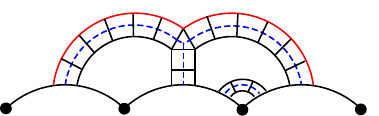
\caption{The outermost hyperplane $H_{0}$ of $\mc{H}_{w}$ and its outermost combinatorial hyperplane $H_{0}'$.}\label{fig:outermost}
\end{figure}
Let $E_{0}$ be the edge of $\mathcal{E}_{w}$ on $\alpha^{1}_{j}$ that $H_{0}$ crosses, and let $e_{1}$ and $e_{2}$ denote its endpoints. Let $F_{0}$ be the edge of $\alpha^{1}_{k}$ labelled by $w$ that $H_{0}$ crosses, and denote its endpoints by $f_{1}$ and $f_{2}$.  Then there is a path $\eta$ connecting $e_{1}$ and $f_{2}$  that is contained in the combinatorial hyperplane $H_{0}'$  labelled by vertices of $\lk(w) \cap \Lambda \subsetneq \Lambda$. Furthermore, if $w$ does not appear as a label of an earlier edge of $\alpha^{1}_{j}$ or a later edge of $\alpha^{1}_{k}$, then $\dist_{g\Lambda}(p_j^{1}, p_{k+1}^{1}) = 1$ as the path obtained by travelling from $p^{1}_{j}$ to $e_{1}$ along $\alpha^{1}_{j}$, then from $e_{1}$ to $f_{2}$ along $\eta$, then from  $f_{2}$ to $p^{1}_{k+1}$ along $\alpha^{1}_{k}$ is labelled by the proper subgraph $\Lambda \smallsetminus w$. This contradicts the assumption that $\gamma_{1}$ is a $C(g\Lambda)$--geodesic.  On the other hand, if $w$ appears as a label of an earlier edge $E_{-1}$ of $\alpha^{1}_{j}$ (take the closest one to $E_{0}$) but not a later edge of $\alpha^{1}_{k}$, then the corresponding hyperplane $H_{-1}$ must cross a segment $\alpha^{1}_{l}$ with $l<j$ (since $H_{0}$ is outermost), and there exists an $S(g\Lambda)$--path $\xi$ labelled by $\Lambda \smallsetminus w$ connecting $e_{1}$ and $\alpha^{1}_{l}$. Then the $C(g\Lambda)$--distance between the endpoints of the path $\xi \cup \eta$ is $1$ and so we obtain $\dist_{g\Lambda}(p^1_{l},p^1_{k+1}) \leq 2$, a contradiction. There therefore exists some edge labelled by $w$ which appears after  $F_{0}$ on $\alpha^{1}_{k}$. Let $E_{1}$ be the closest such edge to $H_{0}$, and consider the hyperplane $H_{1}$ dual to $E_{1}$.

If $H_{1}$ crosses $\alpha^{1}_{s}$ with $|s-j| \geq 3$, then we obtain a contradiction since we have a path in $ C(g\Lambda)$ from $p^{1}_{j}$ to $p^{1}_{s+1}$ (or $p^{1}_{j+1}$ to $p^{1}_{s}$ if $s<j$) of length at most $3$. If $H_{1}$ crosses $\alpha^{1}_{s}$ with $|s-k| \geq 3$, then similarly we obtain a contradiction. Assume therefore that $|s-j| \leq 2$ and $|s-k| \leq 2$. 
Note that since $H_{0}$ and $H_{1}$ cannot cross, we must have $s<j$ or $s>k$. 

If $s<j$ then we must have $k=j+1$ and $s=j-1$. In this case, $H_{1}$ crosses $\alpha^{1}_{j-1}$, which contradicts our assumption that $H_{0}$ is an outermost hyperplane of $\mathcal{H}_{w}$.Thus $H_{1}$ cannot cross any $\alpha^{1}_{s}$ with $s<k$. This implies that if $H_{1}$ crosses a segment $\alpha^{i}_{s}$ with $i=2$ or $3$, then we can conclude that $p^{1}_{j}$ is at most $C(g\Lambda)$--distance $3$ from either $\gamma_{2}$ or $\gamma_{3}$, by following a sequence of geodesics labelled by vertices of $\lk(w) \cap \Lambda$ and contained in combinatorial hyperplanes associated to $H_{0}$ and $H_{1}$; see Figure \ref{fig:follow_comb_hyps}.

\begin{figure}[ht]
\centering
\def\svgscale{1.8}
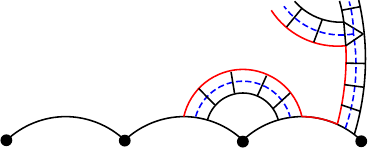
\caption{By following a sequence of combinatorial hyperplanes, we obtain a path labelled by $\Lambda\smallsetminus w$ (shown in red) that must eventually leave $\gamma_{1}'$ and cross $\gamma_{2}'\cup\gamma_{3}'$.}\label{fig:follow_comb_hyps}
\end{figure}

On the other hand, if $H_{1}$ crosses $\alpha^{1}_{s}$ with $s>k$, then $k=j+1$ and $s=j+2$. Repeating the same process, there must exist a later edge of $\alpha^{1}_{s}$ labelled by $w$. Let $H_{2}$ be the hyperplane dual to the closest such edge to $H_{1}$. If $H_{2}$ also crosses $\alpha^{1}_{t}$ where $t \neq s$, then we must have $t<j=s-2$ or $t>s=j+2$, as $H_{2}$ cannot cross the previous hyperplanes. 
However, the first case results in $|t-s| \geq 3$, and the second case gives $|t-j| \geq 3$, both of which give a contradiction. Therefore $H_{2}$ must cross $\alpha^{i}_{t}$ where $i=2$ or $3$. Following the sequence of geodesics labelled by vertices of $\Lambda \smallsetminus w$, we again see that $p^{1}_{j}$ is at most  $C(g\Lambda)$--distance $3$ from either $\gamma_{2}$ or $\gamma_{3}$.
\end{proof}

A similar technique can moreover show that the spaces $C(g\Lambda)$ are quasi-trees, by applying Manning's bottleneck criterion.

\begin{thm}[Bottleneck criterion \cite{ManningBottleneck}]
Let $Y$ be a geodesic metric space. The following are equivalent:
\begin{itemize}
    \item[(1)] $Y$ is quasi-isometric to some simplicial tree $T$;
    \item[(2)] There is some $\Delta>0$ so that for all $y,z\in Y$ there is a midpoint $m = m(y,z)$ with $\dist(y,m) = \dist(z,m) = \frac{1}{2}\dist(y,z)$ and the property that any path from $y$ to $z$ must pass within a distance $\Delta$ of $m$.
\end{itemize}
\end{thm}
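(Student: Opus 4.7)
The plan is to prove the two implications separately, with the harder direction being (2) $\Rightarrow$ (1).

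For (1) $\Rightarrow$ (2), I would fix a $(K,C)$--quasi-isometry $\phi \colon Y \to T$ with quasi-inverse $\psi \colon T \to Y$. Given $y,z \in Y$, take a $Y$--geodesic $\gamma$ from $y$ to $z$ and let $m$ be its midpoint. In the tree $T$, let $m_T$ be the midpoint of the tree geodesic $[\phi(y),\phi(z)]$. The first step is to show $\phi(m)$ is close to $m_T$: since $\phi$ is a quasi-isometry, $\dist_T(\phi(y),\phi(m))$ and $\dist_T(\phi(m),\phi(z))$ are each approximately $\tfrac{1}{2}\dist_T(\phi(y),\phi(z))$ up to the QI constants, and in a tree any such point is uniformly close to $m_T$. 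Next, given any path $\alpha \colon [0,1] \to Y$ from $y$ to $z$, subdivide it into points $y=y_0,y_1,\dots,y_n=z$ with $\dist(y_i,y_{i+1}) \leq 1$ (possible since $Y$ is geodesic and $\alpha$ is continuous), so the images $\phi(y_i)$ form a discrete sequence in $T$ with consecutive points at distance at most $K+C$. Because removing $m_T$ disconnects $\phi(y)$ from $\phi(z)$ in the tree, some $\phi(y_i)$ must lie within $K+C$ of $m_T$, hence within a bounded distance of $\phi(m)$. Applying $\psi$ and using the QI estimate then yields a point of $\alpha$ within some distance $\Delta = \Delta(K,C)$ of $m$.

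For (2) $\Rightarrow$ (1), the idea is to construct a simplicial tree $T$ out of iterated bottleneck midpoints and show the natural map $T \to Y$ is a quasi-isometry. Fix a basepoint $y_0 \in Y$ and, for each pair of distinct points $y,z$, fix once and for all a bottleneck midpoint $m(y,z)$. Build $T$ inductively by scales: take a sufficiently separated $\Delta$--net $N \subseteq Y$ containing $y_0$ as the scale-$0$ vertices, and at each successive scale add the midpoints $m(y,z)$ of suitably close vertex-pairs, joining them to both endpoints. A key preliminary step is \emph{coarse uniqueness of midpoints}: if both $m$ and $m'$ satisfy the bottleneck condition for $y,z$, then a path through $m$ must pass within $\Delta$ of $m'$, forcing $\dist(m,m') \leq 2\Delta$. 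This stability ensures that the inductive construction has no short loops and produces an honest tree (possibly after collapsing near-duplicates). The natural map $f \colon T \to Y$ sending each vertex to its underlying point is automatically Lipschitz, and the reverse QI bound follows from iterating the bottleneck property: any $Y$--geodesic between two net points must pass within $\Delta$ of the top-level midpoint, then within $\Delta$ of the corresponding quarter-point on the appropriate side, and so on, so its length and its tree-distance agree up to a geometric-series error.

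The principal obstacle is the (2) $\Rightarrow$ (1) direction, specifically verifying coarse midpoint uniqueness and showing that the inductive tree construction coarsely recovers all of $Y$. Coarse uniqueness demands a careful path-swapping argument directly from the bottleneck hypothesis, and the QI estimate then rests on showing by induction that the tree distance between scale-$k$ representatives of $y$ and $z$ matches $\dist(y,z)$ up to a constant depending only on $\Delta$, with the inductive step controlled by the halving of distances at each scale. Once these are in place, the two implications combine to give the equivalence.
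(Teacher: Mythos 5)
Note first that the paper does not prove this statement at all: it is quoted verbatim from Manning and used as a black box in the proof of Theorem 4.3, so your argument has to stand on its own. Your direction (1) $\Rightarrow$ (2) is essentially sound, but the intermediate claim that $\phi(m)$ is uniformly close to the tree midpoint $m_T$ of $[\phi(y),\phi(z)]$ is false: a quasi-isometry only controls ratios of distances up to multiplicative constants, so $\phi(m)$ can drift arbitrarily far from $m_T$ (e.g.\ $\phi(x)=x$ for $x\le 0$, $\phi(x)=2x$ for $x\ge 0$ on the line sends the midpoint $0$ of $[-n,n]$ far from the midpoint $n/2$ of $[-n,2n]$). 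The repair is easy: since $\phi\circ\gamma$ is a quasi-geodesic in a tree, $\phi(m)$ lies within a constant $R(K,C)$ of some point $p$ on $[\phi(y),\phi(z)]$, and $p$ is far from both endpoints when $\dist(y,z)$ is large; running your separation/discretisation argument with $p$ in place of $m_T$ gives the bottleneck constant.

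The direction (2) $\Rightarrow$ (1), which is the actual content of Manning's theorem, has genuine gaps. Your coarse uniqueness of midpoints is correct (and has a short proof: a geodesic bigon $y\to m\to z$ must pass within $\Delta$ of $m'$, and the realizing point is forced to within $\Delta$ of $m$ because $m'$ is an exact midpoint), but it does not do the work you ask of it. The tree construction is not well defined (``suitably close vertex-pairs'', ``collapsing near-duplicates''), and as described it manifestly contains cycles: three mutually close net points together with the three pairwise midpoints form a hexagonal loop, and nothing prevents much larger loops arising from the interaction of the midpoint hierarchies of \emph{different, far-apart} pairs. This interaction is the heart of the matter --- one must show that for a triple $x,y,z$ the midpoint $m(x,z)$ is coarsely captured by the hierarchies of $(x,y)$ and $(y,z)$ (equivalently, some tripod-like/hyperbolicity statement), and your sketch never addresses it, so connectivity, acyclicity, and the lower quasi-isometry bound for arbitrary pairs of points in the putative tree are all unproved. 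Finally, the claimed ``geometric-series error'' in the iterated bottleneck estimate is wrong as stated: each halving step costs an additive $\approx\Delta$ (one concatenates a short segment of length up to the previous error before applying the bottleneck property), so the error grows linearly in the depth, i.e.\ like $\Delta\log(\dist(y,z)/\Delta)$, and even this estimate is only sketched within a single pair's hierarchy. As it stands the proposal is a plausible plan for the easy direction plus a correct auxiliary lemma, but not a proof of the criterion.
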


\begin{thm}\label{thm:quasi-trees}
For each $[g\Lambda] \in \mathfrak{S}_{\Gamma}$, either $[g\Lambda]$ is $\nest$--minimal or $C(g\Lambda)$ is a quasi-tree.
\end{thm}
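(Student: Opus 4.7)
The plan is to verify Manning's bottleneck criterion directly, using the same prism/hyperplane machinery that drove the hyperbolicity proof. As in the proof of Lemma \ref{lem:hyperbolicity}, I would first reduce to the case where $\Lambda$ does not split as a join: if $\Lambda = \Lambda_1 \bowtie \Lambda_2$ then $C(g\Lambda)$ has diameter at most $2$ by Remark \ref{rem:join_implies_bounded_diameter} and is trivially a quasi-tree. So assume $\Lambda$ does not split as a join, which ensures that $\st(v) \cap \Lambda \subsetneq \Lambda$ for every vertex $v$ of $\Lambda$.

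Given vertices $x,y \in C(g\Lambda)$, let $\gamma$ be a $C(g\Lambda)$--geodesic from $x$ to $y$ with consecutive vertices $p_0,\dots,p_n$, and let $m$ be a midpoint of $\gamma$, which lies within $\tfrac{1}{2}$ of some vertex $p_j$. Pick an edge $L$ of $\gamma$ incident to $p_j$; it is labelled by a proper subgraph $\Lambda_0 \subsetneq \Lambda$. Expand $L$ into an $S(g\Lambda)$--geodesic $\beta$ inside $p_j\sub{\Lambda_0}$, pick any edge $E$ of $\beta$, and let $H$ be the hyperplane of $S(\Gamma)$ dual to $E$, labelled by some vertex $v \in \Lambda_0$. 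By Remark \ref{rem:labeling_hyperplanes} and the fact that $g\sub{\Lambda}$ is convex in $S(\Gamma)$, the carrier of $H$ meets $g\sub{\Lambda}$ in a set contained in a single coset of $\sub{\st(v)\cap\Lambda}$, so this carrier has $C(g\Lambda)$--diameter at most $1$ and contains $p_j$.

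Now let $\alpha$ be an arbitrary path in $C(g\Lambda)$ from $x$ to $y$. I would lift $\alpha$ to an $S(g\Lambda)$--path $\alpha'$ by replacing each $C(g\Lambda)$--edge, which is a multiplication by an element of some proper subgraph of $\Lambda$, with an $S(g\Lambda)$--geodesic inside the corresponding coset. Every vertex of $\alpha'$ then lies within $C(g\Lambda)$--distance $1$ of a vertex of $\alpha$. Since $H$ separates $x$ from $y$ in $S(\Gamma)$ (Proposition \ref{prop:hyperplanes_properties}(\ref{hyperplanes:geodesic_iff_cross_once})) and $\alpha'$ joins $x$ to $y$, the path $\alpha'$ must cross $H$, so it has a vertex $q$ lying in the carrier of $H$. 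Combining the three distance bounds, any vertex of $\alpha$ closest to $q$ satisfies
\[
\dist_{g\Lambda}(\alpha,m) \leq \dist_{g\Lambda}(\alpha,q) + \dist_{g\Lambda}(q,p_j) + \dist_{g\Lambda}(p_j,m) \leq 1 + 1 + 1 = 3,
\]
so $\alpha$ passes within $C(g\Lambda)$--distance $3$ of $m$. Manning's bottleneck criterion then implies $C(g\Lambda)$ is quasi-isometric to a simplicial tree.

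The only delicate point is the lifting step: one must check that replacing each $C(g\Lambda)$--edge by an $S(g\Lambda)$--geodesic yields an honest $S(g\Lambda)$--path on which the hyperplane $H$ still separates the endpoints, and that intermediate vertices of $\alpha'$ remain close in $C(g\Lambda)$ to the original $\alpha$. Both follow from the observation that each lifted segment sits inside a single coset of a proper subgraph of $\Lambda$, hence has $C(g\Lambda)$--diameter at most $1$, together with the convexity of $g\sub{\Lambda}$ in $S(\Gamma)$ used to argue about hyperplanes crossing $\gamma$ versus $\alpha'$.
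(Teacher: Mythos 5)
Your strategy is the same as the paper's (Manning's bottleneck criterion) and your hyperplane argument is genuinely simpler than the paper's, which recycles the outermost-hyperplane machinery from Lemma \ref{lem:hyperbolicity}. However, there is a real gap at the point where you invoke Proposition \ref{prop:hyperplanes_properties}(\ref{hyperplanes:geodesic_iff_cross_once}) to conclude that $H$ separates $x$ from $y$ in $S(\Gamma)$. That proposition tells you that a geodesic crosses each hyperplane at most once; to conclude that $H$ separates $x$ from $y$ you need an $S(\Gamma)$--geodesic from $x$ to $y$ that crosses $H$. But the concatenation $\gamma'$ obtained by lifting each edge of the $C(g\Lambda)$--geodesic $\gamma$ to an $S(g\Lambda)$--geodesic need not itself be an $S(g\Lambda)$--geodesic, so $H$ can cross $\gamma'$ more than once and fail to separate $x$ and $y$. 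Concretely, with $\Lambda=\{a,b,c\}$, $a\sim b$ the only edge, vertex groups $\Z$, $x=e$, and $y=(ac)^{k}b$, the path $\gamma = \bigl(e,\ (ac)^{k}a,\ (ac)^{k}b\bigr)$ is a $C(\Lambda)$--geodesic whose lift has syllable length $2k+3 > 2k+1 = |y|_{syl}$; if you pick $L=L_1$, $\beta$ the obvious lift, and $E = \bigl((ac)^{k},(ac)^{k}a\bigr)$ (the edge of $\beta$ closest to the midpoint), the dual hyperplane $H$ has carrier $(ac)^{k}\sub{\{a,b\}}$, and $e$, $(ac)^{k}$, $(ac)^{k}b$ all lie on the same side of $H$. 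So "pick any edge $E$ of $\beta$" is not a valid step, and your concluding remark that separation "follows from the observation that each lifted segment sits inside a single coset" does not address this.

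The gap is fixable, and with the fix your proof becomes cleaner than the paper's. Instead of an arbitrary $C(g\Lambda)$--geodesic, take $\gamma$ to be the reduced subgraph expression given by Lemma \ref{lem:normal_subgraph_form}: item (\ref{item:subgraph+syllable_expression}) guarantees that the lift $\gamma'$ is an $S(g\Lambda)$--geodesic, and the bottleneck criterion only asks for \emph{a} midpoint, so you are free to use the midpoint of this particular geodesic. Then $H$ crosses $\gamma'$ exactly once, $H$ genuinely separates $x$ from $y$ in $S(\Gamma)$ (convexity of $g\sub{\Lambda}$ upgrades the $S(g\Lambda)$--geodesic to an $S(\Gamma)$--geodesic), and the remainder of your argument goes through. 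One minor correction: the carrier of $H$ intersected with $g\sub{\Lambda}$ is a coset of $\sub{\st(v)\cap\Lambda}$, which has $C(g\Lambda)$--diameter at most $1$ and contains the endpoints of $E$, but it need not contain $p_j$ — you only get $\dist_{g\Lambda}(p_j,\text{carrier})\leq 1$ via $p_j\sub{\Lambda_0}$. This makes the bottleneck constant $\Delta = \tfrac{7}{2}$ rather than $3$, which of course is harmless. With these repairs your proof sidesteps the outermost-hyperplane argument entirely, whereas the paper's proof of Theorem \ref{thm:quasi-trees} imports it wholesale from Lemma \ref{lem:hyperbolicity}; the Lemma \ref{lem:normal_subgraph_form} normal form buys you this simplification.
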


The proof of Theorem \ref{thm:quasi-trees} proceeds similarly to the proof of Lemma \ref{lem:hyperbolicity}, with the role of $\gamma_{1}$ being played by a geodesic from $y$ to $z$ containing the midpoint $m(y,z)$, and replacing $\gamma_{2}\cup\gamma_{3}$ with an arbitrary path from $y$ to $z$.

\begin{proof}
Suppose $[g\Lambda]$ is not $\nest$--minimal. Let $x,y \in C(g\Lambda)$, let $\gamma$ be a $C(g\Lambda)$--geodesic connecting $x$ and $y$, and let $\beta$ be another $C(g\Lambda)$--path from $x$ to $y$. From $\gamma$ and $\beta$ we may obtain paths $\gamma'$ and $\beta'$ in $S(g\Lambda)$ by replacing each edge with a geodesic segment in $S(g\Lambda)$. Note that any point on such a segment is $C(g\Lambda)$--distance $1$ from the endpoints of the segment. Let $m$ be the midpoint of $\gamma$, so that $m$ is either a vertex of $\gamma$ or a midpoint of an edge.

If $\Lambda$ has no edges, then $S(g\Lambda)$ is a tree of simplices in the same manner as in the previous proof, and in particular any two paths in $S(g\Lambda)$ between $x$ and $y$ are contained in the $1$--neighbourhood of each other. Applying this to $\gamma'$ and $\beta'$ shows that $m$ is at distance at most $\Delta = \frac{7}{2}$ from $\beta$.

Now suppose $\Lambda$ has at least one edge, and let $L_{1}$ and $L_{2}$ be two edges of $\gamma$ adjacent to $m$ (if $m$ is the midpoint of an edge $L$, pick $L$ and one edge adjacent to it). Then $L_{1}$ and $L_{2}$ are labelled by strict subgraphs $\Lambda_{1}$ and $\Lambda_{2}$ of $\Lambda$ such that $\Lambda_{1}\cup\Lambda_{2} = \Lambda$. Thus either $\Lambda_{1}$ or $\Lambda_{2}$ contains a vertex $w$ with non-empty link, and $w$ therefore appears as a label of a hyperplane crossing an edge of the corresponding geodesic segments $\alpha_{1}$ and $\alpha_{2}$ of $\gamma'$.

We can now repeat the argument in the proof of Lemma \ref{lem:hyperbolicity} to find a path   connecting $\alpha_{1}\cup\alpha_{2}$ to $\beta'$ that is labelled by a proper subgraph of $\Lambda$. It follows that $m$ is at most $C(g\Lambda)$--distance $\Delta = \frac{7}{2}$ from $\beta$.
\end{proof}

\subsection{Finite complexity and containers}\label{section:finite_complexity_containers}

\begin{lem}[Finite complexity]\label{lem:finite_complexity}
Any set of pairwise $\nest$--comparable elements has cardinality at most $|V(\Gamma)|$.
\end{lem}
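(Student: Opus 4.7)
The plan is to reduce nesting in $\mathfrak{S}_\Gamma$ to strict containment of subgraphs of $\Gamma$. The key observation is that the assignment $[g\Lambda] \mapsto \Lambda$ is well-defined on parallelism classes (since $g\Lambda \parallel h\Lambda$ means both cosets are built from the same subgraph $\Lambda$), and the definition of nesting (Definition \ref{defn:nesting}) immediately gives $\Lambda \subseteq \Omega$ whenever $[g\Lambda] \nest [h\Omega]$.

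First I would verify that this containment is strict whenever the nesting is strict, i.e., that $[g\Lambda] \propnest [h\Omega]$ forces $\Lambda \subsetneq \Omega$. Indeed, if $\Lambda = \Omega$, then choosing a common representative $k \in G_\Gamma$ with $[k\Lambda] = [g\Lambda]$ and $[k\Omega] = [h\Omega]$ (which exists by definition of $[g\Lambda] \nest [h\Omega]$) gives $[g\Lambda] = [k\Lambda] = [k\Omega] = [h\Omega]$, contradicting strictness.

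Now given any set $\{[g_1\Lambda_1], \ldots, [g_n\Lambda_n]\}$ of pairwise $\nest$--comparable elements, we may reorder so that $[g_1\Lambda_1] \propnest [g_2\Lambda_2] \propnest \cdots \propnest [g_n\Lambda_n]$. By the previous paragraph we obtain a strict chain of (non-empty, full) subgraphs $\Lambda_1 \subsetneq \Lambda_2 \subsetneq \cdots \subsetneq \Lambda_n$ of $\Gamma$. Since each strict inclusion increases the vertex count by at least one and each $\Lambda_i$ has at least one vertex (the $\nest$--minimal elements being single vertices), we conclude $n \leq |V(\Gamma)|$.

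There is no real obstacle here; the lemma is essentially a bookkeeping consequence of the fact that parallelism preserves the underlying subgraph and nesting projects to subgraph inclusion. The only subtlety is ensuring strict nesting yields strict inclusion, which follows from the partial order structure established in Lemma \ref{lem:nesting_partial}.
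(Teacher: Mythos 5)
Your proof is correct and takes essentially the same approach as the paper: both arguments observe that proper nesting $[g\Lambda] \propnest [h\Omega]$ forces $\Lambda \subsetneq \Omega$ (since $\Lambda = \Omega$ together with a common representative $k$ would give $[g\Lambda] = [k\Lambda] = [k\Omega] = [h\Omega]$), and then bound the chain length by the number of vertices of $\Gamma$. One small quibble: your closing sentence attributes the strictness-of-inclusion step to the partial order structure of Lemma \ref{lem:nesting_partial}, but that step actually relies on the specific definition of nesting (the existence of a common representative $k$), which is exactly the argument you already gave in your second paragraph, so the attribution is slightly off but the proof itself is complete.
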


\begin{proof}
If $[g\Lambda] \nest [h\Omega]$ and $\Lambda$ and $\Omega$ have the same number of vertices, then we must have $\Lambda = \Omega$ and $[g\Lambda] = [k\Lambda] = [k\Omega] = [h\Omega]$ for some $k \in G_\Gamma$. Therefore, any two distinct $\nest$--comparable elements must have different numbers of vertices. Thus any set of pairwise $\nest$--comparable elements has cardinality at most $|V(\Gamma)|$.
\end{proof}

\begin{lem}[Containers]\label{lem:containers}
Let $[h\Omega] \propnest [g\Lambda]$ be elements of $\mf{S}_{\Gamma}$.  If there exists $[k\Pi] \in \mathfrak{S}_{\Gamma}$ such that $[k\Pi] \nest [g\Lambda]$ and $[k\Pi] \bot [h\Omega]$, then $[k\Pi] \nest [a(\lk(\Omega)\cap\Lambda)] \propnest [a \Lambda]$ where $a \in G_\Gamma$ satisfies $[a\Lambda] = [g\Lambda]$ and $[a\Omega] = [h\Omega]$.
\end{lem}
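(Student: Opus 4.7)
Set $\Theta := \lk(\Omega) \cap \Lambda$. The inclusion $\Pi \subseteq \Theta$ follows immediately from the hypotheses ($\Pi \subseteq \Lambda$ from nesting, $\Pi \subseteq \lk(\Omega)$ from orthogonality), and $[a\Theta] \propnest [a\Lambda]$ is automatic: $a$ itself is a common representative, and $\Theta \subsetneq \Lambda$ because the nonempty $\Omega \subseteq \Lambda$ is disjoint from its own link. The real content of the lemma is therefore to produce $d \in G_\Gamma$ witnessing $[k\Pi] \nest [a\Theta]$.

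My strategy is to show that $a^{-1}c \in \sub{\st(\Theta)}\sub{\st(\Pi)}$ for a suitable representative $c$ of $[k\Pi]$; writing $a^{-1}c = \theta\sigma$ with $\theta \in \sub{\st(\Theta)}$ and $\sigma \in \sub{\st(\Pi)}$ then lets me set $d := a\theta$, so that $d^{-1}a = \theta^{-1} \in \sub{\st(\Theta)}$ and $d^{-1}c = \sigma \in \sub{\st(\Pi)}$, giving the desired common representative. Invoking both hypotheses on $[k\Pi]$, I pick representatives $b$ (with $[b\Lambda] = [a\Lambda]$, from nesting) and $c$ (with $[c\Omega] = [a\Omega]$, from orthogonality), which by the defining conditions yield $a^{-1}b \in \sub{\st(\Lambda)}$, $b^{-1}c \in \sub{\st(\Pi)}$, and $a^{-1}c \in \sub{\st(\Omega)}$. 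Decomposing canonically via the joins $\st(\Lambda) = \Lambda \bowtie \lk(\Lambda)$ and $\st(\Pi) = \Pi \bowtie \lk(\Pi)$ (Remark \ref{rem:star_elements}), I write $a^{-1}b = \mu l$ with $\mu \in \sub{\Lambda}, l \in \sub{\lk(\Lambda)}$, and $b^{-1}c = \pi\kappa$ with $\pi \in \sub{\Pi}, \kappa \in \sub{\lk(\Pi)}$. Since $\Pi \subseteq \Lambda$ gives $\lk(\Lambda) \subseteq \lk(\Pi)$, the factor $l$ commutes with $\pi$, so I regroup to $a^{-1}c = (\mu\pi)(l\kappa)$, where $\mu\pi \in \sub{\Lambda}$ and $l\kappa \in \sub{\lk(\Pi)} \subseteq \sub{\st(\Pi)}$.

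The main obstacle is showing $\mu\pi \in \sub{\st(\Theta)}\sub{\st(\Pi)}$; combined with $l\kappa \in \sub{\st(\Pi)}$ this gives the desired factorisation of $a^{-1}c$. The key leverage is the constraint $a^{-1}c \in \sub{\st(\Omega)}$: by the uniqueness of reduced syllable expressions (Theorem \ref{thm:graph_product_normal_form}), every syllable of the reduced form of $a^{-1}c$ lies in $\Omega \cup \lk(\Omega)$, so in the expression $(\mu\pi)(l\kappa)$ any syllable of $\mu\pi$ with support outside $\st(\Omega)$ must cancel against a syllable of $l\kappa \in \sub{\lk(\Pi)}$; this is possible only when the offending vertex lies in $\lk(\Pi)$. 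Hence $\supp(\mu\pi) \subseteq (\Lambda \cap \st(\Omega)) \cup (\Lambda \cap \lk(\Pi)) = (\Omega \cup \Theta) \cup (\Lambda \cap \lk(\Pi))$, the first set sitting in $\st(\Theta)$ (since $\Omega \subseteq \lk(\Theta)$) and the second in $\st(\Pi)$. A careful analysis of the order of syllables in $\mu\pi$ — where $\mu$-syllables precede $\pi$-syllables and the commutation relations forced by $\lk(\Pi)$ allow the $\st(\Pi)$-only vertices to be moved to the right past the $\st(\Theta)$-only vertices — then yields the required factorisation $\mu\pi = \theta\sigma$ and completes the proof.
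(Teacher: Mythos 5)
Your overall strategy — produce a common representative $d$ by factoring a relevant group element into an $\sub{\st(\Theta)}$--part and an $\sub{\st(\Pi)}$--part — is the same as the paper's, and your reduction of the problem to showing $\mu\pi \in \sub{\st(\Theta)}\sub{\st(\Pi)}$ is correct (your $d = a\theta$ does the job once that factorisation is in hand). The gap is in the final step, where you pass from the support estimate $\supp(\mu\pi) \subseteq \st(\Theta) \cup \st(\Pi)$ to the factorisation $\mu\pi = \theta\sigma$. Support alone never gives such a factorisation (think of $g = s_u s_v s_u$ in a free product $G_u * G_v$: $\supp(g) \subseteq \{u\} \cup \{v\}$ but $g \notin G_u G_v$). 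You invoke ``the commutation relations forced by $\lk(\Pi)$'' to move the $\st(\Pi)$--only vertices to the right, but being in $\lk(\Pi)$ only gives commutation with $\Pi$. The $\st(\Theta)$--only vertices you need to move past live in $\Theta \smallsetminus \st(\Pi)$, which is disjoint from $\Pi$, and there is no a priori reason a vertex of $\Lambda \cap \lk(\Pi) \smallsetminus \st(\Theta)$ should be adjacent to a vertex of $\Theta \smallsetminus \st(\Pi)$ — indeed in general it is not.

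The claimed commutation does in fact hold \emph{for the specific syllables of $\mu\pi$}, but for a genuinely different reason that you have not supplied: a $\st(\Theta)$--only syllable of $\mu\pi$ is labelled by a vertex of $\Theta \smallsetminus \st(\Pi) \subseteq \st(\Omega) \smallsetminus \lk(\Pi)$, so its hyperplane cannot cross the $l\kappa$--segment and must therefore survive into the reduced form of $a^{-1}c$; meanwhile any $\st(\Pi)$--only syllable lying to its \emph{left} must reach $l\kappa$ on the right and hence must commute past that surviving syllable. Formalising this (via crossing hyperplanes and Proposition \ref{prop:hyperplanes_properties}(\ref{hyperplanes:cross_iff_adjacent_labels}), say) would close the gap, but it is substantially more work than the throwaway justification in your proposal suggests. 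The paper sidesteps the whole issue by using the gate/prefix machinery: it sets $p = \prefix_{\st(\Pi)}(k^{-1}a)$ and observes that, because $k^{-1}a$ admits the two factorisations $(k^{-1}b)(b^{-1}a)$ and $(k^{-1}c)(c^{-1}a)$ with leading factors in $\sub{\st(\Pi)}$, the remainder $s = p^{-1}k^{-1}a$ is simultaneously a suffix of an element of $\sub{\st(\Omega)}$ and of an element of $\sub{\st(\Lambda)}$, hence lies in $\sub{\st(\Omega) \cap \st(\Lambda)} \subseteq \sub{\st(\lk(\Omega)\cap\Lambda)}$; then $d = as^{-1}$ works. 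I would recommend adopting that prefix argument — it uses Lemma \ref{lem:head_lemma} and nothing more, and avoids any syllable--reordering analysis entirely.
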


\begin{proof}
First, since $[k\Pi] \nest [g\Lambda]$ and $[k\Pi] \bot [h\Omega]$, we have $\Pi \subseteq \Lambda$ and $\Pi \subseteq \lk(\Omega)$, hence $\Pi \subseteq \lk(\Omega)\cap\Lambda \subsetneq \Lambda$. Next, let $b \in G_{\Gamma}$ be such that $[b\Pi] = [k\Pi]$ and $[b\Omega] = [h\Omega]$, and let $c \in G_{\Gamma}$ be such that $[c\Pi] = [k\Pi]$ and $[c\Lambda] = [g\Lambda]$. We claim that there exists $d \in G_{\Gamma}$ such that $[k\Pi] = [d\Pi]$ and $[a(\lk(\Omega)\cap\Lambda)] = [d(\lk(\Omega)\cap\Lambda)]$, which would complete our proof.

Indeed, $k^{-1}a = k^{-1}bb^{-1}a = k^{-1}cc^{-1}a$, and we know that $$\supp(k^{-1}b) \subseteq \st(\Pi),\,\, 
\supp(b^{-1}a) \subseteq \st(\Omega)$$
and    $$\supp(k^{-1}c) \subseteq \st(\Pi),\,\, \supp(c^{-1}a) \subseteq \st(\Lambda).$$

Writing $p = \prefix_{\st(\Pi)}(k^{-1}a)$, we have $p^{-1}k^{-1}a = s$, where $s$ satisfies $\prefix_{\st(\Pi)}(s) = e$. That is, $\prefix_{\st(\Pi)}(p^{-1}k^{-1}bb^{-1}a) = e$. Since $p^{-1}k^{-1}b \in \langle\st(\Pi)\rangle$ and $b^{-1}a \in \langle\st(\Omega)\rangle$, this implies $p^{-1}k^{-1}a \in \langle\st(\Omega)\rangle$. Similarly, writing $k^{-1}a = k^{-1}cc^{-1}a$ shows us that $p^{-1}k^{-1}a \in \langle\st(\Lambda)\rangle$. 

That is to say, we can write $k^{-1}a = ps$ where   $p \in \langle\st(\Pi)\rangle$ and $s \in \langle\st(\Omega)\cap\st(\Lambda)\rangle$.
But $\Omega \subseteq \Lambda$ and $\lk(\Lambda) \subseteq \lk(\Omega)$, hence $$\st(\Omega) \cap \st(\Lambda) = \Omega \cup \lk(\Lambda)\cup (\lk(\Omega)\cap\Lambda).$$
Moreover, $\Omega\cup\lk(\Lambda) \subseteq \lk(\lk(\Omega)\cap\Lambda)$, hence $s \in \langle\st(\lk(\Omega)\cap\Lambda)\rangle$. Thus $k^{-1}as^{-1} = p \in \langle\st(\Pi)\rangle$ and $a^{-1}as^{-1} \in \langle\st(\lk(\Omega)\cap\Lambda)\rangle$. Letting $d = as^{-1}$, we have $[k\Pi] = [d\Pi]$ and $[a(\lk(\Omega)\cap\Lambda)] = [d(\lk(\Omega)\cap\Lambda)]$ as desired.
\end{proof}

\subsection{Uniqueness}\label{section:uniqueness} Here we prove the uniqueness axiom, which tells us that all geometry of $G_{\Gamma}$ is witnessed by some associated space $C(g\Lambda)$. This means we do not lose any geometric information through our projections. We also use this axiom to classify boundedness of the hyperbolic spaces $C(g\Lambda)$. In what follows, $|\cdot|_{G_{\Gamma}}$ denotes the word length on $G_{\Gamma}$ with respect to the generating set $S$ defined at the beginning of Section \ref{section:proto}.

\begin{lem}[Uniqueness]\label{lem:uniqueness}
Let $G_\Gamma$ be a graph product of finitely generated groups.  For all $g \in G_\Gamma$, if $\dist_{h\Lambda}(e,g) \leq r$ for all $h \in G_\Gamma$ and subgraphs $\Lambda \subseteq \Gamma$, then $|g|_{G_\Gamma} \leq (2^{|V(\Gamma)|} r+2)^{|V(\Gamma)|}$. 
\end{lem}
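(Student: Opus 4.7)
The plan is to induct on $|V(\Gamma)|$. For the base case $|V(\Gamma)|=1$, the only subgraph is $\Gamma$ itself and $G_\Gamma = G_v$; moreover $C(\Gamma)$ is just the Cayley graph of $G_v$ with respect to $S_v$, so $\dist_\Gamma(e,g) = |g|_{G_\Gamma}$, and the hypothesis yields $|g|_{G_\Gamma}\leq r \leq 2r+2$ immediately.

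For the inductive step, assume the result for graph products on fewer vertices. Applying the hypothesis with $h=e$, $\Lambda=\Gamma$ gives $\dist_\Gamma(e,g) \leq r$, so Lemma \ref{lem:normal_subgraph_form} provides a reduced subgraph expression $g=\lambda_1\cdots\lambda_n$ with $n\leq r$, each $\supp(\lambda_i)=\Lambda_i\subsetneq\Gamma$, and an $S(\Gamma)$--geodesic $\gamma$ from $e$ to $g$ passing through each partial product $g_i:=\lambda_1\cdots\lambda_i$. Property (3) of that lemma plus Corollary \ref{cor:reduced_syllable_expressions_minimize_word_length} yield $|g|_{G_\Gamma}=\sum_{i=1}^n|\lambda_i|_{G_\Gamma}$, so it suffices to bound each $|\lambda_i|_{G_\Gamma}$.

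The key step is to promote the hypothesis on $g$ to a hypothesis on $\lambda_i$ inside $\langle\Lambda_i\rangle$. Since $g_{i-1}$ and $g_i$ both lie on $\gamma$, Lemma \ref{lem:subgraph_distance_along_geodesics} together with the triangle inequality gives $\dist_{h\Omega}(g_{i-1},g_i)\leq \dist_{h\Omega}(e,g_{i-1})+\dist_{h\Omega}(e,g_i)\leq 2r$ for every coset $h\Omega$. Equivariance of the gate map (Proposition \ref{prop:gates_to_subgroups}(\ref{gate:equivariance})) translates this to $\dist_{g_{i-1}^{-1}h\Omega}(e,\lambda_i)\leq 2r$, and letting $h$ range over $G_\Gamma$ shows that $\dist_{k\Omega}(e,\lambda_i)\leq 2r$ for every $k\in G_\Gamma$ and every $\Omega\subseteq\Gamma$; in particular this holds for every $k\in\langle\Lambda_i\rangle$ and every $\Omega\subseteq\Lambda_i$. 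The prefix description of projections (Remark \ref{rem:prefix_description_of_projection}) and the definition of $C(k\Omega)$ depend only on the structure of $\Omega$ and on elements already lying in $\langle\Lambda_i\rangle$, so these projections and distances agree with the ones computed intrinsically in the graph product $\langle\Lambda_i\rangle$.

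We can therefore apply the inductive hypothesis to $\lambda_i\in\langle\Lambda_i\rangle$ with constant $2r$, obtaining
\[
|\lambda_i|_{G_\Gamma} \;\leq\; \bigl(2^{|V(\Lambda_i)|}\cdot 2r + 2\bigr)^{|V(\Lambda_i)|} \;\leq\; \bigl(2^{|V(\Gamma)|}r+2\bigr)^{|V(\Gamma)|-1},
\]
using $|V(\Lambda_i)|\leq|V(\Gamma)|-1$. Summing over $i$ and using $n\leq r\leq 2^{|V(\Gamma)|}r+2$,
\[
|g|_{G_\Gamma} \;\leq\; r\bigl(2^{|V(\Gamma)|}r+2\bigr)^{|V(\Gamma)|-1} \;\leq\; \bigl(2^{|V(\Gamma)|}r+2\bigr)^{|V(\Gamma)|},
\]
which completes the induction. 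The only delicate point is verifying that projections and $C$--spaces attached to cosets lying in $\langle\Lambda_i\rangle$ behave identically whether computed in $G_\Gamma$ or in the subgraph product, so that the inductive hypothesis actually applies; this follows because both the prefix formula for the gate and the definition of edges in $C(k\Omega)$ are intrinsic to $\Omega$.
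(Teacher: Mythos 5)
Your proof is correct and runs along the same inductive skeleton as the paper's: both induct on $|V(\Gamma)|$, both expand $g$ by a normal-form reduced subgraph expression from Lemma \ref{lem:normal_subgraph_form} together with the $S(\Gamma)$--geodesic through the partial products $g_i$, both arrive at $\dist_{k\Omega}(e,\lambda_i)\leq 2r$ for every coset $k\langle\Omega\rangle$ with $k\in\langle\Lambda_i\rangle$ and $\Omega\subseteq\Lambda_i$, and both then apply the inductive hypothesis with the doubled constant and sum the word lengths. Where you differ, and where your route is genuinely cleaner, is in how the $2r$ bound is obtained. The paper splits into the case that $\Gamma$ is a join (handled directly) and the case that it is not, and in the latter it first proves the auxiliary identity $\pi_{p_{i-1}h\Omega}(p_{i-1})=\pi_{p_{i-1}h\Omega}(e)$ (Claim \ref{claim:p_i-1 = e}), whose proof relies on the normalisation $\suffix_{\Lambda_i}(p_{i-1})=e$ from Lemma \ref{lem:normal_subgraph_form}(\ref{item:trivial_suffix}) and on choosing the coset representative with $\suffix_\Omega(h)=e$; only then does it apply Lemma \ref{lem:subgraph_distance_along_geodesics} once (Claim \ref{claim:p_i = g}) and estimate $\dist_{p_{i-1}h\Omega}(e,p_i)\leq\dist_{p_{i-1}h\Omega}(e,g)+\dist_{p_{i-1}h\Omega}(g,p_i)\leq 2r$. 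You instead apply Lemma \ref{lem:subgraph_distance_along_geodesics} to both $g_{i-1}$ and $g_i$, use the triangle inequality through $e$, and translate by $g_{i-1}^{-1}$ via gate equivariance; this gives the same $2r$ while bypassing Claim \ref{claim:p_i-1 = e}, the $\suffix=e$ bookkeeping, and the join/non-join case split. In exchange you still rely on Lemma \ref{lem:normal_subgraph_form} to know that the geodesic passes through the $g_i$ and that word lengths add, so the two arguments share their foundation; but your estimate is shorter and more uniform. Your passing remark about the projections being ``intrinsic'' to $\langle\Lambda_i\rangle$ is the right thing to check and is correct, since the prefix formula (Remark \ref{rem:prefix_description_of_projection}), the convexity of graphical subgroups, and the definition of $C(k\Omega)$ all depend only on data inside $\langle\Lambda_i\rangle$.
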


\begin{proof}
 Let $r \geq 0$. If $\Gamma$ is a single vertex, then the conclusion is immediate as the only subgraph is $\Gamma$ and $d_{\Gamma}(e,g) = |g|_{G_\Gamma} = r$.  Suppose $\Gamma$ contains $n+1$ vertices and assume the lemma holds for any graph product of finitely generated groups whose defining graph contains at most $n$ vertices. Suppose  $g \in G_\Gamma$ with $\dist_{h\Lambda}(e,g) \leq r$ for all $h \in G_\Gamma$ and subgraphs $\Lambda \subseteq \Gamma$.
 
  Since $\dist_\Gamma(e,g) \leq r$, there exist proper subgraphs $\Lambda_i \subsetneq \Gamma$ and elements $\lambda_i$ with $\supp(\lambda_i) = \Lambda_i$ so that $g = \lambda_1 \dots \lambda_m$ and $\dist_\Gamma(e,g) = m \leq r$. We shall see that $\dist_{h\Omega}(e,g) \leq r$ implies $\dist_{h\Omega}(e,\lambda_i)$ is uniformly bounded for each $\Omega \subseteq \Lambda_i$ and $h \in \sub{\Lambda_i}$. Since each $\sub{\Lambda_i}$ is a graph product on at most $n$ vertices, induction will imply the word length of each $\lambda_i$ is bounded, which in turn will bound the word length of $g$.

  If $\Gamma$ splits as a join $\Gamma = \Lambda_{1} \bowtie \Lambda_{2}$, then any element $g \in G_{\Gamma}$ can be written in the form $g = \lambda_{1}\lambda_{2}$ where $\lambda_{i} \in \langle\Lambda_{i}\rangle$ for $i=1,2$ and $|g|_{G_{\Gamma}} = |\lambda_{1}|_{G_{\Gamma}}+|\lambda_{2}|_{G_{\Gamma}}$. Moreover, if $h \in \langle\Lambda_{i}\rangle$ and $\Omega \subseteq \Lambda_{i}$, then $\gate_{h\Omega}(g) = h \cdot \prefix_{\Omega}(h^{-1}g) = h \cdot \prefix_{\Omega}(h^{-1}\lambda_{i}) = \gate_{h\Omega}(\lambda_{i})$.
  Therefore $\dist_{h\Omega}(e,\lambda_i) = \dist_{h\Omega}(e,g) \leq r$ and by induction  $|\lambda_i|_{G_\Gamma} \leq (2^nr+2)^n$ for $i=1,2$. Thus, $|g|_{G_{\Gamma}} \leq 2(2^nr+2)^n \leq (2^{n+1}r+2)^{n+1}$.

  Suppose $\Gamma$ does not split as a join, and define $p_0 =e$ and $p_i = \lambda_1\cdots \lambda_i$ for $i \in \{1,\dots,m\}$. Note that the $p_{i}$ are the vertices of the $C(\Gamma)$--geodesic connecting $e$ and $g$ with edges labelled by the $\lambda_{i}$. By Lemma \ref{lem:normal_subgraph_form}, we can assume that $ \suffix_{\Lambda_i}(p_{i-1}) = e$ for each $i \in \{2,\dots,m\}$ and that there exists an $S(\Gamma)$--geodesic connecting $e$ to $g$ that contains each $p_i$ as a vertex. Fix $i \in \{1,\dots m\}$, $h \in \langle\Lambda_i\rangle$, and $\Omega \subseteq \Lambda_i$. 

 As stated above, we wish to show $\dist_{h\Omega}(e,\lambda_i)$ is bounded uniformly in terms of $r$ so that we can apply the induction hypothesis.  Since $\dist_{h\Omega}(e,\lambda_i)$ is independent of the choice of representative of the coset $h \sub{\Omega}$, we can assume $\suffix_{\Omega}(h) = e$.  To achieve the bound on $\dist_{h\Omega}(e,\lambda_i)$, we  use the following two claims plus the assumption that $\dist_{h\Omega}(e,g) \leq r$. 
  
 \begin{claim}\label{claim:p_i-1 = e}
 $\pi_{p_{i-1}h\Omega}(p_{i-1}) = \pi_{p_{i-1}h\Omega}(e)$.
 \end{claim}
 
 \begin{proof}
 By equivariance of the gate map and the prefix description of the gate map (Lemma \ref{lem:head_lemma}),
 \[ \gate_{p_{i-1}h\Omega}(p_{i-1}) = p_{i-1}h \cdot \prefix_\Omega(h^{-1})\]  and \[\,\, \gate_{p_{i-1}h\Omega}(e) = p_{i-1}h \cdot \prefix_\Omega(h^{-1}p_{i-1}^{-1}).\]
Since $\prefix_{\Lambda_i}(p_{i-1}^{-1}) = e$, we have $\prefix_{\Omega}(p_{i-1}^{-1}) = e$ too.   Since $h \in \sub{\Lambda_i}$ and  $\prefix_{\Omega}(p_{i-1}^{-1}) = e$, we have $\prefix_\Omega(h^{-1}p_{i-1}^{-1}) = \prefix_\Omega(h^{-1})$ and so  $\gate_{p_{i-1}h\Omega}(p_{i-1}) = \gate_{p_{i-1}h\Omega}(e)$. This implies $\pi_{p_{i-1}h\Omega}(p_{i-1}) = \pi_{p_{i-1}h\Omega}(e)$.
  \end{proof}
  
\begin{claim}\label{claim:p_i = g}
$\dist_{p_{i-1}h\Omega}(p_i,g) \leq r$. 
\end{claim}

\begin{proof}[Proof of Claim \ref{claim:p_i = g}]
Recall, we can write each $\lambda_i$ in reduced syllable form to produce an $S(\Gamma)$--geodesic connecting $e$ and $g$ and containing each $p_i$ as a vertex (Lemma \ref{lem:normal_subgraph_form}). Thus, Lemma \ref{lem:subgraph_distance_along_geodesics} says $\dist_{p_{i-1}h\Omega} (p_i,g) \leq \dist_{p_{i-1}h\Omega} (e, g)$,  and $\dist_{p_{i-1}h\Omega} (e,g)\leq r$ by assumption.
\end{proof}

By the equivariance of the gate map (Proposition \ref{prop:gates_to_subgroups}(\ref{gate:equivariance})), $\dist_{h\Omega}(e,\lambda_i) = \dist_{p_{i-1}h\Omega}(p_{i-1},p_{i})$. Claim \ref{claim:p_i-1 = e}  then implies  \[\dist_{p_{i-1}h\Omega}(p_{i-1},p_{i}) =  \dist_{p_{i-1}h\Omega}(e,p_i) \leq \dist_{p_{i-1}h\Omega}(e,g) + \dist_{p_{i-1}h \Omega}(g,p_i).\] 
Since $\dist_{p_{i-1}h\Omega}(e,g) \leq r$ by assumption and $\dist_{p_{i-1}h \Omega}(g,p_i) \leq r$ by  Claim \ref{claim:p_i = g}, we have $\dist_{h\Omega}(e,\lambda_i) = \dist_{p_{i-1}h\Omega}(p_{i-1},p_{i}) \leq 2r$ for each $h \in \sub{\Lambda_i}$ and $\Omega \subseteq \Lambda_i$. 
The induction hypothesis now implies  the word length of $\lambda_i$ in $\sub{\Lambda_i}$ is at most  $(2^{n}(2r)+2)^n$. Thus we have $$|g|_{G_{\Gamma}} \leq r(2^{n+1}r+2)^{n} \leq (2^{n+1}r+2)^{n+1}$$ because each graphical subgroup is convexly embedded in the  word metric $\dist$ on $G_\Gamma$.   \qedhere
\end{proof}

The uniqueness axiom allows us to classify boundedness of the hyperbolic spaces $C(g\Lambda)$.

\begin{thm}\label{thm:diameter}
 For any $g \in G_\Gamma$ and any subgraph $\Lambda$ of $\Gamma$ containing at least two vertices, the space $C(g\Lambda)$ has infinite diameter if and only if $\Lambda$ does not split as a  join.
\end{thm}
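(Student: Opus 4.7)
The easy direction is immediate: if $\Lambda = \Lambda_{1}\bowtie\Lambda_{2}$ splits as a join, then by Remark \ref{rem:join_implies_bounded_diameter}, $\diam C(g\Lambda) \leq 2$.

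For the converse, assume $\Lambda$ does not split as a join and has at least two vertices. By the equivariance of the gate map (Proposition \ref{prop:gates_to_subgroups}(\ref{gate:equivariance})), $C(g\Lambda)$ is isometric to $C(\Lambda)$, so it suffices to take $g=e$ and exhibit elements $g_{n}\in\langle\Lambda\rangle$ with $\dist_{\Lambda}(e,g_{n})\to\infty$. Since $\Lambda$ does not split as a join, its complement graph is connected and in particular there exist two non-adjacent vertices $v,w\in V(\Lambda)$; choose non-trivial $a\in G_{v}$ and $b\in G_{w}$. My plan is to induct on $|V(\Lambda)|$.

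\textbf{Base case.} If $|V(\Lambda)|=2$, then $\Lambda=\{v,w\}$ has no edges and $\langle\Lambda\rangle=G_{v}\ast G_{w}$. The proper subgraphs of $\Lambda$ are precisely the two singletons $\{v\}$ and $\{w\}$ (and the empty graph), so any reduced subgraph expression for an element of $\langle\Lambda\rangle$ is automatically a reduced syllable expression. By Theorem \ref{thm:graph_product_normal_form}, the reduced syllable length of $(ab)^{n}$ in the free product $G_{v}\ast G_{w}$ is exactly $2n$, and hence $\dist_{\Lambda}(e,(ab)^{n})=2n\to\infty$.

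\textbf{Inductive step.} For $|V(\Lambda)|\geq 3$ the element $(ab)^{n}$ no longer suffices, since $\supp((ab)^{n})\subseteq\{v,w\}\subsetneq\Lambda$ gives $\dist_{\Lambda}(e,(ab)^{n})\leq 1$. I argue by contradiction: assume $\diam C(\Lambda)\leq D<\infty$, and combine this with the large links axiom (Lemma \ref{lem:large_link_lemma}) applied at the $\nest$-maximal domain $\Lambda$. For any $g\in\langle\Lambda\rangle$ this produces a collection $V_{1},\dots,V_{m}$ with $m\leq ED+E$ of proper sub-domains absorbing all large projections; each $V_{i}$ has the form $[h_{i}\Omega_{i}]$ for some $\Omega_{i}\subsetneq\Lambda$. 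Combined with the uniqueness axiom (Lemma \ref{lem:uniqueness}) for $\langle\Lambda\rangle$ viewed as its own relative HHG, controlling each $\dist_{V_{i}}(e,g)$ would force a uniform bound on $|g|_{G_{\Lambda}}$, contradicting the fact that $\langle\Lambda\rangle$ is infinite (since all vertex groups are non-trivial and $|V(\Lambda)|\geq 2$). When $\Omega_{i}$ splits as a join, Remark \ref{rem:join_implies_bounded_diameter} bounds $\dist_{V_{i}}(e,g)$ by $2$.

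\textbf{Main obstacle.} The difficult case is when some $\Omega_{i}$ does \emph{not} split as a join: the inductive hypothesis then says $\diam C(\Omega_{i})=\infty$, so $\dist_{V_{i}}(e,g)$ is not automatically controlled, and we cannot directly close the contradiction via uniqueness. I expect the resolution to come from a recursive application of large links within $\langle\Omega_{i}\rangle$ (a graph product on strictly fewer vertices), using the consistency axiom (Lemma \ref{lem:consistency}) to propagate $V_{i}$-projections to deeper sub-domains and eventually terminate either at a join subgraph (handled by Remark \ref{rem:join_implies_bounded_diameter}) or at a $\nest$-minimal vertex-group domain, whose contribution to uniqueness in the top group $\langle\Lambda\rangle$ is then controlled by the bounded length of the syllables involved.
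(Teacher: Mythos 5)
The gap you flag under ``Main obstacle'' is genuine, and your proposed resolution does not work. Your inductive step needs to bound $\dist_{V_i}(e,g)$ for \emph{every} proper subdomain, but the inductive hypothesis tells you only that $C(\Omega_i)$ has infinite diameter when $\Omega_i$ is not a join --- which is information running in the wrong direction. Recursing with large links inside $\langle\Omega_i\rangle$ merely pushes the problem to smaller non-join subgraphs: every two-vertex anticlique is itself not a join, so the recursion never bottoms out at a base case you can control, and the consistency axiom (which compares the relative projections of transverse or nested pairs) gives no purchase on the quantity $\dist_{V_i}(e,g)$ you actually need to bound. Moreover, the number of domains produced at each large-links step is $E\dist_{\Omega_i}(e,g)+E$, which again depends on the very distances you are trying to control, so the recursion does not even produce a bound on the \emph{number} of domains involved.

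The missing idea is to abandon the arbitrary-$g$/induction approach and choose an explicit test element whose proper subdomain projections are controlled \emph{by construction}, not by any descent argument. The paper takes $\lambda = s_1\cdots s_k$ with one nontrivial generator $s_i\in G_{v_i}$ from each vertex of $\Lambda$, and studies $\lambda^n$. Because $\Lambda$ is not a join, no $s_i$ commutes with every other $s_j$, so no reduction occurs and $\syl{\lambda^n}=kn$. The crux is then the direct computation (Claim~\ref{claim:Prefix_of_full_support}): for every proper subgraph $\Omega\subsetneq\Lambda$ and every $h\in\langle\Lambda\rangle$, one has $\dist_{h\Omega}(e,\lambda^n)\leq 3$. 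The argument uses only the prefix description of the gate map and the periodic structure of $\lambda^n$: picking a vertex $v_\ell\in V(\Lambda)\smallsetminus V(\st(\Omega))$ and a vertex $v_m\in V(\Omega)$ non-adjacent to $v_\ell$, one shows that $\prefix_\Omega(h^{-1}\lambda^n)$ can contain at most one syllable supported on $v_m$, which caps its $C(\Omega)$-distance from the identity at $3$. With that uniform bound on \emph{all} proper subdomain projections of $\lambda^n$ in hand, the uniqueness axiom applied to $\langle\Lambda\rangle$ gives the contradiction with $\syl{\lambda^n}\to\infty$, with no graph induction and no appeal to large links. Your base case $|V(\Lambda)|=2$ is the instance $\lambda=ab$, but the inductive step genuinely requires this explicit element and the quantitative claim about its projections, neither of which your sketch supplies.
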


\begin{proof}
Recall, if $\Lambda$ splits as a join, then $\diam(C(g\Lambda)) \leq 2$ by Remark \ref{rem:join_implies_bounded_diameter}. Suppose therefore that $\Lambda$ does not split as a  join and let $v_1,\dots,v_k$ be the vertices of $\Lambda$. For each $i \in \{1,\dots,k\}$, pick $s_i \in S_{v_i}$, where $S_{v_i}$  is the finite generating set for $G_{v_i}$ that  we fixed at the beginning of Section \ref{section:proto}. Define $\lambda = s_1\dots s_k$.
For each $i \in \{1,\dots,k\}$ and $j \in \{1,\dots, n\}$, let $s_i^j$ be the $j$th copy of $s_i$ in the product $(s_1\dots s_k)^n = \lambda^n$, that is, $\lambda^n =(s_1^1\dots s_k^1)(s_1^2 \dots s_k^2) \dots (s_1^n\dots s_k^n)$.

We claim that for each $n \in \mathbb{N}$, $(s_1^1\dots s_k^1) \dots (s_1^n\dots s_k^n)$ is a reduced syllable expression for $\lambda^{n}$. 
Indeed, if  $(s_1^1\dots s_k^1) \dots (s_1^n\dots s_k^n)$ is not reduced, then there exists $s_i^{j}$ that is combined with some $s_i^{\ell}$ ($j \neq \ell$) after applying some number of commutation relations (Theorem \ref{thm:graph_product_normal_form}). However,  if $s_i^{\ell}$ were to be combined with $s_i^{j}$, then $s_i$ would need to commute with each of  $s_1,\dots,s_{i-1},s_{i+1},\dots,s_k$. 
This only happens if the vertex $v_i$ is connected to every other vertex of $\Lambda$, but this does not happen as $\Lambda$ does not split as a join.  Therefore  $(s_1^1\dots s_k^1)\dots (s_1^n\dots s_k^n)$ is a reduced syllable expression for $\lambda^{n}$, and we have $|\lambda^n|_{syl}= kn$ for all $n \in \mathbb{N}$. 

To prove $C(g\Lambda)$ has infinite diameter, we use the following claim plus the uniqueness axiom to show that $\dist_\Lambda(e,\lambda^{n})$ can be made as large as desired by increasing $n$.

\begin{claim}\label{claim:Prefix_of_full_support}
 For all $\Omega \subsetneq \Lambda$, $h \in \sub{\Lambda}$ and $n \geq 2$, $\dist_{h\Omega}(e,\lambda^{n}) \leq 3$.
\end{claim}

For now we accept Claim \ref{claim:Prefix_of_full_support}, deferring its proof until after we have proved $C(g\Lambda)$ has infinite diameter.

For the purposes of contradiction, assume there exists $R>0$ such that $\dist_\Lambda(e,\lambda^n) \leq R$ for all $n \in \mathbb{N}$. By Claim \ref{claim:Prefix_of_full_support}, for every proper subgraph $\Omega \subsetneq \Lambda$ and $h \in \sub{\Lambda}$, we have $\dist_{h\Omega}(e,\lambda^{n}) \leq 3$.  Applying the uniqueness axiom (Lemma \ref{lem:uniqueness}) to the graph product $\sub{\Lambda} = G_\Lambda$, this implies there exists  $D=D(R, |V(\Lambda)|)>0$  such that $|\lambda^n|_{G_\Lambda} =|\lambda^n|_{G_\Gamma} \leq D$ for all $n \in \mathbb{N}$. However, this is a contradiction as $|\lambda^n|_{G_\Gamma} \geq |\lambda^{n}|_{syl} = kn$ for all $n \in \mathbb{N}$. Thus, for each $R >0$, there exists $n_R$ such that $\dist_\Lambda(e,\lambda^{n_R}) > R$. Therefore $C(\Lambda)$, and hence $C(g\Lambda)$, has infinite diameter.
\end{proof}

\begin{proof}[Proof of Claim \ref{claim:Prefix_of_full_support}]
Let $\Omega \subsetneq \Lambda$ be a proper subgraph and $h \in \sub{\Lambda}$. Since $\dist_{h\Omega}(e,\lambda^{n})$ does not depend on the choice of representative of the coset $h\sub{\Omega}$, we can assume $\suffix_\Omega(h) = e$, and thus $\prefix_\Omega(h^{-1}) =e$.

Recall, $\pi_{h\Omega} (e) = h \cdot \prefix_{\Omega}(h^{-1})$ and $\pi_{h\Omega}(\lambda^n) = h \cdot \prefix_\Omega(h^{-1}\lambda^n)$ (Remark \ref{rem:prefix_description_of_projection}).  Since $\prefix_\Omega(h^{-1}) =e$,  it suffices to prove that $\dist_\Omega(e,h^{-1}\lambda^{n}) \leq 3$. We can also assume that $ \prefix_\Omega(h^{-1}\lambda^{n}) \neq e$. 

By Proposition \ref{prop:syllables_of_gate_ are_syllables_of_image}, all syllables of $ \prefix_\Omega(h^{-1}\lambda^{n})$ are syllables of $\lambda^{n}$. As $ \prefix_\Omega(h^{-1}\lambda^{n}) \neq e$, there must exist $i \in \{1,\dots,k\}$ and $j \in \{1,\dots,n\}$ such that $s_i^j$ is the first syllable of $(s_1^1\dots s_k^1)(s_1^2 \dots s_k^2) \dots (s_1^n\dots s_k^n)$ that is also a syllable of $ \prefix_\Omega(h^{-1}\lambda^{n})$.

Let   $\ell,m \in \{1,\dots,k\}$ be such that   $v_{\ell} \in V(\Lambda \smallsetminus \st(\Omega))$ and $v_m\in V(\Omega)$ is not joined to $v_\ell$ by an edge. These vertices exist since $\Lambda$ does not split as a join and thus $\Lambda \not\subseteq \st(\Omega)$.
We will show that $\prefix_\Omega(h^{-1}\lambda^n)$ can be written as a product $p_1p_2p_3$ where $\supp(p_2)$ is a single vertex $v$ of $\Omega$ and $\supp(p_1),\supp(p_3) \subseteq \Omega \smallsetminus v$. This implies the   $C(\Omega)$--distance between $e$ and $\prefix_\Omega(h^{-1}\lambda^n)$ is at most $3$, which in turn says $\dist_{h\Omega}(e,\lambda^{n}) \leq 3$.

Suppose $i<\ell$. Since $v_\ell \not \in V(\Omega)$, every syllable of $\prefix_\Omega(h^{-1}\lambda^{n})$ must either be one of $s_i^j,s_{i+1}^j,\dots, s_{\ell-1}^j$ or must commute with $s^j_{\ell}$. As $s_{m}$ does not commute with $s_{\ell}$, it follows that no $s_m^J$ is a syllable of $\prefix_\Omega(h^{-1}\lambda^{n})$ for $J>j$. Therefore $\prefix_\Omega(h^{-1}\lambda^{n})$ can contain at most one syllable with support $v_m$, namely $s_m^j$. Thus $\prefix_\Omega(h^{-1}\lambda^{n}) = p_1p_2p_3$ with $\supp(p_1)\subseteq \Omega \smallsetminus v_m$, $\supp(p_2) \subseteq v_m$, and $\supp(p_3) \subseteq \Omega \smallsetminus v_m$. Note, if $\Omega = v_m$, then $\prefix_{\Omega}(h^{-1}\lambda^n) = p_2 = s_m^j$ and $\dist_{h\Omega}(e,\lambda^n) =\dist_{\Omega}(e,s_m^j) = 1$ because $s_m^j \in S_{v_m}$.

The case $i>\ell$ proceeds similarly since every syllable of $\prefix_\Omega(h^{-1}\lambda^{n})$ must either be one of $s_i^j,s_{i+1}^j,\dots, s_{k}^j, s_1^{j+1},\dots, s_{\ell-1}^{j+1}$ or must commute with $s^{j+1}_{\ell}$.
\end{proof}

In Section \ref{section:applications}, we use our characterisation of when $C(g\Lambda)$ has infinite diameter to answer two questions of Genevois \cite{Gen_Rigidity} (Theorems \ref{thm:electrification} and \ref{thm:quasi-line}).

\subsection{Bounded geodesic image and large links}\label{section:bgi_large_links} As the bounded geodesic image axiom is used to prove large links, we include both in this section. 

\begin{lem}[Bounded geodesic image]\label{lem:bounded_geodesic_image} 
Let $x,y \in G_{\Gamma}$ and $[h\Omega] \propnest [g\Lambda]$.  For any choice of representatives $h\Omega \in [h\Omega]$ and $g\Lambda \in [g\Lambda]$,  if $\dist_{h\Omega}(x,y) >0$, then  every $C(g\Lambda)$--geodesic $\gamma$ from $\pi_{g\Lambda}(x)$ to $\pi_{g\Lambda}(y)$ intersects the closed $2$--neighbourhood of $\rho^{h\Omega}_{g\Lambda}$.
\end{lem}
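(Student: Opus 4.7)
The plan is to first reduce to a convenient choice of representatives using parallelism, then execute a commutation analysis on syllables. Since $[h\Omega]\propnest[g\Lambda]$, pick $k\in G_\Gamma$ with $[k\Omega]=[h\Omega]$ and $[k\Lambda]=[g\Lambda]$, so $k\sub{\Omega}\subseteq k\sub{\Lambda}$. By Lemma \ref{lem:C_spaces_are_well_defined}, gate maps between parallel cosets are isometries induced by group elements, so projections, relative projections, and the distances $\dist_{h\Omega}(x,y)$ and $\dist_{g\Lambda}(x,y)$ are equivariantly transported under a change of representative. It therefore suffices to prove the lemma under the assumption $h=g=k$, so $\Omega\subseteq\Lambda$ and $g\sub{\Omega}\subseteq g\sub{\Lambda}$. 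Adapting the calculation in the proof of Lemma \ref{lem:nesting} to this nesting direction identifies $\rho:=\rho_{g\Lambda}^{g\Omega}=g\sub{\st(\Omega)\cap\Lambda}$: any element of $g\sub{\st(\Omega)}$ factors as $g\lambda l$ with $\lambda\in\sub{\Omega}$ and $l\in\sub{\lk(\Omega)}$, and Lemma \ref{lem:head_lemma} gives $\gate_{g\Lambda}(g\lambda l)=g\lambda\cdot\prefix_\Lambda(l)\in g\sub{\st(\Omega)\cap\Lambda}$. The composition identity $\pi_{g\Omega}(z)=\pi_{g\Omega}(\pi_{g\Lambda}(z))$ for every $z\in G_\Gamma$ follows from the hyperplane argument of Lemma \ref{lem:C_spaces_are_well_defined}: a hyperplane separating $\gate_{g\Omega}(z)$ from $\gate_{g\Omega}(\gate_{g\Lambda}(z))$ would separate $z$ from $g\sub{\Lambda}$ by Proposition \ref{prop:gates_to_subgroups}(\ref{gate:hyperplanes_separating_image}),(\ref{gate:hyperplanes_separating_pairs}) while simultaneously crossing $g\sub{\Omega}\subseteq g\sub{\Lambda}$.

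Now let $\gamma=p_0,\dots,p_n$ be a $C(g\Lambda)$-geodesic from $\pi_{g\Lambda}(x)$ to $\pi_{g\Lambda}(y)$, so $p_i^{-1}p_{i+1}=:\lambda_{i+1}\in\sub{\Lambda_{i+1}}$ with $\Lambda_{i+1}\subsetneq\Lambda$. Write $p_i=g\mu_i$. Since every vertex of $\lk(\Omega)$ is adjacent to every vertex of $\Omega$, the subgroup $\sub{\st(\Omega)\cap\Lambda}$ splits internally as $\sub{\Omega}\times\sub{\lk(\Omega)\cap\Lambda}$, so $\prefix_{\st(\Omega)\cap\Lambda}(\mu_i)$ factors uniquely as $\sigma_i^1\sigma_i^2$ with $\sigma_i^1=\prefix_\Omega(\mu_i)\in\sub{\Omega}$ and $\sigma_i^2\in\sub{\lk(\Omega)\cap\Lambda}$. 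Writing $\mu_i=\sigma_i^1\sigma_i^2\nu_i$, where $\prefix_{\st(\Omega)\cap\Lambda}(\nu_i)=e$, we obtain $\pi_{g\Omega}(p_i)=g\sigma_i^1$ (using that $\sigma_i^2$ commutes past $\sub{\Omega}$ and that $\prefix_\Omega(\nu_i)\leq\prefix_{\st(\Omega)\cap\Lambda}(\nu_i)=e$) and $\dist_{g\Lambda}(p_i,\rho)\leq\dist_\Lambda(e,\nu_i)$.

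The main step is the claim: if $\dist_\Lambda(e,\nu_i)\geq 2$, then $\sigma_{i+1}^1=\sigma_i^1$. Given this, suppose for contradiction that $\gamma$ avoids $\mathcal{N}_2(\rho)$: each $\dist_\Lambda(e,\nu_i)\geq 3$, so $\sigma_0^1=\cdots=\sigma_n^1$ and the composition identity gives $\pi_{g\Omega}(x)=\pi_{g\Omega}(p_0)=\pi_{g\Omega}(p_n)=\pi_{g\Omega}(y)$, contradicting $\dist_{g\Omega}(x,y)>0$. To prove the claim, expand $\mu_{i+1}=\sigma_i^1\sigma_i^2\nu_i\lambda_{i+1}$; since $\sigma_i^2\in\sub{\lk(\Omega)}$ commutes with $\sub{\Omega}$, $\prefix_\Omega(\mu_{i+1})=\sigma_i^1\cdot\prefix_\Omega(\nu_i\lambda_{i+1})$, so it suffices to show $\prefix_\Omega(\nu_i\lambda_{i+1})=e$. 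Assume toward contradiction that some syllable with support $v\in V(\Omega)$ lies in this prefix. By reducedness of $\nu_i$ and of $\lambda_{i+1}$ (two $v$-support syllables within $\nu_i$ cannot be brought adjacent without commuting past a non-$\lk(v)$ blocker, contradicting reducedness), the prefix syllable arises from combining at most one $v$-syllable $s$ of $\nu_i$ with at most one $v$-syllable of $\lambda_{i+1}$.

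If $s$ is nontrivial, then bringing $s$ adjacent to the $\lambda_{i+1}$ syllable and commuting the combined syllable to the front of $\nu_i\lambda_{i+1}$ forces every syllable of $\nu_i$ other than $s$ to commute with $v$; in particular, every syllable of $\nu_i$ before $s$ commutes with $v$, so $s$ itself is a prefix syllable of $\nu_i$, yielding $s\leq\prefix_\Omega(\nu_i)=e$, a contradiction. If $s$ is trivial, the $\lambda_{i+1}$ syllable must commute past all of $\nu_i$, forcing $\supp(\nu_i)\subseteq V(\lk(v)\cap\Lambda)$; since $v\in V(\Lambda)\setminus V(\lk(v))$, the subgraph $\lk(v)\cap\Lambda$ is proper in $\Lambda$, so $\dist_\Lambda(e,\nu_i)\leq 1$, contradicting the claim's hypothesis. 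The main obstacle is this combinatorial analysis of graph product normal forms, particularly the combination case, which forms the technical heart of the argument.
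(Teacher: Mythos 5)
Your overall strategy is a genuinely different route from the paper's: you reduce to a common representative with $\Omega\subsetneq\Lambda$, decompose each vertex $p_i=g\mu_i$ of the $C(g\Lambda)$-geodesic as $\mu_i=\sigma_i^1\sigma_i^2\nu_i$ via the $\st(\Omega)\cap\Lambda$-prefix, and argue that the $\Omega$-projection $g\sigma_i^1$ is constant along $\gamma$ wherever $\gamma$ stays far from $\rho$. The paper instead works globally with hyperplanes in $S(\Gamma)$: since $\dist_{h\Omega}(x,y)>0$ there is a hyperplane $H$ labelled by some $w\in V(\Omega)$ separating $\gate_{h\Omega}(x)$ from $\gate_{h\Omega}(y)$; $H$ also crosses some $S(\Gamma)$-geodesic segment $\alpha_i$ underlying an edge of $\gamma$, and a carrier/combinatorial-hyperplane path labelled by $\lk(w)$ connects $\alpha_i$ to $h\sub{\Omega}$; projecting the endpoints of this path by $\pi_{g\Lambda}$ gives a pair of points whose difference is supported in $\lk(w)\cap\Lambda\subsetneq\Lambda$, hence the distance-$2$ bound. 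So the paper never touches syllable normal forms.

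The gap in your proof is precisely where you flag it, in the verification of the key claim $\prefix_\Omega(\nu_i\lambda_{i+1})=e$. Both cases overlook cancellation between syllables of $\nu_i$ and of $\lambda_{i+1}$. In Case 1 you assert that ``bringing $s$ adjacent to the $\lambda_{i+1}$ syllable and commuting the combined syllable to the front forces every syllable of $\nu_i$ other than $s$ to commute with $v$''; but a syllable $r_j$ of $\nu_i$ preceding $s$ could in principle cancel against a $\lambda_{i+1}$-syllable rather than commute with $v$, so the inference ``$s$ is a prefix of $\nu_i$'' is not established. In Case 2 the assertion that the $\lambda_{i+1}$-syllable must commute past \emph{all} of $\nu_i$, hence $\supp(\nu_i)\subseteq V(\lk(v)\cap\Lambda)$, has the same issue: a syllable of $\nu_i$ that is cancelled by a $\lambda_{i+1}$-syllable need not lie in $\lk(v)$, so the bound $\dist_\Lambda(e,\nu_i)\leq 1$ does not follow. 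One can try to argue that such cancellations are blocked by the $v$-syllable sitting between $\nu_i$ and $\lambda_{i+1}$, but that argument must also be carried out carefully (a cancelling $\lambda$-syllable might be to the \emph{left} of the $v$-syllable inside $\lambda_{i+1}$, or the $v$-syllable might itself move first), and as written the proposal does not do so. You should either handle this cancellation bookkeeping explicitly—likely by passing to the hyperplane description of the gate map $\gate_\Omega$, as in Proposition~\ref{prop:gates_to_subgroups}, rather than working purely with reduced syllable expressions—or adopt the paper's global hyperplane argument, which avoids the issue entirely.
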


\begin{proof}
We first need to establish that when $[h \Omega] \nest [g\Lambda]$, gating onto $h \sub{\Omega}$ is the same as first gating onto $g\sub{\Lambda}$ and then gating onto $h\sub{\Omega}$. This will allow us to relate $\pi_{g\Lambda}(x)$ and $\pi_{h\Omega}(x)$.
\begin{claim}
 If $[h\Omega] \nest [g\Lambda]$, then $\gate_{h\Omega}(\gate_{g\Lambda}(x)) = \gate_{h\Omega}(x)$ for all $x \in G_{\Gamma}$ and for all representatives $g\Lambda \in [g\Lambda]$ and $h\Omega \in [h\Omega]$.
\end{claim}

\begin{proof}
Let $k \in G_\Gamma$ so that $[k\Omega] = [h \Omega]$ and $[k \Lambda] = [g\Lambda]$. Without loss of generality, we can assume $x \not\in g\sub{\Lambda}$. 

Suppose $\gate_{h\Omega}(\gate_{g\Lambda}(x)) \neq \gate_{h\Omega}(x)$. Then there is a hyperplane $H$ separating $\gate_{h\Omega}(\gate_{g\Lambda}(x))$ and $\gate_{h\Omega}(x)$. By Proposition \ref{prop:gates_to_subgroups}, $H$ also separates $\gate_{g\Lambda}(x)$ and $x$ and cannot cross $g \sub{\Lambda}$. However, we know that $H$ crosses  $h\langle\Omega\rangle \subseteq h\langle\Lambda\rangle$ and  by parallelism (Proposition \ref{prop:hyperplanes_cross_parallel}) $H$ must also cross $k\langle\Omega\rangle \subseteq k\langle\Lambda\rangle$. But $k \Lambda \parallel g\Lambda$, so $H$ must also cross $g \sub{\Lambda}$. This contradiction means  we must have $\gate_{h\Omega}(\gate_{g\Lambda}(x)) = \gate_{h\Omega}(x)$.
\end{proof}

Let $\gamma$ be a $C(g\Lambda)$--geodesic from  $\pi_{g\Lambda}(x)$ to $\pi_{g\Lambda}(y)$ and let $p_1,\dots,p_n \in \sub{\Lambda}$ so that $\pi_{g\Lambda}(x) = gp_1,gp_2,\dots,gp_n=\pi_{g\Lambda}(y)$ are the vertices of $\gamma$. Let $\alpha_i$ be an $S(g\Lambda)$--geodesic from $gp_i$ to $gp_{i+1}$ for each $i \in \{1,\dots,n-1\}$. Let $\gamma'$ be the path in $S(g\Lambda)$ that is the union of all the $\alpha_i$.

Suppose $\dist_{h\Omega}(x,y) > 0$. Then $\dist_{syl}(\gate_{h\Omega}(x),\gate_{h\Omega}(y)) > 0$ and so there is a hyperplane $H$ separating $\gate_{h\Omega}(x) = \gate_{h\Omega}(\gate_{g\Lambda}(x))$ and $\gate_{h\Omega}(y) = \gate_{h\Omega}(\gate_{g\Lambda}(y))$ that is labelled by a vertex   $w \in V(\Omega)$.  
The hyperplane $H$ then also separates $\gate_{g\Lambda}(x)$ and $\gate_{g\Lambda}(y)$ by Proposition \ref{prop:gates_to_subgroups}. 
Thus, $H$ must cross one of the segments $\alpha_i$ that make up $\gamma'$. Since $H$ crosses both $h\sub{\Omega}$ and $\alpha_i$ and $H$ cannot separate $\gate_{g\Lambda}(x)$ from $\gate_{h\Omega}(x)=\gate_{h\Omega}(\gate_{g\Lambda}(x))$ nor $\gate_{g\Lambda}(y)$ from $\gate_{h\Omega}(y)=\gate_{h\Omega}(\gate_{g\Lambda}(y))$ (Proposition \ref{prop:gates_to_subgroups}(\ref{gate:hyperplanes_separating_image})), there exists an $S(\Gamma)$--geodesic, $\eta$, from an element $b_0 \in h\sub{\Omega}$ to $a_0 \in \alpha_i$ that is labelled by vertices in $\lk(w)$; see Figure \ref{fig:BGI}.

\begin{figure}[ht]
     \centering
     \def\svgscale{.7}
     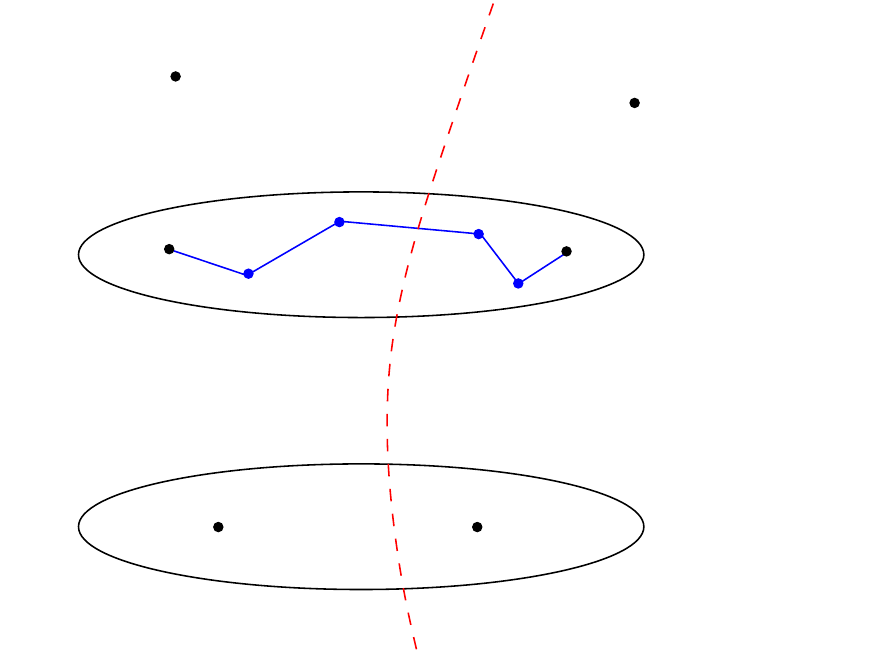
     \caption{The $S(\Gamma)$--geodesic $\eta$ connecting $b_0 \in h \sub{\Omega}$ and $a_0 \in \alpha_i$ when $d_{h\Omega}(x,y)$ is larger than $0$.}
     \label{fig:BGI}
 \end{figure}
 
Let  $a_1 = \pi_{g\Lambda}(a_0)$ and $b_1 = \pi_{g\Lambda}(b_0)$. 
Since $\eta$ was labelled by vertices in $\lk(w)$, Proposition \ref{prop:syllables_of_gate_ are_syllables_of_image} tells us we have $\supp(a_1^{-1}b_1) \subseteq \lk(w) \cap \Lambda$, which is a proper subgraph of  $\Lambda$.  Thus, in the subgraph metric, $\dist_{g\Lambda}(\alpha_i, \rho^{h\Omega}_{g\Lambda}) \leq 1$ as $a_1 \in \pi_{g\Lambda}(\alpha_i)$ and $b_1 \in \pi_{g\Lambda}(h\sub{\Omega}) \subseteq \rho_{g\Lambda}^{h\Omega}$.  
As $\alpha_{i}$ is labelled by a proper subgraph of $\Lambda$, any subsegment is also labelled by a proper subgraph, hence $\dist_{g\Lambda}(ga,gp_{i+1})\leq 1$ for any vertex $ga$ of $\alpha_{i}$. Thus, $\dist_{g\Lambda}(ga, \gamma) \leq 1$ and therefore $\dist_{g\Lambda}(\gamma, \rho^{h\Omega}_{g\Lambda}) \leq 2$.
\end{proof}

We can now use the bounded geodesic image axiom together with the following lemma to prove large links.

\begin{lem}\label{lem:big_gates}
Let $[g\Lambda], [h\Omega] \in \mathfrak{S}_{\Gamma}$. For any representatives $g \Lambda \in [g\Lambda]$ and $h \Omega \in [h\Omega]$, if $\diam(\pi_{g\Lambda}(h\langle\Omega\rangle)) > 2$, then $[g\Lambda] \nest [h\Omega]$.
\end{lem}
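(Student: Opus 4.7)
The plan is to prove the contrapositive: assuming $[g\Lambda] \not\nest [h\Omega]$, I will show $\diam(\pi_{g\Lambda}(h\sub{\Omega})) \leq 2$. Since nesting is reflexive, excluding nesting excludes the possibility $[g\Lambda] = [h\Omega]$, and by the mutually exclusive relations of the proto-hierarchy structure (Theorem \ref{thm:proto-structure}), the remaining options are $[h\Omega] \propnest [g\Lambda]$, $[g\Lambda] \trans [h\Omega]$, and $[g\Lambda] \perp [h\Omega]$. I will dispatch each in turn.

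The two cases $[h\Omega] \propnest [g\Lambda]$ and $[g\Lambda] \trans [h\Omega]$ are essentially immediate from the work already done. In both settings, $h\sub{\Omega} \subseteq h\sub{\st(\Omega)}$, so
\[ \pi_{g\Lambda}(h\sub{\Omega}) \subseteq \pi_{g\Lambda}(h\sub{\st(\Omega)}) = \rho_{g\Lambda}^{h\Omega}, \]
and the diameter bound of $2$ follows from Lemma \ref{lem:nesting} in the reverse-nesting case and from Lemma \ref{lem:side_ways_projections_are_bounded} in the transverse case.

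The only case requiring a fresh argument is orthogonality, $[g\Lambda] \perp [h\Omega]$, and this is where I expect the main (though still very modest) work to lie. Here $\Lambda \subseteq \lk(\Omega)$, and since no vertex lies in its own link, $\Lambda \cap \Omega = \emptyset$. For any two elements $h\omega_1, h\omega_2 \in h\sub{\Omega}$ we have $(h\omega_1)^{-1}(h\omega_2) = \omega_1^{-1}\omega_2 \in \sub{\Omega}$. Proposition \ref{prop:syllables_of_gate_ are_syllables_of_image} then forces every syllable of the element $(\gate_{g\Lambda}(h\omega_1))^{-1}\gate_{g\Lambda}(h\omega_2)$ to be a syllable of $\omega_1^{-1}\omega_2$, hence to have support in $\Omega$. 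On the other hand, this same element clearly lies in $\sub{\Lambda}$, since both of its factors belong to $g\sub{\Lambda}$. The normal form theorem (Theorem \ref{thm:graph_product_normal_form}) identifies $\sub{\Omega} \cap \sub{\Lambda}$ with $\sub{\Omega \cap \Lambda} = \{e\}$, so the gates coincide; in particular $\pi_{g\Lambda}(h\sub{\Omega})$ collapses to a single point, giving diameter $0$.

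In all three non-nesting cases we obtain $\diam(\pi_{g\Lambda}(h\sub{\Omega})) \leq 2$, completing the proof by contradiction. The only conceptual content is the orthogonal case, and that reduces to the fact that gates preserve syllable supports together with the trivial intersection of graphical subgroups on disjoint vertex sets.
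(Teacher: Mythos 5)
Your proof is correct and follows essentially the same three-case decomposition as the paper, with identical handling of the transverse and reverse-nesting cases via $\rho_{g\Lambda}^{h\Omega}$. Your orthogonal case is a mild streamlining: you apply the general clause of Proposition \ref{prop:syllables_of_gate_ are_syllables_of_image} to two arbitrary points of $h\sub{\Omega}$ and conclude from $\sub{\Lambda}\cap\sub{\Omega}=\{e\}$, whereas the paper normalizes the representatives $g,h$ and tracks prefixes explicitly to show the projection is constant, but the underlying mechanism — gates preserve syllable supports, and $\Lambda\cap\Omega=\emptyset$ under orthogonality — is the same.
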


\begin{proof}
If $[g\Lambda] \trans [h\Omega]$ or $[h\Omega] \propnest [g\Lambda]$, then $\pi_{g\Lambda}\bigl(h\sub{\Omega}\bigr)  \subseteq \rho_{g\Lambda}^{h\Omega}$ , which is shown to have diameter at most $2$ in Lemmas \ref{lem:nesting} and \ref{lem:side_ways_projections_are_bounded}.  If $[g\Lambda] \perp [h\Omega]$, then  $\Lambda \subseteq \lk(\Omega)$. Let $\omega \in \sub{\Omega}$. Then $\gate_{g\Lambda}(h\omega) = g \cdot \prefix_{\Lambda}(g^{-1}h \omega)$. 
Assume without loss of generality that $$\suffix_{\Lambda}(g) = e \text{ and } \suffix_{\Omega}(h) = e.$$ By Proposition \ref{prop:syllables_of_gate_ are_syllables_of_image}, all syllables of $\prefix_{\Lambda}(g^{-1}h \omega)$ are syllables of $h\omega$. Further, since $\Lambda \subseteq \lk(\Omega)$, we have $\supp(\omega)\cap\Lambda = \emptyset$. As $\suffix_{\Omega}(h) = e$, this implies that $\prefix_{\Lambda}(g^{-1}h \omega) = \prefix_{\Lambda}(g^{-1}h)$. Thus, $\pi_{g\Lambda}(h\omega) = g \cdot \prefix_{\Lambda}(g^{-1}h)$ for all $\omega \in \sub{\Omega}$, and so $\diam\left(\pi_{g\Lambda}(h\sub{\Omega})\right) =0$.
\end{proof}

\begin{lem}[Large links]\label{lem:large_links}
Let $x,y \in G_\Gamma$ and $n = \dist_{k\Pi}(x,y)$ where $k \in G_\Gamma$ and $\Pi\subseteq \Gamma$. There exist $[h_1\Omega_1],\dots,[h_n\Omega_n] \in \mf{S}_\Gamma$ each nested into $[k\Pi]$ so that for any $[g\Lambda] \in \mf{S}_{\Gamma}$ with $[g\Lambda] \propnest [k\Pi]$, if $\dist_{g\Lambda}(x,y) > 18$ for some representative of $[g\Lambda]$, then $[g\Lambda] \nest [h_i \Omega_i]$ for some $i\in\{1,\dots,n\}$.
\end{lem}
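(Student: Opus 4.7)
The plan is to take a $C(k\Pi)$--geodesic $\gamma$ from $\pi_{k\Pi}(x)$ to $\pi_{k\Pi}(y)$ with vertices $p_0=\pi_{k\Pi}(x), p_1, \ldots, p_n=\pi_{k\Pi}(y)$ in $k\sub{\Pi}$. For each $i\in\{1,\ldots,n\}$, set $\Omega_i=\supp(p_{i-1}^{-1}p_i)\subsetneq\Pi$ and $[h_i\Omega_i]:=[p_{i-1}\Omega_i]$. Since $p_{i-1}\in k\sub{\Pi}$ serves as a common representative for $[p_{i-1}\Pi]=[k\Pi]$ and $[p_{i-1}\Omega_i]$, each $[h_i\Omega_i]\nest[k\Pi]$.

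Now fix $[g\Lambda]\propnest[k\Pi]$ with $\dist_{g\Lambda}(x,y)>18$, and replace $g$ by a common representative with $[k\Pi]$ so that $g\in k\sub{\Pi}$. A ``gates commute'' argument as in the first claim of the proof of Lemma \ref{lem:bounded_geodesic_image} shows $\pi_{g\Lambda}(x)=\pi_{g\Lambda}(p_0)$ and $\pi_{g\Lambda}(y)=\pi_{g\Lambda}(p_n)$. By Proposition \ref{prop:syllables_of_gate_ are_syllables_of_image}, $\supp\bigl(\pi_{g\Lambda}(p_{i-1})^{-1}\pi_{g\Lambda}(p_i)\bigr)\subseteq\Omega_i\cap\Lambda$, which is a proper subgraph of $\Lambda$ whenever $\Lambda\not\subseteq\Omega_i$, forcing $\dist_{g\Lambda}(p_{i-1},p_i)\leq 1$ for such $i$. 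Meanwhile, bounded geodesic image applied to the single-edge pair $(p_{i-1},p_i)$ shows that if neither endpoint lies within $C(k\Pi)$--distance $2$ of $\rho_{k\Pi}^{g\Lambda}$, then $\dist_{g\Lambda}(p_{i-1},p_i)=0$.

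Next, apply bounded geodesic image to $(x,y)$: since $\dist_{g\Lambda}(x,y)>0$, the geodesic $\gamma$ enters the $2$--neighbourhood $N_2(\rho_{k\Pi}^{g\Lambda})$. By Lemma \ref{lem:nesting}, $\rho_{k\Pi}^{g\Lambda}$ has $C(k\Pi)$--diameter at most $2$, so the vertices of $\gamma$ lying in $N_2(\rho_{k\Pi}^{g\Lambda})$ span a geodesic subsegment of length at most $6$; hence at most $8$ edges of $\gamma$ have an endpoint in $N_2(\rho_{k\Pi}^{g\Lambda})$, and the rest contribute $0$ to $\dist_{g\Lambda}(p_0,p_n)$. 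If every one of these ``inside'' edges satisfied $\Lambda\not\subseteq\Omega_i$, summing the contributions would yield $\dist_{g\Lambda}(x,y)\leq 8$, contradicting $\dist_{g\Lambda}(x,y)>18$. Therefore some inside index $i$ satisfies $\Lambda\subseteq\Omega_i$.

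The remaining step---and the main obstacle---is to upgrade the inclusion $\Lambda\subseteq\Omega_i$ to the nesting $[g\Lambda]\nest[p_{i-1}\Omega_i]$ by producing a common representative, i.e., an element $k'\in g\sub{\st(\Lambda)}\cap p_{i-1}\sub{\st(\Omega_i)}$. Because $p_{i-1}$ (or $p_i$) lies within $C(k\Pi)$--distance $2$ of $\rho_{k\Pi}^{g\Lambda}=g\sub{\st(\Lambda)\cap\Pi}$, we can write $p_{i-1}=qr$ with $q\in g\sub{\st(\Lambda)\cap\Pi}$ and $r\in\sub{\Pi}$ a product of at most two elements with support in proper subgraphs of $\Pi$; the task is then to use $\Lambda\subseteq\Omega_i$ together with the supports of edges of $\gamma$ adjacent to $p_{i-1}$ to arrange these factors so that their supports fit inside $\st(\Omega_i)$, producing the required $k'$ as $q$ or a small modification thereof. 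The algebraic bookkeeping in this final step is what forces the loose constant $18$ in the statement.
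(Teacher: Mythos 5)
The first three stages of your plan essentially coincide with the paper's: the choice of domains $[p_{i-1}\Omega_i]$ with $\Omega_i=\supp(p_{i-1}^{-1}p_i)$ (what the paper calls $T_i$), the gates-commute reduction identifying $\pi_{g\Lambda}(x)$ with $\pi_{g\Lambda}(p_0)$, and a bounded-geodesic-image pigeonhole that isolates a bounded window of ``inside'' edges along $\gamma$. (The paper uses $\mathcal N_4(\rho_{k\Pi}^{g\Lambda})$ rather than $\mathcal N_2$, producing $9$ inside indices rather than $8$, which is where $18 = 9\cdot 2$ comes from; this is cosmetic.) Your use of Proposition \ref{prop:syllables_of_gate_ are_syllables_of_image} to bound $\supp(\pi_{g\Lambda}(p_{i-1})^{-1}\pi_{g\Lambda}(p_i))$ inside $\Omega_i\cap\Lambda$ is also correct.

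The genuine gap is the final step, which you correctly flag as the obstacle but do not resolve. Your pigeonhole only yields an inside index $i$ with $\Lambda\subseteq\Omega_i$. By Definition \ref{defn:nesting} that is not nesting: you also need a common representative $k'$ with $[k'\Lambda]=[g\Lambda]$ and $[k'\Omega_i]=[p_{i-1}\Omega_i]$. Your sketch --- write $p_{i-1}=qr$ with $q\in\rho_{k\Pi}^{g\Lambda}$ and ``arrange'' the factors of $r$ to land in $\sub{\st(\Omega_i)}$ --- is not carried out, and there is no reason it should succeed in general: the factors of $r$ are only known to be supported on proper subgraphs of $\Pi$, with no a priori relation to $\st(\Omega_i)$. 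Indeed $\Lambda\subseteq\Omega_i$ alone is compatible with $[g\Lambda]\trans[p_{i-1}\Omega_i]$.

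The paper sidesteps this entirely by running the pigeonhole at the level of $C(g\Lambda)$--displacement rather than support: since the at-most-nine inside diameters $\diam(\pi_{g\Lambda}(T_i))$ sum to at least $\dist_{g\Lambda}(x,y)>18$, some $T_{j_0}$ has $\diam(\pi_{g\Lambda}(T_{j_0}))>2$, and then Lemma \ref{lem:big_gates} immediately gives $[g\Lambda]\nest[T_{j_0}]$. That lemma works by process of elimination --- in the $\perp$ case $\pi_{g\Lambda}(h\sub\Omega)$ is a single point, and in the $\trans$ and reverse-$\propnest$ cases it sits inside a relative projection of diameter at most $2$ --- so large diameter forces $\nest$, with the common representative coming for free from the mutual exclusivity and exhaustiveness of the four relations. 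The repair for your argument is therefore to replace the criterion ``$\Lambda\subseteq\Omega_i$'' with ``$\dist_{g\Lambda}(p_{i-1},p_i)>2$'' in the pigeonhole and invoke Lemma \ref{lem:big_gates}, rather than attempting to build $k'$ by hand.
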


\begin{proof}
Let $\gamma$ be a $C(k\Pi)$--geodesic connecting $\pi_{k\Pi}(x)$ and $\pi_{k\Pi}(y)$, let $\pi_{k\Pi}(x)=p_0,p_1,\dots,p_n=\pi_{k\Pi}(y)$ be the vertices of $\gamma$, and let $\lambda_i=p_{i-1}^{-1} p_{i}$ for each $i \in \{1,\dots,n\}$. 
For $i \in \{1,\dots,n\}$, define $T_i$ to be $p_{i-1}\cdot  \sub{\supp(\lambda_i)}$. 
Note that $p_i \in T_{i} \cap T_{i+1}$, and $T_{i} \subseteq k\sub{\Pi}$ since $p_{i-1} \in k\sub{\Pi}$ and $\supp(\lambda_{i}) \subsetneq \Pi$. In particular, $[T_{i}] \propnest [k\Pi]$. Note also that $\pi_{k\Pi}(T_i)$ = $T_{i}$ is contained in the closed $1$--neighbourhood of $p_i$ in $C(k\Pi)$, because $\supp(\lambda_i)$ is a proper subgraph of $\Pi$. 

Next, let $[g\Lambda] \in \mf{S}_\Gamma$ with $[g\Lambda] \propnest [k\Pi]$ and suppose $\dist_{g\Lambda}(x,y) > 18$ for some representative $g\Lambda \in [g\Lambda]$. We shall show $[g\Lambda] \nest [T_i]$ for some $i \in \{1,\dots,n\}$. Since we have established the bounded geodesic image axiom (Lemma \ref{lem:bounded_geodesic_image}), we have $\gamma \cap \mc{N}_{2}(\rho_{k\Pi}^{g\Lambda}) \neq \emptyset$, where $\mc{N}_{r}(A)$ is the closed $r$--neighbourhood of $A$ in $C(k\Pi)$.  
Let $j$ be the first number in $\{0,\dots,n\}$ so that $p_j \in \mc{N}_{4}(\rho_{k\Pi}^{g\Lambda})$, and recall that  each $\pi_{k\Pi}(T_{i})= T_{i}$ is contained in $\mc{N}_1(p_{i})$ and $\diam(\rho^{g\Lambda}_{k\Pi}) \leq 2$ (Lemma \ref{lem:nesting}). 
Therefore, if $1\leq i\leq j$ or $i\geq j+10$, then $\pi_{k\Pi}(T_i) \cap \mc{N}_2(\rho_{k\Pi}^{g\Lambda}) = \emptyset$ and the bounded geodesic image axiom says $\pi_{g\Lambda}(T_i)$ is a single point. 

Since  $T_{i-1} \cap T_i \neq \emptyset$ for $i \in \{2,\dots,n\}$ and $x \in T_{1}$, $y \in T_{n}$, we have
\[\pi_{g\Lambda} \left( \bigcup\limits_{i=1}^j T_i\right) = \pi_{g\Lambda}(x) \text{ and } \pi_{g\Lambda} \left( \bigcup\limits_{i=j+{10}}^{n} T_i\right) = \pi_{g \Lambda}(y)\] whenever $j > 0$ and $j+9<n$ respectively.
This implies 
\[ \dist_{g\Lambda}(x,y) \leq \sum _{i=j+1}^{\min\{n,j+9\}} \diam\big(\pi_{g\Lambda} \left(  T_i\right) \bigr).\] 

Since $\dist_{g\Lambda}(x,y) > 18$, there must exist $j_0 \in \{j+1,\dots,\min\{n,j+9\}\}$ so that $\diam\bigl(\pi_{g\Lambda}(T_{j_0}) \bigr) > 2$. By Lemma \ref{lem:big_gates}, this implies $[g\Lambda] \nest [T_{j_0}]$.
\end{proof}

\subsection{Partial realisation}\label{section:partial_realisation}
We now prove partial realisation, which roughly says that given a collection of pairwise orthogonal $[g_{i}\Lambda_{i}] \in \mathfrak{S}_{\Gamma}$, the hyperbolic spaces $C(g_{i}\Lambda_{i})$ give a coordinate system for $G_{\Gamma}$.

We first prove that we can  always represent $n$ mutually orthogonal elements of $\mathfrak{S}_{\Gamma}$ by the same group element, and similarly for nesting chains. This allows us to simplify arguments involving three or more orthogonal domains by working within a fixed coset.

\begin{prop}\label{prop:mutual representation}
Let $[g_{1}\Lambda_{1}], \dots, [g_{n}\Lambda_{n}] \in \mf{S}_\Gamma$. If either $[g_{1}\Lambda_{1}] \nest \dots \nest [g_{n}\Lambda_{n}]$ or  $[g_{1}\Lambda_{1}], \dots, [g_{n}\Lambda_{n}]$ are pairwise orthogonal, then there exists $g \in G_\Gamma$ so that $[g\Lambda_i] = [g_i \Lambda_i]$ for all $i \in \{1,\dots,n\}$. 
\end{prop}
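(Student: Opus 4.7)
The plan is to induct on $n$, with base case $n=2$ immediate from Definitions \ref{defn:nesting} and \ref{defn:orthogonality}. For $n\geq 3$, both cases follow the same outline: by the induction hypothesis applied to $[g_{1}\Lambda_{1}],\dots,[g_{n-1}\Lambda_{n-1}]$, we obtain a common representative $a$, and by the $n=2$ case, we obtain $b$ common to $[a\Lambda_{n-1}] = [g_{n-1}\Lambda_{n-1}]$ and $[g_{n}\Lambda_{n}]$. The goal is to splice $a$ and $b$ into a single element $g$ representing all $n$ classes, using the decomposition of an element of $\sub{\st(\Lambda_{n-1})}$ into its $\sub{\lk(\Lambda_{n-1})}$ and $\sub{\Lambda_{n-1}}$ components.

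For the nested case, I would mirror the construction used in the proof of Lemma \ref{lem:nesting_partial}. Writing $g_{n-1}^{-1}a = l\lambda$ and $g_{n-1}^{-1}b = l'\lambda'$ with $l,l' \in \sub{\lk(\Lambda_{n-1})}$ and $\lambda,\lambda' \in \sub{\Lambda_{n-1}}$, set $g = b(\lambda')^{-1}\lambda$. The verifications $[g\Lambda_{n-1}] = [g_{n-1}\Lambda_{n-1}]$ and $[g\Lambda_{n}] = [g_{n}\Lambda_{n}]$ are direct, the latter using $\Lambda_{n-1}\subseteq \Lambda_{n}$ to place $(\lambda')^{-1}\lambda$ in $\sub{\st(\Lambda_{n})}$. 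For $i\leq n-2$, expanding $g_{i}^{-1}g$ and applying the containment $\lk(\Lambda_{n-1})\subseteq \lk(\Lambda_{i})$ (from $\Lambda_{i}\subseteq \Lambda_{n-1}$) together with the commutation between $\sub{\Lambda_{n-1}}$ and $\sub{\lk(\Lambda_{n-1})}$ allows one to rewrite $g_{i}^{-1}g$ as an element of $\sub{\lk(\Lambda_{i})}\sub{\Lambda_{i}} = \sub{\st(\Lambda_{i})}$, exactly as in the transitivity proof.

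For the orthogonal case, the analogous construction is simpler: set $g = a\lambda$ where $\lambda$ is the $\sub{\Lambda_{n-1}}$-component of $a^{-1}b$. The verification for $i\leq n-1$ is straightforward. When $i=n-1$ we have $g^{-1}g_{n-1} \in \sub{\st(\Lambda_{n-1})}$ directly; when $i<n-1$, orthogonality $\Lambda_{n-1}\perp \Lambda_{i}$ yields $\lambda \in \sub{\Lambda_{n-1}} \subseteq \sub{\lk(\Lambda_{i})} \subseteq \sub{\st(\Lambda_{i})}$, and the commutation of $\lambda$ with $\sub{\Lambda_{i}}$ lets us absorb it into the existing decomposition of $g_{i}^{-1}a$.

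The main obstacle is verifying $[g\Lambda_{n}] = [g_{n}\Lambda_{n}]$ in the orthogonal case, which reduces to showing that the $\sub{\lk(\Lambda_{n-1})}$-component $l$ of $a^{-1}b$ lies in $\sub{\st(\Lambda_{n})}$ for a suitable choice of $a$ and $b$. We have two degrees of freedom: we may replace $a$ by any element of $a\sub{\st(\Omega)}$ where $\Omega = \Lambda_{1}\cup\dots\cup\Lambda_{n-1}$ is a join (since this does not change $[a\Lambda_{i}]$ for any $i\leq n-1$), and we may replace $b$ by any element of $b\sub{\st(\Lambda_{n-1})\cap\st(\Lambda_{n})}$. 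Combining these freedoms, together with the fact that every vertex of $\Omega$ is adjacent to every vertex of $\Lambda_{n}$ by pairwise orthogonality (so that $\Omega \subseteq \lk(\Lambda_{n})$), should allow us to absorb the problematic portion of $l$ into a modification of $a$, forcing $l \in \sub{\st(\Lambda_{n})}$. This coordinated adjustment of both representatives will be the most delicate part of the argument.
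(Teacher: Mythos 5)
Your nested case is correct and is a fine alternative to the paper's argument, but the orthogonal case is circular rather than merely delicate, so the gap is real. Normalise $a_0 = e$; the valid choices of $a$ form $A = \sub{\st(\Omega)}$ where $\Omega = \Lambda_1\cup\dots\cup\Lambda_{n-1}$, and the valid choices of $b$ form a single coset $B = \sub{\st(\Lambda_{n-1})}\cap g_n\sub{\st(\Lambda_n)}$ of $\sub{\st(\Lambda_{n-1})\cap\st(\Lambda_n)}$. Your construction of $g = a\lambda$ succeeds precisely when the $\sub{\lk(\Lambda_{n-1})}$-component of $a^{-1}b$ lands in $\sub{\st(\Lambda_n)}$; since $\st(\Lambda_{n-1})\cap\st(\Lambda_n) = \Lambda_{n-1}\cup\bigl(\lk(\Lambda_{n-1})\cap\st(\Lambda_n)\bigr)$ and $\Lambda_{n-1}\subseteq\lk(\Lambda_n)$, this is equivalent to $a^{-1}b \in \sub{\st(\Lambda_{n-1})\cap\st(\Lambda_n)}$, i.e.\ $b \in a\sub{\st(\Lambda_{n-1})\cap\st(\Lambda_n)}$. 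But $B$ is a single coset of that subgroup, so this forces $a \in B$, i.e.\ $a \in A\cap B = \sub{\st(\Omega)}\cap g_n\sub{\st(\Lambda_n)}$ --- which is exactly the set of common representatives whose nonemptiness is the statement you are proving. So the ``coordinated adjustment'' cannot be resolved inside your algebraic bookkeeping; it \emph{is} the theorem.

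The paper's proof uses more information than your single $b$: for every $i<n$ one has a witness $k_i \in \sub{\st(\Lambda_i)}\cap g_n\sub{\st(\Lambda_n)}$, and the desired $h$ is taken to be the shortest prefix of $g_n$ with $g_n^{-1}h\in\sub{\st(\Lambda_n)}$, which by Lemma \ref{lem:head_lemma} is $\gate_{g_n\st(\Lambda_n)}(e)$. Computing this gate relative to the representative $k_i$ of the coset $g_n\sub{\st(\Lambda_n)}$ shows $h$ is simultaneously a prefix of every $k_i$ (equivalently, Proposition \ref{prop:gates_to_subgroups} says any hyperplane separating $e$ from $h$ also separates $e$ from $k_i$), giving $\supp(h)\subseteq\supp(k_i)\subseteq\st(\Lambda_i)$ for all $i<n$ at once. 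This simultaneous smallness across all the witnesses, coming from the convexity geometry of $S(\Gamma)$, is what a splice of $a$ with a single $b$ fundamentally cannot produce; it also handles the nested case with the same words, so your case split becomes unnecessary.
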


\begin{proof}
We proceed by induction. The initial case $n=2$ is true by definition. 
Suppose the statement is true for all $n<m$, and consider $n=m$, that is, we have $[g_{1}\Lambda_{1}], \dots, [g_{m}\Lambda_{m}] \in \mathfrak{S}_{\Gamma}$ which are either pairwise orthogonal or nested. 
Then in particular $[g_{1}\Lambda_{1}],\dots,[g_{m-1}\Lambda_{m-1}]$ are pairwise orthogonal (respectively nested), hence there exists $g \in G_{\Gamma}$ such that $[g\Lambda_{i}] = [g_{i}\Lambda_{i}]$ for all $i<m$.  Since $[g\Lambda_{i}] = [g_{i}\Lambda_{i}]$ if and only if $[\Lambda_{i}] = [g^{-1}g_{i}\Lambda_{i}]$, we can assume $g=e$ without loss of generality.
Then $[\Lambda_{i}] \bot [g_{m}\Lambda_{m}]$ (respectively $[\Lambda_{i}] \nest [g_{m}\Lambda_{m}]$) for each $i<m$, so for each $i<m$ there exists $k_i$ such that $k_i \in \sub{\st(\Lambda_i)}$ and $g_m^{-1}k_i \in \sub{\st(\Lambda_m)}$. Let $h$ be the shortest prefix of $g_m$ such that $g_m^{-1}h \in \sub{\st(\Lambda_m)}$. 
Since  $g_m^{-1}k_i \in \sub{\st(\Lambda_m)}$ for each $i \in \{1,\dots,m-1 \}$, we know $\supp(h) \subseteq \supp(k_i) \subseteq \st(\Lambda_i)$ for each $i<m$. 
Hence $[\Lambda_i] = [h\Lambda_i]$ for each $i<m$ and $[g_m \Lambda_m] = [h \Lambda_m]$. Thus, by induction the statement is true for all $n$.
\end{proof}

\begin{lem}[Partial realisation]\label{lem:partial_realisation}
Let $\{[g_{i}\Lambda_{i}]\}_{i=1}^{n}$ be a finite collection of pairwise orthogonal elements of $\mathfrak{S}_{\Gamma}$. For each $i \in \{1,\dots,n\}$, fix a choice of representative $g_i\Lambda_i$ for $[g_i\Lambda_i]$ and let $p_i \in C(g_i \Lambda_i)$. There exists $x \in G_\Gamma$ so that:
\begin{itemize}
    \item $\dist_{g_{i}\Lambda_{i}}(x,p_{i}) = 0$ for all $i$;
    \item for each $i$ and each $[h\Omega] \in \mathfrak{S}_{\Gamma}$, if $[g_{i}\Lambda_{i}] \nestneq [h\Omega]$ or $[h\Omega] \trans [g_{i}\Lambda_{i}]$,  then for any choice of representative $h\Omega \in [h\Omega]$ we have $\dist_{h\Omega}(x,\rho^{g_{i}\Lambda_{i}}_{h\Omega}) =0$.
\end{itemize}
\end{lem}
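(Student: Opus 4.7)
The plan is to use Proposition~\ref{prop:mutual representation} to choose a common coset representative for every $[g_{i}\Lambda_{i}]$ simultaneously, and then define $x$ as a product of commuting pieces drawn from each $\sub{\Lambda_{i}}$. Specifically, I would apply Proposition~\ref{prop:mutual representation} to obtain $g \in G_{\Gamma}$ with $[g\Lambda_{i}] = [g_{i}\Lambda_{i}]$ for every $i$. By Lemma~\ref{lem:C_spaces_are_well_defined}, the map $\gate_{g\Lambda_{i}}$ restricts to an isometry $g_{i}\sub{\Lambda_{i}} \to g\sub{\Lambda_{i}}$ whose inverse is $\gate_{g_{i}\Lambda_{i}}$. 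For each $i$, set $p_{i}' := \gate_{g\Lambda_{i}}(p_{i}) \in g\sub{\Lambda_{i}}$ and write $p_{i}' = gq_{i}$ with $q_{i} \in \sub{\Lambda_{i}}$. Since $[g_{i}\Lambda_{i}] \perp [g_{j}\Lambda_{j}]$ forces $\Lambda_{i} \subseteq \lk(\Lambda_{j})$ for $i \neq j$, the elements $q_{1},\dots,q_{n}$ pairwise commute, so the product $x := g \cdot q_{1}q_{2}\cdots q_{n}$ is well-defined.

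For the first bullet, the prefix description of projection (Remark~\ref{rem:prefix_description_of_projection}) gives $\pi_{g\Lambda_{i}}(x) = g \cdot \prefix_{\Lambda_{i}}(q_{1}\cdots q_{n})$. Commuting the $q_{j}$ with $j\neq i$ past $q_{i}$ writes $q_{1}\cdots q_{n} = q_{i} r$ with $r\in\sub{\lk(\Lambda_{i})}$. Because $\Lambda_{i}$ and $\lk(\Lambda_{i})$ share no vertices, concatenating reduced syllable expressions for $q_{i}$ and $r$ yields a reduced syllable expression for $q_{i} r$, and then Lemma~\ref{lem:head_lemma} identifies $\prefix_{\Lambda_{i}}(q_{i} r) = q_{i}$. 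Thus $\pi_{g\Lambda_{i}}(x) = p_{i}'$, and applying Lemma~\ref{lem:C_spaces_are_well_defined} once more yields $\pi_{g_{i}\Lambda_{i}}(x) = \gate_{g_{i}\Lambda_{i}}(p_{i}') = p_{i}$, as required.

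For the second bullet, each $q_{j}$ with $j \neq i$ lies in $\sub{\lk(\Lambda_{i})} \subseteq \sub{\st(\Lambda_{i})}$, while $q_{i} \in \sub{\Lambda_{i}} \subseteq \sub{\st(\Lambda_{i})}$; combined with $g^{-1}g_{i} \in \sub{\st(\Lambda_{i})}$, this yields $x \in g\sub{\st(\Lambda_{i})} = g_{i}\sub{\st(\Lambda_{i})}$. By Definitions~\ref{defn:upward_projection} and~\ref{defn:transversality}, $\rho^{g_{i}\Lambda_{i}}_{h\Omega} = \pi_{h\Omega}(g_{i}\sub{\st(\Lambda_{i})})$, so $\pi_{h\Omega}(x) \in \rho^{g_{i}\Lambda_{i}}_{h\Omega}$ whether $[g_{i}\Lambda_{i}] \propnest [h\Omega]$ or $[h\Omega] \trans [g_{i}\Lambda_{i}]$. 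The only conceptual care required is the distinction between the fixed representatives $g_{i}\Lambda_{i}$ and the common parallel representative $g\Lambda_{i}$, but Lemma~\ref{lem:C_spaces_are_well_defined} translates between them transparently, so no further hyperplane or geodesic arguments are needed. The main obstacle is therefore not geometric but organisational: pinning down how the prefix $q_{i}$ survives inside the product $q_{1}\cdots q_{n}$ after we commute past the orthogonal factors.
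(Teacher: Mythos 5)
Your proof is correct and follows essentially the same approach as the paper: invoke Proposition~\ref{prop:mutual representation} to get a common representative $g$, set $x = g\,q_1\cdots q_n$ where $q_i$ is the $\sub{\Lambda_i}$-part of $\gate_{g\Lambda_i}(p_i)$, use the prefix description of the gate together with commutativity of the orthogonal pieces to get $\pi_{g\Lambda_i}(x)=p_i'$, transfer back to $g_i\Lambda_i$ via Lemma~\ref{lem:C_spaces_are_well_defined}, and observe $x\in g\sub{\st(\Lambda_i)}=g_i\sub{\st(\Lambda_i)}$ for the second bullet. The only difference is that you spell out the prefix computation $\prefix_{\Lambda_i}(q_i r)=q_i$ in more detail than the paper does.
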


\begin{proof} 
By Proposition \ref{prop:mutual representation} there exists some $g \in G_{\Gamma}$ such that $[g_{i}\Lambda_{i}] = [g\Lambda_{i}]$ for all $i$. Define $p'_{i} = \gate_{g\Lambda_{i}}(p_{i}) = g\lambda_{i}$, where $\lambda_{i} \in \langle\Lambda_{i}\rangle$, and let $x = g\lambda_{1}\lambda_{2}\dots\lambda_{n}$. 
Then $\pi_{g\Lambda_{i}}(x) = g\cdot\prefix_{\Lambda_i}(g^{-1}x) = g\lambda_{i} = \pi_{g\Lambda_{i}}(p_{i})$ for each $i$, since orthogonality tells us the elements $\lambda_{i}$ all commute with each other and the subgraphs $\Lambda_{i}$ are all disjoint. Therefore $\dist_{g\Lambda_{i}}(x,p_{i}) = 0$ for all $i$, and so by Lemma \ref{lem:C_spaces_are_well_defined}, we have $\dist_{g_{i}\Lambda_{i}}(x,p_{i}) = 0$ for all $i$. 

Now, suppose $[g\Lambda_{i}] \nestneq [h\Omega]$ or $[g\Lambda_{i}] \trans [h\Omega]$ for some $i \in \{1,\dots,n\}$ and $[h \Omega] \in \mf{S}_\Gamma$. Since $\Lambda_j \subseteq \lk(\Lambda_i) \subseteq \st(\Lambda_i)$ for each $j \neq i$, we have $x=g\lambda_1\dots\lambda_n \in g \sub{\st(\Lambda_i)}$. Thus, $\pi_{h\Omega}(x) \in \pi_{h\Omega}\left( g\sub{\st(\Lambda_{i})}\right) = \rho_{h\Omega}^{g\Lambda_i}$ for any choice of representative $h\Omega$ of $[h\Omega]$.  Moreover, we have $\rho_{h\Omega}^{g\Lambda_i} = \bigcup_{k \Lambda_i \parallel g \Lambda_i} \pi_{h\Omega} \bigl( k \sub{\Lambda_i}\bigr) = \rho_{h\Omega}^{g_i \Lambda_i}$, since $g_{i}\Lambda_{i} \parallel g\Lambda_{i}$. This implies $\dist_{h\Omega}(x,\rho^{g_{i}\Lambda_{i}}_{h\Omega}) =0$.
\end{proof}

\subsection{Consistency}\label{section:consistency}

Finally, we prove consistency, which says that given two transverse domains $[g\Lambda]$ and $[h\Omega]$ in $\mathfrak{S}_{\Gamma}$, each element of $G_{\Gamma}$ projects uniformly close to one of the lateral relative projections $\rho^{g\Lambda}_{h\Omega}$ and $\rho^{h\Omega}_{g\Lambda}$. 

Our proof shall proceed by contradiction. Assuming that each element of $G_{\Gamma}$ projects far from both lateral projections, we can use Lemma \ref{lem:big_gates} to show that  $[g\Lambda] \nest [h\lk(w)]$ for each vertex $w$ of $\Omega$, which will imply $[g\Lambda] \bot [hw]$ for each vertex $w$ of $\Omega$. We then obtain $[g\Lambda] \bot [h\Omega]$ by adapting the proof of Proposition \ref{prop:mutual representation} to show that we may promote orthogonality with multiple domains to orthogonality with their union. This contradicts $[g\Lambda] \trans [h\Omega]$.

\begin{lem}\label{lem:promoting_orthogonality}
Let $[g\Lambda_{1}],\dots,[g\Lambda_{n-1}],[k\Lambda_{n}] \in \mathfrak{S}_{\Gamma}$. If $[g\Lambda_{i}] \bot [k\Lambda_{n}]$ for all $i<n$, then $[g\bigcup_{i<n}\Lambda_{i}] \bot [k\Lambda_{n}]$.
\end{lem}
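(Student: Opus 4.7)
The plan is to first dispense with the link condition, then produce a common representative by adapting the shortest-prefix construction from the proof of Proposition \ref{prop:mutual representation}.  The link condition is immediate: each $\Lambda_i \subseteq \lk(\Lambda_n)$ by the individual orthogonality hypotheses, so $\Omega := \bigcup_{i<n}\Lambda_i \subseteq \lk(\Lambda_n)$.  After left-multiplying by $g^{-1}$ (which preserves all relations), we may assume $g = e$, and for each $i < n$ we then obtain elements $c_i \in \langle\st(\Lambda_i)\rangle$ with $w_i := k^{-1}c_i \in \langle\st(\Lambda_n)\rangle$.  The task becomes producing a single $h$ with $h \in \langle\st(\Omega)\rangle$ and $k^{-1}h \in \langle\st(\Lambda_n)\rangle$.

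Following the strategy of Proposition \ref{prop:mutual representation}, let $t = \suffix_{\st(\Lambda_n)}(k)$ be the longest suffix of $k$ in $\langle\st(\Lambda_n)\rangle$, and set $h = kt^{-1}$, so $k = ht$ with $|k|_{syl} = |h|_{syl}+|t|_{syl}$.  A preliminary observation is that $\suffix_{\st(\Lambda_n)}(h) = e$: if $h = h's$ with $s \in \langle\st(\Lambda_n)\rangle$ nontrivial and $|h|_{syl} = |h'|_{syl}+|s|_{syl}$, then comparing $|k|_{syl} = |h'(st)|_{syl} \leq |h'|_{syl}+|st|_{syl} \leq |h'|_{syl}+|s|_{syl}+|t|_{syl} = |k|_{syl}$ forces $|st|_{syl} = |s|_{syl}+|t|_{syl}$, so $st$ would be a suffix of $k$ in $\langle\st(\Lambda_n)\rangle$ strictly longer than $t$, a contradiction.

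The main step, and the principal obstacle, will be to prove $\supp(h) \subseteq \st(\Lambda_i)$ for every $i < n$.  Writing $c_i = h \cdot r_i$ with $r_i := tw_i \in \langle\st(\Lambda_n)\rangle$, we aim to show that $h$ is a prefix of $c_i$.  Any failure of this would require, via Theorem \ref{thm:graph_product_normal_form}, some syllable $s_h$ of $h$ to combine with a syllable of $r_i$ in the same vertex group $G_v$ after commutations; but this forces $s_h$ to commute past every later syllable of $h$, exhibiting $s_h$ as a single-syllable suffix of $h$ whose support $v$ lies in $\st(\Lambda_n)$, contradicting $\suffix_{\st(\Lambda_n)}(h) = e$.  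Once $h$ is a prefix of $c_i$, the syllables of $h$ occur among those of $c_i$, yielding $\supp(h) \subseteq \supp(c_i) \subseteq \st(\Lambda_i)$.

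Intersecting over $i$ then gives $\supp(h) \subseteq \bigcap_{i<n}\st(\Lambda_i) \subseteq \st(\Omega)$, since any vertex lying in every $\st(\Lambda_i)$ is either contained in some $\Lambda_i \subseteq \Omega$ or in $\bigcap_i \lk(\Lambda_i) = \lk(\Omega)$.  Therefore $h \in \langle\st(\Omega)\rangle$, while $h^{-1}k = t \in \langle\st(\Lambda_n)\rangle$ by construction, so $h$ is a common representative witnessing $[\Omega] \perp [k\Lambda_n]$, and undoing the initial translation by $g^{-1}$ yields $[g\Omega] \perp [k\Lambda_n]$.
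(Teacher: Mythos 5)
Your argument is correct and follows the same route as the paper's: after translating so $g=e$, take $h=kt^{-1}$ where $t$ is the longest suffix of $k$ in $\langle\st(\Lambda_n)\rangle$ (equivalently, the paper's ``shortest prefix of $k$ with $k^{-1}h\in\langle\st(\Lambda_n)\rangle$''), show $\supp(h)\subseteq\bigcap_{i<n}\st(\Lambda_i)\subseteq\st\bigl(\bigcup_{i<n}\Lambda_i\bigr)$, and conclude that $h$ is a common representative witnessing orthogonality. Where you go slightly further than the paper is in justifying $\supp(h)\subseteq\st(\Lambda_i)$ by exhibiting $h$ as a prefix of $c_i$ via the normal-form theorem together with $\suffix_{\st(\Lambda_n)}(h)=e$; the paper simply asserts the containment $\supp(h)\subseteq\supp(k_i)$ without spelling this out.
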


\begin{proof}
 Since $[g\Lambda_{i}] \bot [k\Lambda_{n}]$ if and only if $[\Lambda_{i}] \bot [g^{-1}k\Lambda_{n}]$, we may assume that $g=e$. By orthogonality, for each $i<n$ there exists $k_i$ such that $k_i \in \sub{\st(\Lambda_i)}$ and $k^{-1}k_i \in \sub{\st(\Lambda_n)}$. Following the proof of Proposition \ref{prop:mutual representation}, let $h$ be the  shortest prefix of $k$ such that $k^{-1}h \in \sub{\st(\Lambda_n)}$. Then $\supp(h) \subseteq \supp(k_{i}) \subseteq \st(\Lambda_{i})$ for all $i<n$, so $h \in \langle\st(\Lambda_{i})\rangle$ for all $i<n$. Therefore $h \in \langle\bigcap_{i<n}\st(\Lambda_{i})\rangle \subseteq \langle\st(\bigcup_{i<n}\Lambda_{i})\rangle$, hence  $[\bigcup_{i<n}\Lambda_{i}] = [h\bigcup_{i<n}\Lambda_{i}]$ and $[k\Lambda_{n}] = [h\Lambda_{n}]$. Moreover, by orthogonality, $\Lambda_{n} \subseteq \lk(\Lambda_{i})$ for all $i<n$, hence $\Lambda_{n} \subseteq \bigcap_{i<n}\lk(\Lambda_{i}) = \lk(\bigcup_{i<n}\Lambda_{i})$. We therefore have  $[\bigcup_{i<n}\Lambda_{i}] \bot [k\Lambda_{n}]$.
\end{proof}

\begin{lem}[Consistency]\label{lem:consistency}
If $[g\Lambda] \trans [h\Omega]$, then  for all $x \in G_\Gamma$ and for any choice of representatives $g\Lambda \in [g\Lambda]$ and $h\Omega \in [h\Omega]$ we have
\[\min\left\{\dist_{h\Omega} \Bigl( \pi_{h\Omega}(x),\rho^{g\Lambda}_{h\Omega}  \Bigr),\dist_{ g\Lambda} \Bigl( \pi_{g\Lambda}(x), \rho^{h\Omega}_{g\Lambda} \Bigr) \right\}\leq {2}. \tag{$*$}\label{eg:consistency}\] 
Further, if $[k\Pi] \propnest [g\Lambda]$ and either $[g\Lambda] \propnest [h\Omega]$ or $[g\Lambda] \trans [h\Omega]$ and $[h\Omega] \not\perp [k\Pi]$, then $\dist_{h\Omega}(\rho^{k\Pi}_{h\Omega},\rho^{g\Lambda}_{h\Omega}) = 0$.
\end{lem}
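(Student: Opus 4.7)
The plan is to handle the two statements separately, with the main inequality being substantially harder. I will argue the first part by contradiction: suppose both $\dist_{h\Omega}(\pi_{h\Omega}(x),\rho^{g\Lambda}_{h\Omega})>2$ and $\dist_{g\Lambda}(\pi_{g\Lambda}(x),\rho^{h\Omega}_{g\Lambda})>2$, and aim to derive $[g\Lambda]\perp[h\Omega]$, contradicting $[g\Lambda]\trans[h\Omega]$. Set $p=\gate_{g\Lambda}(x)$ and $q=\gate_{h\Omega}(x)$. Since $p\in g\sub{\Lambda}\subseteq g\sub{\st(\Lambda)}$ and $q\in h\sub{\Omega}\subseteq h\sub{\st(\Omega)}$, we have $\pi_{h\Omega}(p)\in\rho^{g\Lambda}_{h\Omega}$ and $\pi_{g\Lambda}(q)\in\rho^{h\Omega}_{g\Lambda}$, so the assumption upgrades to $\dist_{h\Omega}(q,\pi_{h\Omega}(p))>2$ and $\dist_{g\Lambda}(p,\pi_{g\Lambda}(q))>2$.

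Next, for each vertex $w\in V(\Omega)$, I would run a hyperplane argument in the spirit of the proof of Lemma \ref{lem:side_ways_projections_are_bounded}, combined with the syllable-of-gate information in Proposition \ref{prop:syllables_of_gate_ are_syllables_of_image}, to show that the simultaneous large-distance hypothesis forces $\Lambda\subseteq\lk(w)$, and further, following the common-representative trick used in the proof of Proposition \ref{prop:mutual representation}, produces some $k\in G_\Gamma$ with $[k\Lambda]=[g\Lambda]$ and $[kw]=[hw]$, thereby witnessing $[g\Lambda]\perp[hw]$. The intuition is that any vertex $v\in\Lambda$ not adjacent to $w$ would let a syllable of $p^{-1}q$ supported at $w$ register nontrivially under both projections, pulling the two relative projections close to the corresponding gates and contradicting one of the distance bounds. \textbf{This is the main obstacle}: cleanly converting two simultaneous geometric lower bounds (one in $C(g\Lambda)$, one in $C(h\Omega)$) into the combinatorial statement that every vertex of $\Lambda$ is adjacent to $w$, while also manufacturing the aligned representative.

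Once $[g\Lambda]\perp[hw]$ is established for each $w\in V(\Omega)$, with representatives aligned after a final adjustment by a link element as in Proposition \ref{prop:mutual representation}, Lemma \ref{lem:promoting_orthogonality} applied inductively to the collection $\{[hw]:w\in V(\Omega)\}$ promotes these pointwise orthogonalities to $[g\Lambda]\perp[h\Omega]$, yielding the desired contradiction.

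For the second statement the argument is much shorter. Since $[k\Pi]\propnest[g\Lambda]$, Proposition \ref{prop:mutual representation} gives a common representative $b\in G_\Gamma$ with $[b\Pi]=[k\Pi]$ and $[b\Lambda]=[g\Lambda]$; as parallel cosets of graphical subgroups share the same $\st$-coset, this means $k\sub{\st(\Pi)}=b\sub{\st(\Pi)}$ and $g\sub{\st(\Lambda)}=b\sub{\st(\Lambda)}$, so $b$ itself lies in both $k\sub{\st(\Pi)}$ and $g\sub{\st(\Lambda)}$. Consequently $\pi_{h\Omega}(b)\in\rho^{k\Pi}_{h\Omega}\cap\rho^{g\Lambda}_{h\Omega}$, forcing $\dist_{h\Omega}(\rho^{k\Pi}_{h\Omega},\rho^{g\Lambda}_{h\Omega})=0$. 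The only thing left to check is that $\rho^{k\Pi}_{h\Omega}$ is defined: in Case 1 ($[g\Lambda]\propnest[h\Omega]$), transitivity of $\nest$ gives $[k\Pi]\propnest[h\Omega]$; in Case 2 ($[g\Lambda]\trans[h\Omega]$ with $[h\Omega]\not\perp[k\Pi]$), the option $[h\Omega]\nest[k\Pi]$ would force $[h\Omega]\nest[g\Lambda]$ by transitivity, contradicting transversality, so $[k\Pi]$ is either $\nest$-below or transverse to $[h\Omega]$ and the relative projection exists.
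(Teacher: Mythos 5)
Your overall plan for the inequality $(*)$ follows the paper's: assume both distances exceed $2$, derive $[g\Lambda]\perp[hw]$ for every vertex $w$ of $\Omega$, and then promote to $[g\Lambda]\perp[h\Omega]$ via Lemma \ref{lem:promoting_orthogonality}. However, the step you flag as the main obstacle is a genuine gap, and the route you gesture at does not obviously close it. The paper's key device is Lemma \ref{lem:big_gates}, which you never invoke. Concretely, the paper finds a hyperplane $H_w$, labelled by a vertex $w$ of $\Omega$, separating $\gate_{h\Omega}(x)$ from $\gate_{h\Omega}(g\sub{\Lambda})$ inside $h\sub{\Omega}$, and takes the combinatorial hyperplane $H'_w$ of $H_w$ lying on the same side as $g\sub{\Lambda}$. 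Since $H'_w$ is a subcomplex of $h\sub{\lk(w)}$ (Remark \ref{rem:labeling_hyperplanes}) and contains both a vertex of $h\sub{\Omega}$ and a vertex of an $S(\Gamma)$--geodesic from $x$ to $\gate_{g\Lambda}(x)$, its image under $\pi_{g\Lambda}$ contains a point of $\rho^{h\Omega}_{g\Lambda}$ together with $\gate_{g\Lambda}(x)$; the assumed bound $\dist_{g\Lambda}\bigl(\pi_{g\Lambda}(x),\rho^{h\Omega}_{g\Lambda}\bigr)>2$ then forces $\diam\bigl(\pi_{g\Lambda}(h\sub{\lk(w)})\bigr)>2$. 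Lemma \ref{lem:big_gates} converts this diameter bound into the nesting $[g\Lambda]\nest[h\lk(w)]\nest[h\st(w)]$, which by the definition of nesting, together with $\st(\st(w))=\st(w)$, hands you both $\Lambda\subseteq\lk(w)$ \emph{and} a representative $k$ with $[k\Lambda]=[g\Lambda]$ and $[kw]=[hw]$ in one stroke. In your sketch, obtaining $\Lambda\subseteq\lk(w)$ by syllable tracking is plausible, but the separate ``common-representative trick'' you propose borrowing from Proposition \ref{prop:mutual representation} is not applicable there --- that proposition presupposes a chain of existing orthogonality or nesting relations and produces a shared representative; it does not manufacture a new relation. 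Without the detour through Lemma \ref{lem:big_gates} you will not easily produce the aligned representative, and this is the essential missing idea.

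Your argument for the second assertion is correct and is essentially the paper's: a common representative $a$ for $[k\Pi]$ and $[g\Lambda]$ gives $\rho^{k\Pi}_{h\Omega}=\pi_{h\Omega}(a\sub{\st(\Pi)})$ and $\rho^{g\Lambda}_{h\Omega}=\pi_{h\Omega}(a\sub{\st(\Lambda)})$, which visibly share the point $\pi_{h\Omega}(a)$, so the distance is $0$. You cite Proposition \ref{prop:mutual representation} where the definition of nesting already supplies the common representative, but this is harmless, and your extra verification that $\rho^{k\Pi}_{h\Omega}$ is actually defined is a sensible check that the paper leaves implicit.
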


\begin{proof}
We prove $(*)$ by contradiction. Suppose $\dist_{h\Omega}(\pi_{h\Omega}(x),\rho^{g\Lambda}_{h\Omega}) > 2$ and $\dist_{g\Lambda}(\pi_{g\Lambda}(x),\rho^{h\Omega}_{g\Lambda}) > 2$. 
Then we also have 
\[\dist_{syl}(\gate_{h\Omega}(x), \gate_{h\Omega}(g\langle\Lambda\rangle)) > 2 \text{ and } \dist_{syl}(\gate_{g\Lambda}(x), \gate_{g\Lambda}(h\langle\Omega\rangle)) > 2.\]  Thus $\gate_{h\Omega}(x)$ and $\gate_{h\Omega}(g\langle\Lambda\rangle)$ are separated by some hyperplane $H_{w}$ labelled by a vertex $w$ of $\Omega$. By Proposition \ref{prop:gates_to_subgroups}(\ref{gate:hyperplanes_separating_pairs}), $H_w$ also separates $x$ and $g\langle\Lambda\rangle$. In particular, $H_w$ crosses  any $S(\Gamma)$--geodesic segment $\gamma$ connecting $x$ and $g \sub{\Lambda}$. 
Because of Proposition \ref{prop:gates_to_subgroups}(\ref{gate:hyperplanes_separating_image}), $H_w$ cannot separate $ g\sub{\Lambda}$ and $\gate_{h\Omega}(g\sub{\Lambda})$ as $H_w$ crosses $h \sub{\Omega}$. Thus, there exists a combinatorial hyperplane of $H_w$ contained in the same component of $S(\Gamma) \smallsetminus H_w$ as both $g\sub{\Lambda}$ and $\gate_{h\Omega}(g \sub{\Lambda})$. Let $H'_w$ be this particular combinatorial hyperplane of $H_w$; see Figure \ref{fig:consistency}.

 \begin{figure}[ht]
     \centering
     \def\svgscale{.7}
     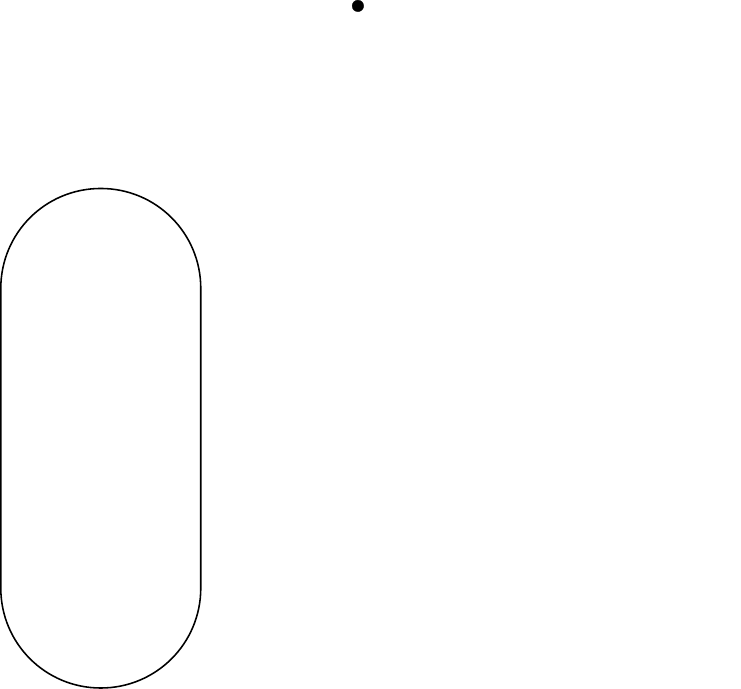
     \caption{The combinatorial hyperplane $H'_w$ of $H_w$ that is in the same component of $S(\Gamma) \smallsetminus H_w$ as both $g\sub{\Lambda}$ and $\gate_{h\Omega}(g \sub{\Lambda})$.}
     \label{fig:consistency}
 \end{figure}

We claim that $\diam(\pi_{g\Lambda}(H'_w)) > 2$. By construction, $H'_{w}$ contains both a vertex of $h\langle\Omega\rangle$ and a vertex of $\gamma$. Thus, $\pi_{g\Lambda}(H'_w)$ contains points from both $\pi_{g\Lambda}(h \sub{\Omega})$ and $\pi_{g\Lambda}(\gamma)$.    
Since $\gate_{g\Lambda}(x)$ is the unique point in $g \sub{\Lambda}$ that minimises the $S(\Gamma)$--distance from $x$ to $g\sub{\Lambda}$, we have $\gate_{g\Lambda}(\gamma) = \gate_{g\Lambda}(x) \in \pi_{g\Lambda}(H'_w)$. 
Since $\dist_{g\Lambda}(\pi_{g\Lambda}(x), \pi_{g\Lambda}(h\sub{\st(\Omega)}))  = \dist_{g\Lambda}(\pi_{g\Lambda}(x), \rho^{h\Omega}_{g\Lambda})>2$, and $\pi_{g\Lambda}(H'_w)$ must contain points from both $\pi_{g\Lambda}(x)$ and $\pi_{g\Lambda}(h \sub{\Omega})$, we must have $\diam(\pi_{g\Lambda}(H'_w)) > 2$.

By Remark \ref{rem:labeling_hyperplanes}, $H'_w \subseteq h \sub{\lk(w)}$. Thus, $\diam(\pi_{g\Lambda}(H'_w)) >2$ implies $\diam(\pi_{g\Lambda}(h\sub{\lk(w)})) > {2}$.  Lemma \ref{lem:big_gates} then forces $$[g \Lambda] \nest [h\lk(w)] \nest [h\st(w)].$$ 
This implies $\Lambda \subseteq \lk(w)$ and that there exists $k \in G_\Gamma$ such that $[k \Lambda ] = [g \Lambda]$ and $[k\st(w) ] = [h\st (w)]$. Since $\st(\st(w))= \st(w)$, $[k\st(w) ] = [h\st(w)]$ implies $[kw] = [hw]$. Thus $[g \Lambda] = [k \Lambda] \perp [k w] = [hw]$. 
Moreover, since $\dist_{h\Omega}(\pi_{h\Omega}(x),\rho^{g\Lambda}_{h\Omega}) > 2$, every vertex of $\Omega$ must appear as an edge label for the $S(h\Omega)$--geodesic  connecting $\gate_{h\Omega}(x)$ and $\gate_{h\Omega}(g\sub{\Lambda})$. 
Therefore such a hyperplane $H_{w}$ exists for every vertex $w$ of $\Omega$, and so $[g\Lambda] \perp [hw]$ for all $w \in V(\Omega)$. Lemma \ref{lem:promoting_orthogonality} then tells us $[g\Lambda] \perp [h\Omega]$, contradicting transversality. Hence inequality (\ref{eg:consistency}) must hold.

Now suppose $[k\Pi] \propnest [g\Lambda]$ and either $[g\Lambda] \propnest [h\Omega]$ or $[g\Lambda] \trans [h\Omega]$ and $[h\Omega] \not\bot [k\Pi]$. Then there exists some element $a$ such that $[k\Pi] = [a\Pi]$ and $[g\Lambda] = [a\Lambda]$.  Therefore  $\pi_{h\Omega}(a\langle\Pi\rangle) \subseteq  \rho^{k\Pi}_{h\Omega}$ and  $ \pi_{h\Omega}(a\langle\Lambda\rangle) \subseteq  \rho^{g\Lambda}_{h\Omega}$.
But $a\langle\Pi\rangle \subseteq a\langle\Lambda\rangle$, so $\dist_{h\Omega}(\rho^{k\Pi}_{h\Omega}, \rho^{g\Lambda}_{h\Omega}) = 0$.
\end{proof}

\subsection{Compatibility of the group structure}\label{section:compatibility}
The results so far show that a graph product $G_{\Gamma}$ can be given the structure of a relatively hierarchically hyperbolic space. It remains to show that this structure agrees with the group structure of $G_{\Gamma}$.

\begin{lem}\label{lem:HHG_relations}
 The map $\phi: G_{\Gamma} \times \mathfrak{S}_{\Gamma} \rightarrow \mf{S}_{\Gamma}$ where $\phi(a,[g\Lambda]) = [ag\Lambda]$ defines  a  $\nest$--, $\perp$--, and $\trans$--preserving action of $G_{\Gamma}$ on $\mf{S}_{\Gamma}$ by bijections such that $\mf{S}_{\Gamma}$ contains finitely many $G_{\Gamma}$--orbits.
\end{lem}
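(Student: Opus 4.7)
The plan is to check each piece directly, since all of the necessary book-keeping is already contained in the definitions of parallelism, nesting, orthogonality, and transversality given in Section~\ref{section:proto}.

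First I would verify that $\phi$ is well-defined on parallelism classes. If $[g\Lambda]=[g'\Lambda]$, then $g^{-1}g'\in\langle\st(\Lambda)\rangle$, and this element is unchanged by left multiplication: $(ag)^{-1}(ag')=g^{-1}g'\in\langle\st(\Lambda)\rangle$, so $[ag\Lambda]=[ag'\Lambda]$. The axioms of a group action ($\phi(e,\cdot)=\mathrm{id}$ and $\phi(a,\phi(b,\cdot))=\phi(ab,\cdot)$) are immediate from associativity in $G_\Gamma$, and then $\phi(a,\cdot)$ is a bijection with inverse $\phi(a^{-1},\cdot)$.

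Next I would check preservation of the three relations. Suppose $[g\Lambda]\nest[h\Omega]$; by Definition~\ref{defn:nesting} there is $k\in G_\Gamma$ with $\Lambda\subseteq\Omega$, $[k\Lambda]=[g\Lambda]$ and $[k\Omega]=[h\Omega]$. Setting $k'=ak$, well-definedness of $\phi$ gives $[k'\Lambda]=[ag\Lambda]$ and $[k'\Omega]=[ah\Omega]$, so $[ag\Lambda]\nest[ah\Omega]$. The same witness $k'=ak$ argument, applied to Definition~\ref{defn:orthogonality}, shows $[ag\Lambda]\perp[ah\Omega]$ whenever $[g\Lambda]\perp[h\Omega]$. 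Transversality is the complement of nesting, orthogonality, and equality, all of which are preserved, so it is preserved too (applying the action of $a^{-1}$ shows that none of the other relations can hold for $[ag\Lambda]$ and $[ah\Omega]$ without already holding for $[g\Lambda]$ and $[h\Omega]$).

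Finally, for finiteness of orbits, every class $[g\Lambda]\in\mathfrak{S}_\Gamma$ satisfies $\phi(g,[\Lambda])=[g\Lambda]$, so the orbit of $[\Lambda]$ contains every class whose underlying subgraph is $\Lambda$. Since $\Gamma$ is a finite simplicial graph, there are only finitely many full subgraphs $\Lambda\subseteq\Gamma$, and hence at most finitely many $G_\Gamma$-orbits in $\mathfrak{S}_\Gamma$. None of these steps is a serious obstacle; the only point requiring a little care is the transversality check, where one must be sure that the action does not create nesting or orthogonality where there was none, which is handled by using the inverse action.
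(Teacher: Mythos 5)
Your proof is correct and follows essentially the same route as the paper's: well-definedness, bijectivity via the inverse, relation-preservation because the underlying subgraph is unchanged, and one orbit per subgraph $\Lambda \subseteq \Gamma$. You are somewhat more careful than the paper in the relation-preservation step (the paper asserts it follows because $\phi_a$ does not alter $\Lambda$, whereas you explicitly translate the witness $k$ to $ak$ and handle transversality by complementarity), but the underlying argument is the same.
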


\begin{proof}
 Let $\phi_{a} = \phi(a,\cdot)$. This is well-defined, since $[g\Lambda] = [k\Lambda]$ if and only if $[ag\Lambda] = [ak\Lambda]$. Further, since $\phi_a$ does not alter the subgraph $\Lambda$, it preserves the orthogonality, nesting, and transversality relations. Each $\phi_{a}$ is also a bijection: if $[ag\Lambda] = [ah\Omega]$, then $\Lambda = \Omega$ and $(ag)^{-1}(ah) = g^{-1}h \in \langle\st(\Lambda)\rangle$, hence $[g\Lambda] = [h\Omega]$, proving injectivity. Surjectivity holds since we can always write $[g\Lambda] = \phi_{a}([a^{-1}g\Lambda])$. Finally, there are finitely many $G_{\Gamma}$--orbits; one for each subgraph $\Lambda \subseteq \Gamma$.
\end{proof}

\begin{lem}\label{lem:HHG_conditions}
  For each subgraph $\Lambda \subseteq \Gamma$  and elements $a,g\in G_{\Gamma}$, there exists an isometry $a_{g\Lambda} \colon C(g\Lambda) \rightarrow C(ag\Lambda)$ satisfying the following for all  subgraphs $\Lambda,\Omega \subseteq \Gamma$ and elements $a,b,g,h \in G_\Gamma$.
    \begin{itemize}
            \item The isometry $(ab)_{g\Lambda} \colon C(g\Lambda) \to C(abg\Lambda)$ is equal to the composition $a_{bg\Lambda} \circ b_{g\Lambda} \colon C(g\Lambda) \to C(abg\Lambda)$.
            \item For each $x \in G_{\Gamma}$, we have $a_{g\Lambda}(\pi_{g\Lambda}(x))=\pi_{ag\Lambda}(ax)$.
            \item If $[h\Omega] \trans [g\Lambda]$ or $[h\Omega] \propnest [g\Lambda]$, then $a_{g\Lambda}(\rho_{g\Lambda}^{h\Omega})=\rho_{ag\Lambda}^{ah\Omega}$.
    \end{itemize}
\end{lem}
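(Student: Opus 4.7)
The plan is to take $a_{g\Lambda} \colon C(g\Lambda) \to C(ag\Lambda)$ to be the map induced by left multiplication by $a$, i.e., $a_{g\Lambda}(gx) = agx$ for $gx \in g\langle\Lambda\rangle$. To see this is an isometry, note that by Definition \ref{defn:subgraph_metric} the edge relation in $C(g\Lambda)$ depends only on the group element $p^{-1}q$ for adjacent vertices $p,q \in g\langle\Lambda\rangle$: namely, $p$ and $q$ are joined by an edge if and only if $p^{-1}q$ lies in $\langle\Omega\rangle$ for some proper subgraph $\Omega \subsetneq \Lambda$, or $p^{-1}q \in S$ when $\Lambda$ is a single vertex. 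Since $(ap)^{-1}(aq) = p^{-1}q$, left multiplication by $a$ is an edge-preserving bijection between the vertex sets of $C(g\Lambda)$ and $C(ag\Lambda)$, hence an isometry.

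Verifying the three properties is then essentially direct from the equivariance already built into the construction. The composition property $(ab)_{g\Lambda} = a_{bg\Lambda} \circ b_{g\Lambda}$ is immediate from the associativity of group multiplication. For the projection equivariance, we apply the prefix description of the projection given in Remark \ref{rem:prefix_description_of_projection} to compute
\[ a_{g\Lambda}(\pi_{g\Lambda}(x)) = a \cdot \bigl(g \cdot \prefix_{\Lambda}(g^{-1}x)\bigr) = ag \cdot \prefix_{\Lambda}((ag)^{-1}(ax)) = \pi_{ag\Lambda}(ax). \]
For the relative projection equivariance, recall from Definitions \ref{defn:upward_projection} and \ref{defn:transversality} that in either case $\rho_{g\Lambda}^{h\Omega} = \pi_{g\Lambda}(h\langle\st(\Omega)\rangle)$. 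Applying the projection equivariance pointwise to the elements of $h\langle\st(\Omega)\rangle$,
\[ a_{g\Lambda}(\rho_{g\Lambda}^{h\Omega}) = a_{g\Lambda}\bigl(\pi_{g\Lambda}(h\langle\st(\Omega)\rangle)\bigr) = \pi_{ag\Lambda}(ah\langle\st(\Omega)\rangle) = \rho_{ag\Lambda}^{ah\Omega}. \]

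There is no real obstacle here: the entire relative HHS structure constructed in Section \ref{section:proto}, including the spaces $C(g\Lambda)$, the projections $\pi_{g\Lambda}$, and the relative projections $\rho_{g\Lambda}^{h\Omega}$, was defined in a manifestly $G_{\Gamma}$--equivariant way (via the equivariance of the gate map in Proposition \ref{prop:gates_to_subgroups}(\ref{gate:equivariance})). The content of the lemma is therefore just the observation that left multiplication furnishes the desired isometries, with each compatibility condition reducing to a one-line calculation.
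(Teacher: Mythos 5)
Your proposal is correct and matches the paper's proof: both define $a_{g\Lambda}$ as left multiplication, derive the projection equivariance from the equivariance of the gate map (via Proposition \ref{prop:gates_to_subgroups}(\ref{gate:equivariance}), which you access through the prefix description), and observe the other two properties follow directly. You supply a bit more detail than the paper in checking that left multiplication is an isometry, but the route is the same.
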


\begin{proof}
Let the isometry $a_{g\Lambda}$ be left-multiplication by $a$, that is for any $gx \in C(g\Lambda)$, let $a_{g\Lambda}(gx) = agx$. Then: 
\begin{itemize}
    \item The equality $(ab)_{g\Lambda} = a_{bg\Lambda} \circ b_{g\Lambda}$ is immediate from our definition.
    \item We have $a_{g\Lambda}(\pi_{g\Lambda}(x)) = \pi_{ag\Lambda}(ax)$ by Proposition \ref{prop:gates_to_subgroups}(\ref{gate:equivariance}).
    \item The final property follows as an immediate consequence of the previous one and the definition of the relative projections.\qedhere
\end{itemize}
\end{proof}

\subsection{Graph products are relative HHGs}
We now compile the results from Section \ref{section:rel_HHG} to obtain the main result of this paper, that any graph product of finitely generated groups is a relative HHG.

\begin{thm}\label{thm:graph_products_are_rel_HHGs}
Let $G_\Gamma$ be a graph product of finitely generated groups.   The proto-hierarchy structure $\mf{S}_\Gamma$ from Theorem \ref{thm:proto-structure} is a relatively hierarchically hyperbolic group structure for $G_\Gamma$  with hierarchy constant $\max\{18,|V(\Gamma)|\}$ and uniqueness function \[\theta(r) = (2^{|V(\Gamma)|}r+2)^{|V(\Gamma)|}.\] 
\end{thm}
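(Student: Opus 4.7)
The plan is essentially bookkeeping: to compile the axiom-by-axiom verifications that were carried out one at a time in Sections~\ref{section:proto} and \ref{section:rel_HHG}. I would begin by invoking Theorem~\ref{thm:proto-structure}, which already supplies the $2$--proto-hierarchy structure on $(G_\Gamma,\dist)$: the index set $\mf{S}_\Gamma$ of parallelism classes, the geodesic spaces $C(g\Lambda)$, the coarsely Lipschitz projections $\pi_{g\Lambda}$, the three relations $\nest$, $\perp$, $\trans$, and the upwards and lateral relative projections with diameter bounded by $2$. That handles axioms (\ref{axiom:projections})--(\ref{axiom:transversality}) of Definition~\ref{defn:HHS}.

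To upgrade this to a relative HHS structure I would then cite, in order, the remaining axioms of Definition~\ref{defn:relativ_HHS}, each of which is settled by a single lemma from Section~\ref{section:rel_HHG}. Specifically: Lemma~\ref{lem:hyperbolicity} gives hyperbolicity of the non--$\nest$--minimal $C(g\Lambda)$ with constant $\tfrac{7}{2}$; Lemma~\ref{lem:finite_complexity} gives finite complexity with bound $|V(\Gamma)|$; Lemma~\ref{lem:containers} supplies containers; Lemma~\ref{lem:uniqueness} establishes uniqueness with precisely the claimed function $\theta(r) = (2^{|V(\Gamma)|}r+2)^{|V(\Gamma)|}$; Lemma~\ref{lem:bounded_geodesic_image} proves bounded geodesic image with constant $2$; Lemma~\ref{lem:large_links} proves large links with constant $18$; Lemma~\ref{lem:partial_realisation} yields partial realisation; and Lemma~\ref{lem:consistency} yields consistency with constant $2$. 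Taking the maximum of the numerical constants that appear ($2$, $\tfrac{7}{2}$, $2$, $2$, $18$, together with the bound $|V(\Gamma)|$ coming from finite complexity) gives the hierarchy constant $E = \max\{18,|V(\Gamma)|\}$.

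For the group-theoretic portion of Definition~\ref{defn:hierarchically hyperbolic groups}, I would cite Lemma~\ref{lem:HHG_relations}, which shows that left multiplication yields a $\nest$--, $\perp$--, $\trans$--preserving $G_\Gamma$--action on $\mf{S}_\Gamma$ with finitely many orbits (one per subgraph of $\Gamma$), and Lemma~\ref{lem:HHG_conditions}, which shows that the corresponding left-multiplication isometries $a_{g\Lambda} \colon C(g\Lambda) \to C(ag\Lambda)$ satisfy the cocycle property and commute (on the nose, not just coarsely) with the projections and the relative projections. Since every ingredient has already been proved, there is no real obstacle; the only thing worth sanity-checking is that the single hierarchy constant $E = \max\{18,|V(\Gamma)|\}$ is large enough to absorb every numerical bound listed above, which it visibly is.
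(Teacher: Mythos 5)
Your compilation of the relative HHS axioms is correct and mirrors the paper's own proof: Theorem~\ref{thm:proto-structure} supplies axioms (\ref{axiom:projections})--(\ref{axiom:transversality}) of Definition~\ref{defn:HHS}, and the lemmas you cite settle the remaining axioms of Definition~\ref{defn:relativ_HHS} with the stated constants, so the hierarchy constant $\max\{18,|V(\Gamma)|\}$ and the uniqueness function from Lemma~\ref{lem:uniqueness} fall out as you say.

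However, there is a genuine gap in your handling of item~(\ref{item:HHG conditions}) of Definition~\ref{defn:hierarchically hyperbolic groups}. Lemma~\ref{lem:HHG_conditions} produces isometries $a_{g\Lambda}\colon C(g\Lambda)\to C(ag\Lambda)$ between spaces attached to \emph{specific cosets}, but the index set is the set of \emph{parallelism classes} $[g\Lambda]$, and $C([g\Lambda])$ is $C(\overline{g}\Lambda)$ for a \emph{fixed} chosen representative $\overline{g}\Lambda$. The HHG axiom requires an isometry $C(\overline{g}\Lambda)\to C(\overline{ag}\Lambda)$, and in general $a\overline{g}\Lambda$ and $\overline{ag}\Lambda$ are distinct (though parallel) cosets — e.g.\ whenever $a$ stabilises the class $[g\Lambda]$ without preserving the coset $\overline{g}\sub{\Lambda}$, which happens for any $a\in\overline{g}\sub{\st(\Lambda)}\overline{g}^{-1}\smallsetminus\overline{g}\sub{\Lambda}\overline{g}^{-1}$. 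So $a_{\overline{g}\Lambda}$ lands in the wrong space, and there is no choice of representatives that makes $\overline{ag}=a\overline{g}$ for all $a$ once some $[g\Lambda]$ has a non-trivial stabiliser. The paper's proof repairs this by defining $\mathbf{a}_{[g\Lambda]}=\gate_{\overline{ag}\Lambda}\circ a_{\overline{g}\Lambda}$, using the fact that the gate map between parallel cosets is an isometry (Lemma~\ref{lem:C_spaces_are_well_defined}). Verifying that this composite still satisfies the cocycle identity $(\mathbf{ab})_{[g\Lambda]}=\mathbf{a}_{[bg\Lambda]}\circ\mathbf{b}_{[g\Lambda]}$ is not automatic: it reduces to showing $p_a p_b = p_{ab}$ for the prefixes $p_a=\prefix_\Lambda\bigl((\overline{abg})^{-1}a\overline{bg}\bigr)$, $p_b=\prefix_\Lambda\bigl((\overline{bg})^{-1}b\overline{g}\bigr)$, $p_{ab}=\prefix_\Lambda\bigl((\overline{abg})^{-1}ab\overline{g}\bigr)$, which requires the algebraic description of the gate map and the normal form $\st(\Lambda)\cong\Lambda\bowtie\lk(\Lambda)$. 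The same mechanism (Lemma~\ref{lem:C_spaces_are_well_defined} plus equivariance of the gate map) is what underwrites the coarse compatibility of $\mathbf{a}_{[g\Lambda]}$ with the projections and with the relative projections. Your proof should supply this correction and verification rather than cite Lemma~\ref{lem:HHG_conditions} as if it already addressed the parallelism classes.
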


\begin{proof}

Let $\mf{S}_\Gamma$ be the proto-hierarchy structure for $(G_\Gamma,\dist)$ from Theorem \ref{thm:proto-structure}. The work of this section has shown that $\mf{S}_\Gamma$ is a relative HHS structure for $(G_\Gamma,\dist)$.

\begin{itemize}
    \item[(1)] We proved that the spaces associated to the non-$\nest$--minimal domains of $\mf{S}_\Gamma$ are  $\frac{7}{2}$--hyperbolic in Lemma \ref{lem:hyperbolicity}.
    \item[(2)] We proved finite complexity in Lemma \ref{lem:finite_complexity}.
    \item[(3)] We proved the container axiom in Lemma \ref{lem:containers}.
    \item[(4)] The proof of the uniqueness axiom  follows from Lemma \ref{lem:uniqueness}, since if $\dist_{C([g\Lambda])}(x,y)$ is uniformly bounded for all $[g\Lambda] \in \mf{S}_\Gamma$, then Lemma \ref{lem:C_spaces_are_well_defined} implies that $\dist_{g\Lambda}(x,y)$ has the same uniform bound for all $g \in G_\Gamma$ and  $\Lambda \subseteq \Gamma$.
    \item[(5)] We proved the bounded geodesic image axiom in Lemma \ref{lem:bounded_geodesic_image}.
    \item[(6)]  We proved the large links axiom in Lemma \ref{lem:large_links}. 
    \item[(7)] We proved the consistency axiom in Lemma \ref{lem:consistency}.
    \item[(8)] We proved the partial realisation axiom in Lemma \ref{lem:partial_realisation}.
\end{itemize}

We now verify the remaining axioms required for $(G_{\Gamma},\dist)$ to be a relative HHG, as laid out in Definition \ref{defn:hierarchically hyperbolic groups}.

Let $\phi: G_{\Gamma} \times \mathfrak{S}_{\Gamma} \rightarrow \mf{S}_{\Gamma}$ be the map $\phi(a,[g\Lambda]) = [ag\Lambda]$. By Lemma \ref{lem:HHG_relations}, this is a well-defined $G_\Gamma$--action by bijections that preserves the nesting, orthogonality, and transversality relations and has finitely many orbits.

For each $[g\Lambda] \in \mf{S}_\Gamma$, let $\overline{g} \Lambda$ denote the fixed representative of $[g\Lambda]$ such that $C([g\Lambda]) = C(\overline{g}\Lambda)$; see the proto-hierarchy structure in Theorem \ref{thm:proto-structure}.
Left multiplication by $a \in G_\Gamma$ gives an isometry $a_{g\Lambda} \colon C(g\Lambda) \to C(ag\Lambda)$ for each $g \in G_\Gamma$ and each subgraph $\Lambda \subseteq \Gamma$. For each $a\in G_\Gamma$ and $[g\Lambda] \in \mf{S}_\Gamma$, define $\mathbf{a}_{[g\Lambda]} \colon C(\overline{g}\Lambda) \to C(\overline{ag}\Lambda)$ by $\mathbf{a}_{[g\Lambda]} = \gate_{\overline{ag}\Lambda} \circ a_{\overline{g}\Lambda}$. 

Let $a,b \in  G_\Gamma$ and $[g\Lambda],[h\Omega] \in \mf{S}_\Gamma$. We now verify the remaining axioms of a relatively hierarchically hyperbolic group (Definition \ref{defn:hierarchically hyperbolic groups}).
\begin{itemize}
    \item Let $\lambda \in \sub{\Lambda}$. To show $(\mathbf{ab})_{[g\Lambda]} = \mathbf{a}_{[bg\Lambda]} \circ \mathbf{b}_{[g\Lambda]}$ we will show  \[(\mathbf{ab})_{[g\Lambda]}(\overline{g}\lambda) = (\mathbf{a}_{[bg\Lambda]} \circ \mathbf{b}_{[g\Lambda]})(\overline{g}\lambda).\] 
    Using the last clause of Lemma \ref{lem:C_spaces_are_well_defined}, we have \[(\mathbf{ab})_{[g\Lambda]}(\overline{g}\lambda) =\gate_{\overline{abg}\Lambda}(ab\overline{g}\lambda) = \overline{abg} \cdot p_{ab} \lambda\] where $p_{ab} = \prefix_{\Lambda}((\overline{abg})^{-1} \cdot ab\overline{g})$. 
    Similarly, we have 
    \[(\mathbf{a}_{[bg\Lambda]} \circ \mathbf{b}_{[g\Lambda]})(\overline{g}\lambda)= \mathbf{a}_{[bg\Lambda]}(\overline{bg} \cdot p_b \lambda) = \overline{abg} \cdot p_a p_b \lambda \] where $p_b = \prefix_\Lambda\bigl( (\overline{bg})^{-1} \cdot b\overline{g}\bigr)$ and  $p_a = \prefix_\Lambda\bigl( (\overline{abg})^{-1} \cdot a\overline{bg}\bigr)$.  
    Thus, it suffices to prove $p_ap_b = p_{ab}$. 
    
    Since $\overline{bg}$ and $b\overline{g}$ are both representatives of the parallelism class $[bg \Lambda]$, we have $(\overline{bg})^{-1} \cdot b\overline{g} \in \sub{\st(\Lambda)}$. 
    Therefore $ (\overline{bg})^{-1} \cdot b\overline{g} = p_bl_b$ where $l_b \in \sub{\lk(\Lambda)}$. 
    Similarly, $ (\overline{abg})^{-1} \cdot a \overline{bg} = p_a l_a$ where $l_a \in \sub{\lk(\Lambda)}$. Hence  the following calculation concludes our argument:
    \begin{align*}
 (\overline{abg})^{-1} \cdot ab \overline{g} =& (\overline{abg})^{-1} \cdot a \overline{bg} \cdot p_b l_b\\
 \prefix_\Lambda\left((\overline{abg})^{-1} \cdot ab \overline{g}\right) =& \prefix_\Lambda\left( (\overline{abg})^{-1} \cdot a \overline{bg} \cdot p_b l_b \right)\\
 p_{ab} =& \prefix_\Lambda( p_a l_a p_b l_b )\\
 p_{ab} =& p_a p_b. 
    \end{align*}

    \item Let $x \in G_\Gamma$. Since $a\overline{g}\Lambda \parallel \overline{ag} \Lambda$, we can use Lemma \ref{lem:C_spaces_are_well_defined} and equivariance of the gate map (Proposition \ref{prop:gates_to_subgroups}(\ref{gate:equivariance})) to conclude: 
    \begin{align*}
       \gate_{\overline{ag}\Lambda}\left(\gate_{a \overline{g}\Lambda}(ax)\right) =& \gate_{\overline{ag}\Lambda}(ax) \\
       \gate_{\overline{ag}\Lambda} \left(a \cdot \gate_{\overline{g}\Lambda}(x)\right) =& \gate_{\overline{ag}\Lambda}(ax)\\
       \left(\gate_{\overline{ag}\Lambda}\circ a_{\overline{g}\Lambda}\right) \left( \pi_{\overline{g}\Lambda}(x) \right) =& \pi_{\overline{ag}\Lambda}(ax)\\
       \mathbf{a}_{[g\Lambda]}\left( \pi_{[g\Lambda]}(x)\right) =& \pi_{[ag\Lambda]}(ax).
    \end{align*} 

    \item Suppose $[h\Omega] \trans [g\Lambda]$ or $[h\Omega] \propnest [g\Lambda]$.  Lemmas \ref{lem:C_spaces_are_well_defined}, \ref{lem:HHG_relations}, and \ref{lem:HHG_conditions} imply $\mathbf{a}_{[g\Lambda]} \left( \rho_{[g\Lambda]}^{[h\Omega]} \right) = \rho_{[ag\Lambda]}^{[ah\Omega]}$:
    \begin{flalign*}
        && \mathbf{a}_{[g\Lambda]} \left( \rho_{[g\Lambda]}^{[h\Omega]} \right) 
        =& \left( \gate_{\overline{ag}\Lambda}\circ a_{\overline{g}\Lambda} \right) \left( \rho_{\overline{g}\Lambda}^{\overline{h}\Omega}\right) && \text{(Definition of } \mathbf{a}_{[g\Lambda]})\\
        && =&  \gate_{\overline{ag}\Lambda} \left( \rho_{a \overline{g} \Lambda}^{a \overline{h} \Omega}  \right) && \text{(Lemma \ref{lem:HHG_conditions})}\\
        && =&  \gate_{\overline{ag}\Lambda} \left(  \gate_{a\overline{g} \Lambda}(a \overline{h} \sub{\st(\Omega)}) \right) && \text{(Definition of } \rho)\\
        && =&  \gate_{\overline{ag} \Lambda} ( a \overline{h} \sub{\st(\Omega)}) && \text{(Lemma \ref{lem:C_spaces_are_well_defined})}\\
        && =&  \gate_{\overline{ag} \Lambda} (  \overline{ah} \sub{\st(\Omega)}) && (a \overline{h} \Omega \parallel \overline{ah} \Omega)\\
        && =& \rho_{\overline{ag}\Lambda}^{\overline{ah}\Omega}. && \qedhere 
    \end{flalign*}
\end{itemize}
\end{proof}

Behrstock, Hagen, and Sisto show that any relatively hierarchically hyperbolic space has a distance formula, which expresses distances in the space as a sum of distances in the projections \cite[Theorem 6.10]{BHS_HHSII}. As a result, we now have such a distance formula for graph products of finitely generated groups.

\begin{cor}[Distance formula for graph products]\label{cor:distance_formula}
Let $G_\Gamma$ be a graph product of finitely generated groups. There exists $\sigma_0 >0$  such that for all $\sigma \geq \sigma_0$ there exist $K\geq 1$ and $L \geq 0$ such that for all $g,h \in G_\Gamma$ \[ \frac{1}{K} \sum_{[k\Lambda] \in \mf{S}_\Gamma}\threshold{\dist_{[k\Lambda]}(g,h)}{\sigma} -L \leq  \dist(g,h) \leq K \sum_{[k\Lambda] \in \mf{S}_\Gamma}\threshold{\dist_{[k\Lambda]}(g,h)}{\sigma} +L\]
where we define $\threshold{N}{\sigma}= N$ if $N \geq \sigma$ and $0$ if $N < \sigma$.
\end{cor}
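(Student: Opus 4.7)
The plan is to deduce this corollary directly from the general distance formula for relatively hierarchically hyperbolic spaces established by Behrstock, Hagen, and Sisto in \cite[Theorem 6.10]{BHS_HHSII}. That theorem states: if $(\mc{X},\mf{S})$ is a relatively hierarchically hyperbolic space, then there exists a threshold $\sigma_0$ such that for every $\sigma \geq \sigma_0$, distances in $\mc{X}$ are comparable (up to multiplicative and additive constants depending only on the HHS data and on $\sigma$) to the sum of the truncated projection distances over all domains of $\mf{S}$. This is exactly the shape of the formula we need to produce.

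First, I would invoke Theorem \ref{thm:graph_products_are_rel_HHGs}, which gives us that $(G_\Gamma,\dist)$ is a relatively hierarchically hyperbolic space with structure $\mf{S}_\Gamma$, hierarchy constant $E = \max\{18,|V(\Gamma)|\}$, and uniqueness function $\theta(r) = (2^{|V(\Gamma)|}r+2)^{|V(\Gamma)|}$. Since the general distance formula of \cite[Theorem 6.10]{BHS_HHSII} applies to any relative HHS, the hypotheses are satisfied immediately; no further verification is needed.

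Next, I would apply the general distance formula to $(G_\Gamma, \mf{S}_\Gamma)$. This yields the existence of $\sigma_0 > 0$ such that for every $\sigma \geq \sigma_0$, there exist constants $K \geq 1$ and $L \geq 0$ (depending on $\sigma$ and on the HHS data produced in Theorem \ref{thm:graph_products_are_rel_HHGs}) for which
\[
\frac{1}{K}\sum_{[k\Lambda]\in\mf{S}_\Gamma}\threshold{\dist_{[k\Lambda]}(g,h)}{\sigma} - L \ \leq\ \dist(g,h) \ \leq\ K\sum_{[k\Lambda]\in\mf{S}_\Gamma}\threshold{\dist_{[k\Lambda]}(g,h)}{\sigma} + L
\]
for all $g,h\in G_\Gamma$. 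This is precisely the claimed formula.

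There is no real obstacle here, since the entirety of Section \ref{section:rel_HHG} has been devoted to verifying the axioms needed to invoke the general machinery. The only cosmetic point worth remarking on is that the distance formula in \cite[Theorem 6.10]{BHS_HHSII} is stated for projections to the spaces $C([k\Lambda])$ indexed by parallelism classes, which agrees with our notation $\dist_{[k\Lambda]}(g,h) := \dist_{C([k\Lambda])}(\pi_{[k\Lambda]}(g), \pi_{[k\Lambda]}(h))$, so no reindexing is required.
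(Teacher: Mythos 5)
Your proposal is correct and is exactly the argument the paper uses: the corollary is stated as an immediate consequence of Theorem \ref{thm:graph_products_are_rel_HHGs} combined with the distance formula for relatively hierarchically hyperbolic spaces from \cite[Theorem 6.10]{BHS_HHSII}. The paper gives no further proof, so your write-up is, if anything, more explicit than the original.
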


Another key consequence of relative hierarchical hyperbolicity for a group is that the action of the group on the $\nest$--maximal space is acylindrical. Thus, we have that the action of $G_\Gamma$ on $C(\Gamma)$ is acylindrical. Recall, the action of a group $G$ on a metric space $X$ is \emph{acylindrical} if for all $\epsilon \geq 0$, there exist $R,N \geq 0$ so that if $x,y \in X$ satisfy $\dist_X(x,y) \geq R$, then there are at most $N$ elements $g \in G$ such that $\dist_X(x,gx) \leq \epsilon$  and $\dist_X(y,gy) \leq \epsilon$.

\begin{cor}[The action on $C(\Gamma)$ is acylindrical]\label{cor:acyl}
Let $G_\Gamma$ be a graph product of finitely generated groups. The action of $G_\Gamma$ on $C(\Gamma)$ by left multiplication is acylindrical.
\end{cor}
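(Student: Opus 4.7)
The plan is to derive acylindricity of the action on $C(\Gamma)$ from the general principle that a (relative) HHG acts acylindrically on its $\nest$--maximal hyperbolic space, a result going back to Behrstock, Hagen, and Sisto (see \cite{BHS_HHSII} and the extensions in \cite{BHS_HHS_AsDim,DHS_corrigendum}). The key observation is that in $\mf{S}_\Gamma$, the unique $\nest$--maximal element is $[\Gamma]$, with associated space $C(\Gamma)$, and by Lemma \ref{lem:HHG_conditions} the isometries $\mathbf{a}_{[\Gamma]} \colon C(\Gamma) \to C(\Gamma)$ arising from the HHG structure agree with left multiplication by $a$ (here $\overline{g}\Gamma = \overline{ag}\Gamma = \Gamma$, so the gate map is trivial and $\mathbf{a}_{[\Gamma]} = a_{\Gamma}$).

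First I would dispose of the degenerate case where $|V(\Gamma)| = 1$. Then $G_\Gamma = G_v$ is finitely generated and $C(\Gamma)$ is its Cayley graph, on which $G_v$ acts properly; properness implies acylindricity trivially. For the remaining case $|V(\Gamma)| \geq 2$, Lemma \ref{lem:hyperbolicity} tells us $C(\Gamma)$ is $\frac{7}{2}$--hyperbolic, so the hypotheses of the BHS acylindricity theorem are met. Applying that theorem to the relative HHG structure $(G_\Gamma,\mf{S}_\Gamma)$ from Theorem \ref{thm:graph_products_are_rel_HHGs} immediately yields acylindricity of the $G_\Gamma$--action on $C(\Gamma)$.

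For completeness I would sketch why the BHS proof goes through in the relative setting. The argument takes $x,y \in C(\Gamma)$ with $\dist_\Gamma(x,y)$ large and considers $g \in G_\Gamma$ moving both points by at most $\epsilon$. Applying the bounded geodesic image axiom (Lemma \ref{lem:bounded_geodesic_image}) together with large links (Lemma \ref{lem:large_links}) to a $C(\Gamma)$--geodesic between preimages, one shows any such $g$ must approximately preserve the projections $\pi_{[k\Pi]}$ for every $[k\Pi] \propnest [\Gamma]$ that lies along the geodesic. The distance formula of Corollary \ref{cor:distance_formula} then bounds the word length of $g$ in terms of $\epsilon$, and since balls in the Cayley graph of $G_\Gamma$ with respect to our finite generating set $S$ are finite, this bounds the number of such $g$. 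Crucially, none of this uses hyperbolicity of the nesting--minimal spaces $C(v)$, so the relative nature of our HHS structure presents no obstruction.

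The main obstacle to formalising this as a self-contained proof would be bookkeeping the constants in the acylindricity statement when chaining the distance formula with bounded geodesic image; however, since this is a standard consequence of the HHG axioms that we have verified in Section \ref{section:rel_HHG}, I would simply cite the relevant acylindricity theorem rather than reprove it.
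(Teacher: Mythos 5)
Your proposal takes essentially the same approach as the paper: cite the Behrstock--Hagen--Sisto acylindricity theorem for the action on the $\nest$--maximal hyperbolic space, observe that the BHS argument uses only the hyperbolicity of that top-level space and so goes through for a relative HHG, and handle separately the degenerate case where $C(\Gamma)$ might not be hyperbolic (your $|V(\Gamma)|=1$ case, the paper's $\mf{S}=\{T\}$ case, both resolved by noting the action is proper, hence acylindrical). The paper cites \cite[Theorem 14.3]{BHS_HHSI} specifically, which is worth pinning down.

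One cautionary note on your ``for completeness'' sketch of the BHS internals: the claim that the distance formula bounds the word length of $g$ given that $g$ approximately preserves the projections $\pi_{[k\Pi]}$ along the geodesic is not correct as stated --- an element of a large point stabiliser can preserve every projection exactly while having arbitrarily large word length. The genuine BHS argument is more delicate (it bounds the \emph{number} of such $g$, not their word length, via a pigeonhole/fellow-travelling argument in $C(T)$ combined with properness of the action on $\mc{X}$). Since you, like the paper, ultimately cite the theorem rather than reprove it, this does not undermine your main proof, but the sketch as written would not survive being expanded.
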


\begin{proof}
Behrstock, Hagen, and Sisto proved that if  $(G,\mf{S})$ is a (non-relative) hierarchically hyperbolic group and $T \in \mf{S}$ is the $\nest$--maximal element, then the action of $G$ on $C(T)$ is acylindrical \cite[Theorem 14.3]{BHS_HHSI}. However, the argument they employ only uses the hyperbolicity of the space $C(T)$ and not the hyperbolicity of any of the other spaces in the HHG structure. Thus, their argument carries through verbatim if $(G,\mf{S})$ is a relative HHG provided $\mf{S} \neq \{T\}$. In the case when $\mf{S} = \{T\}$, then $C(T)$ is equivariantly quasi-isometric to a Cayley graph of $G$ with respect to some finite generating set. Thus, $G$ acts on $C(T)$ properly, and hence acylindrically.  Applying this to the graph product $G_\Gamma$ with relative HHG structure $\mf{S}_\Gamma$, we have that $G_\Gamma$ acts on $C(\Gamma)$ acylindrically.
\end{proof}

\subsection{The syllable metric is an HHS}\label{section:syllable}
Since nearly every argument used in the proof of Theorem \ref{thm:graph_products_are_rel_HHGs} factors through the syllable metric on the graph product $G_\Gamma$, the same arguments show that the syllable metric on $G_{\Gamma}$ is itself a hierarchically hyperbolic space. This proves Corollary \ref{intro_cor:syllable_is_HHG} stated in the introduction and answers a question of Behrstock, Hagen, and Sisto about the syllable metric on a right-angled Artin group. Note that since we are not working with a word metric on $G_\Gamma$ in this situation,  we do not require the vertex groups to be finitely generated. As the only use of the finite generation of the vertex groups in Theorem \ref{thm:graph_products_are_rel_HHGs} is to ensure that $G_\Gamma$ has a word metric, this does not create any additional difficulty.

\begin{cor}
Let $\Gamma$ be a finite simplicial graph, with each vertex $v$ labelled by a group $G_{v}$. Then the graph product $G_{\Gamma}$ endowed with the syllable metric is a hierarchically hyperbolic space.
\end{cor}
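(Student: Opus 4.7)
The plan is to reuse essentially the entire construction from Sections \ref{section:proto} and \ref{section:rel_HHG}, making only the minimal modifications needed to promote the \emph{relative} HHS structure to a genuine HHS structure on $(G_\Gamma, \dist_{syl})$. The key observation is that almost every argument in those sections already factored through the syllable metric on $G_\Gamma$, with the word metric only appearing in places where we needed to package things as an HHG structure on the Cayley graph. In particular, finite generation of the vertex groups was only used to ensure that $G_\Gamma$ admits a word metric; for the syllable metric version, this hypothesis can be dropped entirely.

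First, I would keep the index set $\mf{S}_\Gamma$, relations, and relative projections from Theorem \ref{thm:proto-structure} unchanged. For each non--$\nest$--minimal domain $[g\Lambda]$, the associated space is again $C(g\Lambda)$ from Definition \ref{defn:subgraph_metric}, which is hyperbolic by Lemma \ref{lem:hyperbolicity} independent of the metric chosen on $G_\Gamma$. For each $\nest$--minimal domain $[gv]$, where $v$ is a single vertex, I would redefine $C(gv)$ to be the graph whose vertex set is $g\sub{v}$ and in which every pair of distinct vertices is joined by an edge of length $1$. This graph has diameter $1$ and is therefore trivially hyperbolic. The projection $\pi_{gv}$ is still the gate map $\gate_{gv}$, which is $(1,0)$--coarsely Lipschitz from $(G_\Gamma,\dist_{syl})$ by Proposition \ref{prop:gates_to_subgroups}(\ref{gate:distance_non-increasing}).

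Next, I would verify that each of the axioms established in Section \ref{section:rel_HHG} still holds. Hyperbolicity, finite complexity, containers, bounded geodesic image, large links, partial realisation, and consistency (Lemmas \ref{lem:hyperbolicity}--\ref{lem:consistency}) only use the syllable metric on $G_\Gamma$ together with the structure of $S(\Gamma)$, so their proofs transfer verbatim. The projections axiom requires each $\pi_{[g\Lambda]}$ to be coarsely Lipschitz from $(G_\Gamma,\dist_{syl})$; this is immediate for non--$\nest$--minimal domains since $\dist_{g\Lambda} \leq \dist_{syl}$ on $g\sub{\Lambda}$, and it is immediate for $\nest$--minimal domains since the redefined $C(gv)$ has diameter $1$. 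The group-action compatibility verification in Section \ref{section:compatibility} goes through unchanged, with left multiplication inducing the required isometries.

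The main step requiring a genuinely new argument is the uniqueness axiom. Here I would reprove Lemma \ref{lem:uniqueness} with $|g|_{syl}$ in place of $|g|_{G_\Gamma}$, using essentially the same induction on $\dist_\Gamma(e,g)$. The base case is now immediate: when $\Gamma$ is a single vertex $v$, we have $|g|_{syl} \leq 1$ for every $g \in G_v$. For the inductive step, the only place where the original proof of Lemma \ref{lem:uniqueness} invoked Corollary \ref{cor:reduced_syllable_expressions_minimize_word_length} (to translate bounds on $|\lambda_i|_{G_{\Lambda_i}}$ into bounds on $|g|_{G_\Gamma}$) can be replaced by item (\ref{item:subgraph+syllable_expression}) of Lemma \ref{lem:normal_subgraph_form}, which gives $|g|_{syl} = \sum_i |\lambda_i|_{syl}$ for the reduced subgraph expression produced there. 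This yields a uniqueness function bounding $|g|_{syl}$, and combined with the observation that the group action permutes the structure, completes the verification that $(G_\Gamma,\dist_{syl},\mf{S}_\Gamma)$ is a hierarchically hyperbolic space. No step presents a serious obstacle; the only care needed is bookkeeping to ensure that the vertex groups are never required to be finitely generated.
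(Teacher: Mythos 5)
Your proposal follows essentially the same route as the paper: redefine the $\nest$--minimal spaces $C(gv)$ to be complete graphs of diameter $1$, keep the rest of the proto-structure intact, and observe that every axiom-verification already factors through the syllable metric. Your extra care in spelling out how the uniqueness axiom (Lemma \ref{lem:uniqueness}) should be reproved — substituting $|\cdot|_{syl}$ for $|\cdot|_{G_\Gamma}$ and appealing to Lemma \ref{lem:normal_subgraph_form}(\ref{item:subgraph+syllable_expression}) in place of Corollary \ref{cor:reduced_syllable_expressions_minimize_word_length} — is a correct and useful expansion of the paper's terse ``replace word metric with syllable metric'' remark, but it is the same argument, not a different one.
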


\begin{proof}
Define the proto-hierarchy structure for $G_{\Gamma}$ as before, except whenever $v \in V(\Gamma)$ and $g \in G_{\Gamma}$, define $C(gv)$ to be the graph whose vertices are elements of $gG_{v}$ and where every pair of vertices is joined by an edge (that is, we endow $gG_{v}$ with the syllable metric rather than the word metric). The proofs of the HHG axioms then follow as before, with any instance of `word metric' replaced with `syllable metric', and with trivial $\nest$--minimal case for the majority of axioms due to such $C(gv)$ having diameter $1$.
\end{proof}

\section{Some applications of hierarchical hyperbolicity}\label{section:applications}
We now give some  applications of the relative hierarchical hyperbolicity of graph products. Our main result of this section is Theorem \ref{thm:graph_products_of_HHGs}, which shows that if the vertex groups of a graph product $G_{\Gamma}$ are HHGs, then $G_{\Gamma}$ is itself a (non-relative) HHG.

We then give a new proof of a theorem of Meier, classifying when a graph product $G_{\Gamma}$ with hyperbolic vertex groups is itself hyperbolic. We do this using the relative HHS structure that we just obtained, noting that when the vertex groups are hyperbolic, this is in fact a (non-relative) HHS structure.

Finally, we answer two questions of Genevois regarding the \emph{electrification} $\mathbb{E}(\Gamma)$ of a graph product $G_{\Gamma}$ of finite groups \cite[Questions 8.3, 8.4]{Gen_Rigidity}. 
The similarity of Genevois' definition of $\mathbb{E}(\Gamma)$ to our own subgraph metric $C(\Gamma)$ allows us to leverage properties of $C(\Gamma)$ to prove statements about $\mathbb{E}(\Gamma)$. In particular, we use $\Gamma$ to  classify when $\mathbb{E}(\Gamma)$ has bounded diameter (Theorem \ref{thm:electrification}) and when it is a quasi-line (Theorem \ref{thm:quasi-line}). As Genevois proved that any quasi-isometry between graph products of finite groups induces a quasi-isometry between their electrifications {\cite[Proposition 1.4]{Gen_Rigidity}}, these two theorems provide us with tools for studying quasi-isometric rigidity of graph products of finite groups. 

\subsection{Graph products of HHGs}\label{section:graph_products_of_HHG}

\begin{thm}\label{thm:graph_products_of_HHGs}
Let $G_\Gamma$ be a graph product of finitely generated groups. If for each $v \in V(\Gamma)$, the vertex group $G_v$ is a hierarchically hyperbolic group, then $G_\Gamma$ is a hierarchically hyperbolic group.
\end{thm}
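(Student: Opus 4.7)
The plan is to combine the relative HHG structure $\mf{S}_\Gamma$ on $G_\Gamma$ supplied by Theorem \ref{thm:graph_products_are_rel_HHGs} with the HHG structures $\mf{S}_v$ on each vertex group. The structure $\mf{S}_\Gamma$ already satisfies every HHG axiom except hyperbolicity of the $\nest$--minimal spaces $C([gv])$, which are precisely the cosets $g\sub{v}$ equipped with their word metrics. Since each $G_v$ is an HHG by hypothesis, each such coset already carries its own HHG structure, obtained by left-translating $\mf{S}_v$. The strategy is therefore to \emph{plug in} these HHG structures in place of the nesting-minimal pieces of $\mf{S}_\Gamma$ and verify that the resulting combined data forms an honest HHG structure on $G_\Gamma$.

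Concretely, the first step is to build a new index set $\mf{T}$ whose elements are the non-$\nest$--minimal domains of $\mf{S}_\Gamma$ together with the domains of the translated HHG structures on each coset $g\sub{v}$. Projections from $G_\Gamma$ to a new domain $U$ sitting inside $g\sub{v}$ are defined by first gating onto $g\sub{v}$ using the relative HHG structure and then applying the $\mf{S}_v$--projection. The relations are inherited from the two structures: two new domains in the same coset inherit nesting, orthogonality, transversality, and relative projections from $\mf{S}_v$; two new domains in different cosets $g\sub{v}$ and $g'\sub{v'}$ inherit their relation from the relation between $[gv]$ and $[g'v']$ in $\mf{S}_\Gamma$; and a new domain $U \nest [gv]$ relates to an old domain $W$ as $[gv]$ does. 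Relative projections crossing the old/new divide are defined by $\rho_W^U = \rho_W^{[gv]}$ for $W \in \mf{S}_\Gamma$ with $[gv] \propnest W$ or $[gv] \trans W$, and $\rho_U^W$ is defined by first applying partial realisation (Lemma \ref{lem:partial_realisation}) to land in $g\sub{v}$ and then projecting within $\mf{S}_v$.

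Most of the HHS axioms for $\mf{T}$ will follow by combining the corresponding axioms for $\mf{S}_\Gamma$ and each $\mf{S}_v$: hyperbolicity holds because the only non-hyperbolic pieces have been replaced by hyperbolic $\mf{S}_v$--domains; finite complexity is at most the sum of the complexities of $\mf{S}_\Gamma$ and the $\mf{S}_v$; the containers and partial realisation axioms combine directly; uniqueness follows by an application of Lemma \ref{lem:uniqueness} inside each vertex group once the $\mf{S}_\Gamma$--uniqueness has forced a pair of elements to sit close in the word metric. The compatibility with the $G_\Gamma$--action, and the finiteness of $G_\Gamma$--orbits in $\mf{T}$, reduce to the analogous statements for $\mf{S}_\Gamma$ plus the fact that each $G_v$ has finitely many orbits on $\mf{S}_v$ and there are finitely many vertices in $\Gamma$.

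The main obstacle will be verifying consistency, bounded geodesic image, and large links for pairs of domains straddling the old/new boundary. For these, the key observation is that the projection $G_\Gamma \to C(U)$ factors through the gate onto $g\sub{v}$, so distances in $U$ are controlled by the already-established bounded geodesic image and large links axioms for $\mf{S}_\Gamma$ applied to $[gv]$, together with the corresponding axioms for $U$ inside $\mf{S}_v$. I expect this technical bookkeeping to be encapsulated in a general combination theorem in the appendix, of the form: any relative HHG whose nesting-minimal spaces admit group-equivariantly compatible HHG structures is itself an HHG. The proof then concludes by applying that theorem to the relative HHG $(G_\Gamma, \mf{S}_\Gamma)$ together with the family $\{(G_v, \mf{S}_v)\}_{v \in V(\Gamma)}$.
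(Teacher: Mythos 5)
Your overall strategy matches the paper exactly: form a new index set $\mf{T}_0$ by replacing each $\nest$--minimal domain $[gv]$ of the relative HHG structure $\mf{S}_\Gamma$ with a translated copy $\mf{R}_{[gv]}$ of the HHG structure on $G_v$ living on $C([gv])$, define projections by composing the gate to $g\sub{v}$ with the $\mf{R}_{[gv]}$--projection, inherit relations and relative projections from the two structures according to which pieces the domains live in, and invoke a combination result from the appendix. This is the architecture of the paper's proof.

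However, your statement that ``the containers and partial realisation axioms combine directly'' is precisely where the proposal breaks down, and it is the one substantive step the rest of your outline leaves unaddressed. Consistency, bounded geodesic image, and large links do combine with only routine bookkeeping, as you anticipate. The \emph{container axiom} is the one that genuinely fails for the naive combined structure. Here is the obstruction: suppose $U \in \mf{R}_{[gv]}$ is orthogonal to another $U' \in \mf{R}_{[gv]}$, and $[gv]$ is orthogonal in $\mf{S}_\Gamma$ to some $W \in \mf{S}_\Gamma \smallsetminus \mf{S}_\Gamma^{min}$. Then both $U'$ and $W$ are orthogonal to $U$ in the combined structure, so a container for $U$ must contain both. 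But the only domains of $\mf{T}_0$ that contain an element of $\mf{R}_{[gv]}$ \emph{and} an element of $\mf{S}_\Gamma \smallsetminus \mf{S}_\Gamma^{min}$ are domains $[h\Omega]$ with $[gv] \nest [h\Omega]$ --- and any such $[h\Omega]$ then contains \emph{all} of $\mf{R}_{[gv]}$, including $U$ itself, so it cannot serve as a container for $U$. Consequently no proper container exists in $\mf{T}_0$. The appendix result (Theorem~\ref{thm:almost_HHGs_are_HHGs}) is therefore not the general ``relative HHG with HHG vertex pieces is an HHG'' theorem you expect; it is specifically the statement that an \emph{almost} HHS --- one satisfying every axiom except containers, together with a uniform bound on the size of pairwise orthogonal families --- can be upgraded to an HHS by adjoining dummy container domains whose associated spaces are single points. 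Your proof needs to first establish that $\mf{T}_0$ is such an almost HHS (including the finite-rank hypothesis), and then apply that theorem.
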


\begin{proof}
 For each $v \in V(\Gamma)$, let $\mf{R}_{[v]}$ be the HHG structure for $G_v$ and let $\mf{S}_\Gamma$ be the relative HHG structure for $G_\Gamma$ coming from Theorem \ref{thm:graph_products_are_rel_HHGs}. Fix $E_0 >0$ to be  the maximum of the hierarchy constants for $\mf{S}_\Gamma$ and for each $\mf{R}_{[v]}$. For each $[g\Lambda] \in \mf{S}_\Gamma$, let $\overline{g}\Lambda$ be the fixed representative of $[g\Lambda]$ so that $C([g\Lambda]) = C(\overline{g}\Lambda)$. If $[g\Lambda] = [\Lambda]$, then we choose $\overline{g} = e$.

Let $\mf{S}_\Gamma^{min} = \{ [g\Lambda] \in \mf{S}_\Gamma : \Lambda \text{ is a single vertex of } \Gamma \}$.  If $\Lambda$ is a single vertex $v$ of $\Gamma$, then $C([v])$ is the Cayley graph of the vertex group $G_v$ with respect to a finite generating set. Thus, $\mf{R}_{[v]}$ is an HHG  structure for $C([v])$. For each $[gv] \in \mf{S}_\Gamma^{min}$, $\mf{R}_{[v]}$ is also an $E_0$--HHS structure for $C([gv])$, since $C([gv])$ is isometric to $C([v])$. Let $\mf{R}_{[gv]}$ denote the HHS structure for $C([gv])$ induced by $\mf{R}_{[v]}$. If $U \in \mf{R}_{[v]}$, then we will denote the corresponding element of $\mf{R}_{[gv]}$ by $\overline{g}U$ where $\overline{g}$ is the chosen fixed representative of $[gv]$. Let $\overline{\mf{R}} = \bigcup_{[gv] \in \mf{S}_\Gamma^{min}}\mf{R}_{[gv]}$, then let  $\mf{T}_0 = (\mf{S}_\Gamma \smallsetminus \mf{S}_\Gamma^{min}) \cup \overline{\mf{R}}$. 

 We shall use $\Snest$, $\Sperp$, and $\Strans$ to denote the nesting, orthogonality, and transversality relations between elements of $\mf{S}_\Gamma$, and $\Rnest$, $\Rperp$, and $\Rtrans$ to denote the relations between elements of a fixed $\mf{R}_{[gv]}$.
 
 The bulk of our proof of Theorem \ref{thm:graph_products_of_HHGs} does not use the specifics of the relative HHG structure $\mf{S}_\Gamma$ and instead relies on more general relative HHS properties. Thus, to simplify notation, we will use the capital letters $V$ or $V'$ to denote elements of $\mf{S}_{\Gamma}^{min}$ and use $\mf{R}_V$ or $\mf{R}_{V'}$ to denote the corresponding HHS structure on $C(V)$ or $C(V')$. That is, if $V = [gv]$ for a vertex $v \in V(\Gamma)$, then $\mf{R}_V = \mf{R}_{[gv]}$. We will use the capital letters $U$, $W$, and $Q$ to denote elements of $\mf{T}_{0}$. For $U,W \in \mf{S}_\Gamma \smallsetminus\mf{S}_{\Gamma}^{min}$ or $U,W \in \mf{R}_{V}$ we shall denote the relative projection from $U$ to $W$ in $\mf{S}_\Gamma$ or $\mf{R}_{V}$ as $\oldRel_W^U$.  We shall use $\oldProj_W$ to denote the projection $G_\Gamma \to 2^{C(W)}$ if $W \in \mf{S}_\Gamma$ and  $\oldProj_W^{V}$ to denote the projection $C(V) \to 2^{C(W)}$ if $W \in \mf{R}_{V}$.

Our proof of Theorem \ref{thm:graph_products_of_HHGs} proceeds via four claims. First we prove that the structure $\mf{S}_{\Gamma}$ can be combined with all of the $\mf{R}_{V}$ structures in a natural way to produce a proto-hierarchy structure for $G_\Gamma$ with index set $\mf{T}_0$ (Claim \ref{claim:graph_products_of_HHGs_proto}). This proto-hierarchy structure is not quite a hierarchically hyperbolic space structure, as it  satisfies every axiom  except the container axiom (Claim \ref{claim:graph_product_of_HHG_almost_HHS}). However, we show that this proto-hierarchy structure has the property that any set of pairwise orthogonal elements of $\mf{T}_0$ has uniformly bounded cardinality (Claim
\ref{claim:graph_products_of_HHGs_finite_rank}). This allows us to use the results of the appendix  of \cite{ABD} to upgrade $\mf{T}_0$ to a genuine HHS structure $\mf{T}$. Since the proto-structure will satisfy the equivariance properties of a hierarchically hyperbolic group structure for $G_\Gamma$ (Claim \ref{claim:graph_products_of_HHGs_group_action}), this HHS structure will also be a hierarchically hyperbolic group structure.

\begin{claim}\label{claim:graph_products_of_HHGs_proto}
$G_\Gamma$ admits an  $E_1$--proto-hierarchy structure with index set $\mf{T}_0$, where $E_{1} = E_{0}^{2}+E_{0}$.
\end{claim}

\begin{proof}
For $U \in \mf{T}_0$, the associated hyperbolic space $C(U)$ will be the same as the space associated to $U$ in either $\mf{S}_\Gamma$ or $\overline{\mf{R}}$.

 \textbf{Projections:} For all $W \in \mf{T}_0$, the projection map will be denoted $\newProj_W \colon G_\Gamma \to 2^{C(W)}$. If $W \in \mf{S}_\Gamma \smallsetminus \mf{S}_\Gamma^{min}$, then $\newProj_W = \oldProj_W$ and if $W \in \mf{R}_{V}$, then  $\newProj_W = \oldProj_W^{V} \circ \oldProj_V$. Each $\newProj_{W}$ is $(E_0^{2}, E_0^{2}+E_0)$--coarsely Lipschitz.

\textbf{Nesting:} Let $W,U \in \mf{T}_0$. We define $U\nest W$  if one of the following holds:
    \begin{itemize}
        \item  $W,U \in \mf{S}_\Gamma \smallsetminus \mf{S}_\Gamma^{min}$ and $U \Snest W$;
        \item $W,U \in \mf{R}_V$  and  $U \Rnest W$;
        \item $W \in  \mf{S}_\Gamma \smallsetminus \mf{S}_\Gamma^{min}$ and $U \in \mf{R}_{V}$ with $V \Snest W$.
    \end{itemize}
  
    This definition makes $[\Gamma]$, the $\Snest$--maximal element of $\mf{S}_\Gamma$, also the $\nest$--maximal element of $\mf{T}_0$. For  $U,W \in \mf{T}_0$ with $U \propnest W$ we denote the relative projection from $U$ to $W$ by $\newRel_W^U$ and define it as follows.
        \begin{itemize}
        \item  If $W,U \in \mf{S}_\Gamma \smallsetminus \mf{S}_\Gamma^{min}$ and  $U \Snest W$  or $W,U \in \mf{R}_{V}$  and  $U \Rnest W$, then  $\newRel_W^U$ is $\oldRel_W^U$, the relative projection from $U$ to $W$ in $\mf{S}_\Gamma$ or $\mf{R}_{V}$ respectively.
        \item If $W \in  \mf{S}_\Gamma \smallsetminus \mf{S}_\Gamma^{min}$ and $U \in \mf{R}_{V}$ with $V \Snest W$, then $\newRel_W^U$ is $\oldRel_W^{V}$,the relative projection from $V$ to $W$ in $\mf{S}_\Gamma$.
    \end{itemize}
    The diameter of $\newRel_W^U$ is bounded by $E_0$ in all cases as it is always coincides with a relative projection ($\oldRel_W^U$ or $\oldRel_W^{V}$) from an existing hierarchy structure with constant $E_0$.

    \textbf{Orthogonality:} Let $W,U \in \mf{T}_0$. We define $U\perp W$  if one of the following holds:
    \begin{itemize}
        \item  $W,U \in \mf{S}_\Gamma \smallsetminus \mf{S}_\Gamma^{min}$ and  $U \Sperp W$;
        \item $W,U \in \mf{R}_{V}$  and  $U \Rperp W$;
        \item $W \in  \mf{S}_\Gamma \smallsetminus \mf{S}_\Gamma^{min}$ and $U \in \mf{R}_{V}$ with $V \Sperp W$;
        \item $W \in \mf{R}_{V'}$ and $U \in \mf{R}_{V}$ where $V \Sperp V'$.
    \end{itemize}

   \textbf{Transversality:} Let $U, W \in \mf{T}_0$. We define $U \trans W$ whenever they are not orthogonal or nested in $\mf{T}_0$. This arises in three different situations, which  determine the definition of the relative projections $\newRel_U^W$ and $\newRel_W^U$.
    \begin{itemize}
        \item   Either $U,W \in \mf{S}_\Gamma$ or $U,W \in \mf{R}_V$ and $U \Strans W$ or $U \Rtrans W$ respectively. In this case, $\newRel_W^U$ is $\oldRel_W^U$, the relative projection from $U$ to $W$ in $\mf{S}_\Gamma$ or $\mf{R}_V$ respectively, and $\newRel^{W}_{U}$ is $\oldRel^{W}_{U}$.
        \item $W \in \mf{S}_\Gamma$ and $U \in \mf{R}_V$ where $W \Strans V$. In this case, $\newRel_W^U$ is $\oldRel_W^V$, the relative projection from $V$ to $W$ in $\mf{S}_\Gamma$, and $\newRel^{W}_{U} = \oldProj^{V}_{U}(\oldRel^{W}_{V})$.
        \item $W \in \mf{R}_{V'}$ and $U \in \mf{R}_V$ where $V \Strans V'$. In this case, $\newRel_W^U = \pi_W^{V'} ( \oldRel_{V'}^V)$ and $\newRel_U^W = \pi_U^{V} ( \oldRel_{V}^{V'})$.
     \end{itemize}
  The projection and transversality axioms of $\mf{R}_V$ and $\mf{S}_\Gamma$ ensure that $\newRel_W^U$ has diameter at most $E_0^2+E_0$ in all cases.
  \end{proof}

\begin{claim}\label{claim:graph_product_of_HHG_almost_HHS}
$\mf{T}_0$ satisfies all of the axioms of a hierarchically hyperbolic space except for the container axiom.
\end{claim}

\begin{proof} Recall, $E_1>0$ is the hierarchy constant from the proto-hierarchy structure $\mf{T}_0$. Note $E_1$ is larger than $E_0$, which in turn is larger than the hierarchy constants for $\mf{S}_\Gamma$ and  each $\mf{R}_V$.

    \textbf{Hyperbolicity:} For all $W \in \mf{T}_0$, the space $C(W)$ is $E_1$--hyperbolic.

    \textbf{Uniqueness:} Let $\kappa \geq 0$ and $\theta \colon [0,\infty) \to [0,\infty)$ be the maximum of the uniqueness functions for $\mf{S}_{\Gamma}$ and each $\mf{R}_{V}$.  If $x,y \in G_\Gamma$ and $\dist(x,y) \geq \theta(\theta(\kappa)+\kappa)$, then there exists $W \in \mf{S}_\Gamma$ such that $\dist_{W}(x,y) \geq \theta(\kappa) +\kappa$ by the uniqueness axiom in $(G_\Gamma,\mf{S}_\Gamma)$. If $W \not \in \mf{S}_\Gamma^{min}$, then $W$ is in $\mathfrak{T}_0$ and the uniqueness axiom is satisfied. If $W \in \mf{S}_\Gamma^{min}$, then the uniqueness axiom in $(C(W),\mf{R}_W)$ provides $U \in \mf{R}_W$ so that $\dist_U(x,y) \geq \kappa$. The uniqueness function for $(G_\Gamma,\mf{T}_0)$ is therefore $\phi(\kappa) = \theta(\theta(\kappa) +\kappa)$.
    
    \textbf{Finite complexity:} The length of a $\nest$--chain in $\mf{T}_0$ is at most $2E_1$. 
    
    \textbf{Bounded geodesic image:} Let $x,y \in G_\Gamma $ and $U,W \in \mathfrak{T}_0$ with $U \propnest W$. If $U,W \in \mf{S}_\Gamma$ or $U,W \in \mf{R}_V$, then the bounded geodesic image axiom from $(G_\Gamma,\mf{S}_\Gamma)$ or $(C(V),\mf{R}_V)$ implies the bounded geodesic image axiom for $(G_\Gamma,\mf{T}_0)$. 
    Suppose, therefore, that $U \in \mf{R}_V$ and $W \in \mf{S}_\Gamma \smallsetminus \mf{S}_\Gamma^{min}$.
    By definition, $V \Snest W$  and $\newRel_W^U$ coincides with $\oldRel_W^V$, the relative projection of $V$ to $W$ in $\mf{S}_\Gamma$.  If $\dist_U(x,y) >E_1^2 +E_1$, then we have 
    \begin{align*}
        E_1^2+E_1 &< \dist_{U}(x,y) \\
        &= \dist_{U}(\pi^{V}_{U}(\pi_{V}(x)),\pi^{V}_{U}(\pi_{V}(y))) \\ 
        &\leq E_1\dist_{V}(\pi_{V}(x),\pi_{V}(y)) +E_1,
    \end{align*}
    which implies $E_1 < \dist_{V}(\pi_V(x),\pi_V(y))$. 
    Now, the bounded geodesic image axiom in $(G_\Gamma,\mf{S}_\Gamma)$ says every geodesic in $C(W)$ from $\newProj_W(x) = \oldProj_W(x)$ to $\newProj_W(y) = \oldProj_W(y)$ must pass through the $E_1$--neighbourhood of $\oldRel_W^V=\newRel_W^U$. Thus, the bounded geodesic image axiom is satisfied for $(G_\Gamma,\mf{T}_0)$. 
    
    \textbf{Large links:} Let $W \in \mathfrak{T}_0$ and $x,y \in G_\Gamma$. If $W \in \mf{R}_V$ for some $V \in \mf{S}_\Gamma^{min}$, then all elements of $\mf{T}_0$ that are nested into $W$ are also elements of $\mf{R}_V$. Thus, the large links axiom in $(C(V),\mf{R}_V)$ immediately implies the large links axiom for $(G_\Gamma,\mf{T}_0)$. 
    
    Assume $W \in \mathfrak{S}_\Gamma \smallsetminus \mf{S}_\Gamma^{min}$. 
    The large links axiom for $(G_\Gamma,\mathfrak{S}_\Gamma)$ gives  a collection $\mathfrak{L} = \{U_{1}, \dots, U_{m}\}$ of elements of $\mathfrak{S}_\Gamma$ nested into $W$ such that $m$ is at most $E_1\dist_{W}(\pi_{W}(x),\pi_{W}(y))+E_1$, and for all $V \in \mathfrak{S}_W$, either $V \Snest U_{i}$ for some $i$ or $\dist_{V}(\pi_{V}(x),\pi_{V}(y)) < E_1$. 
    For each $i \in \{1,\dots,m\}$, define  $\overline{U_i}$ to be the $\Rnest$--maximal element of $\mf{R}_{U_i}$ if $U_i \in \mf{S}_\Gamma^{min}$ and define $\overline{U}_i$ to be $U_i$ if $U_i \not\in \mf{S}_{\Gamma}^{min}$. Let $\overline{ \mf{L}} = \{\overline{U_1}, \dots, \overline{U_m}\}$.  
    
    If $V \in \mf{S}_{\Gamma}^{min}$ is nested into $W$, but is not nested into an element of  $\mf{L}$, then $\dist_{V}(\oldProj_{V}(x),\oldProj_{V}(y)) < E_1$ and so \[\dist_{Q}(\newProj_{Q}(x),\newProj_{Q}(y)) < E_1^2+E_1\] for all $Q \in \mf{R}_{V}$. Thus, if $\dist_{Q}(\newProj_{Q}(x),\newProj_{Q}(y)) \geq E_1^2+E_1$ and $Q$ is nested into $W$, then either $Q \in \mf{S}_{\Gamma} \smallsetminus \mf{S}_{\Gamma}^{min}$ or $Q \in \mf{R}_V$ where  $V$ is nested into an element of $\mf{L}$ (and so $Q$ is nested into an element of $\overline{\mf{L}}$). If $Q \in \mf{S}_\Gamma \smallsetminus \mf{S}_\Gamma^{min}$, then $Q$ must be nested into an element of $\mf{L}$ that is not in $\mf{S}_\Gamma^{min}$ by the large links axiom of $(G_\Gamma,\mf{S}_\Gamma)$, and hence must be nested into an element of $\overline{\mf{L}}$. Thus, $Q \nest W$ is nested into an element of $\overline{\mf{L}}$ whenever $\dist_{Q}(\newProj_Q(x),\newProj_Q(y)) \geq E_1^2+E_1$. 
    
   \textbf{Consistency:} Let $U,W \in \mf{T}_0$ with $U \trans W$ and $x \in G_\Gamma$. Since the relative projections are inherited from $\mf{S}_\Gamma$ and the $\mf{R}_V$, we only need to consider the case where either $W \in \mf{S}_\Gamma$ and $U \in \mf{R}_V$, or $W \in \mf{R}_{V'}$ and $U \in \mf{R}_V$ with $V'\neq V$. 
    Define $ Q = W$ if $W \in \mf{S}_\Gamma$ and  $Q=V'$ if $W \in \mf{R}_{V'}$. In either case  $Q \Strans V$. 
    
    First assume  $Q = W$ so that $\newRel_W^U = \oldRel_Q^V$ and $\newRel_U^W = \oldProj^V_U(\oldRel_V^Q)$. If $\dist_W(x,\newRel_W^U) = \dist_Q(x,\oldRel_Q^V) > E_1$, then the consistency axiom for $(G_\Gamma,\mf{S}_\Gamma)$ says $\dist_V(x,\oldRel_V^Q)\leq E_1$. The coarse Lipschitzness of the projections then implies $\dist_U(x, \oldProj^V_U(\oldRel_V^Q) ) = \dist_U(x,\newRel_U^W) \leq E_1^2 +E_1$. 
    
    Now assume $Q =V'$ so that $\newRel_W^U = \oldProj_{W}^{Q}(\oldRel_Q^V)$ and  $\newRel_U^W = \oldProj_U^{V}(\oldRel_V^Q)$.
    If  $\dist_W(x, \newRel_W^U) > E_1^2 +E_1$, then $\dist_{Q}(x,\oldRel_Q^V) > E_1$. The consistency axiom for $(G_\Gamma,\mf{S}_\Gamma)$ then says $\dist_V(x,\oldRel_V^Q)\leq E_1$ and we again have $$\dist_U(x,\newRel_U^W) = \dist_V(x, \oldProj_U^V(\oldRel_V^Q))\leq E_1^2 +E_1.$$
    For the last clause of the consistency axiom, let $Q,U,W \in \mf{T}_0$ with $Q \propnest U$. If $U\propnest W$, the definition of nesting and relative projection in $\mf{T}_0$ and the consistency axioms in $(G_\Gamma,\mf{S}_\Gamma)$ and the  $(C(V),\mf{R}_V)$ ensure that $\dist_W(\newRel_W^Q,\newRel_W^U) \leq E_1^{2}+E_1$. Similarly, if $W \in \mf{S}_\Gamma$ with $W \trans U$ and $W \not \perp Q$, then $\dist_W(\newRel_W^Q,\newRel_W^U) \leq E_1^{2}+E_1$. Assume $W \in \mf{R}_V$ for some $V \in \mf{S}_\Gamma^{min}$, $W \trans U$, and $W \not \perp Q$. If $U,Q \in \mf{R}_{V'}$, then $V' \Strans V$ and $\newRel_W^U = \newRel_W^Q$. If $U,Q \in \mf{S}_{\Gamma}$, then  $U \Strans V$ and  $Q \Strans V$. Thus, the consistency axiom for $(G_\Gamma,\mf{S}_\Gamma)$ provides $\dist_{V}(\oldRel_V^U, \oldRel_V^Q ) \leq E_1$. Similarly, if $U \in \mf{S}_\Gamma$ and $Q \in \mf{R}_{V'}$, then $U \Strans V$,  $V' \Strans V$, and $\dist_{V}(\oldRel_V^U, \oldRel_V^{V'} ) \leq E_1$.  Hence in both cases $\dist_W(\newRel_W^U,\newRel_W^Q) \leq E_1^2+E_1$.
    
    \textbf{Partial realisation:} Let $W_1,\dots, W_n$ be pairwise orthogonal elements of $\mf{T}_0$ and $p_i \in C(W_i)$ for each $i \in \{1,\dots,n\}$. Since $(G_\Gamma,\mf{S}_\Gamma)$ satisfies the partial realisation axiom, we can assume at least one $W_i$ is not an element of $\mf{S}_\Gamma$. There  exist $V_1,\dots,V_r \in \mf{S}_\Gamma^{min}$ so that for each $i \in \{1,\dots,n\}$, either $W_i \in \mf{S}_\Gamma$ or there exists a unique $j \in \{1,\dots, r\}$ such that $W_i \in \mf{R}_{V_j}$. For each $j \in \{1,\dots ,r\}$, let $\{W_1^j,\dots, W_{k_j}^j\}$ be the elements of $\{W_1,\dots, W_{n}\}$ that are also elements of $\mf{R}_{V_j}$ and let $\{p^j_1,\dots, p^j_{k_j}\}$ be the subset of $\{p_1,\dots,p_n\}$ satisfying $p_i^j \in C(W_i^j)$ for all $j \in \{1,\dots,r\}$ and $i \in \{1, \dots, k_j\}$.  For each $j\in \{1,\dots,r\}$,  use partial realisation in $(C(V_j),\mf{R}_{V_j})$  on the points $p^j_1,\dots, p^j_{k_j}$ to produce a  point $y_j \in C(V_j)$ so that: 
 
 \begin{itemize}
 \item $\dist_{W_i^j}(y_j,p^j_{i})\leq E_1$ for all $i \in \{1,\dots,k_j\}$;
 \item for each $i \in \{1,\dots, k_j\}$ and 
 each  $U\in \mf{R}_{V_j}$, if $W^j_i\propnest U$ or $W_j^i\trans U$, we have 
 $\dist_{U}(y_j,\oldRel^{W_i^j}_{U})\leq E_1$.
  \end{itemize}
 
Assume, without loss of generality, that $W_m,W_{m+1},\dots,W_n$ are all of the $W_i$ that are not contained in any of the $\mf{R}_{V_j}$ (it is possible the set of such $W_i$ is empty). Now, applying partial realisation for $(G_\Gamma,\mf{S}_\Gamma)$ to $y_1,\dots,y_r, p_m,\dots,p_n$ produces  a point $x \in G_\Gamma$ so that $\psi_{W_{i}}(x)$  is uniformly close to $p_i$ for each $i \in \{1,\dots,n\}$ and $\psi_{U}(x)$ is uniformly close to $\newRel_{U}^{W_i}$ whenever $ W_i \propnest U$ or $U \trans W_i$, for any $U \in \mf{T}_0$. Note, if the set of $W_i$ that are not elements of any of the $\mf{R}_{V_j}$ is empty, then the above applies just to $y_1,\dots, y_r$, but the conclusion still holds.
\end{proof}

\begin{claim}\label{claim:graph_products_of_HHGs_finite_rank}
The $E_1$--proto-hierarchy structure $\mf{T}_0$ has the following property: if $W_1,\dots,W_n \in \mf{T}_0$ are pairwise orthogonal, then $n \leq  4E_1^2+2E_1$.
\end{claim}

\begin{proof}
We first note the following basic lemma from the theory of hierarchically hyperbolic spaces.
\begin{lem}[{\cite[Lemma 1.5]{HHS_Boundary}}] \label{lem:finite_rank}
If $(\mc{X},\mf{S})$ is an $E$--HHS, then any set of pairwise orthogonal elements of $\mf{S}$ has cardinality at most $2E$.
\end{lem}

Now, let $W_1,\dots, W_n \in \mf{T}_0$ be pairwise orthogonal. Without loss of generality, let $W_1,\dots W_k$ be the elements of $\{W_1,\dots, W_n\}$ that are elements of $\mf{S}_\Gamma$. Since $W_1,\dots,W_k$ is a pairwise orthogonal collection of elements of $\mf{S}_\Gamma$, Lemma \ref{lem:finite_rank} says $k \leq 2E_1$.

Let $V_1,\dots, V_m$ be the minimal collection of elements of $\mf{S}_\Gamma^{min}$ such that if $i \in \{k+1,\dots,n\}$ (i.e. $W_i \not \in \mf{S}_\Gamma$), then $W_i \in \mf{R}_{V_j}$ for some $j \in \{1,\dots,m\}$. Minimality implies that for each $j \in \{1,\dots,m\}$, there exists $i \in \{k+1,\dots,n\}$ such that $W_i \in \mf{R}_{V_j}$. Suppose  $W_i \in \mf{R}_{V_j}$ and $W_\ell \in \mf{R}_{V_r}$ with $j \neq r$. Since $W_i \perp W_\ell$ in $\mf{T}_0$, the definition of orthogonality in $\mf{T}_0$ implies that $V_j \Sperp V_r$. Thus, $V_1,\dots, V_m$ is a pairwise orthogonal collection of {elements of} $\mf{S}_\Gamma$ and $m \leq 2E_1$ by Lemma \ref{lem:finite_rank}. Similarly, for each $j \in \{1,\dots,m\}$ the set $\{W_i : W_i \in \mf{R}_{V_j}\}$  is a pairwise orthogonal collection of {elements of} $\mf{R}_{V_j}$ and must have cardinality at most $2E_1$. Putting this together, we have that $n \leq k + 2E_1 m \leq 2E_1 + 4E_1^2$.
\end{proof}

\begin{claim}\label{claim:graph_products_of_HHGs_group_action}
The action of $G_\Gamma$ on $\mf{S}_\Gamma$ induces an action of $G_\Gamma$ on $\mf{T}_0$ that satisfies axioms (\ref{item:HHG_index_action}) and (\ref{item:HHG conditions}) of the definition of a hierarchically hyperbolic group (Definition \ref{defn:hierarchically hyperbolic groups}).
\end{claim}

\begin{proof}
\textbf{The action of $G_\Gamma$ on $\mf{T}_{0}$:} Let $\sigma \in G_\Gamma$ and $W \in \mf{T}_0$. Define $\Phi \colon G \times \mf{T}_{0} \to \mf{T}_{0}$ as follows.
\begin{itemize}
    \item If $W=[g\Lambda] \in \mf{S}_\Gamma \smallsetminus \mf{S}_\Gamma^{min}$, then $\Phi(\sigma,[g\Lambda]) = [\sigma g\Lambda]$, i.e., the action is the same as the action of $G_\Gamma$ on $\mf{S}_\Gamma$.
    \item \label{item:peripheral_action_graph_product} If $W= \overline{g}R \in \mf{R}_{[gv]}$ for some $[gv] \in \mf{S}_\Gamma^{min}$, then $(\overline{\sigma g})^{-1} \sigma \overline{g} \in \Stab_{G_\Gamma}([v]) $, where $\overline{\sigma g}$ is the chosen fixed  representative of $[\sigma g v] = [\sigma\overline{g} v] $. Since $\Stab_{G_\Gamma}([v]) = \sub{\st(v)}$, there exists $l \in \sub{\lk(v)}$ and $\hat{\sigma} \in \sub{v}$ such that $ l \hat{\sigma} = (\overline{\sigma g})^{-1} \sigma \overline{g}$. Because $\mf{R}_{[v]}$ is an HHG structure for $\sub{v}=G_v$ there exists $R_\sigma = \hat{\sigma} R \in \mf{R}_{[v]}$ determined by $\sigma$ and $\overline{g}R$.
    Define $\Phi(\sigma, \overline{g}R) = {\overline{\sigma g} R_\sigma \in \mf{R}_{[\sigma g v]}}$.  The following  diagram summarises how $\sigma$ takes elements of $\mf{R}_{[gv]}$ to elements of $\mf{R}_{[\sigma g v]}$. 
\end{itemize}

\[
\begin{tikzcd}
 \mf{R}_{[gv]}   \arrow[d, "\overline{g}^{-1}" left] \arrow[r, "\sigma"]  & \mf{R}_{[\sigma gv]}\\
 \mf{R}_{[v]} \arrow[ur, "\hspace{.5cm}  \overline{\sigma g}" below ] \arrow[loop left, "\hat{\sigma}"] &
\end{tikzcd}
\]

We now verify that $\Phi$ preserves the relations in $\mf{T}_0$. Let $W,U \in \mf{T}_0$. If $W,U \in \mf{S}_\Gamma \smallsetminus \mf{S}_\Gamma^{min}$ or $W,U \in \mf{R}_{[gv]}$ for some $[gv] \in \mf{S}_\Gamma^{min}$, then $\Phi$ preserves the relation between $W$ and $U$, since the actions of $G_\Gamma$ on $\mf{S}_\Gamma$ and $G_v = \sub{v}$ on $\mf{R}_{[v]}$ preserve the relations in their respective hierarchy structures.  If $W \in \mf{S}_\Gamma \smallsetminus \mf{S}_\Gamma^{min}$ and $U \in \mf{R}_{[gv]}$, then $W = [h\Omega]$ and the relation between $W$ and $U$ in $\mf{T}_0$ is the same as the relation between $[h\Omega]$ and $[gv]$ in $\mf{S}_\Gamma$. Thus, $\Phi$  preserves the relation between $W$ and $U$, since the action of $G_\Gamma$ preserves the relations in $\mf{S}_\Gamma$. Similarly, the same is true in the case where $W \in \mf{R}_{[gv]}$ and $U \in \mf{R}_{[hw]}$ for $[gv] \neq [hw]$ as the relation between $W$ and $U$ in $\mf{T}_0$ is the same as the relation between $[gv]$ and $[hw]$ in $\mf{S}_\Gamma$.

The definition of $\Phi$ implies that $\overline{g} R \in \mf{R}_{[gv]}$ is in the $G_\Gamma$--orbit of $\overline{h} R' \in  \mf{R}_{[hw]}$ if and only if  $v = w$ and $R$ is in the $G_v$--orbit of $R'$. Thus, the action of $G_{\Gamma}$ on $\mf{T}_0$ has finitely many orbits since the actions of $G_\Gamma$ on $\mf{S}_\Gamma$ and $G_v$ on $\mf{R}_{[v]}$ contain finitely many orbits.

For the remainder of the proof we shall use $\sigma W$ to denote $\Phi(\sigma, W)$ for all $W \in \mf{T}_0$. This does not conflict with previous use of the notation as the action of $G_\Gamma$ on $\mf{T}_0$ agrees with the action of $G_\Gamma$ on $\mf{S}_\Gamma$ or the action of $G_v$ on $\mf{R}_{[v]}$, when $W  \in \mf{S}_\Gamma$ or $\sigma \in \sub{v}$ and $W \in \mf{R}_{[v]}$ respectively.

\textbf{Associated isometries and equivariance with the projection maps:} Let $\sigma,\tau \in G_\Gamma$ and $W \in \mf{T}_{0}$.   Since the action of $G_\Gamma$ on $\mf{T}_0$ agrees with the action of $G_\Gamma$ on $\mf{S}_\Gamma$ for the elements of $\mf{T}_0$ in $\mf{S}_\Gamma$, we can define the isometry $\sigma_{[g\Lambda]} \colon C([g\Lambda]) \to C([\sigma g \Lambda])$ to be the same as the original isometry in $(G_\Gamma,\mf{S}_\Gamma)$; this guarantees the HHG axioms are satisfied in this case.

If $W \in \mf{R}_{[gv]}$, then $W = \overline{g} R$ for some $R \in \mf{R}_{[v]}$.   Now $\sigma W = \overline{ \sigma g} R_\sigma$, where $R_\sigma$ is defined as above. In this case, define the isometry $\sigma_W \colon C(W) \to C(\sigma W)$ to be the composition \[C(W) \xrightarrow{(\overline{g}_R)^{-1}} C(R) \xrightarrow{\hat{\sigma}_R} C(R_\sigma) \xrightarrow{\overline{ \sigma g}_{R_\sigma}} C(\sigma W) \]
where $\hat{\sigma}_R \colon C(R) \to C(R_\sigma)$ is the isometry in $\mf{R}_{[v]}$ induced by $\hat{\sigma} \in G_v$, and $\overline{g}_R$ and $\overline{\sigma g}_{R_\sigma}$ are the isometries resulting from identifying $\mf{R}_{[v]}$ with $\mf{R}_{[gv]}$ and $\mf{R}_{[\sigma g v]}$ respectively.

Now, if $\tau \in G_{\Gamma}$, then $(G_v, \mf{R}_{[v]})$ being an HHG implies $\hat{\tau}_{R_{\sigma}} \circ \hat{\sigma}_{R} = \widehat{\tau\sigma}_{R}$.    Thus the isometry $(\tau\sigma)_W$ equals the isometry $\tau_{\sigma W} \circ \sigma_W$ for any $W \in \mf{T}_0$.  We continue to use the notation set out before Claim \ref{claim:graph_products_of_HHGs_proto}: $\newProj_*$ and $\newRel_*^*$ denote the projections and relative projections in $\mf{T}_0$, while $\oldProj_*^*$ and $\oldRel_*^*$ denote the projections and relative projections in $\mf{S}_\Gamma$ and $\mf{R}_{[gv]}$. Since the projection map $\newProj_W \colon G_\Gamma \to 2^{C(W)}$ is equal to   $\oldProj^{[gv]}_{W} \circ \pi_{[gv]}$, the uniform bound on the distance between $\psi_{\sigma W}(\sigma x)$ and $\sigma_W(\psi_{W}(x))$ follows from the HHG axioms of $(G_\Gamma,\mf{S}_\Gamma)$ and $(G_v,\mf{R}_{[v]})$. Similarly, since the relative projection $\newRel_W^U$ (where $U \propnest W$ or $U \trans W$ in $\mf{T}_0$) is defined using the coarsely equivariant projections and relative projections of $\mf{S}_\Gamma$ and $\mf{R}_{[v]}$, we have that $\sigma_W( \newRel_W^U)$ is uniformly close to  $\newRel^{\sigma U}_{\sigma W}$ whenever  $U \propnest W$ or $U \trans W$ .
\end{proof}

 We now finish the proof of Theorem \ref{thm:graph_products_of_HHGs} using the following result. 

\begin{thm}[{\cite[Theorem A.1]{ABD}}]\label{thm:almost_HHGs_are_HHGs}
Let $G$ be a finitely generated group and let $\mf{T}_0$ be a proto-hierarchy structure for the Cayley graph of $G$ with respect to some finite generating set. If $\mf{T}_0$ satisfies the following:
\begin{itemize}
    \item all of the axioms of a hierarchically hyperbolic space except the container axiom;
    \item any set of pairwise orthogonal elements of $\mf{T}_0$ has uniformly bounded cardinality;
    \item axioms (\ref{item:HHG_index_action}) and (\ref{item:HHG conditions}) of a hierarchically hyperbolic group structure (Definition \ref{defn:hierarchically hyperbolic groups});
\end{itemize}
then there exists a hierarchically hyperbolic group structure $\mf{T}$ for the group $G$ such that $\mf{T}_0 \subsetneq \mf{T}$ and for all $W \in \mf{T} \smallsetminus \mf{T}_0$, the associated hyperbolic space $C(W)$ is a single point.
\end{thm}

Claims \ref{claim:graph_product_of_HHG_almost_HHS}, \ref{claim:graph_products_of_HHGs_finite_rank}, and \ref{claim:graph_products_of_HHGs_group_action} show that the proto-hierarchy structure $\mf{T}_0$ satisfies the requirements of Theorem \ref{thm:almost_HHGs_are_HHGs}. Thus, there exists an HHG structure $\mf{T}$ for $G_\Gamma$.
\end{proof}

\begin{remark}[The HHG structure from Theorem \ref{thm:almost_HHGs_are_HHGs}]
The proof of Theorem \ref{thm:almost_HHGs_are_HHGs} produces an explicit HHG structure given the proto-structure $\mf{T}_0$. We will describe that structure briefly now, and direct the reader to the appendix of \cite{ABD} for full details.

Let $\mc{U}$ denote a non-empty set of pairwise orthogonal elements of $\mf{T}_0$ and let $W \in\mf{T}_0$. We say the pair $(W,\mc{U})$ is a \emph{container pair} if the following are satisfied:
\begin{itemize}
    \item $U
\nest W$  for all $U \in \mc{U}$;
    \item there exists $Q \nest W$ such that $Q \perp U$ for all $U \in \mc{U}$.
\end{itemize}
Let $\mf{D}$ denote  the set of all container pairs. We will denote a pair $(W,\mc{U}) \in \mf{D}$ by $D_W^\mc{U}$. The crux of   Theorem \ref{thm:almost_HHGs_are_HHGs} is that the elements of $\mf{D}$ will serve as containers for the elements of $\mf{T}_0$, while the rest of the proto-structure is set up in the minimal way  that satisfies all the other axioms.

The HHG structure produced by Theorem \ref{thm:almost_HHGs_are_HHGs} has index set $\mf{T}_0 \cup \mf{D}$. The hyperbolic spaces, projection maps, relations, and relative projections for elements of $\mf{T}_0$ remain unchanged. The hyperbolic spaces for elements of $\mf{D}$ are single points and the projection maps are the constant maps to these points.  The nesting relation involving elements of $\mf{D}$ is defined as follows. 
    \begin{itemize}
        \item     Define $Q \nest D_W^\mc{U}$ if $Q \nest W$ in $\mf{T}_0$ and $Q \perp U$ for all $U \in \mc{U}$. 
        \item  Define  $D_W^\mc{U} \nest Q$ if  $W \nest Q$ in $\mf{T}_0$. 
        \item  Define $D_W^\mc{U} \nest D_T^\mc{R}$ if $W \nest T$ in $\mf{T}_0$  and for all $R \in\mc{R}$ either $R \perp W$ or there exists $U \in \mc{U}$ with $R \nest U$.
    \end{itemize}

Two elements $D_W^\mc{U}, D_T^\mc{R} \in \mf{D}$ are orthogonal if $W \perp T$ in $\mf{T}_{0}$.    An element $Q \in \mf{T}_0$ is orthogonal to  $D_W^\mc{U} \in \mf{D}$ if, in $\mf{T}_{0}$, either $W \perp Q$  or $Q \nest U$ for some $U \in \mc{U}$. Two elements of $\mf{T}$ are transverse if they are not orthogonal and neither is nested into the other.

Since the associated hyperbolic spaces for elements of $\mf{D}$ are single points, the relative projections onto  these elements are just these single points. If $D_W^\mc{U} \propnest Q$ or $Q \trans D_W^\mc{U}$, then the relative projection $\rho_Q^{D_W^\mc{U}}$ is defined in one of two ways.
\begin{enumerate}
    \item If there exists $U \in \mc{U}$ such that $U \propnest Q$ or $U \trans Q$, then $\rho_Q^{D_W^\mc{U}}$ is the union of all $\rho_Q^U$ for $U \in \mc{U}$ with $U \propnest Q$ or $U \trans Q$.
    \item If there does not exist $U \in \mc{U}$ such that $U \propnest Q$ or $U \trans Q$, then the definition of the relations given above forces  $Q \trans D_W^{\mc{U}}$ and $W \trans Q$. In this case $\rho_Q^{ D_W^{\mc{U}}}=\rho_Q^W$. 
\end{enumerate}
\end{remark}

\subsection{Meier's condition for hyperbolicity}\label{section:hyperbolic_graph_products}
We now recover a theorem of Meier classifying hyperbolicity of graph products. We do this by applying Behrstock, Hagen and Sisto's bounded orthogonality condition for hierarchically hyperbolic spaces.

\begin{thm}[{\cite[Corollary 2.16]{BHS_HHS_Quasiflats}}]
Let $(\mathcal{X},\mathfrak{S})$ be an HHS. The following are equivalent: 
\begin{itemize}
    \item $\mathcal{X}$ is hyperbolic.
    \item (Bounded orthogonality.)  There exists a constant $D \geq 0$  such that \[\min(\diam(C(U)),\diam(C(V))) \leq D\] for all $U,V \in \mathfrak{S}$ satisfying $U \bot V$.
\end{itemize}
\end{thm}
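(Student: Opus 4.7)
The plan is to prove the two implications separately, using partial realisation for the forward direction and combining the distance formula with consistency for the reverse.

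For the forward direction, I would argue by contrapositive. Suppose bounded orthogonality fails, so for every $N$ there exist $U \perp V$ in $\mf{S}$ with $\diam(C(U)), \diam(C(V)) \geq N$. Choose $x_1,x_2 \in C(U)$ with $\dist_U(x_1,x_2) \geq N$ and $y_1, y_2 \in C(V)$ with $\dist_V(y_1,y_2) \geq N$. Apply partial realisation to each pairwise orthogonal pair $\{U,V\}$ with coordinates $(x_i, y_j)$ to obtain points $p_{ij} \in \mc{X}$ ($i,j \in \{1,2\}$) whose projections to $C(U)$ and $C(V)$ are within $E$ of $x_i$ and $y_j$, respectively, and whose projections to any other $W$ lie within $E$ of $\rho_W^U \cup \rho_W^V$ (a set of bounded diameter). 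The distance formula then yields $\dist_\mc{X}(p_{ij}, p_{kl}) \asymp \dist_U(x_i, x_k) + \dist_V(y_j, y_l)$, because all other terms are bounded above (by consistency applied to any $W$ transverse to or containing $U$ or $V$). The four points therefore span a quasi-isometrically embedded $N \times N$ $\ell^1$-grid. Since Gromov hyperbolic spaces do not admit quasi-isometric embeddings of arbitrarily large $\ell^1$-grids (they contain fat geodesic quadrilaterals), $\mc{X}$ cannot be hyperbolic, contradicting our assumption.

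For the reverse direction, assume bounded orthogonality with constant $D$ and show that geodesic triangles in $\mc{X}$ are uniformly slim. Given a triangle $x,y,z$ and a point $p$ on the side from $x$ to $y$, the distance formula expresses the relevant distances as sums of projections $\dist_W(\cdot,\cdot)$, each in a uniformly hyperbolic $C(W)$ where triangles are slim. The key structural claim, to be proven by induction on complexity, is that bounded orthogonality forces the set of ``relevant'' domains (those contributing to the distance formula above threshold) to behave essentially like a $\nest$-chain up to controlled error. Concretely, if $W_1 \perp W_2$ and both carry large projection, one of the two has bounded diameter in $C(W_i)$, forcing one of the contributions to vanish at threshold. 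Thus the relevant domains for any pair of points are pairwise non-orthogonal, so by the consistency/Behrstock inequality they admit a total order compatible with projections. Slimness of the triangle in each $C(W)$ then lifts, via the realisation theorem for consistent tuples and large links to control accumulated error, to a uniform point on $xz \cup yz$ within bounded $\mc{X}$-distance of $p$.

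The main obstacle will be the reverse direction, specifically making rigorous the ``linearisation'' of the hierarchy under bounded orthogonality and then patching the slimness statements in the individual hyperbolic spaces $C(W)$ into a global slimness statement in $\mc{X}$. The forward direction is essentially an algebraic consequence of partial realisation and the distance formula, but the reverse requires careful bookkeeping of consistency together with the large links axiom to ensure that the total order on relevant domains exists with uniform constants, and that errors introduced when passing from local projections to global geodesic realisation remain bounded independently of the triangle.
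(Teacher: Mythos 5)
This statement is not proved in the paper at all: it is imported verbatim from \cite[Corollary 2.16]{BHS_HHS_Quasiflats} and used as a black box in Section \ref{section:hyperbolic_graph_products}, so there is no in-paper argument to compare against; what follows assesses your sketch on its own terms. The forward direction has a concrete gap. Partial realisation applied to the pair $\{U,V\}$ only controls the projections of the realised point to $C(U)$, $C(V)$, and to those $W$ with $U\propnest W$, $W\trans U$ (and likewise for $V$); it says nothing about domains $W$ orthogonal to both $U$ and $V$, nor about domains properly nested in $U$ or $V$. Since you invoke partial realisation four separate times, the points $p_{ij}$ can have wildly different projections in those uncontrolled domains, so the upper bound in your claimed estimate $\dist_{\mc{X}}(p_{ij},p_{kl})\asymp \dist_U(x_i,x_k)+\dist_V(y_j,y_l)$ does not follow from consistency, and ``the four points span a quasi-isometrically embedded $N\times N$ grid'' is unjustified: four points are not a grid, and even granting the $\ell^1$ pattern on four points, the four-point condition only contradicts hyperbolicity when the multiplicative constant is smaller than $\sqrt{2}$, which the distance formula does not provide. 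The standard repair is to build an honest embedded product of two long geodesics (a piece of a standard product region): fix a basepoint and apply the realisation theorem to consistent tuples whose coordinates are $\gamma_U(s)$, $\gamma_V(t)$ in $U,V$ and the basepoint's (or gate's) coordinates everywhere else; this is how the cited result is obtained, via the existence of higher-rank quasiflats when orthogonality is unbounded.

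The reverse direction, as you concede, is a plan rather than a proof. Your first observation is fine: with the distance-formula threshold taken above $D$, no two relevant domains are orthogonal. But the next two steps are exactly the content of the theorem and are not carried out: pairwise non-orthogonal relevant domains are nested or transverse, and only the transverse pairs are ordered by the Behrstock inequality, so ``they admit a total order compatible with projections'' needs the full time-ordering/hierarchy-path machinery, not just consistency; and ``slimness of the triangle in each $C(W)$ lifts, via realisation and large links, to slimness in $\mc{X}$'' is an assertion of the conclusion, with no mechanism for controlling the accumulated error over the (unboundedly many) relevant domains. A clean route here is the coarse median one: bounded orthogonality forces the HHS to have coarse median rank $1$, and rank-$1$ coarse median spaces are hyperbolic (Bowditch); alternatively one runs the hierarchy-path thin-triangle argument of Behrstock--Hagen--Sisto. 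As written, neither direction of your argument closes, so for the purposes of this paper one should either cite \cite{BHS_HHS_Quasiflats} as is done here or supply the product-region construction and the rank-$1$ argument in full.
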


\begin{thm}[Meier's criterion for hyperbolicity of graph products; \cite{Meier}]
Let $\Gamma$ be a finite simplicial graph with hyperbolic groups associated to its vertices. Let $\Gamma_{F}$ be the subgraph spanned by the vertices associated with finite groups. Then $G_{\Gamma}$ is hyperbolic if and only if the following conditions hold. 
\begin{enumerate}[(i)]
    \item There are no edges connecting two vertices of $\Gamma \smallsetminus \Gamma_{F}$.  \label{item:meier_1}
    \item If $v$ is a vertex of $\Gamma \smallsetminus \Gamma_{F}$ then $\lk(v)$ is a complete graph. \label{item:meier_2}
    \item $\Gamma_{F}$ does not contain any induced squares. \label{item:meier_3}
\end{enumerate}
\end{thm}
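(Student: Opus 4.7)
The plan is to apply the bounded orthogonality characterization of hyperbolicity to the hierarchy structure $\mf{S}_\Gamma$ from Theorem \ref{thm:graph_products_are_rel_HHGs}. The first step is to observe that when every vertex group $G_v$ is hyperbolic, the associated space $C([gv])$ (isometric to the Cayley graph of $G_v$ with respect to $S_v$) is hyperbolic, so the relative HHS structure $\mf{S}_\Gamma$ upgrades to a genuine HHS structure on $G_\Gamma$. By bounded orthogonality, $G_\Gamma$ is hyperbolic if and only if there is a uniform bound on $\min\{\diam C([g\Lambda]),\diam C([h\Omega])\}$ over all orthogonal pairs $[g\Lambda]\perp[h\Omega]$. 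Combining Theorem \ref{thm:diameter} with the fact that $C([gv])$ is the Cayley graph of $G_v$, the space $C([g\Lambda])$ has infinite diameter if and only if either $\Lambda=\{v\}$ with $G_v$ infinite, or $|V(\Lambda)|\geq 2$ and $\Lambda$ does not split as a join. Recall also that orthogonality $[g\Lambda]\perp[h\Omega]$ is equivalent to $\Lambda\subseteq\lk(\Omega)$, and that any full subgraph on $\geq 2$ vertices which is not a join must contain a non-adjacent pair (since a complete graph on $\geq 2$ vertices is trivially a join).

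For the direction $(i),(ii),(iii)\Rightarrow$ hyperbolicity, I would argue by contradiction: assume there exist $[g\Lambda]\perp[h\Omega]$ with both $C([g\Lambda])$ and $C([h\Omega])$ unbounded, and split into three cases by the sizes of $\Lambda,\Omega$. If $\Lambda=\{v\}$ and $\Omega=\{w\}$ are single vertices with $G_v,G_w$ infinite, then orthogonality forces $v\sim w$, contradicting $(i)$. If $\Lambda=\{v\}$ with $G_v$ infinite and $|V(\Omega)|\geq 2$, then $\Omega\subseteq\lk(v)$; by $(ii)$, $\lk(v)$ is complete, so $\Omega$ is complete on $\geq 2$ vertices and hence splits as a join, contradicting unboundedness of $C([\Omega])$. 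Finally, if both $|V(\Lambda)|,|V(\Omega)|\geq 2$ and neither splits as a join, each contains a non-adjacent pair, say $a,b\in\Lambda$ and $c,d\in\Omega$; by orthogonality, $\{a,c,b,d\}$ forms an induced $4$--cycle in $\Gamma$. If any of the four vertices has an infinite vertex group, say $a$, then either another neighbour of $a$ in the square has an infinite vertex group (contradicting $(i)$), or $\lk(a)$ contains the non-adjacent pair $c,d$ (contradicting $(ii)$). Otherwise all four vertices are in $\Gamma_F$, yielding an induced square in $\Gamma_F$ and contradicting $(iii)$.

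For the converse, if any of $(i),(ii),(iii)$ fails, I would exhibit an explicit orthogonal pair with both associated spaces unbounded, forcing bounded orthogonality to fail and hence non-hyperbolicity. If $(i)$ fails, adjacent $v,w$ with $G_v,G_w$ infinite give $[v]\perp[w]$ with unbounded Cayley graphs. If $(ii)$ fails, a vertex $v$ with $G_v$ infinite together with non-adjacent $a,b\in\lk(v)$ give $[v]\perp[\{a,b\}]$, with $C([\{a,b\}])$ unbounded by Theorem \ref{thm:diameter}. If $(iii)$ fails, an induced square $a,b,c,d$ in $\Gamma_F$ with diagonals $\{a,c\}$ and $\{b,d\}$ non-adjacent yields $[\{a,c\}]\perp[\{b,d\}]$, with both associated spaces unbounded.

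The main step where care is needed is the trichotomy in the forward direction: matching the three failure modes of bounded orthogonality (single vertex / single vertex; single vertex / larger subgraph; two larger subgraphs) with the three conditions $(i),(ii),(iii)$. The key combinatorial fact that drives the matching is that a full subgraph on $\geq 2$ vertices fails to split as a join if and only if it contains a non-adjacent pair of vertices; this, together with Theorem \ref{thm:diameter} and the characterisation of orthogonality as $\Lambda\subseteq\lk(\Omega)$, reduces the whole statement to bookkeeping.
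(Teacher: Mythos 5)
Your proposal is correct and takes essentially the same route as the paper: both apply the bounded orthogonality criterion using Theorem \ref{thm:diameter} and the characterisation $[g\Lambda]\perp[h\Omega]\Leftrightarrow\Lambda\subseteq\lk(\Omega)$, and your forward-direction trichotomy on the sizes of $\Lambda$ and $\Omega$ is just a contrapositive reorganisation of the paper's argument, which instead fixes one unbounded domain and shows the other must be bounded. One small point to make explicit: the diameters of the bounded spaces $C(g\Lambda)$ are uniformly bounded (by $D=\max\{2,|G_v|:v\in V(\Gamma_F)\}$, since they are either cosets of finite vertex groups or joins), so that ruling out orthogonal pairs which are simultaneously unbounded really does yield bounded orthogonality rather than merely pointwise boundedness.
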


\begin{proof}
We show hyperbolicity via the bounded orthogonality condition, noting that since each of the vertex groups is hyperbolic, the graph product $G_{\Gamma}$ is an HHS. We call the vertices of $\Gamma_F$ the \emph{finite vertices} of $\Gamma$ and the vertices of $\Gamma \smallsetminus \Gamma_F$ the \emph{infinite vertices} of $\Gamma$.

($\Rightarrow$) Suppose we have bounded orthogonality. Then: 
\begin{enumerate}[(i)]
    \item Suppose two infinite vertices $v,w$ are connected by an edge. Then $[v] \bot [w]$ and $C(v),C(w)$ have infinite diameter as they are the infinite groups $G_{v},G_{w}$ with the word metric. This contradicts bounded orthogonality.
    \item Suppose $\lk(v)$ is incomplete for some vertex $v$ of $\Gamma \smallsetminus \Gamma_{F}$. 
    Then there exist some vertices $x,y$ in $\lk(v)$ with no edge between them. Moreover, $[v] \bot [x \cup y]$, $C(v)$ has infinite diameter as $v$ is an infinite vertex, and $C(x \cup y)$ has infinite diameter since $\dist_{x \cup y}(e,(g_{x}g_{y})^{n}) = 2n$ for elements $g_{x} \in G_{x}\smallsetminus\{e\}, g_{y} \in G_{y}\smallsetminus\{e\}$. 
    This again contradicts bounded orthogonality.
    \item Suppose $\Gamma_{F}$ contains a square with vertices $v,x,w,y$, where $v,w$ and $x,y$ are non-adjacent. Then $[v \cup w] \bot [x \cup y]$ and both $C(v \cup w)$ and $C(x \cup y)$ have infinite diameter as in case (ii). Once again, this contradicts bounded orthogonality.
\end{enumerate}

($\Leftarrow$) Conversely, suppose conditions (\ref{item:meier_1}), (\ref{item:meier_2}), and (\ref{item:meier_3}) are satisfied, and set   $D = \max\{2,|G_{v}| : v \in V(\Gamma_{F})\}.$ Moreover, suppose $[g\Lambda],[h\Omega] \in \mathfrak{S}$ satisfy $[g\Lambda] \bot [h\Omega]$.

Suppose $\diam(C(g\Lambda))>D$. Then Theorem \ref{thm:diameter} tells us that either $\Lambda$ consists of a single infinite vertex or $\Lambda$ contains at least $2$ vertices and does not split as a join. 

If $\Lambda$ consists of a single infinite vertex, then conditions (\ref{item:meier_1}) and (\ref{item:meier_2}) tell us that $\lk(\Lambda) \supseteq \Omega$ is a complete graph consisting of finite vertices, hence either $\Omega$ is a single finite vertex or $\Omega$ splits as a join. In both cases, $\diam(C(h\Omega)) \leq D$. 

If $\Lambda$ contains at least $2$ vertices and does not split as a join, then in particular it contains two non-adjacent vertices $v$ and $w$. As $\Omega \subseteq \lk(\Lambda)$, every vertex of $\Omega$ is connected to both $v$ and $w$. Since $v$ and $w$ are non-adjacent, condition (\ref{item:meier_2}) implies $\Omega \subseteq \Gamma_{F}$. If either $v$ or $w$ is an infinite vertex, condition (\ref{item:meier_2}) implies $\Omega$ is a complete graph, and if both $v$ and $w$ are finite vertices, condition (\ref{item:meier_3}) implies $\Omega$ is a complete graph. That is, $\Omega$ either consists of a single finite vertex or splits as a join. In both cases, $\diam(C(h\Omega)) \leq D$. Thus the bounded orthogonality condition holds.
\end{proof}

\subsection{Genevois' minsquare electrification.} \label{section:electrification}
 We now use our characterisation of when $C(g\Lambda)$ has infinite diameter (Theorem \ref{thm:diameter}) to answer two questions of Genevois \cite[Questions 8.3, 8.4]{Gen_Rigidity} regarding the \emph{electrification} of $G_{\Gamma}$, defined as follows.

\begin{defn}
Let $\Gamma$ be a simplicial graph. An induced subgraph $\Lambda \subseteq \Gamma$ is called \emph{square-complete} if every induced square in $\Gamma$ sharing two non-adjacent vertices with $\Lambda$ is a subgraph of $\Lambda$. A subgraph is \emph{minsquare} if it is a minimal square-complete subgraph containing at least one induced square.

The \emph{electrification} $\mathbb{E}(\Gamma)$ of a graph product $G_{\Gamma}$ is the graph whose vertices are elements of $G_{\Gamma}$ and where two vertices $g$ and $h$ are joined by an edge if $g^{-1}h$ is an element of a vertex group or $g^{-1}h \in \langle\Lambda\rangle$ for some minsquare subgraph $\Lambda$ of $\Gamma$. We use $\dist_{\mathbb{E}}(g,h)$ to denote the distance  in $\mathbb{E}(\Gamma)$ between $g,h \in G_\Gamma$.
\end{defn}

Genevois' interest in the electrification arises from the fact that it forms a quasi-isometry invariant whenever the vertex groups of a graph product are all finite, as is the case for right-angled Coxeter groups.
\begin{thm}[{\cite[Proposition 1.4]{Gen_Rigidity}}]\label{thm:electrification_is_QI_invariant}
Let $G_\Gamma$ and $G_\Lambda$ be graph products of finite groups. Any quasi-isometry  $G_\Gamma \to G_\Lambda$ induces a quasi-isometry between $\mathbb{E}(\Gamma)$ and $\mathbb{E}(\Lambda)$.
\end{thm}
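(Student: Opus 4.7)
The plan is to show that a quasi-isometry $\phi\colon G_\Gamma\to G_\Lambda$ coarsely permutes the two families of cosets collapsed when forming the electrification, and then to push $\phi$ forward to a quasi-isometry between $\mathbb{E}(\Gamma)$ and $\mathbb{E}(\Lambda)$. Because the vertex groups are finite, the word and syllable metrics on $G_\Gamma$ are quasi-isometric, so $\mathbb{E}(\Gamma)$ is, up to quasi-isometry, the space obtained from $(G_\Gamma,\dist)$ by coning off cosets of vertex groups together with cosets of minsquare subgroups. The task thus reduces to a coarse rigidity statement: both families of cosets must admit a quasi-isometry-invariant characterization.

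Vertex-group cosets, being bounded-diameter cliques that exhaust the Cayley graph in a locally finite way, are the easier family: they can be recovered coarsely as the maximal subsets on which a finite subgroup acts transitively, and any quasi-isometry sends such subsets to within bounded Hausdorff distance of subsets of the same type. The subtler family is the minsquare cosets. The key feature is that $\Lambda$ being minsquare forces $\langle\Lambda\rangle$ to contain an infinite product subgroup of the form $(G_v*G_w)\times(G_x*G_y)$ arising from an induced square in $\Lambda$, and hence to support a top-dimensional quasi-flat, while the square-completion minimality prevents $\langle\Lambda\rangle$ from being enlarged without absorbing additional product structure. I would therefore aim to characterize cosets of minsquare subgroups as minimal coarsely convex subsets $Y\subseteq G_\Gamma$ that carry such a two-dimensional quasi-flat and are coarsely closed under adjoining the coset of any induced-square subgroup that meets $Y$ in an unbounded set. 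Invoking rigidity of top-dimensional quasi-flats in graph products of finite groups, together with coarse preservation of convex hulls, should then yield the desired coarse permutation of minsquare cosets by $\phi$.

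Given the coarse correspondence on both families, the induced map $\bar\phi\colon\mathbb{E}(\Gamma)\to\mathbb{E}(\Lambda)$ given on vertices by $\bar\phi=\phi$ is coarsely Lipschitz: any $\mathbb{E}(\Gamma)$-edge between $g$ and $h$ lies in a coned-off coset, whose $\phi$-image sits within bounded $\dist$-distance of a coset coned off in $\mathbb{E}(\Lambda)$, giving uniformly bounded $\dist_{\mathbb{E}}$-distance between $\bar\phi(g)$ and $\bar\phi(h)$; a quasi-inverse is produced symmetrically from a quasi-inverse of $\phi$. The principal obstacle I anticipate is the coarse rigidity of minsquare cosets: minsquareness is a subtle combinatorial condition on $\Gamma$, and converting it into a quasi-isometry-invariant geometric uniqueness property is delicate. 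I would likely need a careful analysis of maximal quasi-flats in graph products of finite groups, combined with an inductive argument tracking the square-completion process and using Theorem~\ref{thm:diameter} to distinguish cosets of minsquare subgraphs from cosets of other subgraphs that happen to support the same flats.
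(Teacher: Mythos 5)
The paper does not prove Theorem~\ref{thm:electrification_is_QI_invariant}; it is cited as a black box from Genevois \cite[Proposition 1.4]{Gen_Rigidity}, where the argument runs through the quasi-median (cubical-like) structure of $S(\Gamma)$ rather than through quasi-flat rigidity. So there is no in-text proof to compare against, and your sketch has to be judged on its own terms.

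Your reduction is sound in outline: vertex-group cosets are uniformly bounded because the vertex groups are finite, so coning them off is already a quasi-isometry, and the whole content of the statement is that minsquare cosets are coarsely permuted. (This also means your characterization of vertex-group cosets as orbits of finite subgroups is unnecessary, and in any case is not a quasi-isometry-invariant description, since a quasi-isometry carries no information about a group action.) The genuine gap is in your proposed recognition of minsquare cosets, and it is larger than you acknowledge. First, the flats produced by a minsquare subgraph are not in general top-dimensional: take $\Gamma$ to be the disjoint union of a $4$--cycle $S$ and the octahedron $K_{2,2,2}$; then $S$ is minsquare and $\langle S\rangle$ supports only $2$--flats, while $G_\Gamma$ contains $3$--flats from the octahedron, so the top-dimensional quasi-flat theorems of \cite{BHS_HHS_Quasiflats} never see the flats you want. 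The implication you assert (``contains $(G_v*G_w)\times(G_x*G_y)$, hence supports a \emph{top-dimensional} quasi-flat'') is simply false. Second, ``minimal coarsely convex subset coarsely closed under adjoining unboundedly-intersecting induced-square cosets'' is not a visibly quasi-isometry-invariant condition: quasi-convexity of a subset is not preserved by an arbitrary quasi-isometry of a non-hyperbolic ambient space, and you have offered no quasi-isometry-invariant recognition of the induced-square cosets you intend to adjoin. You correctly flag minsquare-coset recognition as the main obstacle, but as written the tools you point to do not close it; the metric reformulation of the square-completion closure property is exactly what a real proof must supply, and Genevois' actual argument does so by a different route.
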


For graph products of finite groups, we classify when $\mathbb{E}(\Gamma)$ has bounded diameter and when $\mathbb{E}(\Gamma)$ is a quasi-line. 
These classifications answer Questions 8.3 and 8.4 of \cite{Gen_Rigidity} in the affirmative. The core idea behind both proofs is the same: when $\Gamma$ is not minsquare,  the electrification $\mathbb{E}(\Gamma)$  sits between the syllable metric $S(\Gamma)$ and the subgraph metric $C(\Gamma)$, that is,  we obtain $\mathbb{E}(\Gamma)$ from $S(\Gamma)$ by adding edges and then obtain $C(\Gamma)$ from $\mathbb{E}(\Gamma)$ by adding more edges. 
This means large distances in $C(\Gamma)$, which we can  detect with Theorem \ref{thm:diameter}, will persist in $\mathbb{E}(\Gamma)$. We start with a lemma that we use in both classifications to reduce to the case where $\Gamma$ does not split as a join.

\begin{lem}\label{lem:joins_with_minsquare}
If $\Gamma$ splits as a  join and contains a proper minsquare subgraph, then $\Gamma$ splits as a join $\Gamma = \Gamma_1 \bowtie \Gamma_2$ where $\Gamma_1$ contains every minsquare subgraph of $\Gamma$ and $\Gamma_2$ is a complete graph. In this case, $\mathbb{E}(\Gamma)$ is the $1$--skeleton of $\mathbb{E}(\Gamma_1) \times \mathbb{E}(\Gamma_2)$.
\end{lem}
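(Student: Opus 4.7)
The plan is to take $\Gamma_2$ to be the set of \emph{cone vertices} of $\Gamma$, meaning vertices that are adjacent to every other vertex of $\Gamma$, and $\Gamma_1 = \Gamma \smallsetminus \Gamma_2$. By construction $\Gamma_2$ is a complete graph and $\Gamma = \Gamma_1 \bowtie \Gamma_2$, so the lemma reduces to three verifications: (i) $\Gamma_2$ is non-empty under our hypotheses; (ii) no minsquare contains a cone vertex; and (iii) the product description of $\mathbb{E}(\Gamma)$ follows once minsquares of $\Gamma$ are identified with minsquares of $\Gamma_1$.

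Part (ii) is the easier containment statement. A cone vertex $v$ cannot lie on any induced $4$--cycle of $\Gamma$, since each vertex of an induced square has a non-neighbour among the other three. Consequently, if a minsquare $M$ contained a cone vertex $v$, then every induced square of $M$ would avoid $v$, so $M \smallsetminus v$ would still contain a square. Moreover $M \smallsetminus v$ would be square-complete in $\Gamma$: any induced square $S$ sharing two non-adjacent vertices with $M \smallsetminus v$ shares them with $M$, hence $S \subseteq M$, and $S$ avoids $v$ because $v$ is cone, forcing $S \subseteq M \smallsetminus v$. This contradicts the minimality of $M$, so every minsquare lies in $\Gamma_1$.

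Part (i) is the main obstacle, and it is where the hypotheses that $\Gamma$ splits as a join and has a \emph{proper} minsquare are both essential. I will argue by contradiction: assume $\Gamma = \Omega_1 \bowtie \Omega_2$ with no cone vertex of $\Gamma$. Then each $\Omega_i$ has at least two vertices and contains no vertex adjacent to all others within itself, so every vertex of $\Omega_i$ participates in some non-adjacent pair in $\Omega_i$. The key observation is that for any non-adjacent pair $\{p,r\} \subseteq \Omega_1$ and any non-adjacent pair $\{q,s\} \subseteq \Omega_2$, the four vertices span an induced $4$--cycle in $\Gamma$, since all $\Omega_1$--$\Omega_2$ edges are present. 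Now let $M$ be a proper minsquare and $S \subseteq M$ an induced square. Analysing how the vertices of $S$ distribute across the join, one checks that $S$ has at least one non-adjacent pair lying entirely inside $\Omega_1$ or entirely inside $\Omega_2$. Pairing that pair with each non-adjacent pair of the opposite side and invoking square-completeness of $M$ forces all such pairs into $M$; iterating between sides, every non-adjacent pair of $\Omega_1$ and of $\Omega_2$ lies in $M$, and since the absence of cone vertices makes every vertex part of some non-adjacent pair, we conclude $M = \Omega_1 \cup \Omega_2 = \Gamma$, contradicting properness.

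For the final statement, the join decomposition gives $G_\Gamma = G_{\Gamma_1} \times G_{\Gamma_2}$, and since $\Gamma_2$ is complete it contains no induced square and hence no minsquare, so edges of $\mathbb{E}(\Gamma_2)$ arise only from single vertex groups. It remains to identify the minsquares of $\Gamma$ with the minsquares of $\Gamma_1$: since every induced square of $\Gamma$ avoids the cone vertices $\Gamma_2$, induced squares of $\Gamma$ meeting $\Gamma_1$ coincide with induced squares of the induced subgraph $\Gamma_1$, and the square-completeness condition for subgraphs of $\Gamma_1$ is the same whether tested against squares of $\Gamma$ or of $\Gamma_1$. Using these facts, an edge of $\mathbb{E}(\Gamma)$ between $g$ and $h$ comes from $g^{-1}h \in G_v$ for some vertex $v$ or $g^{-1}h \in G_\Lambda$ for some minsquare $\Lambda$; in either case $g^{-1}h$ belongs to $G_{\Gamma_1}$ or $G_{\Gamma_2}$, matching exactly the edges of the $1$--skeleton of $\mathbb{E}(\Gamma_1) \times \mathbb{E}(\Gamma_2)$, and the converse follows by the same dictionary.
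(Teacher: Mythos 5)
Your proof is correct, and it takes a genuinely different route from the paper's. Rather than starting from a fixed minsquare $\Lambda$ and reshaping a given join decomposition around it, you define $\Gamma_2$ intrinsically as the set of \emph{cone vertices} of $\Gamma$ (vertices adjacent to everything else), which is the canonical maximal complete join factor. The paper instead takes an arbitrary join $\Gamma = \Omega_1 \bowtie \Omega_2$, shows by a case analysis on how $\Lambda$ meets the two sides that $\Gamma$ can be re-split so that $\Lambda$ lies entirely in one side, and only afterwards proves that the other side is complete and that all minsquares are contained in the side with $\Lambda$. Both arguments hinge on the same mechanism --- square-completeness of a minsquare propagates across a join, because any non-adjacent pair in $\Omega_1$ combined with any non-adjacent pair in $\Omega_2$ spans an induced square --- but you run it in the contrapositive direction (no cone vertices forces every minsquare to be all of $\Gamma$, contradicting properness), whereas the paper runs it forward to constrain the neighbours of vertices outside $\Lambda$. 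Your part (ii), that a minsquare containing a cone vertex could shed that vertex and remain square-complete, is a clean standalone step that the paper gets as a by-product of showing the complement of $\Lambda$ is complete. The payoff of your version is a canonical, symmetric decomposition; the paper's version is somewhat more constructive in that it exhibits the join factor directly from a given minsquare, at the cost of a longer case analysis. Your treatment of the product statement for $\mathbb{E}(\Gamma)$ matches the paper's, with the useful addition that you spell out why the minsquares of $\Gamma$ and of $\Gamma_1$ coincide.
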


\begin{proof}
Suppose  $\Gamma$ contains a proper minsquare subgraph $\Lambda$ and splits as a  join $\Gamma = \Omega_1 \bowtie \Omega_2$. We first show $\Gamma$ splits as a (possibly different) join $\Gamma_1 \bowtie \Gamma _2$, where $\Gamma_1$ contains the minsquare subgraph $\Lambda$. If $\Lambda$ is a subgraph of either $\Omega_1$ or $\Omega_2$ we are done. Otherwise, $\Lambda$ contains vertices of both $\Omega_{1}$ and $\Omega_{2}$. By minimality of $\Lambda$, there must exist a square of $\Lambda$ containing vertices of both $\Omega_{1}$ and $\Omega_{2}$. Moreover, since $\Omega_{1}$ and $\Omega_{2}$ form a join, this square must arise in the form of two pairs of disjoint vertices $v_{i},w_{i} \in V(\Omega_{i})$, $i=1,2$. Then any vertex $v$ of $\Omega_{1} \smallsetminus \Lambda$ must be connected to every vertex $w$ of $\Lambda \cap \Omega_{1}$, else $v,w,v_{2},w_{2}$ form an induced square, contradicting square-completeness of $\Lambda$. Similarly, any vertex of $\Omega_{2} \smallsetminus \Lambda$ must be connected to every vertex of $\Lambda \cap \Omega_{2}$. This then gives a decomposition of $\Gamma$ as a join of the minsquare subgraph $\Lambda$ and the graph $\Gamma \smallsetminus \Lambda$.

We have  shown that $\Gamma$ splits as a join $\Gamma_1 \bowtie \Gamma_2$ with $\Lambda  \subseteq \Gamma_1$. We now show that $\Gamma_2$ must be a complete graph. Since $\Lambda$ is  minsquare, there exists an induced square $S$ in $\Lambda \subseteq\Gamma_{1}$. Let $v_{1}, w_{1}$ be two disjoint vertices of $S$, and suppose there exists a pair of disjoint vertices $v_{2}, w_{2}$ in $\Gamma_{2}$. Since $\Gamma$ is a join of $\Gamma_1$ and $\Gamma_2$ and $\Lambda \subseteq \Gamma_1$, the vertices $v_{1}, w_{1}, v_{2}, w_{2}$, define an induced square that shares two opposite vertices with $\Lambda$, but is not contained in $\Lambda$. This would contradict square-completeness of $\Lambda$. Therefore, $\Gamma_{2}$ must be complete. 

Finally we show that every other minsquare subgraph of $\Gamma$ must also be contained in $\Gamma_1$. Let $\Omega \subseteq \Gamma$ be minsquare. If four vertices $v_1,v_2,v_3,v_4$ of $\Omega$ form an induced square of $\Gamma$, then each $v_i$ must be contained in $\Gamma_1$, since any $v_{i}$ that $\Gamma_{2}$ contains must be connected to all $v_{j}$ in $\Gamma_{1}$, but $\Gamma_{2}$ cannot contain a pair of disjoint vertices since it is complete. Thus the minimality of $\Omega$ implies $\Omega$ must be contained in $\Gamma_1$ (otherwise $\Omega \cap \Gamma_{1}$ would be a proper square-complete subgraph of $\Omega$).

Since $\Gamma$ splits as a join $\Gamma_1 \bowtie \Gamma_2$, it follows that $S(\Gamma)$ is the $1$--skeleton of $S(\Gamma_1) \times S(\Gamma_2)$ and since the only minsquare subgraphs of $\Gamma$ are the minsquare subgraphs of $\Gamma_1$, $\mathbb{E}(\Gamma)$ is the $1$--skeleton  of $\mathbb{E}(\Gamma_1) \times \mathbb{E}(\Gamma_2)$ by construction.
\end{proof}

We now show that $\mathbb{E}(\Gamma)$ is bounded only in the obvious cases.

\begin{thm}\label{thm:electrification}
The electrification $\mathbb{E}(\Gamma)$ is bounded if and only if $\Gamma$  either is minsquare, complete, or splits as a join of a minsquare subgraph and a complete graph.
\end{thm}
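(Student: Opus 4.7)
I will first dispose of the ``if'' direction by direct inspection of the three cases. If $\Gamma$ is minsquare, then $\Gamma$ itself appears in the definition of $\mathbb{E}(\Gamma)$, so every pair of elements in $G_\Gamma = \sub{\Gamma}$ is joined by an edge and the diameter is at most $1$. If $\Gamma$ is complete, then $\Gamma$ contains no induced squares so $\mathbb{E}(\Gamma)$ coincides with $S(\Gamma)$, and every element of $G_\Gamma$ decomposes as a product of $|V(\Gamma)|$ pairwise commuting vertex group elements, giving diameter at most $|V(\Gamma)|$. If $\Gamma$ is the join of a minsquare subgraph $\Gamma_1$ and a complete subgraph $\Gamma_2$, then Lemma \ref{lem:joins_with_minsquare} exhibits $\mathbb{E}(\Gamma)$ as the $1$-skeleton of $\mathbb{E}(\Gamma_1) \times \mathbb{E}(\Gamma_2)$, whose diameter is the sum of the two already-bounded factors.

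\textbf{Reverse direction.} For the converse, the plan is to induct on $|V(\Gamma)|$, with the guiding observation that when $\Gamma$ is not minsquare, every minsquare subgraph of $\Gamma$ is proper, so each edge of $\mathbb{E}(\Gamma)$ is already an edge of the subgraph metric $C(\Gamma)$. This sandwich $\dist_\Gamma \leq \dist_{\mathbb{E}} \leq \dist_{syl}$ lets the HHG machinery of Section \ref{section:proto} control $\mathbb{E}(\Gamma)$: assuming $\mathbb{E}(\Gamma)$ is bounded and $\Gamma$ is not minsquare, the sandwich forces $C(\Gamma)$ bounded, and Theorem \ref{thm:diameter} then implies $\Gamma$ is either a single vertex (vacuously complete) or splits as a join. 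If in addition $\Gamma$ contains no minsquare subgraph at all, then $\mathbb{E}(\Gamma) = S(\Gamma)$, and I would argue that any pair of non-adjacent vertices $v,w$ of $\Gamma$ together with non-identity $g_v \in G_v$ and $g_w \in G_w$ yields $|(g_vg_w)^n|_{syl} = 2n$ via Theorem \ref{thm:graph_product_normal_form}, so boundedness of $\mathbb{E}(\Gamma)$ forces $\Gamma$ to be complete.

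\textbf{Inductive step and main obstacle.} The remaining case is when $\Gamma$ splits as a join and contains a proper minsquare subgraph. Lemma \ref{lem:joins_with_minsquare} then refines the join decomposition into $\Gamma = \Gamma_1 \bowtie \Gamma_2$ with $\Gamma_1$ containing every minsquare subgraph of $\Gamma$ and $\Gamma_2$ complete, and identifies $\mathbb{E}(\Gamma)$ with the $1$-skeleton of $\mathbb{E}(\Gamma_1) \times \mathbb{E}(\Gamma_2)$. Boundedness of $\mathbb{E}(\Gamma)$ thus forces $\mathbb{E}(\Gamma_1)$ to be bounded, and since $|V(\Gamma_1)| < |V(\Gamma)|$ the induction hypothesis places $\Gamma_1$ in one of the three listed forms; absorbing $\Gamma_2$ into the complete factor of any join decomposition of $\Gamma_1$ then produces the desired form for $\Gamma$. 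The main obstacle I expect is verifying that a subgraph $\Lambda \subseteq \Gamma_1$ is minsquare as a subgraph of $\Gamma$ if and only if it is minsquare as a subgraph of $\Gamma_1$, so that the induction can be applied cleanly; this compatibility should follow from $\Gamma_2$ being complete, which prevents any induced square of $\Gamma$ from having two non-adjacent vertices inside $\Gamma_2$ and forces such squares to live in $\Gamma_1$. The rest of the argument is then a clean assembly of Theorem \ref{thm:diameter}, Theorem \ref{thm:graph_product_normal_form}, and Lemma \ref{lem:joins_with_minsquare}.
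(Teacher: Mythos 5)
Your proof is correct and takes essentially the same route as the paper: direct inspection for the ``if'' direction, and for the converse, induction on $|V(\Gamma)|$ driven by the inequalities $\dist_\Gamma \leq \dist_{\mathbb{E}} \leq \dist_{syl}$ (valid when $\Gamma$ is not minsquare), Theorem \ref{thm:diameter} to force the join decomposition, the $(g_v g_w)^n$ witness when $\Gamma$ has no minsquare subgraph, and Lemma \ref{lem:joins_with_minsquare} for the inductive step. The compatibility issue you flag---that minsquare subgraphs of $\Gamma$ coincide with those of $\Gamma_1$---is indeed the content silently invoked in the final sentence of the proof of Lemma \ref{lem:joins_with_minsquare}, and your sketch (that $\Gamma_2$ being complete forces any induced square of $\Gamma$ sharing two opposite vertices with $\Gamma_1$ to lie entirely in $\Gamma_1$) is the correct way to fill it in.
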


\begin{proof}
We first show that if $\Gamma$ is minsquare, complete, or splits as the join of a minsquare subgraph and a complete graph then the electrification is bounded.
If $\Gamma$ is minsquare, then $\mathbb{E}(\Gamma)$ has diameter $1$ by definition. Let $x, y$ be vertices of $\mathbb{E}(\Gamma)$, so that $x^{-1}y \in G_{\Gamma}$. If $\Gamma$ is a complete graph on $n$ vertices, then all vertex groups of $\Gamma$ commute, so we can write $x^{-1}y = s_{1} \dots s_{n}$ where $\supp(s_{i}) = v_{i} \in V(\Gamma)$ and $v_{i} \neq v_{j}$ for all $i \neq j$. Thus $\dist_{\mathbb{E}}(x,y) \leq n$, hence $\mathbb{E}(\Gamma)$ is bounded. If $\Gamma$ splits as a join of a minsquare subgraph $\Gamma_{1}$ and a complete graph $\Gamma_{2}$ on $n$ vertices, then $G_{\Gamma} \cong \langle\Gamma_{1}\rangle \times \langle\Gamma_{2}\rangle$ and so we can write $x^{-1}y = g_{1}g_{2}$ where $g_{i} \in \langle\Gamma_{i}\rangle$. Therefore $\dist_{\mathbb{E}}(x,y) \leq n+1$, hence $\mathbb{E}(\Gamma)$ is bounded.

We now assume $\mathbb{E}(\Gamma)$ is bounded and prove this implies $\Gamma$ either is complete, minsquare, or splits as a join of a minsquare subgraph and a complete graph. The proof will proceed by induction on the number of vertices of $\Gamma$. The base case is immediate as  $\Gamma$ is complete and $\mathbb{E}(\Gamma)$ has diameter 1 when $\Gamma$ is a single vertex. Assume the conclusion holds whenever the defining graph has at most $n-1$ vertices. Let $G_\Gamma$ be a graph product of groups where $\Gamma$ contains $n \geq 2$ vertices.

\begin{claim}\label{claim:electrification}
If $\mathbb{E}(\Gamma)$ is bounded and $\Gamma$ is neither complete nor minsquare, then $\Gamma$ must split as a  join and must contain a proper minsquare subgraph.
\end{claim}
\begin{proof}
Suppose $\Gamma$ does not split as a join. By Theorem \ref{thm:diameter}, $C(\Gamma)$ is therefore unbounded. Since $\Gamma$ is not minsquare, $\mathbb{E}(\Gamma)$  can be obtained from $C(\Gamma)$ by removing some edges. In particular, if $C(\Gamma)$ has infinite diameter then so does $\mathbb{E}(\Gamma)$.  This implies that if $\Gamma$ is not minsquare and does not split as a join then $\mathbb{E}(\Gamma)$ is unbounded, contradicting our assumption. 

Now suppose $\Gamma$ does not contain any proper minsquare subgraphs. Then $\mathbb{E}(\Gamma)$ is simply  $S(\Gamma)$. Since $\Gamma$ is not complete, there exist two disjoint vertices $v,w \in V(\Gamma)$. Therefore  $\dist_{\mathbb{E}}(e,(g_{v}g_{w})^{m}) = \dist_{syl}(e,(g_{v}g_{w})^{m}) = 2m$ for any $g_{v} \in G_{v}\smallsetminus\{e\}$ and $g_{w} \in G_{w}\smallsetminus\{e\}$, hence $\mathbb{E}(\Gamma)$ is unbounded, a contradiction. 
\end{proof}

Assume that $\Gamma$ is neither complete nor minsquare, so that $\Gamma$ must contain a strict minsquare subgraph $\Lambda$ and splits as a  join by Claim \ref{claim:electrification}. By Lemma \ref{lem:joins_with_minsquare}, $\Gamma$ must split as a join of $\Gamma_1$ and $\Gamma_2$ where $\Gamma_2$ is complete and $\mathbb{E}(\Gamma)$ is the $1$--skeleton of $\mathbb{E}(\Gamma_1) \times \mathbb{E}(\Gamma_2)$. Thus, $\mathbb{E}(\Gamma)$ having bounded diameter implies $\mathbb{E}(\Gamma_1)$  must also have bounded diameter. Since $\Gamma_1$ contains at most $n-1$ vertices, the induction hypothesis then implies $\Gamma_1$  either is minsquare, complete, or splits as a join of a minsquare subgraph and a complete graph. Since $\Lambda \subseteq \Gamma_1$ contains a square, $\Gamma_1$ cannot be complete. Thus, $\Gamma_1$ is either minsquare or a join of $\Lambda$ with a  complete graph $\Omega$. Hence, $\Gamma$  splits either as a join of the minsquare subgraph $\Gamma_{1}$ and the complete graph $\Gamma_{2}$, or as a join of the minsquare subgraph $\Lambda$ and the complete graph $\Omega \bowtie \Gamma_2$. 
\end{proof}

Finally, we show that $\mathbb{E}(\Gamma)$ being a quasi-line coincides with $G_\Gamma$ being virtually cyclic. The key step of the proof is to produce two elements of $G_\Gamma$ that act as independent loxodromic elements on $C(\Gamma)$. This creates more than two directions to escape to infinity in $C(\Gamma)$, which then gives more than two directions to escape to infinity in $\mathbb{E}(\Gamma)$.

\begin{thm}\label{thm:quasi-line}
Let $G_\Gamma$ be a graph product of finite groups. The electrification $\mathbb{E}(\Gamma)$ is a quasi-line if and only if $G_\Gamma$ is virtually cyclic.
\end{thm}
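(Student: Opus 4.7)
The plan is to prove both implications. For the $(\Leftarrow)$ direction I would first determine which $\Gamma$ yield a virtually (infinite) cyclic $G_\Gamma$. Since each vertex group is finite, $G_\Gamma$ is infinite only if $\Gamma$ has two non-adjacent vertices, and an easy analysis (ruling out $F_2$ subgroups and $\mathbb{Z}^2$ subgroups) shows that the only virtually cyclic case is $\Gamma = \{v,w\} \bowtie \Omega$, where $\{v,w\}$ is edgeless with $G_v = G_w = \mathbb{Z}/2\mathbb{Z}$ (so $\langle v,w\rangle = D_\infty$) and $\Omega$ is complete, contributing a finite direct factor. In this configuration $\Gamma$ contains no induced square, so $\mathbb{E}(\Gamma) = S(\Gamma)$; since the vertex groups are finite, $S(\Gamma)$ is quasi-isometric to the word metric on $G_\Gamma$ and hence to $\mathbb{R}$.

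For $(\Rightarrow)$, assume $\mathbb{E}(\Gamma)$ is a quasi-line, so in particular unbounded. I would proceed by induction on $|V(\Gamma)|$. If $\Gamma$ splits as a non-trivial join $\Gamma_1 \bowtie \Gamma_2$, then Lemma \ref{lem:joins_with_minsquare} gives $\mathbb{E}(\Gamma) = \mathbb{E}(\Gamma_1)\times\mathbb{E}(\Gamma_2)$ with $\Gamma_2$ complete (when $\Gamma$ contains a proper minsquare), and the analogous decomposition $\mathbb{E}(\Gamma) = S(\Gamma) = S(\Gamma_1)\times S(\Gamma_2)$ holds when $\Gamma$ has no minsquare. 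A product of two unbounded geodesic spaces is never a quasi-line, so one factor is bounded and the other is a quasi-line; the inductive hypothesis applied to the unbounded factor gives virtual cyclicity of the corresponding subgroup, and hence of $G_\Gamma$.

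The core case is when $\Gamma$ does not split as a join and $|V(\Gamma)|\geq 2$. Then Theorem \ref{thm:diameter} gives that $C(\Gamma)$ has infinite diameter, and by Corollary \ref{cor:acyl} the action of $G_\Gamma$ on $C(\Gamma)$ is acylindrical. Osin's trichotomy for acylindrical actions on hyperbolic spaces forces the action to be either lineal or non-elementary (elliptic is ruled out as orbits are unbounded). In the lineal case, a standard consequence of acylindricity is that $G_\Gamma$ is virtually cyclic, which is what we want. In the non-elementary case, one obtains two independent loxodromic elements $g,h\in G_\Gamma$, and the four orbits $\{g^n\cdot e\}$, $\{g^{-n}\cdot e\}$, $\{h^n\cdot e\}$, $\{h^{-n}\cdot e\}$ yield quasi-geodesic rays in $C(\Gamma)$ limiting to four pairwise distinct points of the Gromov boundary. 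A standard Gromov-product estimate then gives $\dist_\Gamma(r_i(n), r_j(n)) \to \infty$ for each pair. Since every edge of $\mathbb{E}(\Gamma)$ is also an edge of $C(\Gamma)$ (as $\Gamma$ is not itself minsquare when $\mathbb{E}(\Gamma)$ is unbounded, by Theorem \ref{thm:electrification}), we have $\dist_\Gamma \leq \dist_{\mathbb{E}}$, and the same divergence transfers to $\mathbb{E}(\Gamma)$, producing four pairwise divergent quasi-geodesic rays from $e$. This contradicts $\mathbb{E}(\Gamma)$ being a quasi-line, which supports only two disjoint directions at infinity.

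The main obstacle is producing the two independent loxodromic elements and verifying that their orbits persist as pairwise divergent rays in the coarser electrification metric. The Osin classification supplies both the dichotomy and, in the non-elementary case, the loxodromic pair; the acylindricity of the action in the lineal case then forces virtual cyclicity directly. The transfer of divergence from $C(\Gamma)$ to $\mathbb{E}(\Gamma)$ rests on the inequality $\dist_\Gamma \leq \dist_{\mathbb{E}}$, which holds because every generator producing an $\mathbb{E}(\Gamma)$-edge (a vertex group element or an element of a minsquare subgroup) lies in a proper graphical subgroup of $\Gamma$ and therefore produces a $C(\Gamma)$-edge.
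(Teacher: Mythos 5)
Your proposal is correct and follows essentially the same route as the paper: Osin's trichotomy applied to the acylindrical action on $C(\Gamma)$ to produce independent loxodromics, the inequality $\dist_\Gamma \leq \dist_{\mathbb{E}}$ to transfer divergence, and Lemma~\ref{lem:joins_with_minsquare} to split off a complete join factor. The only differences are cosmetic: you run the non-trivial implication directly (inducting on $|V(\Gamma)|$ and using a Gromov-boundary/Gromov-product estimate), whereas the paper argues by contraposition and quotes the infinite Hausdorff distance between $\langle g\rangle$ and $\langle h\rangle$ directly from Osin's theorem.
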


\begin{proof}
A graph product of finite groups $G_\Gamma$ is virtually cyclic if and only if either $\Gamma$ is a pair of disjoint vertices each with vertex group $\Z_2$ or $\Gamma$ splits as a join $\Gamma_1 \bowtie \Gamma_2$, where $\Gamma_1$ is a pair of disjoint vertices each with vertex group $\Z_2$ and $\Gamma_2$ is a complete graph (this follows from \cite[Lemma 3.1]{BPR_v_cyclic_RACG}).  Thus, if $G_\Gamma$ is virtually cyclic, then $\mathbb{E}(\Gamma) = S(\Gamma)$ is a quasi-line by construction.

 Let us now assume $G_\Gamma$ is not virtually cyclic. If $\Gamma$   is either  minsquare, complete, or the join of a minsquare graph and a complete graph, then  $\mathbb{E}(\Gamma)$ has bounded diameter by Theorem \ref{thm:electrification} and is therefore not a quasi-line.  Let us therefore assume that $\Gamma$ is not minsquare, not complete, and does not split as a join of a minsquare graph  and a complete graph.

First assume $\Gamma$ does not split as a  join at all. 
Since the action of $G_\Gamma$ on $C(\Gamma)$ by left multiplication is acylindrical (Corollary \ref{cor:acyl}), a result of Osin \cite[Theorem 1.1]{Osin_acyl_hyp} says $G_\Gamma$ must satisfy exactly one of the following: $G_\Gamma$ has bounded orbits in $C(\Gamma)$, $G_\Gamma$ is virtually cyclic, or $G_\Gamma$ contains two elements that act loxodromically and independently on $C(\Gamma)$. Since $\Gamma$ does not split as a join, the proof of Theorem \ref{thm:diameter} implies that $G_\Gamma$ does not have bounded orbits in $C(\Gamma)$. Further, $G_\Gamma$ is not virtually cyclic by assumption.
Thus, there exist $g,h \in G_\Gamma$ such that $n \mapsto \pi_\Gamma(g^n)$ and $n \mapsto \pi_\Gamma(h^n)$ are bi-infinite quasi-geodesics in $C(\Gamma)$ whose images, $\pi_\Gamma(\sub{g})$ and $\pi_\Gamma(\sub{h})$, have infinite Hausdorff distance from each other.
Now, since $\Gamma$ is not minsquare, $C(\Gamma)$ is obtained from $\mathbb{E}(\Gamma)$ by adding edges and therefore $\dist_\Gamma(x,y) \leq \dist_{\mathbb{E}}(x,y)$ for all $x,y \in G_\Gamma$. Hence, the subsets $\sub{g}$ and $\sub{h}$  in $\mathbb{E}(\Gamma)$ are also the images of bi-infinite quasi-geodesics that have infinite Hausdorff distance from each other. This implies $\mathbb{E}(\Gamma)$ is not a quasi-line, as any two bi-infinite quasi-geodesics in a quasi-line have finite Hausdorff distance.

Now assume $\Gamma$ splits as a join. If $\Gamma$ contains no minsquare subgraph, then $\mathbb{E}(\Gamma) = S(\Gamma)$. Since the vertex groups are all finite, $S(\Gamma)$ is quasi-isometric to the word metric on $G_\Gamma$ and hence $S(\Gamma) = \mathbb{E}(\Gamma)$ is  not a quasi-line, because we assumed $G_\Gamma$ is not virtually cyclic.  Thus we can assume $\Gamma$ contains a minsquare subgraph $\Lambda$. By applying Lemma \ref{lem:joins_with_minsquare} iteratively, we have that $\Gamma$ splits as a join $\Gamma = \Gamma_1 \bowtie \Gamma_2$ such that: 
\begin{itemize}
    \item $\Gamma_1$ either does not split as a join or is minsquare;
    \item $\Gamma_2$ is a complete graph;
    \item $\mathbb{E}(\Gamma)$ is the 1--skeleton of $\mathbb{E}(\Gamma_1) \times \mathbb{E}(\Gamma_2)$.
\end{itemize}
Recall our assumption that $\Gamma$ does not split as a join of a minsquare graph and a complete graph, hence $\Gamma_{1}$ cannot be minsquare and thus must not split as a join by the first item above. Further, $\langle\Gamma_{1}\rangle$ is not virtually cyclic since it is a finite index subgroup of $G_{\Gamma}$, which is not virtually cyclic. Thus, we can apply the previous case to conclude that $\mathbb{E}(\Gamma_1)$ is not a quasi-line and hence $\mathbb{E}(\Gamma)$ is not a quasi-line.
\end{proof}

\bibliography{GraphProducts}{}
\bibliographystyle{ggd2021}
\end{document}